\newcommand{\oti}{\otimes}
\newcommand{\da}{\dagger}
\newcommand{\MH}{\mathcal{H}}
\newcommand{\MK}{\mathcal{K}}
\newcommand{\MM}{\mathcal{M}}
\newcommand{\MO}{\mathcal{O}}
\newcommand{\CS}{\mathbb{C}^{\st}}
\newcommand{\MW}{\mathcal{W}}
\newtheorem{theorem}{Theorem}[section]
\newtheorem{corollary}[theorem]{Corollary}
\newtheorem{definition}[theorem]{Definition}
\newtheorem{example}[theorem]{Example}
\newtheorem{lemma}[theorem]{Lemma}
\newtheorem{proposition}[theorem]{Proposition}
\renewcommand{\Im}{\mathrm{Im}}
\newcommand{\MC}{\mathcal{C}}
\newcommand{\Tr}{\mathrm{Tr}}
\newcommand{\st}{\ast}
\newcommand{\pa}{\partial}
\newcommand{\ti}{\times}
\newcommand{\vp}{\varphi}
\newcommand{\ME}{\mathcal{E}}
\newcommand{\al}{\alpha}
\newcommand{\na}{\nabla}
\newcommand{\ep}{\epsilon}
\newcommand{\bpa}{\bar{\pa}}
\newcommand{\be}{\beta}
\newcommand{\lam}{\lambda}
\newcommand{\mft}{\mathfrak{t}}
\newcommand{\mfe}{\mathfrak{e}}
\newcommand{\MP}{\mathcal{P}}
\newcommand{\diag}{\mathrm{diag}}
\newcommand{\MG}{\mathcal{G}}
\newcommand{\mfm}{\mathfrak{m}}
\newcommand{\su}{\mathfrak{su}}
\newcommand{\MF}{\mathcal{F}}
\newcommand{\Lam}{\Lambda}
\newcommand{\MV}{\mathcal{V}}
\newcommand{\MMH}{\MM_{\mathrm{Hit}}}
\newcommand{\loc}{\mathrm{loc}}
\newcommand{\LC}{\mathrm{Lim}}
\newcommand{\tL}{\widetilde{L}}
\newcommand{\MB}{\mathcal{B}}
\newcommand{\mbM}{\mathbb{M}}
\newcommand{\MI}{\mathcal{I}}
\newcommand{\Dol}{\mathrm{Dol}}
\newcommand{\MD}{\mathcal{D}}
\newcommand{\BMP}{\overline{\MP}}
\newcommand{\SL}{\mathrm{SL}}
\newcommand{\MMHQLC}{\MM_{\mathrm{Hit},q}^{\LC}}
\newcommand{\Fix}{\mathrm{Fix}}
\newcommand{\MSD}{\mathscr{D}}
\newcommand{\MSF}{\mathscr{F}}
\newcommand{\MSN}{\mathscr{N}}
\newcommand{\MSP}{\mathscr{P}}
\newcommand{\MSV}{\mathscr{V}}
\newcommand{\MSW}{\mathscr{W}}
\newcommand{\rank}{\mathrm{rank}}
\newcommand{\tJ}{\widetilde{J}}
\newcommand{\mss}{\mathrm{ss}}
\newcommand{\Tot}{\mathrm{Tot}}
\newcommand{\Aut}{\mathrm{Aut}}
\newcommand{\hvp}{\hat{\vp}}
\newcommand{\id}{\mathrm{id}}
\newcommand{\MMD}{\MM_{\mathrm{Dol}}}
\newcommand{\BMMD}{\overline{\MM}_{\mathrm{Dol}}}
\newcommand{\BMMDq}{\overline{\MM}_{\mathrm{Dol},q}}
\newcommand{\BMMH}{\overline{\MM}_{\mathrm{Hit}}}
\newcommand{\BMMHq}{\overline{\MM}_{\mathrm{Hit},q}}
\newcommand{\BXi}{\overline{\Xi}}
\newcommand{\BMH}{\overline{\MH}}
\newcommand{\CP}{\mathbb{C}P}
\newcommand{\PMB}{\overline{\MB}}
\newcommand{\Lim}{\mathrm{Lim}}
\newcommand{\SLC}{\mathrm{SL}(2,\mathbb{C})}
\newcommand{\Tor}{\mathrm{Tor}}
\newcommand{\sing}{\mathrm{sing}}
\newcommand{\MGC}{\mathcal{G}_{\mathbb{C}}}
\newcommand{\tME}{\widetilde{\ME}}
\newcommand{\tvp}{\tilde{\vp}}
\newcommand{\BJac}{\overline{\mathrm{Jac}}}
\newcommand{\BPic}{\overline{\mathrm{Pic}}}
\newcommand{\Jac}{\mathrm{Jac}}
\newcommand{\tS}{\widetilde{S}}
\newcommand{\tMO}{\widetilde{\MO}}
\newcommand{\hME}{\widehat{\ME}}
\newcommand{\hq}{\hat{q}}
\newcommand{\mbC}{\mathbb{C}}
\newcommand{\om}{\omega}
\newcommand{\Div}{\mathrm{Div}}
\newcommand{\ML}{\mathcal{L}}
\newcommand{\MT}{\mathcal{T}}
\newcommand{\TheMoc}{\Theta^{\mathrm{Moc}}}
\newcommand{\MNM}{\mathrm{Nm}}
\newcommand{\hPMod}{\widehat{\mathrm{PMod}}(\tS)}
\newcommand{\tK}{\tilde{K}}
\newcommand{\tpi}{\tilde{\pi}}
\newcommand{\Pic}{\mathrm{Pic}}
\newcommand{\Prym}{\mathrm{Prym}}
\newcommand{\Nm}{\mathrm{Nm}}
\newcommand{\PMod}{\mathrm{PMod}(\tS)}
\newcommand{\supp}{\mathrm{supp\;}}
\newcommand{\ord}{\mathrm{ord}}
\newcommand{\BMT}{\overline{\MT}}
\newcommand{\BNR}{\mathrm{BNR}}
\newcommand{\UpMoc}{\Upsilon^{\mathrm{Moc}}}
\newcommand{\MMHLC}{\MMH^{\LC}}
\newcommand{\reg}{\mathrm{reg}}
\newcommand{\BMMDr}{\BMMD^{\reg}}
\newcommand{\BMMDs}{\BMMD^{\sing}}
\newcommand{\BMMHr}{\BMMH^{\reg}}
\newcommand{\BMMHs}{\BMMH^{\sing}}
\newcommand{\MBV}{\mathbb{V}}
\newcommand{\MBW}{\mathbb{W}}
\newcommand{\app}{\mathrm{app}}
\newcommand{\ptf}{p^{\star}_{\mathrm{tf}}}
\newcommand{\bcup}{\bigcup}
\newcommand{\pr}{\mathrm{pr}}
\newcommand{\sta}{\mathrm{st}}
\newcommand{\Gr}{\mathrm{Gr}}
\newcommand{\Ze}{Z_{\mathrm{even}}}
\newcommand{\Zo}{Z_{\mathrm{odd}}}
\newcommand{\tMT}{\widetilde{\MT}}
\newcommand{\Hit}{\mathrm{Hit}}
\newcommand{\tlh}{\tilde{h}}
\newcommand{\tp}{\tilde{p}}
\newcommand{\tq}{\tilde{q}}
\newcommand{\tx}{\tilde{x}}
\newcommand{\tR}{\widetilde{R}}
\newcommand{\tlam}{\tilde{\lam}}
\newcommand{\tH}{\widetilde{H}}
\newcommand{\tZ}{\widetilde{Z}}
\newcommand{\tMP}{\widetilde{\MP}}
\newcommand{\mbfH}{\mathbf{H}}
\newcommand{\mbfc}{\mathbf{c}}
\newcommand{\lra}{\longrightarrow}
\newcommand{\isorightarrow}{\xrightarrow{
       \,\smash{\raisebox{-0.5ex}{\ensuremath{\sim}}}\,}}
\author[He]{Siqi He}
\address{Morningside Center of Mathematics,
  Chinese Academy of Sciences, 
   Beijing, 100190 China}
\email{sqhe@amss.ac.cn}
\author[Mazzeo]{Rafe Mazzeo}
\address{
Department of Mathematics, 
Stanford University, 
Stanford, CA, 94305
}
\email{rmazzeo@stanford.edu}
\author[Na]{Xuesen Na}
\address{Department of Mathematics,
   University of Illinois,
   Champaign, IL 61820, USA}
\email{xna@illinois.edu}
\author[Wentworth]{Richard A. Wentworth}
\address{Department of Mathematics,
   University of Maryland,
   College Park, MD 20742, USA}
\email{raw@umd.edu}
\begin{document}
	\title[Compactifications of the Hitchin Moduli Space]{The Algebraic and Analytic 
    Compactifications\\  of the Hitchin Moduli Space}
	
\begin{abstract}
Following the work of Mazzeo-Swoboda-Weiss-Witt \cite{mazzeo2016ends} and
    Mochizuki \cite{Mochizukiasymptotic}, there is a map $\BXi$ between the
    algebraic compactification of the Dolbeault moduli space of $\SLC$
    Higgs bundles on a smooth projective curve coming from the
    $\mathbb{C}^\ast$ action,  and the
    analytic compactification of Hitchin's moduli space of solutions to the  $\mathsf{SU}(2)$
    self-duality equations on a Riemann surface obtained by adding solutions to the decoupled equations, known as ``limiting
     configurations''. This map extends the classical Kobayashi-Hitchin correspondence. The main result of this paper is that $\BXi$ fails to be  continuous at the boundary over a certain subset of 
    the discriminant locus of the Hitchin fibration.
    This suggests  the possibility of a third, refined compactification
    which dominates both. 
\end{abstract}
	\maketitle
 
\section{Introduction}	
Let $\Sigma$ be a closed Riemann surface of genus $g\geq 2$. 
The coarse Dolbeault moduli space of $\SLC$
    semistable Higgs bundles on $\Sigma$, denoted by $\MMD$, 
    and Hitchin's moduli space of solutions to the  $\mathsf{SU}(2)$
    self-duality equations on $\Sigma$, denoted by $\MMH$, 
    have been extensively studied since their introduction over 35 years ago. 
    The Kobayashi-Hitchin correspondence, proved in \cite{hitchin1987self},
    gives a homeomorphism between these two moduli spaces:
$$\Xi:\MMD\isorightarrow \MMH\ .$$
The space $\MMD$ is naturally a quasiprojective variety
\cite{Nitsure:91, simpson1994moduli}, and similarly to  monopole moduli
spaces, $\MMH$ fails to be compact. 
Recently, there has been interest from several directions on
natural  compactifications of these two spaces. 
A key feature on the Dolbeault side
is the existence of a $\mathbb{C}^\ast$ action with the Bia{\l}ynicki-Birula
property, and  this may be used to define a  completion
of $\MMD$ as a projective variety
\cite{hausel1998compactification,de2018compactification,fan2022analytic}. 
The ideal points  are identified with   
the $\mathbb{C}^*$ orbits in the complement of the nilpotent cone of $\MMD$. 
The Hitchin moduli space also  admits  a more recently introduced compactification, $\BMMH$, 
based on the work of several authors (see \cite{mazzeo2016ends, Mochizukiasymptotic, 
Taubes20133manifoldcompactness}). 
The boundary of $\BMMH$ is given by gauge equivalence classes of \emph{limiting
configurations}. This compactification is relevant to many aspects of
Hitchin's moduli space.  For more details, we refer the reader to 
\cite{DumasNeitzke:19,mazzeo2012limiting,fredrickson2020exponential,fredrickson2022asymptotic,ott2020higgs,katzarkov2015harmonic,chen2022asymptotic},
and the references therein.

By the work of \cite{mazzeo2016ends,Mochizukiasymptotic}, there is a
natural extension 
$$\BXi:\BMMD\lra \BMMH$$
of the Kobayashi-Hitchin correspondence to the two compactifications
described above, and
it is of interest to study the geometry of this map. 
This involves another key feature of Hitchin's moduli space; namely,
spectral curves.
Spectral curves and spectral data \cite{hitchin1992lie} play a central
role in the realization of the Dolbeault moduli space  as an algebraically complete integrable
system $\mathcal{H}: \MMD\to \mathcal{B}$. 
In the case of $\SLC$, the base $\mathcal B$  is the space of
holomorphic quadratic differentials  on $\Sigma$. Given
$q\in H^0(K^2)$, one obtains a (scheme theoretic) spectral curve $S_q$. This curve is 
reduced if $q\neq 0$, irreducible if $q$ is not  the square of an abelian
differential, and
smooth if  $q$ has simple zeros. We let $\mathcal{B}^{\rm
reg}\subset \mathcal{B}$ denote  the open cone of quadratic differentials
with simple zeros. 

The ideal points of both compactifications $\BMMD$ and $\BMMH$ have
associated nonzero quadratic differentials, and therefore spectral curves. 
We write $\BMMDr$ for the elements in $\BMMD$ with smooth spectral curves,
and $\BMMDs=\BMMD\setminus \BMMDr$ for those with singular spectral curves; 
similarly for $\BMMHr$ and $\BMMHs$. We then  have the following result.
\begin{theorem}
	The restriction of the compactified Kobayashi-Hitchin map
    $\BXi:\BMMD\to \BMMH$ to the locus with  smooth associated spectral
    curves
    defines a homeomorphism $\BMMD^{\reg}\simeq \BMMH^{\reg}$. 
    On the singular spectral curve locus, however,
    $\BXi^{\sing}:\BMMD^{\sing}\to \BMMH^{\sing}$ is neither surjective nor injective.
\end{theorem}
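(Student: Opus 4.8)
The plan is to reduce the statement to a fiberwise analysis over the projectivized Hitchin base. Both boundaries carry a natural map to $\mathbb{P}\mathcal{B}$ recording the projective class of the quadratic differential $q=\det\Phi$, equivalently the spectral curve $S_q$, and by construction $\BXi$ commutes with these maps. So it suffices to understand $\BXi$ on the fiber over each $[q]\in\mathbb{P}\mathcal{B}$, treating separately the regular cone $\mathbb{P}\mathcal{B}^{\reg}$ of differentials with simple zeros, where $S_q$ is smooth, and its complement, the discriminant locus, where $S_q$ is singular. Since the $\mathbb{C}^\ast$ action scales $q\mapsto t^2 q$, the ideal points lying over a fixed ray $[q_0]$ are exactly the $\mathbb{C}^\ast$ orbits meeting the Hitchin fiber $\mathcal{H}^{-1}(q_0)$; this identifies each Dolbeault boundary fiber with the corresponding orbit space of the fiber of the integrable system.

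For the regular part I would proceed as follows. Over $[q_0]\in\mathbb{P}\mathcal{B}^{\reg}$ the curve $S_{q_0}$ is smooth, its Hitchin fiber is the Prym variety $\Prym(S_{q_0}/\Sigma)$, and the associated Dolbeault boundary fiber is the corresponding orbit space. On the Hitchin side, the results of Mazzeo--Swoboda--Weiss--Witt and Mochizuki identify the limiting configurations with fixed $q_0$ with a torus built from flat unitary data on $S_{q_0}$ away from the ramification divisor, which is canonically the real torus underlying $\Prym(S_{q_0}/\Sigma)$. The map $\BXi$ is precisely the passage from the holomorphic spectral line bundle to this flat limiting datum, hence a fiberwise bijection. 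The uniform convergence of harmonic metrics to limiting configurations supplied by those same works shows that $\BXi$ is continuous on $\BMMDr$; since the fibers over $\mathbb{P}\mathcal{B}^{\reg}$ are compact tori, $\BXi$ is proper over this locus, and a proper continuous bijection onto a locally compact Hausdorff space is a homeomorphism. This yields $\BMMDr\simeq\BMMHr$.

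For the singular part the strategy is to exhibit explicit discrepancies between the two fiber structures over a model singular differential, say $q$ with a single double zero, for which $S_q$ acquires a node. On the Dolbeault side the boundary fiber is governed by the compactified Jacobian $\BJac(S_q)$, which contains, beyond the image of $\Prym$ of the normalization, extra strata parametrizing torsion-free sheaves that fail to be locally free at the node. On the Hitchin side, the asymptotics of limiting configurations near a higher-order zero of $q$ show that the limiting datum depends only on the restriction of the spectral data to the smooth locus of $S_q$ together with a coarser gluing parameter at the singular fiber. The plan is to show that the non-locally-free strata of $\BJac(S_q)$ are collapsed by $\BXi$ onto a lower-dimensional set, producing distinct boundary points of $\BMMDs$ with equal image and hence \emph{non-injectivity}; and, dually, that the torus of limiting configurations over $[q]$ carries a full set of gluing parameters at the node that cannot all be realized as $\mathbb{C}^\ast$-degeneration limits of algebraic spectral data, producing boundary points of $\BMMHs$ outside the image and hence \emph{non-surjectivity}.

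The hard part will be the singular case: making precise the correspondence between the strata of $\BJac(S_q)$ and the space of limiting configurations, and then isolating honest witnesses for the two failures. This rests on the fine asymptotics of the harmonic metric near zeros of $q$ of order $\geq 2$, where the relevant model solutions are no longer the fiducial ones used at simple zeros, and on matching these asymptotics against the algebraic normal-cone description of the Bia{\l}ynicki--Birula ideal points. I expect non-surjectivity to be the most delicate point, since it requires producing a specific limiting configuration and proving that \emph{no} sequence of rescaled Higgs bundles along a $\mathbb{C}^\ast$ orbit converges to it; this is exactly where the failure of continuity of $\BXi$ flagged in the introduction enters decisively.
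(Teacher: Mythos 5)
Your treatment of the regular locus is essentially the paper's: reduce fiberwise over $\PMB$, identify the limiting configurations over a simple-zero differential with the torus $\Prym(S_{q_0}/\Sigma)$ (this is Theorem \ref{thm_simple_zero_bijective}, quoted from \cite{mazzeo2019asymptotic}), and get continuity of $\BXi$ at the boundary from the uniform convergence along rays together with continuity of the limiting-configuration assignment in families (Propositions \ref{prop_topology_KHmap} and \ref{prop_homeomorphic_part_one}). Your endgame differs mildly: you close with ``proper continuous bijection onto a locally compact Hausdorff space is a homeomorphism,'' whereas the paper cites continuity of the inverse from the construction in \cite{mazzeo2019asymptotic}. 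Your properness argument does work, but note it follows from compactness of $\BMMD$ and compatibility of $\BXi$ with the projections to $\PMB$, not from compactness of the individual fibers as you state. Your non-injectivity mechanism on the singular locus is also the paper's: the algebraic Mochizuki map factors through the torsion-free pullback $\ptf$ to the normalization, whose fibers are positive-dimensional once $S_q$ is singular (Theorem \ref{thm_stratification_fibration}, Proposition \ref{prop_simple_zero_injective}), so distinct boundary points of $\BMMDs$ share a limiting configuration.

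The genuine gap is in your non-surjectivity argument. You propose to exhibit a limiting configuration over $[q]$ and prove that no ray $(\ME,t\vp)$ converges to it. That only shows the point is not in the image of $\partial\BXi$. To contradict surjectivity of $\BXi^{\sing}:\BMMDs\to\BMMHs$ you must \emph{also} certify that your witness lies in $\BMMHs$, i.e.\ that it \emph{is} a limit of some sequence of honest solutions of the Hitchin equations: $\BMMH$ is by definition the closure of $\MMH$ in $\mbM$, and for singular $q$ there is no analogue of Theorem \ref{thm_simple_zero_bijective} asserting that every admissible limiting configuration is attained. Your phrase ``full set of gluing parameters'' presupposes exactly this unproven (and, in light of this paper, doubtful) fact. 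The paper resolves the issue with a diagonal argument: it produces sequences of Higgs bundles $(\ME_i^k,\vp_i^k)\to(\ME_\infty,\vp_\infty)$ \emph{inside} the fiber $\MM_q$ whose Mochizuki images converge to configurations $\eta^k$ corresponding to filtered line bundles with non-$\sigma$-symmetric weights $\chi_{D_k}$, $\sigma^{\st}D_k\neq D_k$ (Proposition \ref{prop_number_of_limits_of_filtered_bundle}); the uniform exponential convergence along rays (Proposition \ref{prop_topology_KHmap}) upgrades these to limits of actual solutions, so $\eta^k\in\BMMHs$, while every boundary point in the image of $\BXi$ has $\sigma$-symmetric weights $\tfrac12\chi_{D-\Lambda}$, and the nonabelian Hodge correspondence for filtered bundles is injective (Proposition \ref{prop_different_limiting_configurations}), so $\eta^k$ is excluded from the image. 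Without both halves of this argument --- membership in the closure \emph{and} exclusion from the image --- your witnesses do not yield non-surjectivity. Relatedly, your plan to redo ``fine asymptotics of the harmonic metric near zeros of order $\geq 2$'' is unnecessary once one has Proposition \ref{prop_topology_KHmap}: the paper deliberately converts the analytic question into the purely algebraic one about $\TheMoc$, which is then settled with parabolic modules.
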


It is convenient to analyze the behavior along rays in $\mathcal{B}$, where
the spectral curve is simply rescaled.
Let $q\neq 0$ be a quadratic differential and $\BMMDq$ (resp.\ $\BMMHq$) be the
points in  $\BMMD$ (resp.\  $\BMMH$) with spectral curves $S_{tq}$, $t\in
\mathbb{C}^\ast$.
The restriction of $\BXi$ gives us the map $\BXi_q:\BMMDq\to \BMMHq$. 
We  shall study the continuous behavior of $\BXi_{q}$ for points in the
fiber of $tq$ as $t\to \infty$.
For convenience, we set 
$\MM_{q^\ast}:=\BMMDq\cap \MMD$.
 When $q$ is irreducible, i.e.\ not a square,  all elements in $\MM_q$ are stable. 
Via the Hitchin \cite{hitchin1987stable} and Beauville-Narasimhan-Ramanan (BNR) correspondence \cite{bnr1989spectral}, this reduces the
description of the fiber  $\mathcal{H}^{-1}(q)$ 
to the characterization of rank $1$ torsion free sheaves on the integral
curve $S_q$. 

In \cite{rego1980compactified}, parameter spaces for  rank $1$ torsion free
sheaves on algebraic curves with Gorenstein singularities were studied in
the context of compactified Jacobians, and the crucial notion of a
parabolic module was introduced. This was extensively investigated by Cook in 
\cite{cook1993local,cook1998compactified}, partially following ideas of
Bhosle \cite{Bhosle:92}. For simple plane curve singularities of the type appearing
in spectral curves, one makes use of the local classification of torsion free
modules of Greuel-Kn\"orrer \cite{Greuel1985}. These methods were applied
to study the Hitchin fibration by Gothen-Oliveira in
\cite{gothen2013singular} (see also \cite{kydonakis2022monodromy} for
recent study).  In parallel, Horn \cite{horn2022semi} defines a stratification of
$\MM_q=\bigcup_D\MM_{q,D}$  by certain effective 
divisors contained in the divisor of $q$ (see Section
\ref{sec:divisor-stratification},
and  also \cite{hornna2022geometry} for
the more general situation). 
Using the results from these references, we have a  reinterpretation of Mochizuki's construction
\cite{Mochizukiasymptotic}.  This leads to the following result.

\begin{theorem}
	Let $q\neq0$ be an irreducible quadratic differential. 
	\begin{enumerate}
		\item 	If $q$ has only zeros of odd order, then $\BXi_q$ is
            continuous.
		\item  If $q$ has at least one zero of even order,  then for each $D\neq 0$
            there exists an integer $n_D\geq 1$ so that for any Higgs bundle
            $(\MF,\psi)\in \MM_{q,D}$, there exist $n_D$ sequences of
            Higgs bundles $(\ME_i^k,\vp_i^k)$ with $k=1,\ldots, n_D$
            satisfying
		\begin{itemize}
			\item $\lim_{i\to \infty}(\ME_i^k,\vp_i^k)=(\MF,\psi)$ for $k=1,\ldots,n_D$,
			\item  
                and if we write $$\eta^k:=\lim_{i\to
                \infty}\BXi_q(\ME_i^k,\vp_i^k)\quad ,\quad \xi:=\lim_{i\to
                \infty}\BXi_q(\MF,t_i\psi)\ ,$$  for some  sequence
                $t_i\in\mathbb{R}^+$,  $t_i\to+\infty$, then $\xi,\eta^1,\ldots,\eta^{n_D}$ are $n_D+1$
                different limiting configurations.
	\end{itemize}
		\end{enumerate}
\end{theorem}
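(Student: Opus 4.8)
The plan is to localize the comparison at the individual zeros of $q$ and to match, zero by zero, the torsion free sheaf data on the Dolbeault side against the local holonomy data of the limiting configuration on the Hitchin side. On the Dolbeault side the $\BNR$ correspondence \cite{bnr1989spectral} identifies $\MM_q$ with rank one torsion free sheaves on the integral spectral curve $S_q$, while Horn's stratification $\MM_q=\bigcup_D\MM_{q,D}$ \cite{horn2022semi} records the local type of such a sheaf at each zero of $q$; by the Greuel--Kn\"orrer classification \cite{Greuel1985} of modules over the plane curve germ $\lambda^2=z^k$, this local type is governed entirely by the parity of $k$. On the Hitchin side, the analyses of \cite{mazzeo2016ends} and \cite{Mochizukiasymptotic} present a limiting configuration as a fixed fiducial model in a disc around each zero of $q$, glued to a flat abelian spectral datum on the complement, so that the boundary value $\BXi_q$ of a point of $\MM_q$ under the scaling $t\to\infty$ is computed from this spectral datum together with its $\ZT$-holonomy around the zeros. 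The first step is to make both descriptions precise and reduce the theorem to a local statement about which holonomies are realizable near a single zero.

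The geometric heart of the argument is the behavior of the double cover $\tS_q\to\Sigma$. If $q$ has a zero of odd order $k=2m+1$, the germ $\lambda^2=z^{2m+1}$ is unibranch and the cover is ramified there, so the two spectral sheets are locally connected and exchanged by the monodromy; there is then a single branch along which the limiting flat datum is defined, no branch ambiguity is introduced, the limiting configuration attached to an ideal point is independent of how that point is approached, and --- exactly as on the smooth locus treated in the preceding theorem --- $\BXi_q$ extends continuously, proving (1). If instead $q$ has a zero of even order $k=2m$, the germ splits into the two branches $\lambda=\pm z^{m}$, the cover is unramified over that point, and $S_q$ has a node. A locally free sheaf on $S_q$ then carries a nonzero gluing parameter $c\in\mathbb{C}^\ast$ identifying the two branch fibers, while the non-locally-free sheaves comprising $\MM_{q,D}$ (those of the form $\nu_\ast$ of a line bundle on $\tS_q$) arise precisely in the degeneration $c\to 0$. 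Crucially, the limiting configuration retains the independent $\ZT$-holonomies of the two decoupled sheets, which are not recorded by the collapsed torsion free sheaf.

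To exhibit the discontinuity I would, for a given $(\MF,\psi)\in\MM_{q,D}$, construct families of locally free Higgs bundles $(\ME^k_i,\vp^k_i)$ whose underlying spectral data restrict to that of $(\MF,\psi)$ on $\tS_q$ but whose node gluing parameters degenerate along $n_D$ distinct sign sectors as $i\to\infty$, indexed by the admissible branch-sign patterns at the even-order zeros in $\supp D$. These converge to $(\MF,\psi)$, while Mochizuki's asymptotic expansion shows that the radial boundary value of the $k$-th family converges to a limiting configuration $\eta^k$ whose branch holonomies realize the $k$-th pattern, whereas the direct radial limit $\xi$ of $(\MF,t_i\psi)$ realizes the degenerate pattern. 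Distinctness of $\xi,\eta^1,\dots,\eta^{n_D}$ is then detected by the $\ZT$-holonomy of the limiting flat connection around loops encircling the even-order zeros, and $n_D$ is identified with the number of nontrivial admissible patterns determined by $D$ and the orders of vanishing of $q$; that at least one such pattern exists, so $n_D\geq1$, uses the presence of an even-order zero together with $D\neq0$.

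The main obstacle is analytic rather than combinatorial: one must establish the two convergences simultaneously and uniformly. The approximating families degenerate toward the singular spectral curve at the same time that the scaling parameter is sent to infinity, so the estimates of \cite{Mochizukiasymptotic} must be shown to hold uniformly as the gluing parameter $c\to0$, and the resulting interchange of the limits $i\to\infty$ and $t\to\infty$ must be justified. Controlling the fiducial-to-flat gluing near an even-order zero uniformly in this degeneration --- equivalently, showing that the branch holonomy varies continuously in the approximating family and survives in the limit --- is the step I expect to require the most care.
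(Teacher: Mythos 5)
Your overall frame (odd zeros are unibranch so no ambiguity arises; even zeros split into two branches whose gluing data is collapsed by the torsion-free sheaf; a final limit-interchange step via Mochizuki's uniform estimates) is the right skeleton, and your last paragraph correctly isolates the interchange of limits that the paper handles in Proposition \ref{prop_topology_KHmap}. However, there is a genuine gap at the heart of part (2): your mechanism for \emph{distinguishing} the limits fails. In the paper the $n_D+1$ candidate limits are the filtered bundles $\MF_{\st}(\ptf L,\chi_{D_k})$, indexed by the splittings $D_k$ of the $\sigma$-divisor $D$ with $D_k+\sigma^{\st}D_k=D$ and $\sigma^{\st}D_k\neq D_k$, together with $\MF_{\st}(\ptf L,\tfrac12\chi_D)$; their non-gauge-equivalence as limiting configurations is deduced from Simpson's filtered nonabelian Hodge correspondence (Proposition \ref{prop_different_limiting_configurations}), i.e.\ from the distinct growth rates of the harmonic metric on the two eigenlines. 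These weight functions all differ by \emph{integers}, so the holonomies of the limiting flat connections around the even zeros coincide: $\ZT$-holonomy can never separate $\eta^{k_1}$ from $\eta^{k_2}$, and it also fails to separate $\xi$ from the $\eta^k$ whenever every coefficient $d_i$ of $D$ is even. For instance, if $D$ has coefficient $d_i=2$ at a single even zero, the three limits correspond to the splittings $(2,0)$, $(0,2)$, $(1,1)$, all with integer weights and hence trivial holonomy, yet they are pairwise distinct limiting configurations. So the key distinctness step of your argument would fail exactly in the situation the theorem is about.

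Two further problems. First, the combinatorics: the sequences are not indexed by ``branch-sign patterns.'' At an even zero with $D$-coefficient $d_i$ the relevant choices are the splittings $c_i^++c_i^-=d_i$ with $c_i^{\pm}\geq 0$, of which there are $d_i+1$, giving $n_D=\prod_i(d_i+1)$, minus one when all $d_i$ are even (Propositions \ref{prop_computation_NL} and \ref{prop_computation_nL}); your binary picture, like your description of even zeros as nodes with a single gluing parameter $c\in\mathbb{C}^{\st}$ and of the non-locally-free sheaves as pushforwards from the normalization, is correct only for zeros of order exactly $2$ (a zero of order $2m\geq 4$ gives an $A_{2m-1}$ singularity, whose intermediate modules $M_k$, $0<k<m$, are neither free nor pushforwards). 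Second, your part (1) is asserted (``no branch ambiguity, hence continuous'') rather than proved: the torsion-free pullback $\ptf$ is itself discontinuous across strata, since $\lim_i p^{\st}L_i=(\ptf L_{\infty})(D_{v_{\infty}})$, and continuity requires showing this jump is exactly compensated by the jump in weights, i.e.\ that $D_{v_{\infty}}=\tfrac12 D$. The paper proves this by factoring the Mochizuki map through Cook's parabolic module $\tau:\hPMod\to\BMP$ — a finite bijection, hence a homeomorphism, precisely when all zeros are odd — on which the assignment $(J,v)\mapsto\MF_{\st}(J,0)$ is manifestly continuous; the same device (density of $\tau^{-1}(\MP)$ in $\hPMod$) is what produces the approximating sequences of locally free sheaves in part (2). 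Some bookkeeping structure of this kind is the missing ingredient in both halves of your construction.
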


When $q$ is reducible, the description of Higgs bundles in   $\MM_q$
becomes more complicated because, among other things,  of the existence of
strictly semistable objects.
To understand this, we use the local descriptions of Gothen-Oliveira
and Mochizuki (see \cite{gothen2013singular,Mochizukiasymptotic}).
Our result, which focuses on the  stable locus, is the following.
\begin{theorem}
	Suppose $q\neq 0$ is reducible. Let $\BMMDq^{\sta}$ denote the stable locus
    of $\BMMDq$. If $g\geq 3$, then the restriction map
    $\BXi_q|_{\BMMDq^{\sta}}$ is discontinuous. However, if $g=2$, the map
    $\BXi_q|_{\BMMDq^{\sta}}$ is continuous.
\end{theorem}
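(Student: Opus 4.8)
The plan is to reduce the statement to an explicit description of the stable locus of the reducible spectral fiber, and then to apply Mochizuki's asymptotic description of the limiting configuration one node at a time. Write $q=\omega^2$ with $\omega\in H^0(K)$, so that the spectral curve $S_q=\{\lambda^2=q\}\subset\Tot(K)$ splits as $\Sigma_+\cup\Sigma_-$, two copies of $\Sigma$ meeting transversally at the zeros of $\omega$; we call these the nodes and, for clarity, assume $\omega$ has $2g-2$ simple zeros, the general case following by the same degree count together with the local classification of Gothen--Oliveira \cite{gothen2013singular} and Greuel--Kn\"orrer \cite{Greuel1985}. By the BNR correspondence \cite{bnr1989spectral}, a stable $(\ME,\vp)\in\MM_q$ corresponds to a rank one torsion free sheaf $\MSF$ on $S_q$ with $\ME=\pi_*\MSF$, subject to the $\SLC$ (Prym) condition $\det\ME=\MO$. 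Let $L_+,L_-\subset\ME$ be the saturated $\vp$-invariant sub-line-bundles, i.e.\ the eigen-subbundles for $\pm\omega$. At a node where $\MSF$ is locally free the two eigenbundles osculate the common nilpotent direction of $\vp$ and $\ME$ is a nonsplit extension; where $\MSF$ fails to be locally free they separate and $\ME$ splits. This is exactly Horn's stratification \cite{horn2022semi} $\MM_q=\bigcup_D\MM_{q,D}$, with $D$ the reduced divisor of non-locally-free nodes.

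The decisive step is a degree count. The map $L_+\oplus L_-\to\ME$ is injective with torsion cokernel of length one at each locally free node (where the eigenbundles coincide) and zero elsewhere, so $\deg L_++\deg L_-=-m$, where $m=(2g-2)-\deg D$ is the number of locally free nodes. Since the $L_\pm$ are the only $\vp$-invariant line subbundles, stability is equivalent to $\deg L_+<0$ and $\deg L_-<0$, which forces $m\geq2$ and $\deg L_+\in\{-1,\dots,-(m-1)\}$; equivalently $\deg D\leq2g-4$. This single inequality is the entire source of the dichotomy: for $g=2$ it reads $\deg D\leq0$, so $D=0$ and $\deg L_+=\deg L_-=-1$ are forced, whereas for $g\geq3$ it permits $D\neq0$.

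For $g=2$ the conclusion follows at once: $\BMMDq^{\sta}$ consists of the single stratum $\MM_{q,0}$ together with its compactifying rays, the non-locally-free strata being strictly semistable and hence excluded. On one stratum Mochizuki's limiting configuration \cite{Mochizukiasymptotic} depends continuously on the spectral data, so $(\ME,\vp)\mapsto\lim_{t\to\infty}\Xi(\ME,t\vp)$, and therefore $\BXi_q|_{\BMMDq^{\sta}}$, is continuous. For $g\geq3$ I would produce a discontinuity. Fix $(\MF,\psi)\in\MM_{q,D}$ with $D=p_1$ a single node, admissible since $\deg D\leq2g-4$; the gluing parameter $\zeta_1$ at $p_1$, which vanishes for $(\MF,\psi)$, can be deformed to small nonzero values, yielding a sequence $(\ME_i,\vp_i)\in\MM_{q,0}$ with $(\ME_i,\vp_i)\to(\MF,\psi)$. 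The whole family is stable because the remaining $2g-3\geq3$ locally free nodes keep both $\deg L_\pm$ negative, whereas for $g=2$ the analogous endpoint has $m=1$ and is unstable, which is exactly why no such construction exists there.

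The main obstacle is the analytic input that makes the discontinuity genuine: one must show that $\eta:=\lim_i\lim_{t\to\infty}\Xi(\ME_i,t\vp_i)$ differs from $\xi:=\lim_{t\to\infty}\Xi(\MF,t\psi)$. Each node is locally the even order model $\lambda^2=z^2$, so this is the same failure of the $t\to\infty$ and moduli limits to commute that underlies the even-order phenomenon of the preceding theorem, now localized at $p_1$. Concretely, Mochizuki's expansion attaches to $p_1$ a parabolic weight in the limiting metric determined by whether $\MSF$ is locally free there, and the locally free and non-locally-free values differ. The work is to establish this node-local expansion and to verify that the two filtered structures are inequivalent as limiting configurations rather than merely gauge equivalent. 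Once this local non-commutation is in hand, stability of the whole family upgrades it to a genuine discontinuity of $\BXi_q$ on the stable locus for $g\geq3$, while the rigidity $\deg D=0$ leaves no room for it when $g=2$.
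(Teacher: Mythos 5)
Your structural reduction is essentially the paper's, translated into the language of sheaves on the nodal curve: your divisor of non-locally-free nodes is the complement $\Lam-D$ of the paper's divisor $D=\Div(\det\iota)$ recording where the eigenlines collide; your degree count and stability inequality reproduce the paper's $d_++d_-+\deg D=0$ with $-\deg D<d_\pm<0$; your $g=2$ argument (the stable locus is the single stratum $\wp(\MSV(\Lam,-1))$, on which the Mochizuki map is continuous by Proposition \ref{prop_reducible_continuous_on_strata}) is the paper's; and your degenerating family obtained by turning on the gluing parameter at one node is exactly the paper's family $\wp(L,s_i)$ with $s_i\in\MV(\Lam,L)$ degenerating to $s_\infty\in\MV(D_\infty,L)$, $\deg D_\infty=2g-3$, whose limit is stable precisely when $g\geq 3$.

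The genuine gap is the step you yourself defer: proving $\eta\neq\xi$. Your proposed mechanism --- a parabolic weight at $p_1$ ``determined by whether $\MSF$ is locally free there,'' in analogy with the even-order phenomenon of the irreducible case --- is both unproven and incorrectly described. In the reducible case the weights of the limiting filtered bundle are not local data: at each $p$ the weight is $\chi_p(c)=\min\{\ell_p,(m_p+1)c+\ell_p/2\}$, where $\ell_p$ is local but $c$ is a single constant fixed by the global equation $d_++\sum_p\chi_p(c)=0$ of Lemma \ref{lemma_weightparabolic}; you never pin down the degrees $d_\pm$ along your family, without which no weight can be evaluated. The paper's actual mechanism is the \emph{exotic} phenomenon, which it stresses occurs only over reducible fibers (so the analogy with the irreducible even-zero case, where discontinuity comes from asymmetric divisors $D_v\neq\sigma^{\st}D_v$ in the parabolic-module fiber, points at the wrong phenomenon): along the family $d_+$ is constant while $d_-$ jumps by one in the limit, so choosing $d_+=-(g-1)$ makes every member of the sequence non-exotic ($c=0$, all weights $\ell_p/2$) while the limit has $d_+\neq d_-$, hence $c>0$, shifting the weight at every zero $p\neq p_0$ (Propositions \ref{prop_choiceofconstant_c} and \ref{prop_genus3_algebraic_Mochizuki}); this global weight shift is what makes the two filtered limits, hence the two limiting configurations, distinct, and it passes to $\BXi_q$ through Proposition \ref{prop_topology_KHmap}, which rests on Mochizuki's exponential convergence along rays (Theorem \ref{thm_moc_convergence_reduciblecase}) and which you likewise use only implicitly. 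Finally, your reduction to simple zeros does not cover the theorem as stated, which concerns every reducible $q\neq 0$: already for $g=2$, $\Lam=2p$ the spectral curve has a tacnode, where your binary dichotomy (locally free versus pullback from the normalization) misses the intermediate local modules $M_k$ of the $A_3$ classification; the paper's weight formalism handles arbitrary $\Lam$ uniformly.
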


We also note that there is recent work of Mochizuki and Szab\'{o}
\cite{mochizuki2023asymptotic} on the asymptotic behavior for families of
Higgs bundles in  the higher rank case.

This paper is organized as follows: 
in Section \ref{sec_background_Higgs_bundles}, we provide an overview of Higgs bundles and BNR correspondence. 
In Section \ref{sec_filtered_bundles_and_compactness}, we delve into the concepts of 
filtered bundles and their compactness properties. 
Section \ref{sec_the_algebraic_and_analytic_compactifications} defines 
the algebraic and analytic compactifications. 
Section \ref{sec_parabolic_modules} 
introduces parabolic modules and examines their connection to spectral curves. 
The main results for Hitchin fibers with irreducible singular spectral curves are established 
in Section \ref{sec_irreducible_singular_fiber}. In Section \ref{sec_reducible_singular_fiber}, 
the results for the reducible case are proven. 
Finally, in Section \ref{sec_compactified_Kobayashi_Hitchin_map}, 
we construct the compactified Kobayashi-Hitchin map and prove the main results.
The Appendix, based on the work of Greuel-Kn\"orrer, calculates some invariants of rank $1$ torsion free 
sheaves on the spectral curves we consider.

\textbf{Acknowledgements.} We extend our sincere gratitude to Takur\={o} Mochizuki for his valuable insights and stimulating
discussions during the BIRS conference in 2021. 
The authors also wish to express their gratitude to a great many people for their interest and helpful comments. 
Among them are Mark de Cataldo, Ron Donagi, Simon Donaldson, Laura
Fredrickson, Johannes Horn, Laura Schaposnik, Shizhang Li, Jie Liu, Tony
Pantev, Thomas Walpuski, Daxin Xu. S.H is supported by NSFC grant
No.12288201. R.W.'s research is supported by NSF grants DMS-1906403 and
DMS-2204346, and he  thanks the Max Planck Institute for Mathematics in
Bonn for its hospitality and financial support. 
The authors also gratefully acknowledge the support  of the MSRI
under NSF grant DMS-1928930 during the program ``Analytic  and Geometric Aspects of Gauge Theory'', Fall 2022.

\section{Background on Higgs bundles}
\label{sec_background_Higgs_bundles}
This section gives a very brief overview of 
%concepts in gauge theory and Higgs bundles that will be relevant to this paper, including
the Dolbeault and Hitchin moduli spaces, spectral curve descriptions,
and the nonabelian Hodge correspondence. For  more details on  these
topics, see  \cite{hitchin1987self,
hitchin1987stable, Simpson1992, Wentworth2016}.
\subsection{Higgs bundles}
As in the Introduction, throughout  this paper
 $\Sigma$ will denote a closed Riemann surface of genus $g\geq 2$ with
 structure sheaf $\MO=\MO_\Sigma$ and 
 canonical bundle $K=K_\Sigma$. Let $E\to \Sigma$ be a complex vector bundle. 
  A Higgs bundle consists of a pair $(\mathcal{E},\varphi)$, where
  $\mathcal{E}$ is a holomorphic bundle and $\varphi\in
  H^0(\mathrm{End}(\mathcal{E})\otimes K)$. 
 If $\rank(\mathcal{E})=1$, then a Higgs field is just an abelian
 differential $\omega$.  
  The pair $(\mathcal{E},\varphi)$ is called  
   an $\mathrm{SL}(2,\mathbb{C})$ Higgs bundle if $\mathrm{rank}(E)=2$, 
   $\det(\mathcal{E})$ has a fixed isomorphism with the  trivial bundle, and $\mathrm{Tr}(\varphi)=0$. In this
   paper we will focus mainly on $\mathrm{SL}(2,\mathbb{C})$ Higgs bundles,
   but the rank $1$ case will also be important.  

Let $(\mathcal{E},\varphi)$ be an $\mathrm{SL}(2,\mathbb{C})$ Higgs bundle.
A (proper) Higgs subbundle of $(\mathcal{E},\varphi)$ is a holomorphic line
bundle $\mathcal{L}\subset \mathcal{E}$ that  is
$\varphi$-invariant, i.e.\ $\varphi:\mathcal{L}\to \mathcal{L}\otimes K$.
In this case, the restriction
$\varphi_{\mathcal{L}}:=\varphi\bigr|_{\mathcal L}$,  makes 
$(\mathcal{L},\varphi_{\mathcal{L}})$  a  rank $1$ Higgs bundle. 
Moreover, $\varphi$ induces a Higgs bundle structure on the quotient
$\mathcal{E}/\mathcal{L}$.
We say $(\ME,\vp)$ is stable (resp.\ semistable) if for all Higgs
subbundles $\ML$, $\deg \ML<0$ (resp.\ $\deg \ML\leq 0$). We say  $(\ME,\vp)$ is polystable
if  $(\ME,\vp)\simeq(\ML,\om)\oplus (\ML^{-1},-\om)$, where $\ML$ is a
degree zero holomorphic line bundle and $\om \in H^0(K)$. 

If $(\ME,\vp)$ is strictly semistable, i.e.\ semistable but not stable,  
the Seshadri filtration \cite{seshadri1967space} gives a unique Higgs
subbundle $0\subset (\ML,\om)\subset (\ME,\vp)$ with
$\deg(\ML)=\frac{1}{2}\deg(\ME)=0$. Write $(\ML',\om'):=(\ME,\vp)/(\ML,\om)$, then we have $\om'=-\om$ and $\ML'=\ML^{-1}$.  
The associated graded bundle $\Gr(\ME,\vp)=(\ML,\om)\oplus (\ML^{-1},-\om)$ of this filtration 
 is a polystable $\SLC$ Higgs bundle. We say that  $(\ME,\vp)$ is
 S-equivalent to  $\Gr(\ME,\vp)$.

Holomorphic bundles $\mathcal{E}$ with underlying $C^\infty$ bundle $E$
are in 1-1 correspondence with $\bar\partial$-operators $\bar\partial_E:
\Omega^0(E)\to \Omega^{0,1}(E)$. We use the
notation  $\mathcal{E}:=(E,\bar{\partial}_E)$. 
Let $\MC$ denote the space of pairs $ (\bar\partial_E,\varphi)$,
$\bar\partial_E\varphi=0$.
Let $\MC^s$ and $\MC^{ss}$ denote the subspaces of $\MC$ where the Higgs
bundles are stable (resp.\ semistable). 
The complex gauge transformation group $\MGC:=\Aut(E)$ has a right action
on $\MC$ by defining for $g\in\MGC$,
$(\bar{\pa}_E,\vp)g:=(g^{-1}\circ\bar{\pa}\circ g,g^{-1}\circ\vp\circ g)$.

There is a quasiprojective scheme $\MMD$ whose closed points are in 1-1
correspondence with polystable Higgs bundles constructed via (finite dimensional) Geometric
Invariant Theory (see \cite{Nitsure:91,
simpson1994moduli}). In \cite{Fan:22} it was shown that the infinite
dimensional quotient   $\MC^{\mss}\sslash\MGC$, 
where the double slash indicates that S-equivalent orbits are identified,
admits the structure of a complex analytic space that is 
biholomorphic to the analytification $\MMD^{\rm an}$ of $\MMD$. 
Henceforth, we shall work in the complex analytic category, identify the
algebro-geometric and gauge theoretic moduli spaces,  and  simply denote
them both by  $\MMD$.
We note that 
 the set of stable Higgs bundles modulo gauge transformations,
 $\MMD^s:=\MC^{\mathrm{s}}\sslash\MGC$,  is  a geometric quotient and an open subset of $\MMD$.

 Finally, notice that the a pair $(\mathcal{E},\varphi)$ is stable (resp.\
 semistable) if and only if the same is true for
 $(\mathcal{E},\lambda\varphi)$, $\lambda\in \mathbb{C}^\ast$. Hence,
 $\MMD$ admits an action of $\mathbb{C}^\ast$ that preserves $\MMD^{s}$.
 Though $\MMD$ is only
 quasiprojective, the $\mathbb{C}^\ast$ action satisfies the
 Bia{\l}ynicki-Birula property:  

 \begin{theorem} [{\cite{hitchin1987self,Simpson1992}}]
     \label{thm:proper-action}
     For any $[(\mathcal{E},\varphi)]\in \MMD$, 
     $$
     \lim_{\lambda\to 0}
     \lambda\cdot[(\mathcal{E},\varphi)]:=\lim_{\lambda\to 0}[(\mathcal{E},\lambda\varphi)]
     $$
     exists in $\MMD$.
 \end{theorem}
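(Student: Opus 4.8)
The plan is to realize the $\mathbb{C}^\ast$-limit as the terminal point of a gradient flow of the Hitchin energy and to use the properness of that energy to prevent the orbit from escaping $\MMD$. First I would dispose of the polystable case directly: if $[(\mathcal{E},\varphi)]$ is polystable but not stable, then $(\mathcal{E},\varphi)\simeq(\ML,\om)\oplus(\ML^{-1},-\om)$, so $\lambda\cdot[(\mathcal{E},\varphi)] = [(\ML,\lambda\om)\oplus(\ML^{-1},-\lambda\om)]$, which manifestly converges to $[(\ML,0)\oplus(\ML^{-1},0)]\in\MMD$ as $\lambda\to 0$. Hence the content lies at a stable point $[(\mathcal{E},\varphi)]\in\MMD^s$, where the bundle $\mathcal{E}$ itself need not be semistable and the limit can genuinely jump.

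For a stable point I would invoke Hitchin's theorem producing the unique harmonic metric $h$ solving the self-duality equations, and set $E([(\mathcal{E},\varphi)]) := \|\varphi\|^2_{L^2,h}$. On the smooth locus $\MMD^s$ the circle action $\varphi\mapsto e^{i\theta}\varphi$ is Hamiltonian for one of the hyperk\"ahler forms, with moment map a fixed multiple of $E$, and the full $\mathbb{C}^\ast$-action is its complexification. Consequently the radial subgroup $\mathbb{R}^+\subset\mathbb{C}^\ast$, i.e.\ the path $\lambda=e^{t}$, traces out a gradient trajectory of $E$, with the direction $\lambda\to 0$ being the \emph{downward} direction. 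The two analytic facts that drive the argument are then: (i) $E$ is bounded below and \emph{proper} on $\MMD$ (Hitchin's a priori estimates, equivalently properness of the Hitchin map); and (ii) $E$ decreases monotonically along the ray as $\lambda\to 0$.

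Properness together with monotonicity confines the trajectory $\{[(\mathcal{E},\lambda\varphi)] : \lambda\in(0,1]\}$ to a compact subset of $\MMD$, so it has accumulation points. Since $E$ is real-analytic and Morse--Bott, with critical set equal to the $\mathbb{C}^\ast$-fixed locus, a {\L}ojasiewicz-type argument (or the Morse--Bott structure directly on the smooth locus) upgrades accumulation to convergence of the flow to a single critical point. That limit, being a critical point of $E$, is fixed by $\mathbb{C}^\ast$ --- a complex variation of Hodge structure --- and in particular lies in $\MMD$. By continuity of the action, the existence of the limit over the punctured $\lambda$-disk reduces to this real-ray statement, completing the proof.

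The main obstacle is precisely the properness in (i): one must rule out that, as the orbit is contracted, the associated harmonic metrics degenerate and the point runs off to the ideal boundary. This is the essential analytic input, and it is exactly what distinguishes the claim from the automatic Bia{\l}ynicki--Birula property on a projective variety, $\MMD$ being only quasiprojective. I would also note a parallel purely algebraic route that bypasses the flow: one constructs the limit directly as the associated graded $\Gr(\mathcal{E},\varphi)=\bigoplus_i E_i/E_{i-1}$ of a canonical $\varphi$-adapted filtration $0=E_0\subset E_1\subset\cdots\subset E_k=\mathcal{E}$ satisfying $\varphi(E_i)\subset E_{i+1}\otimes K$ (a Rees/deformation-to-the-Hodge-filtration construction), and verifies that this graded object is a semistable system of Hodge bundles; there the difficulty migrates to proving existence of the filtration and semistability of the limit, which ultimately rests on the same positivity phenomenon.
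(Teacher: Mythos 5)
The paper does not actually prove this theorem: it is stated as background, with the proof deferred to the cited references \cite{hitchin1987self,Simpson1992}. Your proposal is essentially a reconstruction of exactly those arguments --- the main route (properness of the moment map $\|\varphi\|^2_{L^2}$, with the radial $\mathbb{R}^+\subset\mathbb{C}^\ast$ orbits as downward gradient trajectories) is the Hitchin--Simpson analytic proof, and the alternative you sketch (an adapted filtration with $\varphi(E_i)\subset E_{i+1}\otimes K$ whose associated graded is a semistable system of Hodge bundles) is Simpson's algebraic one. The only step you gloss that deserves care is the upgrade from ``bounded trajectory with accumulation points'' to ``convergence to a single critical point'': the limit can be a \emph{singular} point of $\MMD$ --- for instance, if $\mathcal{E}$ is a nonsplit extension of $\ML^{-1}$ by $\ML$ with $\deg\ML=0$, $\ML^2\not\cong\MO$, the limit is the strictly polystable point $[(\ML\oplus\ML^{-1},0)]$ --- so a Morse--Bott argument ``on the smooth locus'' does not suffice there; one needs a {\L}ojasiewicz--Simon inequality in the gauge-theoretic setting, or one can bypass the flow entirely by a finite-dimensional algebraic argument: since $\mathcal{H}(\lambda\cdot\xi)=\lambda^2\mathcal{H}(\xi)$ and $\mathcal{H}$ is proper, the partial orbit $\{\lambda\cdot\xi : 0<|\lambda|\leq 1\}$ lies in the compact set $\mathcal{H}^{-1}(\{t^2\mathcal{H}(\xi):|t|\leq 1\})$, and applying the valuative criterion in a projective compactification of the quasiprojective variety $\MMD$ forces the algebraic orbit map $\mathbb{C}^\ast\to\MMD$ to extend over $\lambda=0$ with limit in $\MMD$. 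That last argument is closer to what Simpson actually does, and it is what makes the statement robust at the singular points where the flow-based argument is delicate.
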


\subsection{Spectral curves and the Hitchin fibration}
%In this subsection, we will examine the key concepts of spectral curves and the Hitchin fibration introduced by Hitchin in \cite{hitchin1987stable}. These concepts play a crucial role in comprehending the Dolbeault moduli space. We begin by revisiting the construction of the spectral curve $S$. 
The Hitchin map 
%on the Dolbeault moduli space $\MMD$ 
is defined as
\begin{equation*}
	\MH: \MMD \longrightarrow H^0(K^2)\ , \  [(\ME,\vp)] \mapsto \det(\vp)\ ,
\end{equation*}
where $H^0(K^2)=:\MB$ is known as the Hitchin base. 
Hitchin \cite{hitchin1987self,hitchin1987stable} showed that $\MH$ is a
proper map and a fibration by abelian varieties
over the open cone  $\MB^{\rm reg}\subset\MB$
consisting of nonzero  quadratic differentials with only simple zeros.
The discriminant
locus   $\MB^{\rm sing}:=\MB\setminus \MB^{\rm reg}$ consists of quadratic differentials that are
either identically zero or have at least one zero with multiplicity. 
For $q\in \mathcal{B}$, let  
$\mathcal{M}_q:=\mathcal{H}^{-1}(q)$. The ``most singular fiber''
$\mathcal{M}_0$ is called  the \emph{nilpotent cone}.

Consider the total space $\Tot(K)$ of $K$,  
along with its
projection  $\pi: \Tot(K) \to \Sigma$. The pullback bundle $\pi^* K$ has 
a tautological section, which we denote by $\lambda \in
H^0(\Tot(K),\pi^\ast K)$. Given any $q\neq 0 \in H^0(K^2)$, 
the \emph{spectral curve} $S_q$ associated with $q$ is the 
zero scheme of the section $\lambda^2 - \pi^\ast q \in H^0(\Tot(K),\pi^* K)$. 
This is a reduced, but possibly reducible, projective algebraic curve.
The restriction of $\pi$ to $S_q$, also denoted by $\pi: S_q \to \Sigma$, 
is a double covering branched along the zeros of $q$.

The spectral curve $S_q$ is smooth if and only if $q$ has only simple
zeros. It is reducible if and only if $q = -\omega \otimes \omega$ for some $\omega \in H^0(K)$.
We shall refer to such quadratic differentials as \emph{reducible}, and
\emph{irreducible} otherwise.
%As a general quadratic differential has only simple zeros, the spectral curve is generically smooth. 
%Therefore, we refer to the Hitchin fiber with a quadratic differential $q$ as a smooth fiber if $q$ has only simple zeros.
There is a noteworthy observation regarding irreducible spectral curves. 
\begin{proposition}[cf.\ {\cite{hitchin1987stable}}]
	Let $(\ME,\vp)$ be a Higgs bundle with $q = \det(\vp)$, and suppose $q$ is irreducible. Then
    $(\ME,\vp)$ has no proper invariant subbundles. In particular, $(\ME,\vp)$ is stable.
\end{proposition}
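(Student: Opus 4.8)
The plan is to argue by contradiction, exploiting the fact that a $\vp$-invariant subbundle forces $\det(\vp)$ to be (the negative of) a perfect square of an abelian differential. Suppose $(\ME,\vp)$ admits a proper invariant subbundle; in rank $2$ this is necessarily a holomorphic line bundle $\ML\subset\ME$ with $\vp(\ML)\subset\ML\ot K$. Since $\End(\ML)\cong\MO$, the restriction $\vp|_{\ML}$ is a global section of $\ML^{-1}\ot\ML\ot K=K$; write $\vp|_{\ML}=\om\in H^0(K)$ for this abelian differential. Because $\det\ME\cong\MO$, the quotient satisfies $\ME/\ML\cong\ML^{-1}$, and $\vp$ descends to a Higgs field on it given by multiplication by some $\om'\in H^0(K)$.

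Next I would relate $\om$ and $\om'$ to the invariant data of $\vp$. With respect to a local splitting of the short exact sequence $0\to\ML\to\ME\to\ML^{-1}\to0$, the Higgs field $\vp$ is upper triangular with diagonal entries $\om$ and $\om'$. The trace-free condition $\Tr(\vp)=0$ gives $\om+\om'=0$, hence $\om'=-\om$, while the determinant of an upper-triangular matrix is the product of the diagonal entries (the off-diagonal term does not contribute), so
\[
q=\det(\vp)=\om\,\om'=-\om\ot\om\ .
\]
This exhibits $q$ in the form $-\om\ot\om$, contradicting the hypothesis that $q$ is irreducible, which by definition means precisely that $q$ is \emph{not} of this form. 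Therefore no proper $\vp$-invariant subbundle can exist.

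For the final assertion, I would note that stability requires every \emph{proper Higgs subbundle} to have negative degree; since the argument above shows there are no proper Higgs subbundles whatsoever, the stability condition is satisfied vacuously, and $(\ME,\vp)$ is stable.

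There is no serious obstacle here. The only points demanding a little care are the identification of $\vp|_{\ML}$ and of the induced quotient field with genuine holomorphic sections of $K$ (immediate from the holomorphicity of $\vp$ and of the sub- and quotient bundles, together with $\End(\ML)\cong\MO$), and the elementary bookkeeping that the off-diagonal entry of the triangular form of $\vp$ plays no role in $\det(\vp)$.
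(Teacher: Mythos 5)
Your proof is correct and follows essentially the same route as the paper: an invariant line subbundle yields an abelian differential $\om$ with $q=\det(\vp)=-\om\ot\om$, contradicting irreducibility, and stability then holds vacuously. The only cosmetic difference is that you compute $\det(\vp)$ from the upper-triangular form and the trace-free condition, whereas the paper packages the same computation as $\det\vp=-\tfrac{1}{2}\Tr(\vp^2)=-(\vp_{\ML})^2$.
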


\begin{proof}
    Suppose $\mathcal{L}\subset\mathcal{E}$ is $\varphi$-invariant, and
    let $\varphi_{\mathcal L}$ be the restriction. 
    Then 
    $$
    \det\varphi=-\frac{1}{2}\Tr(\varphi^2)=-(\varphi_{\mathcal L})^2\ ,
    $$
    contradicting the assumption.
    	%Suppose $(\ME,\vp)$ is not stable, then there exists a $L\subset \ME$ with $\vp(L)\subset L\otimes K$. If we write $\om$ be the eigenvalue of $\vp$ on $L$, then $-\om$ is also an eigenvalue of $\vp$. This implies that $q = -\om\otimes \om$, which contradicts the assumption that $q$ is irreducible.
\end{proof}

Let us emphasize that being reducible is not the same as having only even zeros. 
To see this, 
 suppose that $\Div(q)=2\Lam'$. Then  $K\simeq\MO(\Lam')\otimes \MI$, 
where $\MI$ is a 2-torsion point in the Jacobian. 
The spectral curve $S_q$ is reducible if and only if $\MI$ is trivial.

\subsection{Rank 1 torsion free  sheaves and the BNR correspondence}
In this subsection, we provide some 
background on rank 1 torsion free sheaf theory over  spectral curves in the
context of the Hitchin and BNR correspondence, as developed in \cite{hitchin1987stable,bnr1989spectral}.

Let $S$ be a reduced and irreducible complex projective curve and $\MO_S$  its structure sheaf. 
The moduli space of invertible sheaves on $S$ is denoted by $\Pic(S)$, and
$\Pic^d(S)\subset \Pic(S)$ is the degree $d$ component. If $\mathcal F$ is a
coherent analytic sheaf on $S$, we can define its cohomology groups
$H^i(X,\mathcal{F})$. Since $\dim S=1$,  $H^i(X,\mathcal{F})=0$ for $i\geq 2$. The Euler characteristic 
is defined as $\chi(\mathcal{F})=\dim H^0(X,\mathcal{F})-\dim
H^1(X,\mathcal{F})$. The degree of a torsion free sheaf $\MF$ is given by
$\deg(\mathcal{F})=\chi(\mathcal{F})-\rank(\mathcal{F})\chi(\MO_S)$.
If $\mathcal{F}$ is locally free, then
$\deg(\mathcal{F})$ coincides with the degree of the invertible sheaf $\det(\mathcal{F})$.

Let $\BPic^d(S)$ be the moduli space of degree $d$ rank 1 torsion free
sheaves on $S$, and $\BPic(S)=\prod_{d\in \mathbb{Z}}\BPic^d(S)$ \cite{cyril1979compactification}. 
Then $\BPic^d(S)$ is an irreducible projective scheme containing
$\Pic^d(S)$ as an open subscheme. When $S$
is smooth, we have $\BPic^d(S)=\Pic^d(S)$. The relationship to Higgs
bundles is given by the following.

\begin{theorem}[{\cite{hitchin1987stable,bnr1989spectral}}]
	\label{thm_BNRcorrespondence}
	Let $q\in H^0(K^2)$ be an irreducible quadratic differential with
    spectral curve $S_q$. There is a bijective correspondence between
    points in $\BPic(S_q)$ and
    isomorphism classes of rank 2 Higgs pairs $(\ME,\vp)$ with
    $\Tr(\vp)=0$ and 
    $\det(\vp)=q$. Explicitly:
    if $\ML\in \BPic(S_q)$, then $\ME:=\pi_{\st}(\ML)$ is a rank 2 vector
    bundle, and the homomorphism $\pi_{\st}\ML\to \pi_{\st}\ML\otimes K\cong
    \pi_{\st}(\ML\otimes \pi^{\st}K)$ given by multiplication by the
    canonical section $\lam$ defines the Higgs field $\vp$. 
\end{theorem}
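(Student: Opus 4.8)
The plan is to realize both sides of the correspondence as restrictions of a single equivalence of categories coming from the geometry of the total space $\Tot(K)$. Recall that $\Tot(K)=\mathrm{Spec}_\Sigma(\mathrm{Sym}^\bullet K^{-1})$, so that specifying a coherent sheaf on $\Tot(K)$ whose support is finite over $\Sigma$ is the same as specifying a coherent $\MO_\Sigma$-module $\MF$ together with an action of the tautological section $\lam$. Such an action is precisely a homomorphism $\phi:\MF\to\MF\otimes K$, i.e.\ a Higgs field, and since $\pi:\Tot(K)\to\Sigma$ is affine this correspondence is an equivalence implemented by $\pi_{\st}$. The first step is to record this dictionary and to observe that the spectral curve $S_q$ is the closed subscheme $\mathrm{Spec}_\Sigma\bigl(\mathrm{Sym}^\bullet K^{-1}/(\lam^2-\pi^{\st}q)\bigr)$. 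Consequently a sheaf on $\Tot(K)$ is pushed forward from $S_q$ exactly when the associated Higgs field $\phi$ satisfies the relation obtained by substituting $\phi$ for $\lam$ in the defining section; for a rank $2$ bundle this says that the defining equation of $S_q$ is, up to the sign conventions of the preceding subsection, the characteristic polynomial of $\vp$, which is equivalent to the pair of conditions $\Tr\vp=0$ and $\det\vp=q$.

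With this dictionary in place I would verify the two directions. Given $\ML\in\BPic(S_q)$, set $\ME:=\pi_{\st}\ML$. Because $\pi$ is finite and $\ML$ is torsion free, $\ME$ is a torsion-free $\MO_\Sigma$-module; as $\Sigma$ is a smooth curve this forces $\ME$ to be locally free, and comparing generic ranks across the degree-$2$ cover gives $\rank\ME=2$. The Higgs field $\vp$ is multiplication by $\lam$, transported through the projection-formula isomorphism $\pi_{\st}(\ML\otimes\pi^{\st}K)\cong\pi_{\st}\ML\otimes K$; by construction it satisfies the defining relation of $S_q$, so $\Tr\vp=0$ and $\det\vp=q$. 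Conversely, given $(\ME,\vp)$ with $\Tr\vp=0$ and $\det\vp=q$, the Cayley--Hamilton identity shows that $\vp$ satisfies the equation of $S_q$, so the $\lam$-action promotes $\ME$ to a coherent sheaf $\ML$ on $S_q$ with $\pi_{\st}\ML=\ME$. One checks $\ML$ is torsion free: a nonzero torsion subsheaf $T\subset\ML$ would be supported at finitely many points, and since $\pi$ is finite, $\pi_{\st}$ is exact and faithful, so $\pi_{\st}T\subset\ME$ would be a nonzero torsion subsheaf, contradicting local freeness of $\ME$. That $\ML$ has rank $1$ follows from irreducibility of $S_q$: away from the branch locus $\pi$ is \'etale of degree $2$ and $\vp$ has distinct eigenvalues, so $\ME$ splits into two eigenline bundles carried by the two sheets and $\ML$ has generic rank $1$, while irreducibility makes this rank a well-defined global invariant. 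Since both assignments are induced by the single equivalence $\pi_{\st}$, they are mutually inverse, yielding the asserted bijection on isomorphism classes, compatible with the decomposition $\BPic(S_q)=\prod_d\BPic^d(S_q)$.

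The main obstacle is the translation between the torsion-free and locally-free conditions across $\pi$, together with the attendant rank bookkeeping. Concretely, the crux is to check that $\pi_{\st}$ of a rank $1$ torsion-free sheaf is genuinely a rank $2$ vector bundle on $\Sigma$, and---more delicately---that the converse construction lands in rank $1$ torsion-free sheaves rather than in higher-rank or embedded-torsion objects. Both points hinge on $\pi$ being finite (so $\pi_{\st}$ is exact and faithful), on $\Sigma$ being smooth of dimension one (so that torsion free equals locally free downstairs), and on the irreducibility hypothesis on $q$, which guarantees that $S_q$ is integral and hence that generic rank is a meaningful global invariant. I expect the bijection itself to be formal once these structural facts are in hand; the genuinely curve-specific inputs are the identification of $S_q$ with the relative spectrum of $\mathrm{Sym}^\bullet K^{-1}/(\lam^2-\pi^{\st}q)$ and the reducedness and irreducibility of $S_q$ under the stated hypotheses.
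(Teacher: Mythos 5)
The paper contains no proof of Theorem \ref{thm_BNRcorrespondence}: it is quoted as background from Hitchin and Beauville--Narasimhan--Ramanan, so there is no internal argument to measure your proposal against. The relevant comparison is with the classical BNR proof, and your proposal essentially reproduces it: the dictionary coming from the affine morphism $\pi$ (coherent sheaves on $\Tot(K)$ finite over $\Sigma$ $\leftrightarrow$ coherent $\MO_\Sigma$-modules with a $K$-twisted endomorphism), Cayley--Hamilton to confine the Higgs pairs to $\MO_{S_q}$-modules, the transfer of torsion-freeness and of generic rank across the finite degree-$2$ cover, and mutual inverseness because both directions are restrictions of the single equivalence implemented by $\pi_{\st}$. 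In outline this is correct, and it is the standard route.

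Two steps deserve sharpening. First, your dictionary paragraph asserts that, for a rank $2$ bundle, being annihilated by the defining section of $S_q$ is \emph{equivalent} to the pair of conditions $\Tr\vp=0$, $\det\vp=q$ (up to sign). Cayley--Hamilton gives only one implication; the converse is false in general: if $q=\om\otimes\om$ then $\vp=\om\cdot\mathrm{id}$ is killed by $\lam^2-\pi^{\st}q$ but has trace $2\om$. The implication you need in the forward direction of the theorem holds precisely because $q$ is irreducible: then $\lam^2-q$ is irreducible over the function field of $\Sigma$, so it is the minimal, hence the characteristic, polynomial of multiplication by $\lam$; equivalently, $\Tr\vp$ is the field-theoretic trace of $\lam$ in the degree-two extension of function fields, which vanishes since the minimal polynomial has no linear term. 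You invoke irreducibility only for the generic-rank bookkeeping, but it is equally indispensable at this step, and this is where the hypothesis of the theorem enters most critically. Second, with the paper's literal conventions ($S_q=\{\lam^2=\pi^{\st}q\}$ and $q=\det\vp$), multiplication by $\lam$ produces $\vp^2=q\cdot\mathrm{id}$ and hence $\det\vp=-q$; the statement as printed is recovered only after the harmless fiberwise identification $\lam\mapsto i\lam$ of $S_q$ with $S_{-q}$. You flag this as a convention issue, which is the right call, but it is worth making the identification explicit so that the bijection lands on $\det\vp=q$ exactly as asserted.
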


This correspondence gives the very useful exact sequence
\begin{equation} \label{eq_BNR}
	\begin{split}
		0\to \ML(-\Delta)\to
        \pi^{\st}\ME\xrightarrow{\pi^{\st}\vp-\lam}\pi^{\st}\ME\otimes
        \pi^{\st}K\to \ML\otimes \pi^{\st}K\to 0\ ,
	\end{split}
\end{equation}
where $\MO_S(\Delta):=K_S\otimes \pi^{\st}K_{\Sigma}^{-1}$ is the ramification divisor.
This will be used below in Section
\ref{sec_irreducible_singular_fiber}.

%We define $\MM_q:=\MH^{-1}(q)$ and when $q$ irreducible, we will describe $\MM_q$ using the BNR correspondence. 
Let $q$ be a quadratic differential with only simple zeros, and define a
divisor on $S$ by $\Lam=\Div(\lambda)$.
Then $\Lam$ is the ramification divisor of the map $\pi:S\to \Sigma$. 
By the Riemann-Hurwitz formula, the genus of $S$ is $g(S)=4g-3$. 
Furthermore, for any $\ML\in \Pic(S)$, Riemann-Roch gives $\deg(\pi_{\st}\ML)=\deg(\ML)-(2g-2).$
The $\SLC$ Higgs bundles are characterized by
\begin{equation} \label{eq_definition_T}
	\MT:=\{\ML\in\Pic^{2g-2}(S)\mid\det(\pi_{\st}\ML)=\MO_\Sigma\}.
\end{equation}
By the Hitchin-BNR correspondence (Theorem \ref{thm_BNRcorrespondence}), the map $\chi_{BNR}:\MT\to \MM_q$ is a bijection.

The branched  double cover $\pi:S\to \Sigma$
is given by an involution $\sigma:S\to S$.
We have the norm map
$\Nm_{S/\Sigma}:\Jac(S)\to \Jac(\Sigma)$, where $\Nm_{S/\Sigma}(\MO_{S}(D)):=\MO_{\Sigma}(\pi(D))$. The Prym variety is defined as 
$$\Prym(S/\Sigma):=\ker(\MNM_{S/\Sigma})=\{\ML\in\Pic(S)\mid\ML\otimes
\sigma^{\st}\ML=\MO_S\}\ .$$ 
%Unless there is a chance of confusion, we shall normal write: $\Prym$ for
%$\Prym(S/\Sigma)$ and $\Nm$ for $\Nm_{S/\Sigma}$. 
Also, we have $\det(\pi_{\st}\ML)\cong \MNM_{S/\Sigma}(\ML)\otimes K^{-1}$.
Thus, $\MT$ can be expressed as
$$\MT=\{\ML\in\Pic^{2g-2}(S)\mid\MNM_{S/\Sigma}(\ML)\cong K\}\ .$$
Hence, $\MT$ is a torsor over $\Prym(S,\Sigma)$.
Explicitly, by choosing
$\ML_0\in \MT$,  we obtain an isomorphism $\MT\isorightarrow
\Prym(S,\Sigma)$ given by $\ML\to \ML\otimes\ML_0^{-1}$. 

%Recall that we have defined $\MM_q:=\MH^{-1}(q)$. 
To summarize, we have the following:
\begin{proposition}
	Let $q$ be a quadratic differential with simple zeros. Then $\MM_q\cong
    \MT\cong \Prym(S,\Sigma)$.
\end{proposition}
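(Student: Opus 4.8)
The plan is to establish the two isomorphisms separately and then compose them. The first isomorphism $\MM_q\cong\MT$ requires essentially no new work. Since $q$ has only simple zeros, the spectral curve $S=S_q$ is smooth, so every rank $1$ torsion-free sheaf is invertible and $\BPic(S)=\Pic(S)$. The Hitchin-BNR correspondence (Theorem \ref{thm_BNRcorrespondence}) then gives a bijection between $\Pic(S)$ and isomorphism classes of rank $2$ Higgs pairs with $\det\vp=q$ and $\Tr\vp=0$; imposing the $\SLC$ normalization $\det\ME\cong\MO_\Sigma$ cuts this down to exactly the subset $\MT\subset\Pic^{2g-2}(S)$ defined in \eqref{eq_definition_T}. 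The degree $2g-2$ is forced by the Riemann-Roch computation $\deg(\pi_{\st}\ML)=\deg\ML-(2g-2)$, since $\det(\pi_{\st}\ML)=\MO_\Sigma$ requires $\deg(\pi_{\st}\ML)=0$. Thus $\chi_{BNR}\colon\MT\to\MM_q$ is the required bijection.

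For the second isomorphism, I would first record the identity $\det(\pi_{\st}\ML)\cong\Nm_{S/\Sigma}(\ML)\otimes K^{-1}$, which rewrites the defining condition $\det(\pi_{\st}\ML)\cong\MO_\Sigma$ as $\Nm_{S/\Sigma}(\ML)\cong K$. Since a point of $S$ maps to a point of $\Sigma$, the norm preserves degrees, so the degree constraint $\ML\in\Pic^{2g-2}(S)$ is automatic, and
\[
\MT=\{\ML\in\Pic(S)\mid\Nm_{S/\Sigma}(\ML)\cong K\}=\Nm_{S/\Sigma}^{-1}(K)
\]
is a single fiber of the norm homomorphism. Because $\Nm_{S/\Sigma}$ is a homomorphism of abelian groups with kernel $\Prym(S,\Sigma)$, any nonempty fiber is a coset, hence a torsor over $\Prym(S,\Sigma)$: for $\ML_0\in\MT$ and $\ML'\in\Prym(S,\Sigma)$ one has $\Nm_{S/\Sigma}(\ML_0\otimes\ML')=K$, while for $\ML\in\MT$ one has $\Nm_{S/\Sigma}(\ML\otimes\ML_0^{-1})=\MO_\Sigma$, so $\ML\otimes\ML_0^{-1}\in\Prym(S,\Sigma)$. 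Fixing a base point $\ML_0$ then produces the isomorphism $\MT\isorightarrow\Prym(S,\Sigma)$, $\ML\mapsto\ML\otimes\ML_0^{-1}$.

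The two steps needing genuine (if standard) justification are the determinant-of-pushforward formula and the nonemptiness of $\MT$. For the former I would invoke relative duality for the finite flat double cover $\pi$, or carry out a direct local computation with transition functions, which on a smooth curve is elementary. The nonemptiness of $\MT$ --- equivalently, that $K$ lies in the image of $\Nm_{S/\Sigma}$ --- is the step I expect to be the main (though minor) obstacle: it follows from surjectivity of $\Nm_{S/\Sigma}\colon\Jac(S)\to\Jac(\Sigma)$ for the connected double cover, or more geometrically from the fact, due to Hitchin, that the fiber $\MM_q$ is a nonempty abelian variety over $\MB^{\reg}$. Once a base point is fixed the torsor structure makes the remaining identification automatic, and composing with $\chi_{BNR}$ completes the chain $\MM_q\cong\MT\cong\Prym(S,\Sigma)$.
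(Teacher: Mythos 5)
Your proof is correct and follows essentially the same route as the paper: the BNR correspondence identifies $\MM_q$ with $\MT$, and the identity $\det(\pi_{\st}\ML)\cong\Nm_{S/\Sigma}(\ML)\otimes K^{-1}$ exhibits $\MT$ as a fiber of the norm map, hence a $\Prym(S,\Sigma)$-torsor trivialized by a choice of base point. The only difference is that you explicitly flag the nonemptiness of $\MT$ and the pushforward-determinant formula as needing justification, which the paper simply asserts.
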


If $q\neq 0$ is irreducible but nongeneric, the spectral curve $S$ is
singular and irreducible. We may still define the set $\BMT\subset \BPic^{2g-2}(S)$ as follows:
\begin{equation*}
	\BMT:=\{\ML\in\BPic^{2g-2}(S)\mid\det(\pi_{\st}\ML)\cong \MO_\Sigma\}\
    ,
\end{equation*}
%where $\BPic^{2g-2}(S)$ is the compactified Picard variety.
We also set $\MT:=\BMT\cap \Pic^{2g-2}$.% where $\Pic^{2g-2}(S)$ is the Picard variety.
Then $\BMT$ is the natural compactification of $\MT$ induced by the inclusion $\Pic^{2g-2}(S)\subset \BPic(S)$. The BNR correspondence, as stated in Theorem \ref{thm_BNRcorrespondence}, implies that 
$\chi_{\BNR}:\BMT\to \MM_q$ is an isomorphism.

%    Moreover, we write $\chi_{\BNR}:\BPic(S_q)\to \MMD$ to be the map constructed above.

\subsection{The Hitchin moduli space and the nonabelian Hodge
correspondence} \label{sec:nah}
We now recall the well-known nonabelian Hodge correspondence (NAH), 
which relates the space of flat $\mathrm{SL}(2,\mathbb{C})$ connections, Higgs bundles, and solutions to the Hitchin equations. This result was developed in the work of Hitchin \cite{hitchin1987self}, Simpson \cite{Simpson1988Construction}, Corlette \cite{corlette1988flat}, and Donaldson \cite{donaldson1987twisted}.

As above, let $E$ be a trivial, smooth,  rank 2 vector bundle over a Riemann surface
$\Sigma$, and let
$H_0$ be a fixed Hermitian metric on $E$. 
We denote by $\mathfrak{sl}(E)$ (resp.\ $\mathfrak{su}(E)$) the bundle of
traceless (resp.\ traceless skew-hermitian) endomorphisms of $E$.
Let $A$ be a unitary  (with respect to $H_0$)
connection on $E$ that induces the trivial connection on $\det E$,
and let $\phi \in \Omega^1(i\mathfrak{su}(E))$. We will sometimes also
refer to $\phi$ as a Higgs field. 
The Hitchin equations  for the pair $(A,\phi)$ are given by:
\begin{equation} \label{eq_Hitchin_real}
	\begin{split}
        F_{A} + \phi\wedge\phi= 0\ ,\ d_A \phi = d_A^\ast\phi= 0\ .
	\end{split}
\end{equation}
If we split the Higgs field into type:
$\phi = \vp + \vp^{\dagger}$, with $\vp \in \Omega^{1,0}(\mathfrak{sl}(E))$, then
\eqref{eq_Hitchin_real} is equivalent to:
\begin{equation} \label{eq_Hitchin}
	\begin{split}
		F_{A} + [\vp,\vp^{\dagger}]= 0\ ,\ \bar{\partial}_A \vp = 0\ .
	\end{split}
\end{equation}
Notice that $(\bar\partial_E,\varphi)$ then defines an $\SL(2,\mathbb{C})$ Higgs bundle.
The Hitchin moduli space, denoted by $\MMH$, is the moduli space of solutions to the Hitchin equation, given by
$$\MMH := \{(A,\phi) \mid (A,\phi)\;\mathrm{satisfies}\;\eqref{eq_Hitchin_real}\}/\MG,$$
where $\MG$ is the gauge group of unitary automorphisms of $E$. 
Recall that a flat connection $\MD$ is called completely reducible if and
only if it is a direct sum 
of irreducible flat connections. The NAH can be summarized as follows:

\begin{theorem}[{\cite{hitchin1987self,simpson1990harmonic,corlette1988flat,donaldson1987twisted}}]
A Higgs bundle $(\mathcal{E},\varphi)$ is polystable if and only if there
    exists a Hermitian metric $H$ such that the corresponding Chern
    connection $A$ and Higgs field $\phi=\varphi+\varphi^{\dagger}$ solve the Hitchin
    equations \eqref{eq_Hitchin_real}. Moreover, the connection
    $\mathcal{D}$ defined by $\mathcal{D}=\nabla_A+\phi$ is a completely
    reducible flat connection, and it is irreducible if and only if $(\mathcal{E},\varphi)$ is stable.

Conversely, a flat connection $\mathcal{D}$ is completely reducible if and
    only if there exists a Hermitian metric $H$ on $E$ such that when we
    express $\mathcal{D}=\nabla_A+\varphi+\varphi^{\dagger}$, we have
    $\bar{\partial}_{\mathcal{E}}\varphi=0$. Moreover, the corresponding
    Higgs bundle
    $(\mathcal{E},\varphi)$ is polystable, and it is stable if and only if $\mathcal{D}$ is irreducible.
 \end{theorem}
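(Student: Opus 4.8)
The plan is to prove the two halves of the correspondence separately and to treat the passage between solutions of the self-duality equations and flat connections as a formal computation. First I would establish the Hitchin--Kobayashi direction: given a polystable $(\mathcal{E},\varphi)$, one seeks a Hermitian metric $H$ (inducing a fixed metric on $\det\mathcal{E}$) whose Chern connection $A$ and Higgs field $\phi=\varphi+\varphi^{\dagger}$ solve \eqref{eq_Hitchin_real}. Following Donaldson and Simpson, I would set this up as the minimization of the Donaldson energy over the space of such metrics, or equivalently run the associated nonlinear heat flow, and show that a minimizer exists and solves \eqref{eq_Hitchin}. The stability hypothesis enters precisely here: it controls the asymptotic slope of the functional in terms of the degrees of $\varphi$-invariant subbundles, and so rules out escape of the minimizing sequence to infinity. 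In the polystable but non-stable case one applies the stable case to each summand and forms the $H$-orthogonal direct sum, which produces the block-diagonal harmonic metric.

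Next, given a solution of \eqref{eq_Hitchin_real}, I would verify flatness of $\mathcal{D}=\nabla_A+\phi$ by splitting $\mathcal{D}=\mathcal{D}'+\mathcal{D}''$ with $\mathcal{D}''=\bar\partial_A+\varphi$ and $\mathcal{D}'=\partial_A+\varphi^{\dagger}$. A direct computation gives $(\mathcal{D}'')^{2}=\bar\partial_A\varphi$ --- the terms $\bar\partial_A^{2}$ and $\varphi\wedge\varphi$ both vanish, the latter because $\varphi$ is a $(1,0)$-form on a curve --- while the cross term $\mathcal{D}'\mathcal{D}''+\mathcal{D}''\mathcal{D}'$ equals $F_A+[\varphi,\varphi^{\dagger}]$. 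Thus \eqref{eq_Hitchin} is exactly the condition $\mathcal{D}^{2}=0$, and the holomorphic structure recovered from $\mathcal{D}''$ automatically satisfies $\bar\partial_{\mathcal{E}}\varphi=0$. To match irreducibility with stability I would use that the harmonic metric identifies $\mathcal{D}$-parallel subbundles with $H$-orthogonally split $\varphi$-invariant holomorphic subbundles of degree zero: a proper parallel subbundle yields a degree-zero Higgs subbundle, so $(\mathcal{E},\varphi)$ is non-stable, and conversely a destabilizing subbundle of a polystable bundle is a degree-zero summand, hence parallel. This gives complete reducibility in general and $\mathcal{D}$ irreducible $\iff$ $(\mathcal{E},\varphi)$ stable.

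For the converse, starting from a completely reducible flat $\mathcal{D}$, I would invoke Corlette's theorem on the existence of an equivariant harmonic map to the symmetric space $\mathrm{SL}(2,\mathbb{C})/\mathrm{SU}(2)$, equivalently a harmonic metric $H$ on $E$. Decomposing $\mathcal{D}=\nabla_A+\varphi+\varphi^{\dagger}$ with respect to $H$, the harmonic map equation is equivalent to $\bar\partial_A\varphi=0$, producing a Higgs bundle; flatness of $\mathcal{D}$ then returns the curvature equation of \eqref{eq_Hitchin_real}, and the subbundle dictionary of the previous paragraph yields polystability together with the stable $\iff$ irreducible refinement.

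The genuinely hard content is the two existence statements --- convergence of the flow (or attainment of the energy infimum) on the Higgs side, and existence of the equivariant harmonic map on the flat side. In both, the reductivity, respectively stability, hypothesis is exactly what prevents the relevant energy from running off to the boundary of the space of metrics; by contrast the flatness identity and the correspondence of subbundles are routine once the harmonic metric is in hand.
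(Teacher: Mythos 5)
The paper does not prove this theorem at all: it is stated as background, with the proof deferred to the cited references (Hitchin, Simpson, Corlette, Donaldson). Your outline is essentially a faithful summary of how those references establish it --- Donaldson-functional/heat-flow existence of the harmonic metric on the polystable side, with stability providing the properness that prevents minimizing sequences from escaping, and the polystable non-stable case handled summand by summand; Corlette's equivariant harmonic map theorem on the flat side; and the formal curvature computation plus the dictionary between parallel subbundles and degree-zero $\varphi$-invariant subbundles tying the two together. So there is nothing internal to the paper to compare against, and as a sketch of the standard argument your proposal is sound.

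One imprecision is worth fixing. It is not true that ``\eqref{eq_Hitchin} is exactly the condition $\mathcal{D}^{2}=0$.'' The full curvature is $\mathcal{D}^{2}=(\mathcal{D}')^{2}+(\mathcal{D}'')^{2}+(\mathcal{D}'\mathcal{D}''+\mathcal{D}''\mathcal{D}') = \partial_A\varphi^{\dagger}+\bar\partial_A\varphi+F_A+[\varphi,\varphi^{\dagger}]$; you accounted for $(\mathcal{D}'')^{2}$ and the cross term but omitted $(\mathcal{D}')^{2}=\partial_A\varphi^{\dagger}$. Splitting into skew-hermitian and hermitian parts, flatness is equivalent to $F_A+[\varphi,\varphi^{\dagger}]=0$ together with $\bar\partial_A\varphi+\partial_A\varphi^{\dagger}=0$, i.e. $d_A\phi=0$; since $\partial_A\varphi^{\dagger}=(\bar\partial_A\varphi)^{\dagger}$, this does \emph{not} force $\bar\partial_A\varphi=0$. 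The missing equation is the harmonicity condition $d_A^{\ast}\phi=0$ appearing in \eqref{eq_Hitchin_real}, and it is exactly this extra condition --- equivalently, the metric being harmonic in Corlette's sense --- that lets one pass from a flat connection back to a Higgs bundle; an arbitrary Hermitian metric on a flat bundle will not do. Thus Hitchin's equations imply flatness, but not conversely. Your argument is not actually damaged by this slip, because in the forward direction you only need the implication \eqref{eq_Hitchin} $\Rightarrow$ $\mathcal{D}^{2}=0$, and in the converse direction you correctly rely on Corlette's theorem and the harmonic map equation rather than on flatness alone; but the claimed equivalence should be stated as a one-way implication.
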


The nonabelian Hodge correspondence defines the following
Kobayashi-Hitchin homeomorphism
\begin{equation*}
	\begin{split}
		\Xi:\MMD\to \MMH\ ,
	\end{split}
\end{equation*}
which is a diffeomorphism to irreducible solutions of
\eqref{eq_Hitchin_real}  when restricted to the stable locus. 

Finally, we note that there is an
action of $S^1$ on $\MMH$ defined by $(A,\phi) \to (A,e^{i\theta}\cdot
\phi)$, where
$e^{i\theta}\cdot\phi=e^{i\theta}\varphi+e^{-i\theta}\varphi^\dagger$.
With respect to this and the $S^1\subset\mathbb{C}^\ast$ action on $\MMD$, 
the map $\Xi$ is $S^1$-equivariant.

\section{Filtered bundles and compactness} \label{sec_filtered_bundles_and_compactness}
Filtered (or parabolic) bundles 
are described, for example,  in \cite{simpson1990harmonic}. 
They play a key role in the analytic compactification.
This section provides a brief overview of filtered line bundles and
demonstrates a compactness result.

\subsection{Filtered line bundles and  nonabelian Hodge}
Let $Z$ be a finite collection of distinct points on a closed Riemann surface $\Sigma$, and
let $\Sigma' = \Sigma\setminus Z$. Viewing $\Sigma$ as a projective
algebraic curve,  an algebraic line bundle $L$ over $\Sigma'$ is a
line bundle defined by regular transition functions on Zariski
open sets over $\Sigma'$. The sheaf of sections of $L$ can be extended
in infinitely many different ways over $Z$ to obtain 
coherent (invertible) sheaves on $\Sigma$. 
The sections of $L$ are then realized as 
  meromorphic sections of an extension, regular on $\Sigma'$.

A \emph{filtered line bundle} $\MF_{\st}(L)$ is an algebraic line bundle
$L$ with a collection of coherent extensions $L_{\al}$ across the punctures $
Z$ such that $L_{\al} \subset L_{\be}$ for $\al \geq \be$, for a fixed
sufficiently small $\ep$,
$L_{\al-\ep}=L_{\al}$, and $L_{\al}  = L_{\al+1} \otimes
\MO_\Sigma(Z)$.
Let $\Gr_{\al}= L_{\al+\ep}/L_{\al}$ denote the quotient (torsion)
sheaf. A value $\alpha$ where  $\Gr_{\al} \neq 0$ is called a jump.
Since we are considering  line bundles,  for each $p$  in
the support of $\Gr_{\al_p}$, 
there is exactly 
one jump $\al_p$ in the interval $[0,1)$. The collection of  jumps $\{\alpha_p\}$ fully determines the filtered
bundle structure. If we denote by
$\ML:=L_{0}$, the degree of a filtered line bundle is defined as 
$$\deg(\MF_{\st}(L)) = \deg(\ML) + \sum_{p\in Z}\alpha_p
\ .$$

Alternatively,
a \emph{weighted line bundle} is a pair $(\ML,\chi)$ where $\ML\to \Sigma$
is a holomorphic line bundle  and $\chi:Z\to \mathbb{R}$ is a weight function.
The degree of a weighted bundle is defined as 
$$\deg(\ML,\chi) = \deg(\ML) + \sum_{p\in Z}\chi_p\ .$$

The concepts of filtered line bundles and weighted line bundles are
nearly equivalent.  Namely, given
a filtered line bundle $\MF_{\st}(L)$, we define $\ML:=\ML_{0}$ and
$\chi_p=\alpha_p$.
Conversely, given a weighted line bundle $(\ML,\chi)$, 
let $\alpha_p=\chi_p+n_p$, where $n_p\in \mathbb{Z}$ is the unique integer 
so that $0\leq \chi_p+n_p<1$. A filtered bundle $\MF_{\ast}(L)$ is then determined by
setting $L_{0} =
\ML(-\sum_{p\in Z} n_p p)$ with jumps $\alpha_p$. 
Clearly,
$\deg(\mathcal{F}(L))=\deg(\mathcal{L},\chi)$. 
We shall use the notation $\MF_{\ast}(\mathcal{L},\chi)$ for the filtered
bundle associated to a weighted bundle $(\mathcal{L},\chi)$ in this way. 

Different weighted bundles can give rise to the same filtered bundle. 
The following is a fact that will be frequently used in this paper. 
If $D=\sum_{x\in Z}d_xx$ is a divisor  supported  on $Z$, 
let 
$$
\chi_D(x):=\begin{cases}d_x\ ,&x\in Z\ ;\\ 0\ ,&x\in\Sigma\setminus Z\ .
\end{cases}
$$
 Then for any weighted bundle $(\mathcal{L},\chi)$ we have 
$\MF_\ast(\mathcal{L}(D),\chi-\chi_{D})=\MF_\ast(\mathcal{L},\chi)$.

Let $(\ML_1,\chi_1)$ and $(\ML_2,\chi_2)$ be two weighted lines bundles. We define the tensor product 
$$(\ML_1,\chi_1)\otimes (\ML_2,\chi_2):=(\ML_1\otimes \ML_2,\chi_1+\chi_2)\
.$$
Then the degree is additive on tensor products. For filtered bundles, we
\emph{define}
\begin{equation}\label{eqn:tensor}
    \mathcal{F}_\ast(\ML_1,\chi_1)\otimes\mathcal{F}_\ast(\ML_2,\chi_2):=\mathcal{F}_\ast(\ML_1\otimes
    \ML_2,\chi_1+\chi_2)\ .
\end{equation}
The degree is again additive for the tensor product of filtered bundles.
This agrees with the usual definition of tensor product for parabolic
bundles. 
%Notice that
%\begin{equation}
%    (\ML,\chi)\otimes (\mathcal{O}(D),-\chi_{D})=(\ML(D),\chi-\chi_{D})=(\ML,\chi)\ .
%\end{equation}

\subsection{Harmonic metrics for weighted line bundles}
\begin{proposition}%{\cite{simpson1990harmonic}}
	\label{prop_NAHforfilteredlinebundle}
	Let $(\ML,\chi)$ be a degree 0 weighted bundle. Then there exists a Hermitian metric $h$ 
    on $\mathcal{L}_{\Sigma'}$ such that:
	\begin{itemize}
        \item [(i)] the Chern connection $A_h$ of $(\ML,h)$ is flat: $F_{A_h}=0$;
		\item [(ii)] for $p\in Z$, and $(U_p,z)$  a holomorphic coordinate
            centered at $p$,  $|z|^{-2\chi_p}h$ extends to  a $\MC^{\infty}$ Hermitian metric on $\ML|_{U_p}$;
		\item [(iii)]  $h$ is uniquely determined up to a multiplication by a nonzero constant.
	\end{itemize}
\end{proposition}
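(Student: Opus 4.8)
The plan is to reduce the existence of $h$ to solving a single linear Poisson equation on the closed surface $\Sigma$, and to read off uniqueness from the fact that harmonic functions on a closed surface are constant.

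First I would fix a reference metric. Choose a $\MC^\infty$ Hermitian metric $h_0$ on the holomorphic line bundle $\ML$, arranged so that $h_0$ is flat on a small coordinate disk $U_p$ around each $p\in Z$; this is possible because $\ML$ is holomorphically trivial near a point, where one may take the flat trivial metric. Pick cutoffs $\rho_p\in\MC^\infty(\Sigma)$ with $\rho_p\equiv 1$ near $p$ and $\supp \rho_p\subset U_p$, and define the model singular metric
$$
h_{\mathrm{mod}} := h_0\,\exp\Bigl(2\textstyle\sum_{p\in Z}\chi_p\,\rho_p\,\log|z_p|\Bigr)\ ,
$$
where $z_p$ is the coordinate on $U_p$. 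By construction $h_{\mathrm{mod}}$ is a smooth metric on $\ML_{\Sigma'}$, and near each $p$ it equals $h_0|z_p|^{2\chi_p}$, so any metric of the form $h=e^{2u}h_{\mathrm{mod}}$ with $u$ bounded automatically satisfies (ii). In a local holomorphic frame the Chern curvature is $F_{A_h}=\bar\partial\partial\log h$, so flatness of $h=e^{2u}h_{\mathrm{mod}}$ becomes
$$
2\,\bar\partial\partial u = -\,F_{A_{h_{\mathrm{mod}}}}\ ,
$$
i.e.\ a Poisson equation $\Delta u = f$, where $f$ represents $-F_{A_{h_{\mathrm{mod}}}}$. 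Because $\log|z_p|$ is harmonic away from $p$ and $h_0$ is flat near $Z$, the form $F_{A_{h_{\mathrm{mod}}}}$ vanishes near each $p$ and is supported in the compact annular regions where $\rho_p$ transitions; hence $f$ extends to a genuine smooth function on all of $\Sigma$.

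The key step, and the main obstacle, is the solvability of $\Delta u = f$ on the closed surface $\Sigma$, whose only constraint is the Fredholm condition $\int_\Sigma f\,dA=0$, equivalently $\int_{\Sigma'}F_{A_{h_{\mathrm{mod}}}}=0$. I would verify this by a Stokes/Green's-function computation: on $\Sigma\setminus\bigcup_p D_p(\varepsilon)$ the metric $h_{\mathrm{mod}}$ is smooth, and as $\varepsilon\to 0$ the boundary integrals of $\partial\log|z_p|$ over the circles $\{|z_p|=\varepsilon\}$ contribute exactly $-2\pi\chi_p$ (up to the fixed sign convention), while the smooth part integrates to $2\pi\deg\ML$ by Chern--Weil. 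Collecting terms gives $\frac{i}{2\pi}\int_{\Sigma'}F_{A_{h_{\mathrm{mod}}}}=\deg\ML+\sum_{p\in Z}\chi_p=\deg(\ML,\chi)$, which vanishes precisely by the degree-zero hypothesis. This identity is exactly where the hypothesis is used, and getting the boundary contributions and their signs right is the delicate part.

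With $\int_\Sigma f=0$, standard elliptic theory on the closed surface produces a solution $u\in\MC^\infty(\Sigma)$, unique up to an additive constant. Then $h:=e^{2u}h_{\mathrm{mod}}$ is a smooth flat metric on $\ML_{\Sigma'}$ satisfying (i), and since $u$ is smooth, hence bounded, near each $p$ we have $|z_p|^{-2\chi_p}h=e^{2u}h_0$, which extends smoothly and positively across $p$, giving (ii). For uniqueness (iii), if $h_1,h_2$ both satisfy (i) and (ii), write $h_1=e^{2w}h_2$; condition (ii) forces the singular factors to cancel, so $w$ extends to a function on all of $\Sigma$, and flatness of both metrics gives $\bar\partial\partial w=0$, i.e.\ $w$ is harmonic on the closed surface and therefore constant. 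Hence $h$ is determined up to a nonzero constant multiple, completing the proof.
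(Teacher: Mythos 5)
Your proof is correct and follows essentially the same route as the paper's: fix a background metric with the prescribed singular behavior $|z|^{2\chi_p}$ near each puncture, observe that its curvature is smooth on all of $\Sigma$ with total integral $\deg(\ML,\chi)=0$ (you prove this identity by a Stokes computation; the paper cites the Poincar\'e--Lelong formula), and then solve the resulting linear Poisson equation on the closed surface, with the degree-zero hypothesis serving exactly as the Fredholm integrability condition. Your uniqueness argument --- writing $h_1=e^{2w}h_2$, using (ii) to see that $w$ extends smoothly across $Z$, and concluding that $w$ is harmonic on the closed surface and hence constant --- is in fact spelled out more completely than the paper's, which only observes that the solution $\rho$ of the Poisson equation is determined up to an additive constant.
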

\begin{proof}
	We first choose a background Hermitian metric $h_0$ such that
    $|z|^{-2\chi_p}h_0$ defines a $\mathcal{C}^{\infty}$ Hermitian metric
    defined on $U_p$. Let $A_{h_0}$ be the Chern connection,  and $F_{A_0}$  the curvature.
    Note that $F_{A_0}$ is smooth on $\Sigma$.  By the Poincar\'e-Lelong
    formula,  we have
    $\frac{\sqrt{-1}}{2\pi}\int_{\Sigma}F_{A_0}=\deg(\mathcal{L},\chi)=0$.
    Therefore, there exists a $\mathcal{C}^{\infty}$ function $\rho$ such
    that $\Delta\rho+\frac{\sqrt{-1}}{2\pi}\Lambda F_{A_0}=0$. We define
    $h=h_0e^{\rho}$. For the corresponding Chern connection $A_h$, we have
    $F_{A_h}=0$, which implies (i). (ii) follows from the property for
    $h_0$, since   $\rho$ is a  smooth function on $\Sigma$. As $\rho$ is well-defined up to a constant, $h$ is well-defined up to a constant, which implies (iii).
\end{proof}

The metric obtained above is called the \emph{harmonic metric}. For a weighted bundle $(\mathcal{L},\chi)$, the holomorphic bundle $\mathcal{L}$ and the harmonic metric $h$ define a filtration as follows. 
For $\epsilon>0$ sufficiently small, let 
$$L_{\alpha}:=\{s\in\ML(\ast Z)  \mid |s|_h\leq Cr^{\alpha-\epsilon}\text{
    for some $C$}\}\ . $$
Here, $r$ denotes the distance to $Z$ in any smooth conformal metric  on
$\Sigma$. It is straightforward to check that  this defines a filtered
bundle that matches $\MF_\ast(\ML,\chi)$ under the correspondence in the previous
section.

Even though the harmonic metric is only well-defined up to a constant, the
Chern connection $A=(\ML,h)$ is independent of this choice.
The $(1,0)$ part of $A$, denoted $\nabla_h$, then defines
logarithmic connections $\nabla_h:L_{\alpha}\to
L_{\alpha}\otimes K(Z).$

\subsection{Convergence of weighted line bundles}
In this subsection, we will consider the convergence of weighted line
bundles. The main result we prove here is a consequence  of \cite[Theorem 1.8]{mochizuki2023asymptotic}. For the reader's convenience, we present a short proof here for our situation. 

Let $(\Sigma_0,g_0)$ be a metrized Riemann surface (i.e.\ a Riemann surface
$\Sigma_0$
with conformal metric $g_0$).  We view $\Sigma_0$ as given by an underlying
surface $C$ with almost complex structure $J_0$. 
Consider a neighborhood $U_1$ of $J_0$ in the moduli space of holomorphic structures and a neighborhood $U_2$ of $g_0$ in the space of smooth metrics. We denote the product of these neighborhoods by $U=U_1\times U_2$. We can define the fiber bundle $\Pic_U \to U$, where each fiber is the Picard group defined by the holomorphic structure.  
Let $(\Sigma_t=(C,J_t),g_t)$ be a family of metrized Riemann surfaces that converge smoothly to $(\Sigma_0,g_0)$ as $t\to 0$. Let $Z_t\subset \Sigma_t$ be a collection of a finite number of points that converge to $Z_0$ in suitable symmetric products of $C$. For each $p\in Z_0$, we can write $Z_t=\cup_{p\in Z_0}Z_{t,p}$ such that all points in $Z_{t,p}$ converge to $p$. We define the convergence of weighted line bundles as follows:

\begin{definition}
	\label{def_convergence_parabolic_bundles}
	A family of weighted line bundles $(\ML_t,\chi_t)$ over $\Sigma_t$ with weights $\chi_t:Z_t\to \mathbb{R}$ converges to $(\ML_0,\chi_0)$ if
	\begin{itemize}
		\item [(i)] $\ML_t$ converges to $\ML_0$ in $\Pic_U$,
		\item [(ii)] for all $p\in Z_0$ and $t$ sufficiently small, $\sum_{q\in Z_{t,p}}\chi_t(q)=\chi_0(p)$.
	\end{itemize}
\end{definition}
A sequence of filtered bundles $\MF_{\st}(\ML_t)$ converges to $\MF_{\st}(\ML_0)$ if the corresponding weighted bundles converge. The following theorem provides insight into the compactness of a sequence of weighted line bundles:
\begin{theorem}
\label{thm_convergence_family_harmonic_bundles}
Let $(\ML_t,\chi_t)$ be a sequence of weighted line bundles over
    $\Sigma_t\setminus Z_t$, where $\deg(\ML_t,\chi_t)=0$, and let $h_t$ be
    the corresponding harmonic metrics. If $Z_t$ converges to $Z_0$, we
    write $Z_t=\cup_{p\in Z_0}Z_{t,p}$.
    %with $Z_{t,p}$ convergence to $p$ when $t\to 0$, 
    Then there exists a weighted line bundle $(\ML_0,\chi_0)$ over $Z_0$ with a harmonic metric $h_0$ such that:
\begin{itemize}
	\item [(i)] After being rescaled by $c_t>0$, $c_th_t$ converges to $h_0$ over $\Sigma_0\setminus Z_0$ in the $\MC^{\infty}_{\loc}$ sense.
	\item [(ii)] Let $\na_t$ be the unitary connection of $h_t$. Then on 
        $\Sigma_0\setminus Z_0$, $\lim_{t\to 0}\na_t=\na_0$  in $\MC^{\infty}_{\loc}$.
\end{itemize}
\end{theorem}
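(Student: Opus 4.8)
The plan is to first extract the limiting data $(\ML_0,\chi_0)$, then reduce the flatness equation for $h_t$ to a scalar Poisson equation with convergent coefficients, and finally combine elliptic estimates with the uniqueness in Proposition~\ref{prop_NAHforfilteredlinebundle} to promote subsequential convergence to convergence of the full family.

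First I would extract the limit. Since $\deg(\ML_t,\chi_t)=0$, one has $\deg\ML_t=-\sum_{q\in Z_t}\chi_t(q)\in\mathbb{Z}$; as the right-hand side converges it is eventually constant, so the $\ML_t$ lie in a single fixed-degree component of $\Pic_U$, a compact torus bundle over the convergent base $(J_t,g_t)\to(J_0,g_0)$. Passing to a subsequence yields $\ML_t\to\ML_0$ in $\Pic_U$. For each $p\in Z_0$ the total weight $\sum_{q\in Z_{t,p}}\chi_t(q)$ converges, and I would define $\chi_0(p)$ to be its limit; this is exactly condition~(ii) of Definition~\ref{def_convergence_parabolic_bundles}. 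The degree-zero condition passes to the limit, so by Proposition~\ref{prop_NAHforfilteredlinebundle} the weighted bundle $(\ML_0,\chi_0)$ carries a harmonic metric $h_0$, unique up to scale, with flat unitary connection $\na_0$.

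Next I would set up the comparison. I would choose background metrics $h_{0,t}$ on $\ML_t$ whose local model near each $p\in Z_0$ is $\prod_{q\in Z_{t,p}}|z-q|^{2\chi_t(q)}$ times a fixed smooth factor; as the points of $Z_{t,p}$ coalesce these merge into the model $|z-p|^{2\chi_0(p)}$ for $(\ML_0,\chi_0)$, so that $h_{0,t}\to h_{0,0}$ in $\MC^{\infty}_{\loc}(\Sigma_0\setminus Z_0)$. Writing $h_t=h_{0,t}e^{\rho_t}$, the flatness equation $F_{A_{h_t}}=0$ becomes the linear equation $\Delta_t\rho_t=-\tfrac{\sqrt{-1}}{2\pi}\Lambda_t F_{A_{0,t}}$ of Proposition~\ref{prop_NAHforfilteredlinebundle}, whose source $f_t$ converges in $\MC^{\infty}_{\loc}(\Sigma_0\setminus Z_0)$ to the corresponding source for $(\ML_0,\chi_0)$. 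I would then establish a Green's-function estimate, local in $\Sigma_0\setminus Z_0$ and uniform in $t$, bounding $\rho_t$ modulo an additive constant; interior elliptic regularity gives, along a further subsequence, $\rho_t\to\rho_0$ in $\MC^{\infty}_{\loc}(\Sigma_0\setminus Z_0)$ with $\rho_0$ solving the limiting equation. Absorbing the additive constant into a scale $c_t>0$ yields $c_th_t\to h_0$, which is (i); differentiating, and using that the unitary connection is insensitive to the scale, gives $\na_t\to\na_0$ in $\MC^{\infty}_{\loc}$, which is (ii). Uniqueness of the limiting harmonic metric (Proposition~\ref{prop_NAHforfilteredlinebundle}(iii)) then forces every subsequence to the same limit, so the full family converges. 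The invariant pinning down $\na_0$ is its holonomy: around each $p\in Z_0$ the holonomy of $\na_t$ equals $\exp\!\bigl(2\pi\sqrt{-1}\sum_{q\in Z_{t,p}}\chi_t(q)\bigr)\to\exp(2\pi\sqrt{-1}\,\chi_0(p))$, while the holonomies around the handles track $\ML_t\to\ML_0$.

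The hard part will be the uniform control of the conformal factor $\rho_t$ as the points of $Z_{t,p}$ collide, since a priori the smooth part of the background curvature $\Lambda_t F_{A_{0,t}}$ might concentrate near the collision points. The key is that the degree-zero normalization, through the Poincar\'e--Lelong identity $\tfrac{\sqrt{-1}}{2\pi}\int_{\Sigma_t}F_{A_{0,t}}=\deg(\ML_t,\chi_t)=0$, keeps the total source balanced, so that the only mass accumulating at each $p$ is the jump $\chi_0(p)$ already carried by the singular background; this is what makes the Green's-function bound uniform in $t$ and identifies the limiting jumps with the merged weights in condition~(ii) of Definition~\ref{def_convergence_parabolic_bundles}.
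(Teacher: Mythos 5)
Your proposal is correct and takes essentially the same route as the paper's proof: extract $\ML_0$ by compactness of $\Pic_U$ along the path, define $\chi_0$ by merging the local weight sums, compare $h_t$ against a family of approximate background metrics carrying the model singular factors, reduce flatness to a scalar Poisson equation for the log-ratio, and obtain (i) by elliptic estimates with (ii) following by differentiation. The only cosmetic differences are that the paper normalizes $\|s_t\|_{L^2}=1$ and invokes a global Schauder estimate on $\Sigma$ (using bounded geometry of $g_t$) instead of your interior Green's-function bound, and it leaves implicit the uniqueness and holonomy remarks you add at the end.
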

\begin{proof}
By the assumptions on weights, 
    $\deg(\ML_t)$ is a fixed, $t$-independent constant. Let $\gamma_t=(J_t,g_t)$ be a path in $U$. 
    Then $\Pic_U|_{\gamma_t}$ is compact, and there exists an $\ML_0\in \Pic(\Sigma_0)$ such that $\ML_t$ converges to $\ML_0$.
For $p\in Z_0$,  define $\chi_0(p)=\sum_{q\in Z_{t,p}}\chi_t(q)$, and  thus 
    obtain a weighted line bundle $(\ML_0,\chi_0)$. We can choose a family
    of approximate harmonic metrics  $h_t^{\app}$, such that $|z|^{-2\chi_p}h_t^{\app}$ extends to a smooth metric in a neighborhood of $p$ and $h_t^{\app}$ converges to $h_0^{\app}$ in $\MC^{\infty}_{\loc}(\Sigma_0\setminus Z_0)$. 
    Moreover, we write $h_t=h_t^{\app}e^{s_t}$. After a suitable rescale of $h_t$, we can assume $\|s_t\|_{L^2}=1$.
Let $\rho_t:=\Delta_th_t^{\app}$ be the curvature defined by the metric
    $h^{\app}_t$. Then $s_t$ satisfies the equation $\Delta_ts_t=\rho_t$
    over $\Sigma$. As $\rho_t$ converges to $\rho_0\in\MC^{\infty}_{\loc}(\Sigma\setminus Z_0)$, and $g_t$ is a family with bounded geometry, we obtain the estimate
$$
\|s_t\|_{\MC^{k+2,\al}(\Sigma)}\leq
    C_{k,\al}(\|\rho_t\|_{\MC^{k,\al}(\Sigma)}+1)\ ,
$$
where $C_{k,\al}$ is a $t$-independent constant. Therefore, passing to a
    subsequence, $s_t$ converges to $s_0$ in $\MC^{\infty}(\Sigma)$, which
    implies (i). The assertion (ii) follows from (i).
\end{proof}

\section{The algebraic and analytic compactifications}
\label{sec_the_algebraic_and_analytic_compactifications}
In this section, we introduce compactifications of  the Dolbeault and
Hitchin moduli spaces. 

\subsection{The algebraic compactification of the Dolbeault moduli space}
In this subsection, we present the algebraic method for compactifying the Dolbeault moduli space.
This technique is based on the $\CS$ action on $\MMD$, and was introduced in 
\cite{simpson1996hodge, Schmitt:98, hausel1998compactification,de2018compactification,katzarkov2015harmonic}. 
The gauge theoretic approach can be found in \cite{fan2022analytic}.

\begin{theorem}[{\cite[Thm.\ 11.2]{simpson1996hodge},\cite{de2018compactification}}]
	\label{thm_algebraiclemmasimpson}
Let $V$ be an algebraic variety with $\CS$ action. Suppose 
\begin{itemize}
	\item [(i)]the fixed point set of the $\CS$ action is proper,
    \item [(ii)]for every $t\in \CS,\;v\in V$, the limit $\displaystyle\lim_{t\to 0}t\cdot v$ exists.
\end{itemize}
Then the space $\displaystyle U:=\{v\in V\mid \lim_{t\to\infty}t\cdot
    v\;\mathrm{does\;not\;exist}\}$ is open in $V$, and the quotient $U/\CS$
    is separated and proper.
\end{theorem}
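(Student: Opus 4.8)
\emph{Setup and reductions.} The plan is to treat all three assertions through the valuative criterion, after fixing a $\CS$-equivariant projective completion $V\subset\overline{V}$ (which exists by Sumihiro's theorem for normal $V$). On the projective variety $\overline{V}$ both limits $\overline{p}(v):=\lim_{t\to 0}t\cdot v$ and $\overline{q}(v):=\lim_{t\to\infty}t\cdot v$ exist pointwise and lie in the fixed locus $\overline{V}^{\CS}$. Since $\overline{V}^{\CS}\cap V=V^{\CS}=F$, hypothesis (ii) says $\overline{p}(v)\in F$ for every $v\in V$, and $W:=V\setminus U$ is exactly $\{v\in V:\overline{q}(v)\in F\}$. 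Note that $F\subset W$ (a fixed point is its own $t\to\infty$ limit), so $U$ contains no fixed points; consequently every $\CS$-stabilizer on $U$ is a proper closed subgroup of $\CS$, hence finite, and $U,W$ are $\CS$-invariant. Because $F$ is proper it is closed in $\overline{V}$.

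\emph{Openness of $U$.} It suffices to show $W$ is closed. As a union of Bia{\l}ynicki-Birula repelling strata, $W$ is constructible, so I only need stability under specialization. Let $R$ be a trait with generic point $\eta$ and closed point $0$, and let $c:\mathrm{Spec}\,R\to V$ send $\eta$ into $W$ and $0$ to $v_0$; we must show $v_0\in W$ (assume $v_0\notin F$, else this is trivial). Extend the orbit family $\CS\times\mathrm{Spec}\,R\to V\subset\overline{V}$ to a rational map $\mathbb{P}^1\times\mathrm{Spec}\,R\dashrightarrow\overline{V}$ and resolve indeterminacy, giving a proper birational $Y\to\mathbb{P}^1\times\mathrm{Spec}\,R$ and a morphism $\widetilde{g}:Y\to\overline{V}$. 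The strict transform $\widetilde{S}\cong\mathrm{Spec}\,R$ of $\{\infty\}\times\mathrm{Spec}\,R$ carries the $t\to\infty$ limits: over $\eta$ it maps to $\overline{q}(v_\eta)\in F$, and over $0$ to $\overline{q}(v_0)$. Since $\widetilde{g}^{-1}(F)$ is closed and contains the generic point of the irreducible curve $\widetilde{S}$, it contains all of $\widetilde{S}$; hence $\overline{q}(v_0)\in F$ and $v_0\in W$. This is exactly where the properness of $F$ in (i) enters.

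\emph{Separatedness.} It is enough to prove that the action map $\Psi:\CS\times U\to U\times U$, $(t,v)\mapsto(v,t\cdot v)$, is proper, for then the geometric quotient $U/\CS$ exists as a separated algebraic space. By the valuative criterion, over the generic point of a trait we are given $v(s)\in U$ and $t(s)\in\CS$ with $v(s)\to v_0\in U$ and $t(s)\cdot v(s)\to w_0\in U$, and must bound $t(s)$ away from $0$ and $\infty$. If $t(s)\to 0$, then since $v_0$ is not $\CS$-fixed the extended action $\mathbb{A}^1\times\overline{V}\to\overline{V}$ is a morphism near $(0,v_0)$, so $t(s)\cdot v(s)\to\overline{p}(v_0)\in F$, contradicting $w_0\in U$ and $F\cap U=\varnothing$. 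If $t(s)\to\infty$, then $t(s)\cdot v(s)\to\overline{q}(v_0)$, which lies in $\overline{V}\setminus V$ because $v_0\in U$; this contradicts $w_0\in U\subset V$. Hence $t(s)$ extends to a unit, $\Psi$ is proper, and $U/\CS$ is separated.

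\emph{Properness, and the main obstacle.} For properness of $U/\CS$ over $\mathbb{C}$ I would again use the valuative criterion. After a finite base change a $K$-point of $U/\CS$ lifts to $v\in U(K)$; viewing $v$ as a $K$-point of $\overline{V}$ and using properness of $\overline{V}$, it extends to $\overline{v}\in\overline{V}(R)$ with special value $\overline{v}_0$. The content is to rescale the lift by a suitable $t(s)\in\CS(K)$ so that $t\cdot v$ extends to an $R$-point whose special value lies in $U$, exhibiting the limiting orbit. The two ends of an escaping orbit are pinned asymmetrically — the $t\to 0$ end limits into the proper locus $F\subset V$ by (i)--(ii), while the $t\to\infty$ end leaves $V$ — and this rigidity is what should force a canonical ``middle'' rescaling landing in $U$. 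Making this rescaling precise (equivalently, realizing $U/\CS$ as a GIT quotient of $\overline{V}$, or of $\overline{V}\times\mathbb{A}^1$, whose properness is automatic) is the main obstacle, and is the step I would model on the constructions of Simpson and de Cataldo cited in the statement.
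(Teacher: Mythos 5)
The paper offers no proof of this theorem to compare against: it is quoted verbatim from Simpson \cite[Thm.\ 11.2]{simpson1996hodge} and de Cataldo \cite{de2018compactification}. Judged on its own, your proposal has a fatal gap in the openness step, and the later steps inherit the same error. The strict transform $\widetilde{S}$ of $\{\infty\}\times\mathrm{Spec}\,R$ computes the \emph{iterated} limit $\lim_{s\to 0}\bigl(\lim_{t\to\infty}t\cdot v(s)\bigr)$, not $\overline{q}(v_0)=\lim_{t\to\infty}t\cdot v_0$; the two agree only when $(\infty,0)$ is \emph{not} an indeterminacy point of the rational map, which is precisely what you cannot assume. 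The tell-tale symptom is that your openness argument uses only hypothesis (i), while openness genuinely needs (ii). Concretely, take $\overline{V}=\mathbb{P}^2$ with $t\cdot[x:y:z]=[x:ty:t^{-1}z]$ and $V=\mathbb{P}^2\setminus\{[1:0:0]\}$: then (i) holds (the fixed locus of $V$ is $\{[0:1:0],[0:0:1]\}$) but (ii) fails, and along the family $v(s)=[1:s:1]$ one has $\overline{q}(v(s))=[0:1:0]\in F$ for all $s\neq 0$, while $\overline{q}(v(0))=[1:0:0]\notin V$. Your argument runs verbatim on this family and ``proves'' $v(0)\in W$, which is false; indeed $W=\{y\neq 0\}\cup\{[0:0:1]\}$ is not closed here. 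So the step ``hence $\overline{q}(v_0)\in F$'' is not a local patch away from being correct: the entire mechanism by which (ii) forces the vertical limit to be controlled by the horizontal one is absent, and that mechanism is the real content of the theorem.

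The separatedness argument contains the same exchange-of-limits error in a different guise: the claim that the extended action $\mathbb{A}^1\times\overline{V}\dashrightarrow\overline{V}$ is a morphism near $(0,v_0)$ whenever $v_0$ is not fixed is false, even when both (i) and (ii) hold. For $V=\mathbb{A}^1\times\mathbb{P}^1$ with $t\cdot(u,v)=(tu,t^{-1}v)$, hypotheses (i) and (ii) hold and $v_0=(u_0,0)$ with $u_0\neq 0$ lies in $U$ and is not fixed, yet the extended action is indeterminate at $(0,v_0)$: taking $t(s)=s$ and $v(s)=(u_0,s)$ gives $t(s)\cdot v(s)\to(0,1)$, which is neither $\overline{p}(v_0)=(0,0)$ nor even a fixed point. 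So joint limits cannot be computed factorwise, and your contradiction ``$t(s)\cdot v(s)\to\overline{p}(v_0)\in F$'' does not follow. Finally, you concede that properness of $U/\CS$ — arguably the hardest of the three assertions — is left open. In sum, none of the three conclusions is established; the missing idea throughout is a device (present in Simpson's proof, e.g.\ an equivariant analysis of the full exceptional fiber of the resolved two-parameter family, where (ii) constrains the images of all exceptional components) that relates the limit of a degenerating family of orbits to the limit of the orbit of the degenerate point.
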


We apply this to the Dolbeault moduli space. The first step is to note that
the possible isotropy subgroups are limited.
\begin{lemma}{\cite[Thm.\ 6.2]{hausel1998compactification}}
Let $\xi=[(\ME,\vp)]$ be a Higgs bundle equivalence class with $\MH(\xi)\neq 0$. Then the stabilizer $\Gamma_{\xi}$ 
of $\xi$ for the $\CS$ action is either trivial or $\mathbb{Z}/2$. The latter case holds if and only if 
$(\ME,\vp)$ and $(\ME,-\vp)$ are complex gauge equivalent.
\end{lemma}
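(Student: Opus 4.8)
The plan is to reduce the whole statement to the $\CS$-equivariance of the Hitchin map. The action is $\lambda\cdot[(\ME,\vp)]=[(\ME,\lambda\vp)]$, and since $\vp$ is valued in the traceless $2\times 2$ endomorphisms, $\det(\lambda\vp)=\lambda^2\det(\vp)$. Thus $\MH$ transforms with weight two:
\begin{equation*}
\MH(\lambda\cdot\xi)=\lambda^2\,\MH(\xi)\ .
\end{equation*}
First I would note that any $\lambda\in\Gamma_\xi$ fixes $\xi$, hence fixes $q:=\MH(\xi)$; applying the displayed identity gives $\lambda^2 q=q$. Because $H^0(K^2)$ is a (torsion-free) vector space and $q\neq 0$ by hypothesis, this forces $\lambda^2=1$, so $\lambda\in\{\pm1\}$ and $\Gamma_\xi\subseteq\{\pm1\}\cong\mathbb{Z}/2$.

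Next I would invoke the elementary fact that a subgroup of $\mathbb{Z}/2$ is either trivial or all of $\mathbb{Z}/2$, the latter occurring precisely when $-1\in\Gamma_\xi$. Unwinding the definition of the action, $-1\in\Gamma_\xi$ means $[(\ME,-\vp)]=[(\ME,\vp)]$ as points of $\MMD$, i.e.\ that $(\ME,\vp)$ and $(\ME,-\vp)$ are complex gauge equivalent. This is exactly the asserted dichotomy, so no further argument is needed beyond recording these two equivalences.

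I do not expect a genuine obstacle here: the conclusion is immediate once the weight-two equivariance is in hand. The only point requiring a word of care is the interpretation of equality in $\MMD$. Since $\MMD$ identifies S-equivalent Higgs bundles and each class admits a polystable representative on which S-equivalence coincides with isomorphism, the condition $(-1)\cdot\xi=\xi$ is faithfully recorded by the existence of $g\in\MGC$ with $g^{-1}\bar\partial_E g=\bar\partial_E$ and $g^{-1}\vp g=-\vp$, which is the complex gauge equivalence in the statement. This distinction matters in particular for reducible $q$, where strictly semistable objects occur and one should work with the polystable (graded) representative throughout; in every case, however, the scaling identity $\lambda^2 q = q$ with $q\neq 0$ does the work.
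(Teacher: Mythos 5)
Your proof is correct and follows essentially the same route as the paper: the paper's entire proof is the observation that $\MH(t\cdot\xi)=t^2\MH(\xi)$, so $t^2=1$ whenever $\MH(\xi)\neq 0$, with the dichotomy and the gauge-equivalence characterization of the $\mathbb{Z}/2$ case treated as immediate. Your additional remark about passing to polystable representatives so that equality in $\MMD$ is faithfully recorded by complex gauge equivalence is a reasonable point of care, but it is not part of the paper's (one-line) argument.
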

\begin{proof}
For $t\in\Gamma_\xi$,  $\mathcal{H}(t\cdot \xi)=t^2\mathcal{H}(\xi)$, hence $t^2=1$ if $\MH(\xi)\neq 0$.
\end{proof}

By this Lemma, the space 
$(\MMD\setminus \MH^{-1}(0))/\CS$ has an orbifold structure. 
In passing, we note that the 
fixed points of the $\mathbb{Z}/2$ action correspond to real representations under the nonabelian Hodge 
correspondence \cite[Sec.\ 10]{hitchin1987self}. 

%Let $V:=\MMD\times \mathbb{C}$ and consider the extension of the $\CS$ action on $\MMD$ to $V$. As $t$ approaches 0, we examine the behavior of the $\CS$ action. For $t\in \CS$ and a Higgs bundle class $\xi=[(\ME,\vp)]\in\MDOL$, the $\CS$ action affects the Hitchin map as $\MH(t\cdot \xi)=t^2\MH(\xi)$. The Hitchin fibration is proper, so the limit $\lim_{t\to 0}t\cdot \xi$ always exists.

By the properness of the Hitchin map $\mathcal{H}$ (see Theorem
\ref{thm:proper-action}), 
%(see \cite[Sec.\ 4]{hitchin1987self}, \cite[Sec.\ 6]{Nitsure:91}, \cite[Sec.\ 10]{Simpson1992}).
%It follows that for any $\xi\in \MMD$, $\lim_{t\to 0}t\cdot \xi$  exists.
it follows that $\displaystyle \lim_{t\to \infty}t\cdot \xi$  exists if and only if $\mathcal{H}(\xi)=0$. Now define
\begin{equation} \label{eqn:compactification}
    \BMMD = \left\{(\MMD \times \CS) \coprod (\MMD \setminus \MH^{-1}(0))\right\}/\CS\ .
\end{equation}
The analytic topology on the disjoint union is generated by  open sets
$U\times W_1$ and $V\times (W_2\cap\CS)\amalg V\cap(\MMD \setminus \MH^{-1}(0))$,
where
$U,V\subset\MMD$,
 $W_1,W_2\subset \mathbb{C}$  are
open, and $0\not\in  W_1$, $0\in W_2$.
The topology on $\BMMD$ is then the quotient topology, and it is straightforward to see that with this topology,
it is compact. 

%if $\MH(\xi)=0$, then $\MH(t\cdot \xi)=0$. Since the nilpotent cone is compact, when $\xi\in \MH^{-1}(0)$, we have $t\cdot \xi\in \MH^{-1}(0)$ and $\lim_{t\to \infty}t\cdot \xi$ exists. However, if $\xi\notin \MH^{-1}(0)$, the limit $\lim_{t\to\infty}\MH(t\cdot \xi)=\lim_{t\to\infty}t^{2}\MH(\xi)$ does not exist.

%Therefore, the set $U$ in Theorem \ref{thm_algebraiclemmasimpson} can be expressed as
%$$U = \MMD \times \CS \cup (\MMD \setminus \MH^{-1}(0)) \times \{0\}.$$
%And its compactification is
%$$\BMMD = U/\CS = (\MMD \times \CS)/\CS \cup (\MMD \setminus \MH^{-1}(0))/\CS.$$
Since $(\MMD \times \CS)/\CS=\MMD$, there is a natural inclusion
\begin{equation*}
    \iota : \MMD \to \BMMD,\;\iota(\xi) = [(\xi,1)]\ ,
\end{equation*}
where brackets denote the equivalence  class under the $\CS$ action. The boundary of $\BMMD$ is 
$$\partial \BMMD = \BMMD \setminus \iota(\MMD)=(\MMD \setminus
\MH^{-1}(0))/\CS\ .$$
There is a \emph{boundary map}
\begin{equation*}
	\iota_{\partial} : \MMD \setminus \MH^{-1}(0) \longrightarrow \partial
    \BMMD,\; \xi \mapsto [( \xi, 0)]\ ,
\end{equation*}
which is invariant under the $\CS$ action, i.e., $\iota_{\partial}(\lambda \xi) = \iota_{\partial}(\xi)$
for $\lambda \in \CS$. 

%On the other hand, there is a natural $\CS$ action on $U=\MB\ti \mbC$.
%let $\MB=H^0(K^2)$ be the Hitchin base, then we might
%take $U=\MB\ti \mbC$ with a natural $\CS$ action. 
The $\CS$ action on $\MMD$ covers the square of the action on $\MB$.
%Using a similar
%construction, it is straightforward to check that the $\CS$
Hence, it is natural to compactify $\MB$ by projectivizing: 
$$
\overline\MB := \mathbb{P}(H^0(K^2)\oplus \mathbb{C}) \ . 
$$
%$\PMB:=( H^0(K^2)\ti \mathbb{C}\setminus \{0\})/\CS$.
The inclusion is given, as usual, by
\begin{equation*}
\iota_0:\MB\to\PMB,\;\iota(q)=[q\ti \{1\}]\ ,
\end{equation*}
where $q\ti \{1\}\in H^0(K^2)\oplus \mbC$.  %(and bracket denotes equivalence class in $\PMB$). 
We also define $\pa\PMB=\PMB\setminus \iota_0(\MB)\simeq
\mathbb{P}(H^0(K^2))$, with boundary projection map
\begin{equation*}
	\iota_{0,\pa}:\MB\setminus \{0\}\to \pa\PMB\ ,\ \iota_{0,\pa}(q)=[q\ti
    \{0\}]\ .
\end{equation*}
The Hitchin map $\MH:\MMD\to \MB$ extends to $\BMH:\BMMD\to \PMB$, where 
$\BMH|_{\MMD}:=\iota_0\circ \MH$,  and for every $[(\ME,\vp)]/\CS\in \pa\BMMD$, 
$$
\BMH([(\ME,\vp)]/\CS):=[(\MH(\vp), 0)]\subset\PMB\ .
$$ 
This is well defined, since $\det(\vp) \neq 0$ if $[(\ME,\vp)]/\CS\in \pa
\BMMD$. Moreover,
$$
\begin{tikzcd} \MMD \arrow[r, "\iota"] \arrow[d, "\MH"] & \BMMD \arrow[d,"\BMH"] \\ \MB \arrow[r, "\iota_0"] & \PMB \end{tikzcd}
$$
commutes.

There is a good algebraic structure on this algebraic compactification:

\begin{theorem}[{\cite{simpson1996hodge, Schmitt:98,
    hausel1998compactification,de2018compactification,fan2022analytic}}]
	The compactified space $\BMMD$ is a normal projective variety, and
    $\pa\BMMD$ is a Cartier divisor of $\BMMD$. 
\end{theorem}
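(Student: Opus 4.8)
The plan is to upgrade the analytic quotient \eqref{eqn:compactification} to a projective variety by exhibiting $\BMMD$ as a relative $\mathrm{Proj}$ over the projective base $\PMB$, and then to read off normality and the Cartier property from this description. The starting observation is that $\MMD$ is a normal quasiprojective variety whose Hitchin map $\MH\colon\MMD\to\MB=H^0(K^2)$ is proper onto the affine space $\MB$; hence $\MH$ is a projective morphism and $\MMD$ is projective over the affine scheme $\mathrm{Spec}\,\MO(\MB)$. Moreover, as noted in the text, the $\CS$-action covers the weight-two scaling $q\mapsto t^2q$ on $\MB$, so the action relevant to the projectivization descends to $\CS/\{\pm 1\}\cong\CS$; this reparametrization is exactly what makes $\PMB=\mathbb{P}(H^0(K^2)\oplus\mathbb{C})$ the natural base and $\BMH\colon\BMMD\to\PMB$ the natural structure map.

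For the construction itself I would choose a $\CS$-linearized relatively ample line bundle $\mathcal{L}$ on $\MMD$ and form the section algebra $S=\bigoplus_{n\ge 0}H^0(\MMD,\mathcal{L}^{\otimes n})$, a finitely generated graded algebra over $A:=\MO(\MB)$ with $\mathrm{Proj}\,S=\MMD$. The $\CS$-action equips $S$ and $A$ with a second, weight grading; adjoining the coordinate $u$ dual to the summand $\mathbb{C}\subset H^0(K^2)\oplus\mathbb{C}$, I would take $\mathrm{Proj}$ with respect to the combined grading. One then checks that on the chart $\{u\neq 0\}$ this recovers $\iota(\MMD)$, while over $\partial\PMB=\mathbb{P}(H^0(K^2))$ it produces the orbit space $(\MMD\setminus\MH^{-1}(0))/\CS$, so that the underlying space matches \eqref{eqn:compactification}. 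Separatedness and properness of the boundary piece are supplied by Theorem \ref{thm_algebraiclemmasimpson} applied to $V=\MMD$: hypothesis (i) holds because the fixed-point set consists of complex variations of Hodge structure, which have $\det\varphi=0$ and hence lie in the nilpotent cone $\MH^{-1}(0)$, a proper set since $\MH$ is proper; hypothesis (ii) is Theorem \ref{thm:proper-action}. Being proper over the projective base $\PMB$ and carrying a relatively ample line bundle, $\BMMD$ is then projective.

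Normality and the Cartier property I would treat separately. Since $\MMD$ is normal and the boundary $(\MMD\setminus\MH^{-1}(0))/\CS$ is a good quotient of a normal variety by a generically free reductive action, both strata are normal; as the boundary has codimension one, I would conclude normality of $\BMMD$ via Serre's criterion, with $R_1$ coming from regularity away from the singularities already present in $\MMD$ and its boundary, and $S_2$ from the $\mathrm{Proj}$ description of a normal graded algebra. For the Cartier claim, observe that $\partial\BMMD=\BMMD\setminus\iota(\MMD)=\BMH^{-1}(\partial\PMB)$, and that scheme-theoretically $\partial\PMB=\{u=0\}$ is a Cartier divisor on $\PMB$; its pullback under $\BMH$ is cut out locally by the single equation $u=0$, hence is locally principal. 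The weight-two covering noted in the first paragraph is what guarantees that this pullback appears with multiplicity one.

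The hard part will be precisely the scheme structure of the boundary. The $\CS$-action carries $\mathbb{Z}/2$ stabilizers along the real-representation locus, and a priori this could force the quotient to be only an orbifold and $\partial\BMMD$ to be merely $\mathbb{Q}$-Cartier rather than Cartier. The delicate step is therefore to verify that the combined grading yields a reduced boundary with the correct scheme structure and that the resulting analytic topology coincides with the quotient topology of \eqref{eqn:compactification}. This is exactly where I would invoke the finer analysis of \cite{hausel1998compactification,de2018compactification,fan2022analytic}: the identification of $\partial\BMMD$ with $(\MMD\setminus\MH^{-1}(0))/\CS$ together with the comparison of its algebraic and analytic structures established in \cite{fan2022analytic}.
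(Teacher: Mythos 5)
First, a point of comparison: the paper does not prove this statement at all --- it is imported verbatim from \cite{simpson1996hodge,Schmitt:98,hausel1998compactification,de2018compactification,fan2022analytic} --- so your sketch can only be measured against those constructions, whose Proj/GIT strategy it follows in outline. Measured that way, two steps do not hold up. The lesser problem is normality: knowing that the two strata $\MMD$ and $(\MMD\setminus\MH^{-1}(0))/\CS$ are separately normal says nothing about $\BMMD$ (a nodal curve has smooth strata but is not normal), and Serre's criterion cannot be run stratum by stratum; what your construction actually provides, and what you should use instead, is that $\BMMD\setminus\MH^{-1}(0)$ is a good quotient of the normal variety $(\MMD\setminus\MH^{-1}(0))\times\mathbb{C}$ by $\CS$ (equivalently, that your graded ring is the section ring of a line bundle on a normal variety, hence normal), together with the fact that invariant rings of normal rings under reductive groups are normal.

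The essential gap is the Cartier step, and there your key assertion is backwards. Parametrize a ray approaching the boundary by $\xi_t=t\cdot\xi_0$ with $q_0=\MH(\xi_0)\neq 0$; in the chart implicit in \eqref{eqn:compactification} this ray is $[(\xi_0,s)]$ with $s=t^{-1}$, and $\BMH(\xi_t)=[t^2q_0:1]=[q_0:s^2]$. Thus the local equation $u$ of $\pa\PMB$ pulls back to $s^2$ times a unit: precisely because the $\CS$-action covers the \emph{weight-two} action on $\MB$ while the boundary parameter $s$ has weight one, one gets $\BMH^{\st}(\pa\PMB)=2\,\pa\BMMD$ as divisors, non-reduced along the entire boundary --- not ``multiplicity one''. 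Pulling back a Cartier divisor therefore only shows that $2\,\pa\BMMD$ is Cartier, i.e.\ that the boundary is $\mathbb{Q}$-Cartier, which is exactly the difficulty you concede in your final paragraph and then outsource to the references. That difficulty is real and cannot be absorbed into ``checking the combined grading'': at a boundary point $[\xi_0]$ with $(\ME,\vp)\cong(\ME,-\vp)$ (such points exist over every $q\neq 0$, e.g.\ on the Hitchin section, and the $\mathbb{Z}/2$-stabilizer acts nontrivially on a slice because the real-representation locus is a proper subvariety), the naive quotient model of $\BMMD$ near the boundary is $(W\times\mathbb{C}_s)/(\mathbb{Z}/2)$ with the involution acting by $-1$ on $s$ and nontrivially on $W$; there the reduced boundary $\{s=0\}/(\mathbb{Z}/2)$ is, in the simplest case, the ruling of a quadric cone --- a Weil divisor that is not Cartier. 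So whether, and with which scheme structure, $\pa\BMMD$ is Cartier is exactly the content supplied by the cited constructions; your proposal does not prove it but re-cites it, and in the one place where it offers an argument (multiplicity one via the weight-two covering) the argument is incorrect.
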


The following characterization of sequential convergence is useful.
\begin{proposition}
\label{prop_convergence_Dol_space}
Let $[(\ME_i,\vp_i)]\in\MMD$ be a sequence of Higgs bundles, and write $q_i=\det(\vp_i)$ and $r_i=\|q_i\|_{L^2}^{\frac12}$. 
Suppose $\limsup r_i\to \infty$. Then up to subsequence:
\begin{itemize}
\item [(i)] there exists a Higgs bundle $[(\hME_{\infty},\hvp_{\infty})]$ with $\tq_{\infty}=\det(\tvp_{\infty})$ and 
$\|\hq_{\infty}\|_{L^2}=1$ such that $\lim_{i\to \infty}[(\ME_i,r_i^{-1}\vp_i)]=[(\hME_{\infty},\hvp_{\infty})]$ in $\MMD$ 
and $\lim_{i\to \infty}r_i^{-1}q_i=\hq_{\infty}$ in $H^0(K^2)$;
\item [(ii)] 
\begin{align*}
    \lim_{i \to \infty}
    \iota[(\ME_i,\vp_i)]&=\iota_{\pa}[(\hME_{\infty},\hvp_{\infty})]\ ,\
    \mbox{on}\ \BMMD\ , \\ 
\lim_{i\to \infty}\iota_0(q_i)&=\iota_{0,\pa}(\hq_{\infty})\ ,\  \mbox{on}\
    \PMB\ .
\end{align*}
\end{itemize}
\end{proposition}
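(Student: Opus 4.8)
The plan is to pass to the normalized sequence $(\ME_i, r_i^{-1}\vp_i)$, whose associated quadratic differential is $\det(r_i^{-1}\vp_i)=r_i^{-2}q_i$ and hence has unit $L^2$-norm, and to extract its limit using the properness of the Hitchin map. After passing to a subsequence we may assume $r_i\to\infty$, since $\limsup r_i=\infty$.

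For (i), first I would use that $\{r_i^{-2}q_i\}$ lies on the unit sphere of the finite-dimensional space $H^0(K^2)$; compactness of this sphere yields, after a subsequence, $r_i^{-2}q_i\to\hq_\infty$ with $\|\hq_\infty\|_{L^2}=1$, so in particular $\hq_\infty\neq 0$. Next, since $\MH([(\ME_i,r_i^{-1}\vp_i)])=r_i^{-2}q_i$ converges, the set $\{r_i^{-2}q_i\}\cup\{\hq_\infty\}$ is compact in $\MB$; by properness of $\MH$ its preimage is a compact subset of $\MMD$ containing the entire normalized sequence. Passing to a further subsequence gives $[(\ME_i,r_i^{-1}\vp_i)]\to[(\hME_\infty,\hvp_\infty)]$ in $\MMD$, and continuity of $\MH$ then forces $\det(\hvp_\infty)=\hq_\infty$. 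This establishes (i).

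For (ii), the key is to choose the right representative of the orbit $\iota[(\ME_i,\vp_i)]=[((\ME_i,\vp_i),1)]$ in the disjoint union defining $\BMMD$ in \eqref{eqn:compactification}. Under the relevant $\CS$-action $\mu\cdot((\ME,\vp),t)=((\ME,\mu\vp),\mu t)$, scaling by $\mu=r_i^{-1}$ produces the equivalent representative $((\ME_i,r_i^{-1}\vp_i),\,r_i^{-1})$. By (i) its base point converges to $(\hME_\infty,\hvp_\infty)\in\MMD\setminus\MH^{-1}(0)$ (a legitimate boundary point because $\hq_\infty\neq 0$), while the $\CS$-coordinate $r_i^{-1}\to 0$. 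I would then test this representative against the generating neighborhoods $\{V\times(W_2\cap\CS)\}\amalg\{V\cap(\MMD\setminus\MH^{-1}(0))\}$ with $V\ni(\hME_\infty,\hvp_\infty)$ and $0\in W_2$: it lies in each such set for $i$ large, so by continuity of the quotient map $\iota[(\ME_i,\vp_i)]\to\iota_{\pa}[(\hME_\infty,\hvp_\infty)]$. The $\PMB$-statement is the image of this downstairs: in $\mathbb{P}(H^0(K^2)\oplus\mathbb{C})$ one has $\iota_0(q_i)=[(q_i,1)]=[(r_i^{-2}q_i,\,r_i^{-2})]$, and since $(r_i^{-2}q_i,r_i^{-2})\to(\hq_\infty,0)\neq 0$, continuity of projectivization gives $\iota_0(q_i)\to[(\hq_\infty,0)]=\iota_{0,\pa}(\hq_\infty)$.

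The one genuinely nontrivial ingredient is the properness of the Hitchin map invoked in (i): it is what guarantees that the limit is an actual point of $\MMD$ and not merely a limiting quadratic differential. Everything in (ii) is bookkeeping, and the only subtlety there is pinning down the correct $\CS$-action so that the normalization $\mu=r_i^{-1}$ drives the $\CS$-coordinate to $0$---placing the limit on the boundary $\pa\BMMD$ rather than sending it off to $\infty$---after which matching with the explicit neighborhood basis of the quotient topology is routine.
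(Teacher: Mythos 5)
Your proof is correct and takes essentially the same route as the paper's, which for (i) simply invokes properness of the Hitchin map together with boundedness of $\MH(r_i^{-1}\vp_i)$, and for (ii) appeals to the definition of the quotient topology on $\BMMD$ and of $\PMB$; your write-up just supplies the compactness, subsequence, and neighborhood-basis details that the paper leaves implicit. Note also that you correctly work throughout with $r_i^{-2}q_i=\det(r_i^{-1}\vp_i)$, which is what the statement's ``$r_i^{-1}q_i$'' must mean (a typo), since $\|r_i^{-1}q_i\|_{L^2}=r_i\to\infty$ cannot converge to a unit-norm limit.
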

\begin{proof}
The first point follows since the Hitchin map $\MH$ is proper and $\MH(r_i^{-1}\vp_i)$ is
bounded. The second follows directly from the definition.
	\end{proof}

\subsection{The analytic compactification of the Hitchin moduli space}
We next describe the compactification of the Hitchin moduli space, as 
developed in \cite{mazzeo2012limiting,Mochizukiasymptotic,taubes2013compactness}. 
\subsubsection{Decoupled Hitchin equations}
We begin by defining the decoupled Hitchin equations. Recall the notation from Section \ref{sec:nah}, let $E$ be a trivial, smooth,  rank 2 vector bundle over a Riemann surface $\Sigma$, and let $H_0$ be a background Hermitian metric on $E$. Let $Z$ be a finite set of distinct points in $\Sigma$. For a smooth  unitary connection $A$ on $E|_{\Sigma\setminus Z}$ and
smooth $\phi=\vp+\vp^{\da}\in \Omega^1(i\su(E))|_{\Sigma\setminus Z}$, the
\emph{decoupled Hitchin equations} on $\Sigma\setminus Z$  are:
\begin{equation} \label{eq_decoupled_Hitchin_equation}
	\begin{split}
		F_A=0\ ,\ [\vp,\vp^{\da}]=0\ ,\ \bar{\pa}_A\vp=0\ .
	\end{split}
\end{equation}
Solutions to \eqref{eq_decoupled_Hitchin_equation} may be quite singular near $Z$, so we make the following restriction:
\begin{definition}
A solution $(A,\phi)$ to the decoupled Hitchin equations over
    $\Sigma\setminus Z$ is called \emph{admissible} 
    if $\phi\neq 0$, and  $|\phi|$ extends to a continuous function on $\Sigma$ with $|\phi|^{-1}(0)=Z$.
\end{definition}
By a \emph{limiting configuration} we always mean an admissible solution to the decoupled Hitchin equations. 
Clearly, $Z$ is determined by $(A,\phi)$. Admissibility guarantees  that $\det(\vp)$ extends to a holomorphic 
quadratic differential $q=\det(\vp)$ on $\Sigma$,  with $Z=q^{-1}(0)$ the
zero locus. Hence, the spectral curve $S_q$ 
is well-defined. We emphasize that $Z$ may vary for different admissible solutions, but one always has 
that $\# Z\leq 4g-4$. 

The equivalence relation on limiting configurations is that $(A_1,\phi_1)\sim (A_2,\phi_2)$ if $Z_1=Z_2$ and
$(A_1,\phi_1)g=(A_2,\phi_2)$ for  a smooth unitary gauge transformation $g$ on $\Sigma\setminus Z_1$. 
The moduli space of decoupled Hitchin equations is then
\begin{equation*}
        \MMH^{\LC}=\{\text{admissible solutions to }\eqref{eq_decoupled_Hitchin_equation}\}/\sim\ .
\end{equation*} 
We denote by $\MMHQLC$ the elements in $\MMH^{\LC}$ with a fixed quadratic
differential $q$. In this case,  the equivalence 
relation is  induced by the action of the unitary gauge group over $\Sigma\setminus Z$,   $Z=q^{-1}(0)$.

There is a natural $\CS$ action on the moduli space $\MMH^{\LC}$: given $(A,\phi=\vp+\vp^{\da})\in \MMH^{\LC}$ 
and $t\in \CS$, we set $t\cdot[(A,\phi)]=[(A,t\vp+\bar{t}\vp^{\da})]$, which is also a solution 
to \eqref{eq_decoupled_Hitchin_equation}.

\subsubsection{Compactification of the Hitchin moduli space}
The following compactness result is due to Taubes \cite{Taubes20133manifoldcompactness}
and Mochizuki \cite{Mochizukiasymptotic} (see also \cite{he2020behavior}).

\begin{proposition} %{\cite{he2020behavior,Mochizukiasymptotic,Taubes20133manifoldcompactness}}
	\label{prop_general_convergence_solutions}
	Let  $(A_{i},\vp_i)$ be a sequence of 
    solutions to \eqref{eq_Hitchin_real}, with  $q_i=\det(\vp_i)\in
    H^0(K^2)$. Then 
	\begin{itemize}
		\item [(i)] if $\limsup\|q_i\|_{L^2(\Sigma)}<\infty$, then there
            is a subsequence (also denoted $\{i\}$),
            a smooth solution  $(A_{\infty},\phi_{\infty})$ to
            \eqref{eq_Hitchin_real},
           and a sequence $g_i$ of smooth unitary gauge transformations on
            $\Sigma$,  
            such that $(A_i,\phi_i)g_i$ converges smoothly to
            $(A_{\infty},\phi_{\infty})$ on $\Sigma$;
		\item [(ii)]	if $\lim\|q_i\|_{L^2(\Sigma)}=\infty$, then there
            is a subsequence (also denoted $\{i\}$), and $q_{\infty}\in H^0(K^2)$ 
            so that
            $$\frac{q_i}{\Vert q_i\Vert_{L^2}}\longrightarrow
            q_{\infty}$$ over $\Sigma$, and an admissible solution 
            $(A_\infty,\phi_{\infty}=\vp_{\infty}+\vp^{\da}_{\infty})$ to
            \eqref{eq_decoupled_Hitchin_equation}, with 
			$Z_{\infty}:=q_{\infty}^{-1}(0)$,  and smooth unitary gauge
            transformations $g_i$ on $\Sigma\setminus Z_\infty$, 
            such that over any open set $ \Omega\Subset \Sigma\setminus Z_{\infty}$, 
            $(A_i)g_i\to A_\infty$, and  
            $$\frac{g_i^{-1}\phi_ig_i}{\Vert\phi\Vert_{L^2}}\longrightarrow
            \phi_{\infty}$$
           smoothly on $\Omega$.  
	\end{itemize}
\end{proposition}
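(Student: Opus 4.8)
The plan is to treat both alternatives by a single scheme: derive a pointwise a priori bound on the Higgs field, use the curvature identity $F_{A_i}=-[\vp_i,\vp_i^{\da}]$ to convert this into a (suitably rescaled) bound on $F_{A_i}$, and then extract a convergent subsequence by Uhlenbeck gauge fixing together with the ellipticity of the Hitchin complex. The basic input is the Weitzenb\"ock inequality for $|\vp|^2$: since $\vp$ is tracefree, its eigenvalues as a $K$-valued endomorphism are $\pm\sqrt{-q}$ with $q=\det\vp$, so $|\vp|$ is comparable to $|q|^{1/2}$ away from $Z=q^{-1}(0)$, and the maximum principle gives $\sup_\Sigma|\vp_i|\le C(1+\sup_\Sigma|q_i|^{1/2})$. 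Because $H^0(K^2)$ is finite dimensional, all norms on it are equivalent, so $\sup_\Sigma|q_i|$ is controlled by $\|q_i\|_{L^2}$; this is exactly why the dichotomy is governed by the boundedness of $\|q_i\|_{L^2}$.

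For (i), assume $\limsup\|q_i\|_{L^2}<\infty$. The a priori estimate then yields a uniform $C^0$ bound on $\vp_i$, hence $|F_{A_i}|=|[\vp_i,\vp_i^{\da}]|$ is uniformly bounded on all of $\Sigma$. I would invoke Uhlenbeck compactness on the closed surface $\Sigma$ to obtain smooth unitary gauge transformations $g_i$ for which $(A_i)g_i$ is bounded in $W^{1,p}$ and, after passing to a subsequence, converges weakly. Since $\vp_i$ is bounded and satisfies $\bar\partial_{A_i}\vp_i=0$, and the linearized equations are elliptic modulo gauge, Coulomb gauge fixing and elliptic bootstrapping upgrade this to $C^\infty$ convergence of $(A_i,\phi_i)g_i$ to a smooth solution $(A_\infty,\phi_\infty)$ of \eqref{eq_Hitchin_real}.

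For (ii), set $r_i=\|q_i\|_{L^2}^{1/2}\to\infty$ and rescale $\tvp_i:=\vp_i/r_i$, so that $\det\tvp_i=q_i/\|q_i\|_{L^2}$ has unit $L^2$ norm; the normalization by $\|\phi_i\|_{L^2}$ in the statement differs from this by a bounded factor converging to a positive constant, which is absorbed into the limit. By finite dimensionality, a subsequence satisfies $q_i/\|q_i\|_{L^2}\to q_\infty$ with $\|q_\infty\|_{L^2}=1$; put $Z_\infty=q_\infty^{-1}(0)$, a finite set with $\#Z_\infty\le 4g-4$. Fix $\Omega\Subset\Sigma\setminus Z_\infty$, where $|q_\infty|$ is bounded below, so that for large $i$ the field $\tvp_i$ is bounded above and below on $\Omega$; the rescaled identity now reads $F_{A_i}=-r_i^2[\tvp_i,\tvp_i^{\da}]$. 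The analytic heart of the argument, and where I expect the main obstacle, is to control the commutator against the growing prefactor $r_i^2$. This is the content of the Taubes--Mochizuki a priori analysis \cite{Taubes20133manifoldcompactness,Mochizukiasymptotic,he2020behavior,mazzeo2012limiting}: on the region where $|q|$ is bounded below one has a differential inequality schematically of the form $\Delta\beta_i\ge c\,r_i^2\,\beta_i$ for $\beta_i=|[\tvp_i,\tvp_i^{\da}]|$, which forces $\beta_i$ to decay exponentially into the interior of $\Omega$ at rate comparable to $r_i$, so that $r_i^2\beta_i\to 0$ locally uniformly.

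Granting this estimate, on every such $\Omega$ the fields $\tvp_i$ are bounded, $\bar\partial_{A_i}\tvp_i=0$, and $F_{A_i}=-r_i^2[\tvp_i,\tvp_i^{\da}]\to 0$; the gauge-fixing and bootstrap argument of part (i), applied on an exhaustion of $\Sigma\setminus Z_\infty$ by such $\Omega$ and combined with a diagonal argument, produces gauge transformations $g_i$ on $\Sigma\setminus Z_\infty$ and a limit $(A_\infty,\vp_\infty)$ in $\MC^{\infty}_{\loc}$. Passing to the limit in the identities gives $F_{A_\infty}=0$ and $[\vp_\infty,\vp_\infty^{\da}]=\lim(-F_{A_i}/r_i^2)=0$, so $(A_\infty,\phi_\infty)$ solves the decoupled equations \eqref{eq_decoupled_Hitchin_equation}. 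Finally, in this decoupled limit $\vp_\infty$ is normal, so $|\phi_\infty|$ equals a fixed positive multiple of $|q_\infty|^{1/2}$ on $\Sigma\setminus Z_\infty$; since $|q_\infty|^{1/2}$ extends continuously across $Z_\infty$ with zero set exactly $Z_\infty$, the limit is admissible and $Z_\infty=q_\infty^{-1}(0)$, as claimed.
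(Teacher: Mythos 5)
The paper gives no proof of this proposition at all: it is quoted as a known result of Taubes \cite{Taubes20133manifoldcompactness} and Mochizuki \cite{Mochizukiasymptotic} (see also \cite{he2020behavior}), so there is no internal argument to compare yours against, and your proposal rests the hard analytic step on exactly those same sources. Your outline is a correct reconstruction of the standard argument behind the citation — the maximum-principle bound $\sup_\Sigma|\vp_i|\le C(1+\sup_\Sigma|q_i|^{1/2})$, norm equivalence on the finite-dimensional space $H^0(K^2)$ to set up the dichotomy, Uhlenbeck compactness plus bootstrapping for (i), and for (ii) the rescaled identity $F_{A_i}=-r_i^2[\tvp_i,\tvp_i^{\da}]$ combined with the uniform exponential decay of the commutator on compact sets away from $Z_\infty$ (valid here since the constants in the decay estimate depend only on $\inf_\Omega|q_i|$ and the sup bound, both uniform along the sequence) — and you correctly identify that decay estimate as the analytic core, which is precisely the content of the cited works.
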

The norm on $H^0(\Sigma,K^2)$ can be chosen arbitrarily, since it is a finite dimensional space.

There is also a compactness result for sequences of solutions in $\MMH^{\LC}$.
\begin{proposition}
	\label{prop_seq_compactification_limitingconfiguration}
    Let $[(A_i,\phi_i=\vp_i+\vp_i^{\dagger})]\in \MMH^{\LC}$ be a sequence
    of admissible solutions to \eqref{eq_decoupled_Hitchin_equation},
    and let $q_i=\det(\vp_i)$ be the corresponding quadratic differentials.
    Then after passing to a subsequence, there are $t_i\in\CS$, a limiting
    configuration $(A_{\infty},\phi_{\infty}=\vp_{\infty}+\vp_{\infty}^{\dagger})$
    with quadratic differential $q_{\infty}=\det(\vp_{\infty})\neq 0$,
    and a sequence $g_i$ of smooth gauge transformations on
    $\Sigma\setminus Z_\infty$,  such that:
	\begin{itemize}
		\item [(i)] $t_i^2q_i$ converges smoothly to $q_{\infty}$,
		\item [(ii)] over any open set $\Omega\Subset X\setminus Z_{\infty}$, 
            $(A_i,t_i\cdot \phi_i)g_i$ converges smoothly to $(A_{\infty},\phi_{\infty})$.
	\end{itemize}
\end{proposition}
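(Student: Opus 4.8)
The plan is to normalize the sequence with the $\CS$ action and then reduce the $\mathsf{SU}(2)$ compactness statement to the abelian convergence result for weighted line bundles in Theorem \ref{thm_convergence_family_harmonic_bundles}. First I would use admissibility: since each $(A_i,\phi_i)$ is a limiting configuration, $q_i=\det(\vp_i)\neq 0$, so one may pick $t_i\in\RR^+\subset\CS$ with $\|t_i^2q_i\|_{L^2}=1$. Because $H^0(K^2)$ is finite dimensional, after passing to a subsequence $t_i^2q_i\to q_\infty$ smoothly, with $\|q_\infty\|_{L^2}=1$ and hence $q_\infty\neq 0$; this already gives (i) and defines the finite set $Z_\infty=q_\infty^{-1}(0)$. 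Replacing $\phi_i$ by $t_i\cdot\phi_i$, I may assume $\det(\vp_i)=t_i^2q_i\to q_\infty$. The commuting relation $[\vp_i,\vp_i^\da]=0$ makes $t_i\vp_i$ a normal endomorphism, so its pointwise norm is a fixed multiple of $|t_i^2q_i|^{1/2}$; in particular the rescaled Higgs fields are uniformly bounded on every $\Omega\Subset\Sigma\setminus Z_\infty$.

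Next I would abelianize. On any $\Omega\Subset\Sigma\setminus Z_\infty$ and for $i$ large, $t_i^2q_i$ is bounded away from $0$, so $t_i\vp_i$ is normal with distinct nonvanishing eigenvalues $\pm\sqrt{-t_i^2q_i}$, and $E$ splits orthogonally into its two $\bar{\pa}_{A_i}$-holomorphic eigen-line bundles. Since $A_i$ is the Chern connection of $(E,\bar{\pa}_{A_i},H_0)$ and both eigen-line bundles are holomorphic and mutually orthogonal, flatness $F_{A_i}=0$ forces $A_i$ to split as a direct sum of flat unitary connections on the eigen-line bundles. Globally on $\Sigma\setminus Z_\infty$ this is precisely the weighted line bundle datum on the spectral curve $S_{t_i^2q_i}$ attached to the limiting configuration, with parabolic weights at the branch points determined by the orders of the zeros of $t_i^2q_i$; moreover flatness forces each such weighted line bundle to have degree $0$, so the harmonic metric of Proposition \ref{prop_NAHforfilteredlinebundle} coincides with the flat metric of the abelian connection. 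As $t_i^2q_i\to q_\infty$, the branch points $Z_i$ converge to $Z_\infty$ and the spectral curves converge away from $Z_\infty$, so Theorem \ref{thm_convergence_family_harmonic_bundles} applies to the abelianized data: after rescaling the harmonic metrics by positive constants and applying unitary gauge transformations $g_i$ on $\Sigma\setminus Z_\infty$, the abelian connections converge in $\MC^{\infty}_{\loc}$. Reassembling the two sheets yields a limit $(A_\infty,\phi_\infty=\vp_\infty+\vp_\infty^\da)$ solving \eqref{eq_decoupled_Hitchin_equation} on $\Sigma\setminus Z_\infty$ with $(A_i,t_i\cdot\phi_i)g_i\to(A_\infty,\phi_\infty)$ there; since $|\phi_\infty|$ is the same fixed multiple of $|q_\infty|^{1/2}$, it extends continuously to $\Sigma$ with zero locus exactly $Z_\infty$, so the limit is admissible with $\det(\vp_\infty)=q_\infty\neq 0$. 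This establishes (ii).

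The hard part will be the abelianization step in the presence of a moving branch locus. Away from $Z_\infty$ there is no difficulty, but as zeros of $t_i^2q_i$ collide into higher-order zeros of $q_\infty$ the individual parabolic weights must be summed according to Definition \ref{def_convergence_parabolic_bundles}(ii), and one must verify that the associated weighted line bundles genuinely converge in that sense so that Theorem \ref{thm_convergence_family_harmonic_bundles} is applicable. The accompanying technical point is to upgrade the locally defined eigen-frames to a single unitary gauge transformation $g_i$ defined on all of $\Sigma\setminus Z_\infty$ that simultaneously realizes the orthogonal eigen-splitting and the harmonic-metric convergence; this is where the spectral-curve bookkeeping and the degree-$0$ normalization do the essential work.
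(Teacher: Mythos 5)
Your normalization and pointwise abelianization steps are sound: choosing $t_i$ with $\|t_i^2q_i\|_{L^2}=1$ gives (i), and on any $\Omega\Subset\Sigma\setminus Z_\infty$ the normality of $\vp_i$ together with $|\vp_i|^2=2|\det\vp_i|$ gives uniform bounds and, for large $i$, an orthogonal holomorphic eigen-splitting with respect to which $A_i$ splits into flat line-bundle connections. The genuine gap is the step where you feed this data into Theorem \ref{thm_convergence_family_harmonic_bundles}. That theorem is formulated for line bundles on a family of metrized Riemann surfaces $(\Sigma_t,g_t)=((C,J_t),g_t)$ converging \emph{smoothly} to $(\Sigma_0,g_0)$ --- a fixed underlying surface $C$ with converging complex structures --- together with colliding marked points. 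Your line bundles live on the spectral curves $S_{t_i^2q_i}$, and these form such a family only when $q_\infty$ has simple zeros. When zeros of $t_i^2q_i$ collide into a higher-order zero of $q_\infty$ (a case the proposition must cover, and indeed the case this paper is about), the curves $S_{t_i^2q_i}$ degenerate to the \emph{singular} curve $S_{q_\infty}$: a cycle is pinched, the geometric genus drops, and there is no fixed $C$ with $J_i\to J_\infty$ smoothly. Summing parabolic weights as in Definition \ref{def_convergence_parabolic_bundles}(ii) handles marked points colliding on a \emph{fixed} surface; it does not address a degenerating family of surfaces, so Theorem \ref{thm_convergence_family_harmonic_bundles} simply does not apply, and you have misidentified the obstruction as a bookkeeping issue when it is a failure of the framework. (Working on $\Sigma$ itself does not rescue this: the eigenvalues $\pm\sqrt{-t_i^2q_i}$ have monodromy around odd-order zeros, so the eigen-line bundles exist only on the varying double covers --- which is exactly why you were forced onto the spectral curves in the first place.)

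The paper's own proof avoids abelianization entirely and is far shorter: on any $K\Subset\Sigma\setminus Z_\infty$ one has $Z_i\cap K=\emptyset$ for large $i$, so $F_{A_i}=0$ there; Uhlenbeck compactness with the (trivial) curvature bound plus classical bootstrapping gives smooth subsequential convergence of the $A_i$ up to unitary gauge on compact exhaustions of $\Sigma\setminus Z_\infty$, and the Higgs fields then converge because they are uniformly bounded by $|q_i|^{1/2}$ and satisfy the elliptic equation $\bar\partial_{A_i}\vp_i=0$ with converging coefficients; admissibility of the limit follows since $|\phi_\infty|$ is a fixed multiple of $|q_\infty|^{1/2}$. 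If you wish to keep your strategy, you would need a rank-one convergence result valid for degenerating spectral curves --- essentially the content of \cite{mochizuki2023asymptotic} --- which is a substantially heavier input than the direct gauge-theoretic argument this proposition admits.
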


\begin{proof}
Write $q_i=\det(\vp_i)\in H^0(K^2)$. Adjusting by $t_i$ if necessary, we
    may assume   $q_i$ converges to $q_{\infty}$ over $\Sigma$. Also, since $F_{A_i}=0$ over $\Sigma\setminus Z_i$
    and $Z_i$ converges to $Z_{\infty}$, we can apply both Uhlenbeck
    compactness and the classical bootstrapping method to obtain
    $A_{\infty}$ such that up to gauge $A_i$ converges smoothly to $A_{\infty}$ over $\Sigma\setminus Z_{\infty}$.
    Finally, the convergence of $\varphi_i$ follows by the bound on
    $q_i$'s.
\end{proof}

\subsubsection{The topology on the compactified space}
We now carefully define the topology on the space $\MMH \coprod
\MMH^{\LC}/\CS$. Choose a metric in the conformal class on $\Sigma$.  Let $W^{k,2}$ denote the Sobolev spaces on $\Sigma$ of
distributional sections with at least  $k$ derivatives in $L^2$. For a finite set of points $Z \subset \Sigma$ (or indeed any
closed subset), %$W^{k,2}_{\loc}(\Sigma\setminus Z)$ 
\begin{equation*}
	\begin{split}
		W^{k,2}_{\loc}(\Sigma\setminus Z):=\{f\mid f\in W^{k,2}(K),\;K\subset \Sigma\setminus Z,\;K\;\mathrm{compact}\}.
	\end{split}
\end{equation*}
These definitions extend easily to the space of connections and $\Omega^1(i \su(E))$ for a Hermitian vector bundle $(E,H_0)$ 
over $\Sigma$ with a fixed smooth background connection. 

Let ${\omega_n}$ be a nested collection of open sets with $\omega_n\subset \overline{\omega_n} \subset \omega_{n+1}$, 
with $\bigcup_{n}\omega_n= \Sigma\setminus Z$. We then define the seminorms $\|f\|_n:=\|f\|_{W^{k,2}(\omega_n)}$; in
terms of these, $W^{k,2}_{\loc}(\Sigma\setminus Z)$ a Fr\'echet space. %,  and the open sets of $W^{k,2}_{\loc}$ are induced by these seminorms.
 
For any $q\in H^0(K^2)\setminus\{0\}$, set $Z_q:=q^{-1}(0)$, and consider the moduli space
\begin{equation*}
	\mbM_q=\{(A,\phi)\in
    \MM_{q^\ast}\cap W^{k,2}(\Sigma)\}\cup \{(A,\phi)\in\MMHQLC\cap
    W^{k,2}_{\loc}(\Sigma\setminus Z_q)\}/\CS\ .
\end{equation*}
By classical bootstrapping of the gauge-theoretic elliptic equations, $\mbM_q$ is independent of $k\geq 2$. 

Next define $\mbM:=\mathcal{M}_{0}\cup \bigcup_{q\in H^0(K^2)\setminus\{0\}}\mbM_q$.  Its topology is generated by two types of open sets. For interior points 
$\xi=[(A,\phi)]\in \MMH\subset \mbM$ we use the open sets
\begin{equation*}
	\begin{split}
		V_{\xi,\ep}:=\{[(A',\phi')]\in \MMH\mid\|A'-A\|_{W^{k,2}(\Sigma)}+\|\phi'-\phi\|_{W^{k,2}(\Sigma)}<\ep\}
	\end{split}
\end{equation*}
from the topology of $\MMH$.  For any boundary point $\xi_0\in\MMH^{\LC}/\CS$, choose a representative $(A_0,e^{i\theta}\phi_0)$ for some 
constant $\theta$. Let $q=\det(\phi_0)$, and fix any open set $\om\Subset \Sigma\setminus Z_q$. Then, setting $\MMH^{\st}=\MMH\setminus \MH^{-1}(0)$, 
\begin{equation*}
	\begin{split}
		U_{\xi_0,\om,\ep}:=&\{(A,\phi)\in\MMH^{\st}\mid
        \|A-A_0\|_{W^{k,2}(\om)}+\inf_{\theta\in S^1}\|\|\phi\|_{L^2}^{-\frac12}\phi-e^{i\theta}\phi_0\|_{W^{k,2}(\om)}<\ep,\;\|\phi\|_{L^2}>\ep\}\\
			&\bigcup \, \{(A,\phi)\in\MMH^{\LC}|
			\|A-A_0\|_{W^{k,2}(\om)}+\|\phi-\phi_0\|_{W^{k,2}(\om)}<\ep\}
	\end{split}
\end{equation*}
defines an open set around $\xi_0$.  The sets $U_{\xi_0,\om,\ep}$ and $V_{\xi,\ep}$ generate the topology on $\mbM$.

\begin{theorem}
	The space $\mbM$ is a Hausdorff and compact.
\end{theorem}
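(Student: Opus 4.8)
The plan is to prove compactness via sequential compactness and handle the Hausdorff property separately. First I would reduce compactness to sequential compactness: the topology on $\mbM$ is generated by the sets $V_{\xi,\ep}$ and $U_{\xi_0,\om,\ep}$, and letting $\om$ range over a countable exhaustion of each $\Sigma\setminus Z_q$ and $\ep$ over $\{1/m\}$ produces a countable neighborhood basis at every point. Since the interior $\MMH$ and the boundary $\MMHLC/\CS$ are finite-dimensional with separable Sobolev topologies, $\mbM$ is second countable, hence Lindel\"of. As a Lindel\"of countably compact space is compact, and sequential compactness implies countable compactness, it suffices to prove $\mbM$ is sequentially compact.

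Given a sequence in $\mbM$, I would pass to a subsequence lying entirely in $\MMH$ or entirely in $\MMHLC/\CS$. For an interior sequence $[(A_i,\phi_i)]$ with $q_i=\det\varphi_i$, I split on $\|q_i\|_{L^2}$. If $\limsup\|q_i\|_{L^2}<\infty$, Proposition \ref{prop_general_convergence_solutions}(i) gives, after gauge, smooth convergence on $\Sigma$ to a solution of \eqref{eq_Hitchin_real}; smooth convergence matches the $V_{\xi,\ep}$ neighborhoods, so the limit is the interior point. If $\|q_i\|_{L^2}\to\infty$, Proposition \ref{prop_general_convergence_solutions}(ii) gives $\MC^\infty_{\loc}$ convergence on $\Sigma\setminus Z_\infty$ of the rescaled, gauged solutions to a limiting configuration; once the normalization and residual $S^1$-phase in $U_{\xi_0,\om,\ep}$ are reconciled with the rescaling of the proposition (the phase being absorbed by the $\inf_\theta$ and the $\CS$-quotient), this is precisely convergence to the corresponding boundary point. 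For a boundary sequence, Proposition \ref{prop_seq_compactification_limitingconfiguration} yields, after a $\CS$-action and gauge, $\MC^\infty_{\loc}$ convergence on $\Sigma\setminus Z_\infty$ to a limiting configuration, again matching the $U$-neighborhoods. This establishes sequential compactness.

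For the Hausdorff property, I would first use the extended Hitchin map $\mbM\to\PMB$, sending interior $\xi$ to $\iota_0(\det\varphi)$ and boundary $\xi_0$ to $\iota_{0,\pa}(q_\infty)$, which is continuous into the Hausdorff space $\PMB$; points with distinct images are thus separated, reducing to separating points over a single $[q]$. Two interior points over $[q]$ are separated because $\MMH$ is Hausdorff and its subspace topology in $\mbM$ agrees with its intrinsic one. An interior point and a boundary point are separated using the norm threshold built into $U_{\xi_0,\om,\ep}$: points near a boundary class necessarily have large $\|\phi\|_{L^2}$ (their rescaled Higgs fields approximate a limiting configuration), while a fixed interior solution has bounded norm, so a sufficiently small $V_{\xi,\ep}$ is disjoint from an appropriate $U_{\xi_0,\om,\ep'}$.

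The hard part will be separating two distinct boundary classes $\xi_0\neq\xi_1$ lying over the same $[q]$, since the neighborhoods $U_{\xi_j,\om,\ep}$ only record $W^{k,2}(\om)$ behavior on compact $\om\Subset\Sigma\setminus Z_q$. Separating them requires a rigidity statement: a limiting configuration is determined by its restriction to any such $\om$. This is a unique-continuation property for the decoupled system \eqref{eq_decoupled_Hitchin_equation}, where on $\Sigma\setminus Z_q$ the equations $F_A=0$, $[\varphi,\varphi^\dagger]=0$, $\bar\partial_A\varphi=0$ are overdetermined-elliptic and, together with the fixed spectral data $q$, force two solutions agreeing up to unitary gauge and phase on an open set to be globally gauge-equivalent (their flat structures having identical holonomy and continuing analytically to agree). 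Granting this, if no $U$-neighborhoods separated $\xi_0$ and $\xi_1$, then by sequential compactness and a diagonal argument over an exhaustion $\{\om_n\}$ one would extract a single sequence converging to both, forcing $\xi_0=\xi_1$, a contradiction. I expect this holonomy-rigidity input to be the crux; the compactness half is essentially a repackaging of Propositions \ref{prop_general_convergence_solutions} and \ref{prop_seq_compactification_limitingconfiguration} in terms of the neighborhood basis.
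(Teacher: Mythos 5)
The compactness half of your proposal is correct and is essentially the paper's argument: sequential compactness is exactly Propositions \ref{prop_general_convergence_solutions} and \ref{prop_seq_compactification_limitingconfiguration} repackaged in terms of the neighborhood basis, and your upgrade to compactness via second countability/Lindel\"of is in fact \emph{tighter} than the paper's stated ``first countable and sequentially compact, hence compact'' (which, as a bare topological implication, is false --- $\omega_1$ is a counterexample; one really needs the Lindel\"of property you invoke).

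The genuine gap is in the Hausdorff half, precisely at the point you yourself flag as the crux. Your rigidity lemma --- that a limiting configuration is determined, up to gauge and phase, by its restriction to \emph{any} compact $\om\Subset\Sigma\setminus Z_q$, via ``unique continuation'' of the flat structure --- is false, and no analytic-continuation argument can prove it. For solutions of \eqref{eq_decoupled_Hitchin_equation} with fixed $q$, the Higgs field is normal with distinct eigenvalues away from $Z_q$; after a local unitary gauge aligning the eigendecompositions, two limiting configurations in $\MMHQLC$ differ by a \emph{closed} diagonal $1$-form $a$. If $\om$ is simply connected, $a|_{\om}$ is exact and hence removable by a gauge transformation on $\om$: every pair of limiting configurations over the same $q$ becomes gauge-equivalent upon restriction to such an $\om$. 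The data distinguishing distinct boundary classes is holonomy --- a global invariant of the flat eigenline bundle on the spectral curve --- which is invisible on small open sets. Unique continuation applies to solutions as fields (a solution vanishing to infinite order, say), not to gauge classes of flat objects, which are locally trivial by definition.

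The repair is to use not an arbitrary $\om$ but one for which the inclusion $\om\hookrightarrow\Sigma\setminus Z_q$ is a homotopy equivalence (e.g.\ the complement of small disks about the zeros of $q$). Restriction to such an $\om$ does capture the holonomy, so two distinct boundary classes are \emph{not} gauge-equivalent on $\om$; a compactness argument for gauge transformations on $\overline{\om}$ then gives a positive gauge-invariant $W^{k,2}(\om)$-distance between them, and the corresponding $U$-neighborhoods with small $\ep$ are disjoint. (Two further small caveats: your interior-versus-boundary separation tacitly reads the norm threshold in $U_{\xi_0,\om,\ep}$ as $\|\phi\|_{L^2}>1/\ep$ rather than the literal $\|\phi\|_{L^2}>\ep$, which is vacuous as printed --- this matters because a smooth globally decoupled solution, e.g.\ one coming from a polystable split Higgs bundle with flat summands, defines \emph{both} an interior point and a boundary configuration, and only the large-norm threshold separates them. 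Be aware that the paper itself disposes of Hausdorffness with the single sentence that it ``follows from the definition of the topology,'' so your instinct that this step needs a real argument is correct; it is only your proposed argument that does not work.)
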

\begin{proof}
The Hausdorff property follows from the definition of the topology. By Propositions \ref{prop_general_convergence_solutions} and
\ref{prop_seq_compactification_limitingconfiguration}, $\mbM$ is sequentially compact. Moreover, using this explicit base for
the topology $\mbM$ is first countable, and hence compact. 
\end{proof}

We may now define the compactification of the Hitchin moduli space as the
closure $\BMMH\subset\mathbb{M}$;
we write $\pa\BMMH$ for the boundary of the closure, and $\BMMHq:=\BMMH\cap \mbM_q$ for the subset of elements 
with a fixed quadratic differential.  

The following result is described in \cite{mazzeo2016ends,ott2020higgs,mazzeo2019asymptotic}.
\begin{theorem}[{\cite[Prop.\ 3.3]{mazzeo2019asymptotic}}]
\label{thm_simple_zero_bijective}
If $q$ has only simple zeros, then $\BMMHq=\mbM_q$.
\end{theorem}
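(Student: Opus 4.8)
The plan is to prove the two inclusions separately, noting that the inclusion $\BMMHq\subseteq\mbM_q$ is immediate from the definition $\BMMHq=\BMMH\cap\mbM_q$. The entire content of the statement is therefore the reverse inclusion $\mbM_q\subseteq\BMMH$ (which, intersected with $\mbM_q$, gives $\mbM_q=\BMMHq$). Here $\mbM_q$ splits into its interior part $\MM_{q^\ast}$ and its boundary part, the limiting configurations $\MMHQLC/\CS$. For the interior part there is nothing to do: any element of $\MM_{q^\ast}$ is, under the correspondence $\Xi$, a genuine solution of the Hitchin equations \eqref{eq_Hitchin_real}, hence already lies in $\MMH\subseteq\BMMH$. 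Thus everything reduces to showing that every limiting configuration $\xi_0=[(A_\infty,\phi_\infty)]$ with $\det(\vp_\infty)=q$ a simple-zero differential is a limit, in the topology of $\mbM$, of a sequence of genuine solutions of \eqref{eq_Hitchin_real}.

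For this I would invoke the gluing construction of Mazzeo--Swoboda--Weiss--Witt \cite{mazzeo2016ends,mazzeo2019asymptotic}. Given the limiting configuration $(A_\infty,\phi_\infty)$ and a large parameter $t$, one produces an approximate solution $(A_t^{\app},t\phi_\infty)$ of \eqref{eq_Hitchin_real}: away from $Z_q=q^{-1}(0)$ one leaves the limiting configuration essentially unchanged, while in a small disk around each $p\in Z_q$ one grafts in the explicit fiducial solution modelled on $\mbC$ (the desingularization of the decoupled solution near a simple zero), interpolating across an annular region. Because $q$ has only simple zeros, the relevant local model is the standard one and the matching is uniform over all of $Z_q$. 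One then solves \eqref{eq_Hitchin_real} exactly by perturbing $(A_t^{\app},t\phi_\infty)$ via a contraction-mapping argument, obtaining for $t$ large an honest solution $(A_t,\phi_t)\in\MMH$ whose determinant is a fixed multiple of $q$ with $\|\phi_t\|_{L^2}\to\infty$ as $t\to\infty$.

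Finally I would verify convergence in $\mbM$. Choosing $t_i\to\infty$ and unpacking the definition of the neighborhoods $U_{\xi_0,\om,\ep}$, this amounts to checking that on any $\om\Subset\Sigma\setminus Z_q$ one has $A_{t_i}\to A_\infty$ and that the normalized Higgs field converges to $\phi_\infty$ up to an $S^1$-phase in $W^{k,2}(\om)$, together with $\|\phi_{t_i}\|_{L^2}\to\infty$ and convergence of the normalized quadratic differentials to a multiple of $q$; all of these follow from the exponentially small size of the correction term away from $Z_q$ supplied by the gluing estimates. I expect the main obstacle to be the analytic heart of the perturbation step, namely establishing that the linearization of the Hitchin operator at $(A_t^{\app},t\phi_\infty)$ is uniformly invertible, with the operator norm of its inverse controlled as $t\to\infty$. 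This is exactly where the simple-zero hypothesis is used: it guarantees that the linearized operator of each local fiducial model has trivial cokernel and no small eigenvalues, so that the local inverses can be patched into a global one. Once uniform invertibility is in hand, the remaining phase-matching and $\MC^{\infty}_{\loc}$-convergence checks are routine consequences of the MSWW estimates.
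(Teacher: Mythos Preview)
Your proposal is correct and matches the approach of the cited reference. The paper does not supply its own proof of this statement; it is quoted directly from \cite[Prop.\ 3.3]{mazzeo2019asymptotic}, and your outline---reducing to the boundary inclusion and then invoking the MSWW gluing construction to desingularize an arbitrary limiting configuration over simple zeros into a one-parameter family of genuine solutions converging back to it---is exactly the argument carried out there.
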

In other words, the compactification of any slice where $q$ does not lie in
the discriminant locus is ``the obvious one''. 

\section{Parabolic modules and stratification of BNR data}
\label{sec_parabolic_modules}
In this section, we review the notion of a parabolic module, as
described in \cite{rego1980compactified,cook1993local,cook1998compactified,
gothen2013singular}. This concept leads to a partial normalization of the
generalized Jacobian and Prym varieties of the spectral curve.
\subsection{Normalization of the spectral curve}
Let $q\neq 0$ be a quadratic differential with an irreducible,  singular spectral curve $S=S_q$. The zeros of $q$ define a divisor 
$\Div(q) = \sum_{i=1}^{r_1} m_i p_i + \sum_{j=1}^{r_2} n_j p_j'$, where the $m_i$ and $n_j$ are even and odd integers, respectively, 
and hence $r_1$ and $r_2$ are the numbers of even and odd zeros, respectively, counted without
multiplicity. Write $\Ze = \{p_1, \dots, p_{r_1}\}$, $\Zo = \{p_1', \dots, p_{r_2}'\}$, and $Z = \Ze \cup \Zo$, so $\#Z=r = r_1 + r_2$.

The map $\pi: S \to \Sigma$ is a double covering branched along $Z$; hence, we may view $p_i$ and $p_i'$ as points in $S$. 
For $x \in S$, let $\MO_x$ be the local ring, $\MO_x^\ast$ its group of units, and  $R_x$ the completion. 
We say that $S$ has an $A_n$ singularity at $x$ if $R_x \cong \mbC[[r,s]]/(r^2-s^{n+1})$, where $n \geq 1$. If $S$ has an $A_1$ 
singularity at $x$, we call it a \emph{nodal} singularity, and if $S$ has an $A_2$ singularity at $x$, we
call it a \emph{cusp} singularity.

Let $p:\tS \to S$ be the normalization of $S$, and let $\tpi := \pi \circ p$: 
\[
\begin{tikzcd}
	\tS \arrow{r}{p} \arrow[swap]{dr}{\tpi} & S \arrow{d}{\pi} \\
	& \Sigma
\end{tikzcd}
\]
For even zeros $p_i$  we write $p^{-1}(p_i) = \{\tp_i^{+}, \tp_i^{-}\}$, and for odd zeros $p_i'$ we write $p^{-1}(p_i') = \tp_i'$. Since 
$\pi:S\to\Sigma$ is a branched double cover, the involution $\sigma$ on $S$ extends to an involution of $\tS$ which we also denote 
by $\sigma$. Note that $\sigma(\tp_i') = \tp_i'$ while $\sigma(\tp_i^\pm) = \tp_i^\mp$.

The ramification divisor
$\Lam'=\frac{1}{2}\sum_{i=1}^{r_1}m_ip_i+\sum_{j=1}^{r_2}(n_j-1)p_j''$,
is a divisor on $S$, and there is an exact sequence: 
\begin{equation} \label{eq_normalization_exact_sequence}
    0\lra \MO_{S}\lra p_{\st}\MO_{\tS}\lra \MO_{\Lam'}\lra 0\ .	
\end{equation}
The genus of $\tS$ is $g(\tS)=4g-3-\deg(\Lam')=2g-1+r_2/2$. 

\subsection{Jacobian under the pull-back of the normalization}
We now recall some facts about the Jacobian under the pull-back of the normalization, cf.\ \cite{gothen2013singular}. Let $x \in Z
\subset S$ be a singular point, i.e.\ either $x\in\Ze$ or $x=p'_j$ with
$n_j\geq 3$.   Let  $\tMO_{x}$ be the
integral closure of $\MO_x$. We take $V:=\prod_{x\in
Z}\tMO_x^{\st}/\MO_x^{\st}$. Then we have the following well-known short
exact sequence induced by the pull-back of the normalization.
\begin{equation} \label{eq_normalization_Jacobian_fibration}
	\begin{split}
		0\lra V\lra \Jac(S)\xrightarrow{p^{\st}} \Jac(\tS)\lra 0\ .
	\end{split}
\end{equation}
This will play an important role later on. 

\subsubsection{Hitchin fiber}
Now we examine the locally free part $\MT$ of the Hitchin fiber under
the pull-back. Here, $\MT$ is defined to be the set of $L\in\Pic^{2g-2}(S)$ such that 
$\det(\pi_{\st}L)=\MO_{\Sigma}$.  For any $L\in\Pic(S)$, from \eqref{eq_normalization_exact_sequence} we 
see that $\det(\tpi_{\st}p^{\st}L)\cong \det(\pi_{\st}L)\otimes \MO_\Sigma(\Lam')$. We define a new set, $\tMT$, as follows:
\begin{equation*}
	\tMT:=\{\tL\in \Pic^{2g-2}(\tS)\mid\det(\tpi_{\st}L)\cong\MO(\Lam')\}.
\end{equation*}
It follows that $p^{\st}$ maps $\MT$ to $\tMT$. Furthermore, if $L_1,L_2\in \Pic(S)$ satisfy $p^{\st}L_1\cong p^{\st}L_2$, then we have $\pi_{\st}L_1\cong \pi_{\st}L_2$. This means that the fiber of $p^{\st}:\Jac(S)\to \Jac(\tS)$ is the same as $p^{\st}:\MT\to \tMT$, resulting in the following exact sequence:
\begin{equation} \label{eq_exactsequenceofT}
	0\longrightarrow
    V\longrightarrow\MT\xrightarrow{\hspace{.4cm}p^{\st}\ }\tMT\longrightarrow  0\ .
\end{equation}
\subsection{Torsion free sheaves}
\label{subsection_torsion_free_sheaf_integers}
Now we present Cook's parametrization of rank 1 torsion free sheaves on
curves with
Gorenstein singularities (see  \cite[p.\ 40]{cook1998compactified} and also
\cite{cook1993local, rego1980compactified}). An explicit computation of the
invariants used in this paper is provided in Appendix \ref{appendixA}.
Let $x \in Z$ be a singular point of $S$, and let $L\to S$ be a rank 1 torsion free sheaf.
We again let $\MO_x$ denote the local ring at $x$, $\tMO_x$ its integral closure,
$\MK_x$ its fraction field,
and $\delta_x = \dim_{\mbC}(\tMO_x/\MO_x)$. 
According to \cite[Lemma 1.1]{greuel1993moduli}, there exists a fractional
ideal $I_x$ that is isomorphic to $L_x$, uniquely defined up to multiplication
by a unit of $\tMO_x$, such that $\MO_x \subset I_x \subset \tMO_x$. 
We define $\ell_x := \dim_{\mbC} (I_x/\MO_x)$ and $b_x:=\dim_{\mbC}(\Tor(I_x\otimes_{\MO_x}\tMO_x))$. 
Then, $\ell_x$ and $b_x$ are invariants of $L$.

Let $\MK_x$ be the field of fractions of $\MO_x$. The \emph{conductor} of
$I_x\subset \tMO_x$ is defined to be 
$$C(I_x)=\{u\in\MK_x\mid u\cdot\tMO_x\subset I_x\}\ .$$
The singularity is characterized by the following dimensions:
\begin{equation*}
 	\underbrace{C(\MO_x)\subset\overbrace{C(I_x)\subset
    \overbrace{\MO_x\subset \underbrace{I_x\subset
    \tMO_x}_{\delta_x-\ell_x}}^{\delta_x}}^{2\delta_x-2\ell_x}}_{2\delta_x}\ .
\end{equation*}
For $x = p_i \in \Ze$, we have $\delta_{p_i}=m_i/2$, and there are
two maximal ideals $\mfm_{\pm}$ in $\tMO_x$ corresponding to the points
 $\tp_i^{\pm}$. We let $(\tMO_{p_i}/C(I_{p_i}))_{\mfm_{\pm}}$
be the localization by the ideals $\mfm_{\pm}$, and define
$a_{\tp_i^{\pm}}:=\dim_{\mbC}(\tMO_{p_i}/C(I_{p_i}))_{\mfm_{\pm}}$.
Moreover, we have
$\dim_{\mbC}(\tMO_{p_i}/C(\MO_{p_i}))_{\mfm_{\pm}}=m_i/2=\delta_{p_i}$.
By Appendix \ref{appendixA}, 
$a_{\tp_i^{\pm}}=(m_i/2)-\ell_{p_i}$, and therefore $a_{\tp_i^+}+a_{\tp_i^-}=2\delta_{p_i}-2\ell_{p_i}$, and also 
$b_{p_i}=\ell_{p_i}$. Define 
$$
V(L_{p_i})=\{(c_i^+,c_i^-)\mid c_{i}^{\pm}\in\mathbb{Z}_{\ge 0}\ ,\ c_i^++c_i^-=\ell_{p_i}\}\ .
$$
For $x=p_i' \in \Zo$, we have $\delta_{p_i'}=(n_i-1)/2$, and the maximal ideal $\mfm$ of $\tMO_x$ is unique.
Define $a_{\tp_i'}:=\dim_{\mbC}(\tMO_{p_i'}/C(I_{p_i'}))_{\mfm}$. By Appendix \ref{appendixA}, 
we have $a_{\tp_i'}=2\delta_{p_i'}-2\ell_{p_i'}$ and $b_{p_i'}=\ell_{p_i'}$. Moreover, 
$\dim_{\mbC}(\tMO_{p_i'}/C(\MO_{p_i'}))_{\mfm}=n_i-1=2\delta_{p_i'}$.  In this case we set $V(L_{p_i'})=\{\ell_{p_i'}\}$.

Now consider modules compatible with  $L_x$. Let $\eta:\tMO_x\to \tMO_x/C(\MO_x)$ be the quotient map.
Define 
$$
S(L_x):=\{\MO_x\text{-submodules}\ F_x\subset \tMO_x/C(\MO_x)\mid \dim_{\mathbb
C}(F_x)=\delta\ ,\ \eta^{-1}(F_x)\cong L_x\}\ .
$$
Hence, if $J_x=\eta^{-1}(F_x)$ with $F_x\in S(L_x)$, there exists an ideal $\mft_x$ such that  
$J_x=\mft_x\cdot L_x$. For $x=p_i\in \Ze$, we obtain two integers $c_{i}^{\pm}=\dim_{\mbC}(\tMO_x/(\mft_x\cdot \tMO_x))_{\mfm_{\pm}}$.
By \cite[Lemma 6]{cook1998compactified}, $(c_i^+,c_i^-)\in V(L_{p_i})$, for $x=p_i'\in \Zo$, 
$\dim_{\mbC}(\tMO_x/(\mft_x\cdot \tMO_x))=\ell_{p_i'}\in V(L_{p_i'})$, 
and these only depend on $F_x$. Hence, there is a well-defined map:
\begin{equation*}
    \kappa_x:S(L_x)\longrightarrow V(L_x)\ :\ \begin{cases} F_x\to
        (c_i^+,c_i^-)&\mathrm{when}\;x=p_i,\;\\
    F_x\to \ell_{p_i'}&\mathrm{when}\;x=p_i' \ . \end{cases}
\end{equation*}
\begin{lemma}[{\cite[Lemma 6]{cook1998compactified}}]
For $x\in Z$, the components of $S(L_x)$ are parameterized by elements in $V(L_x)$.
\end{lemma}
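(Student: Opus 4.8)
The plan is to show that the map $\kappa_x\colon S(L_x)\to V(L_x)$ identifies the set of connected components $\pi_0(S(L_x))$ with the finite set $V(L_x)$; concretely, I would establish that $\kappa_x$ is surjective, locally constant, and has connected fibers. First I would pass to completions and fix the local model. At an even zero $x=p_i$ the spectral curve has an $A_{m_i-1}$ singularity with $m_i$ even, so $R_{p_i}$ has two smooth analytic branches and $\tMO_{p_i}\cong\mbC[[t_+]]\times\mbC[[t_-]]$ with maximal ideals $\mfm_\pm$, while at an odd zero $x=p_i'$ the $A_{n_i-1}$ singularity is unibranch. By the Greuel--Kn\"orrer normal form \cite{Greuel1985,greuel1993moduli}, the fractional ideal $I_x$ with $\MO_x\subset I_x\subset\tMO_x$ representing $L_x$ is fixed up to a unit of $\tMO_x$, so the entire conductor chain displayed above, and in particular the invariants $\ell_x$, $b_x$ and the $a_{\tp_i^\pm}$ computed in Appendix \ref{appendixA}, are constant along $S(L_x)$. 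This realizes $S(L_x)$ as a locally closed subvariety of the Grassmannian $\Gr(\delta_x,\tMO_x/C(\MO_x))$ cut out by the conditions of being an $\MO_x$-submodule and of having preimage $\eta^{-1}(F_x)\cong L_x$; writing $\eta^{-1}(F_x)=\mft_x\cdot L_x$, the invariant $\kappa_x$ records the branch orders of $\mft_x$ at $\mfm_\pm$ in the even case and the single localization length in the odd case.

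Next I would prove local constancy of $\kappa_x$. Each branch length $c_i^{\pm}=\dim_{\mbC}(\tMO_x/(\mft_x\tMO_x))_{\mfm_\pm}$ is upper semicontinuous in the tautological family of submodules over $S(L_x)$. Since the isomorphism type $\eta^{-1}(F_x)\cong L_x$ is fixed throughout $S(L_x)$, one has the constraint $c_i^++c_i^-=\ell_{p_i}$; thus $c_i^+=\ell_{p_i}-c_i^-$ is simultaneously lower semicontinuous, hence continuous, i.e.\ locally constant. In the odd case $V(L_{p_i'})=\{\ell_{p_i'}\}$ is a single point and there is nothing to check. Consequently each connected component of $S(L_x)$ is carried to a single element of $V(L_x)$.

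Finally I would establish surjectivity and connectedness of the fibers. For surjectivity in the even case, for each decomposition $c^++c^-=\ell_{p_i}$ I would write down an explicit monomial fractional ideal $\mft_x$ with prescribed branch orders $(c^+,c^-)$, producing a point of $S(L_x)$ in the corresponding fiber; the odd case is immediate from nonemptiness. For the fibers themselves, once the branch distribution is prescribed the remaining freedom in the embedded submodule $F_x\subset\tMO_x/C(\MO_x)$ is governed by the off-diagonal (unit) ambiguity modulo the conductor, parametrized by an affine space, hence irreducible and in particular connected; therefore each $\kappa_x^{-1}(v)$ is connected. Combining the three facts, $\kappa_x$ is a locally constant surjection with connected fibers, so it induces a bijection $\pi_0(S(L_x))\xrightarrow{\sim} V(L_x)$.

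I expect the connectivity of the fibers in the even-zero case to be the main obstacle: one must verify that all $\MO_x$-submodules realizing a fixed branch distribution form a single irreducible family rather than breaking up further. This is precisely where the finiteness of the Greuel--Kn\"orrer list of indecomposable modules is decisive, since it forces the isomorphism type to be rigid once $(\ell_x,b_x,a_{\tp_i^\pm})$ are fixed, leaving only the affine moduli of embeddings to account for.
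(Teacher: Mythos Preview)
The paper does not give its own proof of this lemma: it is stated with a citation to \cite[Lemma 6]{cook1998compactified} and no proof environment follows. So there is no argument in the present paper to compare your proposal against; the authors take Cook's result as a black box.

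As for your outline itself, the strategy of showing that $\kappa_x$ is surjective, locally constant, and has connected fibers is the natural one. Your local constancy argument via the constraint $c_i^++c_i^-=\ell_{p_i}$ together with upper semicontinuity of each summand is sound. You correctly identify the connectedness of the fibers $\kappa_x^{-1}(v)$ in the even case as the crux, and you are right that this is where the actual work lies. Your claim that the remaining freedom is ``parametrized by an affine space'' is asserted rather than proved; to make this rigorous you would need to exhibit explicit coordinates on the set of $\MO_x$-submodules $F_x$ with prescribed $(c^+,c^-)$ and fixed isomorphism type $\eta^{-1}(F_x)\cong L_x$, which in Cook's treatment is done by analyzing the $\tMO_x^\ast$-action on the Grassmannian of $\MO_x$-submodules. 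Invoking the Greuel--Kn\"orrer classification alone does not settle this, since that list controls isomorphism types, not the moduli of embedded submodules realizing a fixed type.
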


Set $V(L):=\prod_{x\in Z}V(L_x)$ and $S(L):=\prod_{x\in Z}S(L_x)$.  Write $N(L) := |V(L)|$ for the number of points in $V(L)$. 
 there is a map
\begin{equation*}
\kappa:=\prod_{x\in Z}\kappa_x: S(L)\lra V(L)\ .
\end{equation*}
For any $\mbfc\in V(L)$, write $\mbfc=(c_1^{\pm},\ldots,c_{r_1}^{\pm},\ell_{p_1'},\ldots,\ell_{p_{r_2}'})$. Associate to this the 
divisor %$D_{\mbfc}$ on $\tS$:
\begin{equation*}
 D_{\mbfc}=\sum_{i=1}^{r_1}(c_i^+\tp_i^++c_i^-\tp_i^-)+\sum_{i=1}^{r_2}\ell_{p_i'}\tp_i'
\end{equation*}
on $\tS$. 
Composing $\kappa$ with the map above, we define 
\begin{equation} \label{eq_divisor_map}
\varkappa: S(L)\lra \Div(\tS)\ :\	\prod_{x\in Z}F_x \mapsto \mbfc\mapsto D_{\mbfc}\ .
\end{equation}

The following result is straightforward but important:
\begin{proposition}
$L$ is locally free if and only if $\varkappa=0$ on $S(L)$.
\end{proposition}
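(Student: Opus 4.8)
The plan is to reduce everything to the local invariants $\ell_x$, and to exploit the elementary fact that the divisor $D_{\mbfc}$ is effective with degree equal to the total defect $\sum_{x\in Z}\ell_x$ of local freeness, a degree that does not depend on the chosen point of $S(L)$.

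First I would record the purely local criterion for local freeness. A rank $1$ torsion free sheaf on a reduced curve is automatically invertible at every smooth point, so the only possible failure of local freeness occurs at the singular points $x\in Z$. At such a point, $L$ is locally free precisely when $L_x$ is a free $\MO_x$-module, equivalently when the fractional ideal $I_x$ (with $\MO_x\subseteq I_x\subseteq \tMO_x$, unique up to a unit of $\tMO_x$ by \cite[Lemma 1.1]{greuel1993moduli}) equals $\MO_x$. Since $\MO_x\subseteq I_x$, this happens exactly when $\ell_x=\dim_{\mbC}(I_x/\MO_x)=0$. As $\ell_x$ is an invariant of $L$, I conclude that $L$ is locally free if and only if $\ell_x=0$ for every $x\in Z$.

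Next I would compute $\deg D_{\mbfc}$. For any $\mbfc=(c_1^{\pm},\dots,c_{r_1}^{\pm},\ell_{p_1'},\dots,\ell_{p_{r_2}'})\in V(L)$ the divisor $D_{\mbfc}$ is effective, and using the defining relation $c_i^++c_i^-=\ell_{p_i}$ for each even zero,
\[
\deg D_{\mbfc}=\sum_{i=1}^{r_1}(c_i^++c_i^-)+\sum_{i=1}^{r_2}\ell_{p_i'}=\sum_{i=1}^{r_1}\ell_{p_i}+\sum_{i=1}^{r_2}\ell_{p_i'}=\sum_{x\in Z}\ell_x .
\]
The key point is that this degree is independent of $\mbfc$, and hence of the point of $S(L)$ at which we evaluate $\varkappa$.

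Finally I would assemble the equivalence. Since $D_{\mbfc}$ is effective, $D_{\mbfc}=0$ iff $\deg D_{\mbfc}=0$, and by the displayed identity together with $\ell_x\geq 0$ this holds iff $\ell_x=0$ for every $x\in Z$. Because $\deg D_{\mbfc}$ does not depend on $\mbfc$, the map $\varkappa$ is either identically zero (when $\sum_{x\in Z}\ell_x=0$) or nowhere zero (when $\sum_{x\in Z}\ell_x>0$), so the hypothesis ``$\varkappa=0$ on $S(L)$'' is unambiguous and amounts to $\sum_{x\in Z}\ell_x=0$. Combined with the first step this gives $\varkappa=0$ on $S(L)\iff \ell_x=0\ \forall x\in Z\iff L$ locally free. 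I expect the only genuinely nontrivial input to be the local criterion of the first step, namely that $\ell_x=0$ detects freeness of $L_x$ (which rests on the Greuel--Kn\"orrer structure theory of fractional ideals); the remaining steps are just bookkeeping with effective divisors.
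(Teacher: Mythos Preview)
Your proof is correct and follows essentially the same approach as the paper: both reduce to the local criterion that $L$ is locally free iff $\ell_x=0$ for all $x\in Z$, and then observe from the definition of $D_{\mbfc}$ that this is equivalent to $\varkappa=0$. Your version is simply more explicit, packaging the observation via the degree identity $\deg D_{\mbfc}=\sum_{x\in Z}\ell_x$, whereas the paper leaves this as ``follows directly from the definition of $D_{\mbfc}$.''
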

\begin{proof}
$L$ is locally free if and only if $\ell_x=0$ for $x\in Z$. The claim then follows directly from the definition of $D_{\mbfc}$.
\end{proof}

\subsection{Parabolic modules}
In this subsection, we define the notion of a parabolic module, following \cite{rego1980compactified,cook1993local,cook1998compactified}.

First note that $\dim_{\mbC}(\tMO_x/C(\MO_x))=2\delta_x$. 
Let $\Gr(\delta_x,\tMO_x/C(\MO_x))$ be the Grassmannian of $\delta_x$ dimensional subspaces 
of the vector space $\tMO_x/C(\MO_x)$. Then $\tMO^{\st}_x$ acts on $\Gr(\delta_x,\tMO_x/C(\MO_x))$ 
by multiplication, with fixed points corresponding to $\delta_x$-dimensional $\MO_x$ submodules of $\tMO_x/C(\MO_x)$. 
We write $\MP(x)$ for the (reduced) variety of fixed points. This is a closed subvariety of $\Gr(\delta_x,\tMO_x/C(\MO_x))$.

Suppose $x$ is an $A_n$ singularity. For notational convenience, we write $\MP(A_n):=\MP(x)$. We have the following:
\begin{proposition}[{\cite[Prop.\ 2]{cook1998compactified}}]
	\label{prop_fiber_of_parabolic_module}
	%For the variety $\MP(A_n)$, 
    The following holds:
	\begin{itemize}
		\item [(i)] $\MP(A_n)$ is connected and depends only on $\delta_x$.
            Also, $\dim_{\mbC}\MP(A_n)=n$, and we have isomorphisms $\MP(A_{2n-1})\cong \MP(A_{2n})$.
		\item [(ii)] If $P(A_0)$ is defined to be a point, then the
            inclusions $\MP(A_0)\subset \MP(A_2)\subset \cdots \subset \MP(A_{2n})$ give a cell decomposition of $\MP(A_{2n})$.
		\item [(iii)] The singular locus $\mathrm{Sing}(\MP(A_{2n}))\cong
            \MP(A_{2n-4})$. In particular, it
           % $\MP(A_{2n})=\MP(A_{2n-1})$
            has codimension $\geq 2$. Moreover, $\MP(A_1)=\MP(A_2)\cong
            \CP^1$, and $\MP(A_4)$ is a quadric cone.
	\end{itemize}
\end{proposition}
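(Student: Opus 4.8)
The plan is to reduce the computation of $\MP(A_n)$ to a concrete problem in linear algebra and then read off its geometry from a torus action. First I would fix a normalization of the $A_n$ singularity: writing $R_x\cong\mbC[[r,s]]/(r^2-s^{n+1})$, the normalization $\tMO_x$ is $\mbC[[t]]$ when $n=2\delta_x$ is even (one branch, $s=t^2$, $r=t^{2\delta_x+1}$) and $\mbC[[t]]\oplus\mbC[[u]]$ when $n=2\delta_x-1$ is odd (two branches $r=\pm s^{\delta_x}$). In either case $\dim_{\mbC}(\tMO_x/C(\MO_x))=2\delta_x$, and the key observation is that on this quotient \emph{one} of the two generators of $\MO_x$ acts as zero: $r=0$ on $\tMO_x/C(\MO_x)$ in the two-branch case, and $t^{2\delta_x+1}=0$ in the one-branch case. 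Hence $\MO_x$ acts through a single nilpotent operator $N$ (multiplication by $s$, resp.\ by $t^2$), and a direct inspection of the Jordan chains shows that in \emph{both} cases $N$ consists of two Jordan blocks of size $\delta_x$. Since the variety of $\delta_x$-dimensional $\MO_x$-submodules is exactly the variety of $\delta_x$-dimensional $N$-invariant subspaces of $\mbC^{2\delta_x}$, which depends only on the conjugacy class of $N$, this already yields the dependence on $\delta_x$ alone and the isomorphism $\MP(A_{2n-1})\cong\MP(A_{2n})$ asserted in (i).

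For the remaining assertions I would exploit the $\CS$-action $\rho_\lambda$ scaling the grading of $\tMO_x/C(\MO_x)$ by $t^j\mapsto\lambda^j t^j$. One checks $\rho_\lambda^{-1}N\rho_\lambda=\lambda^2 N$, so $\rho_\lambda$ preserves the $N$-invariant locus and therefore acts on $\MP(A_n)$. The fixed points are the monomial submodules; counting the $N$-invariant coordinate subspaces of dimension $\delta_x$ (an up-set of size $\delta_x-a$ in the even chain together with one of size $\delta_x-b$ in the odd chain, with $a+b=\delta_x$) gives exactly $\delta_x+1$ of them. Since $\MP(A_n)$ is a closed, hence projective, subvariety of the Grassmannian, the Bia{\l}ynicki-Birula theorem produces an affine paving by the attracting cells of these fixed points, one cell in each dimension $0,1,\dots,\delta_x$. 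This gives the cell decomposition of part (ii), identifies the successive closed strata with the chain $\MP(A_0)\subset\MP(A_2)\subset\cdots\subset\MP(A_{2\delta_x})$, and shows that $\MP(A_n)$ is irreducible (its unique top cell $\mbC^{\delta_x}$ is dense), hence connected, of dimension $\delta_x$.

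For part (iii) I would compute the Zariski tangent space to the module variety at a point $[F]$, which for a Grassmannian of submodules is $\Hom_{\MO_x}(F,(\tMO_x/C(\MO_x))/F)$. Smoothness at $[F]$ is equivalent to this space having dimension $\delta_x$, so the singular locus is precisely the jumping locus of this dimension. Running through the $\CS$-fixed points and their attracting cells, I expect the jump to occur exactly on the submodules that are ``deepest'' in the conductor filtration, a locus one identifies with the parabolic-module variety of the singularity two orders lower, namely $\MP(A_{2n-4})$; the dimension count $\dim\MP(A_{2n-4})=\delta_x-2$ then forces codimension $\geq 2$. As consistency checks, $\MP(A_1)=\MP(A_2)\cong\CP^1$ are smooth and $\MP(A_4)$ is a quadric cone whose single vertex realizes $\mathrm{Sing}=\MP(A_0)$. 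The main obstacle is precisely this tangent-space analysis: pinning down the jumping locus \emph{canonically} as $\MP(A_{2n-4})$, rather than merely bounding its dimension, requires understanding $\Hom_{\MO_x}(F,V/F)$ as $F$ varies, which is the technical heart of Cook's argument and the step I expect to occupy most of the work.
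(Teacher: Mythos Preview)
The paper does not prove this proposition; it is quoted from \cite{cook1998compactified} without argument, so there is no proof in the paper to compare against.

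Your sketch is a reasonable outline. The argument for (i) is essentially complete and clean: the observation that in both the $A_{2n-1}$ and $A_{2n}$ models one generator of $\MO_x$ acts as zero on $\tMO_x/C(\MO_x)$, so the module structure is governed by a single nilpotent with Jordan type $(\delta_x,\delta_x)$, immediately gives the isomorphism and the dependence on $\delta_x$ alone. (You correctly obtain $\dim\MP=\delta_x$; the paper's ``$\dim_{\mbC}\MP(A_n)=n$'' is inconsistent with $\MP(A_{2n-1})\cong\MP(A_{2n})$ and should be read as $\dim\MP(A_{2n})=n$.)

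For (ii) and (iii) there are genuine gaps. The torus action and the fixed-point count are fine, but the standard Bia{\l}ynicki-Birula theorem yields \emph{affine-space} cells only for smooth varieties, and by (iii) the space $\MP(A_{2n})$ is singular for $n\geq 2$; you would need a direct verification that the attracting sets are affine (as is done for Springer/Spaltenstein-type varieties) or an induction on $n$. More importantly, even granting an affine paving with one cell per dimension, you have not shown that the closures of the strata are the specific subvarieties $\MP(A_{2k})$ sitting inside $\MP(A_{2n})$ via some canonical inclusion---this identification is part of the content, not a formal consequence of the cell count. For (iii) you are candid that pinning down the singular locus as $\MP(A_{2n-4})$ via the jump in $\dim\Hom_{\MO_x}(F,V/F)$ is the missing technical core, and that is accurate; the low-dimensional checks ($\CP^1$, quadric cone) are correct but do not substitute for the general argument.
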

Define $\MSP(S)=\prod_{x\in Z}\MP(x)$. This only depends on the curve singularity of $S$.
Let $J\in\Pic(\tS)$. We have an
isomorphism $J_x\otimes \MO_{\Lam',x}\cong \tMO_x/C(\MO_x)$ as
$\MO_x$-modules. More explicitly,  as vector spaces,
$$
J_{\tp_i^+}^{\oplus \frac{m_i}{2}}\oplus J_{\tp_i^-}^{\oplus \frac{m_i}{2}}\cong
\tMO_{p_i}/C(\MO_{p_i'})\ , \ J_{\tp_i'}^{\oplus (n_i-1)}\cong \tMO_{p_i'}/C(\MO_{p_i'})\ .
$$

\begin{definition}
A parabolic module $\PMod$ consists of pairs $(J,v)$, where $J\in
\Jac(\tS)$ and $v=\prod_{x\in Z} v_x$, with $v_x\in J_x\otimes \MO_{\Lam',x}$.
\end{definition}
By \cite[p.\ 41]{cook1998compactified}, $\PMod$ has a natural algebraic structure. Let $\pr:\PMod\to \Jac(\tS)$ be the projection 
to the first component. Then $\pr$ defines a fibration of $\PMod$ with fiber $\MSP(S)$. 
Moreover, there is a finite morphism $\tau: \PMod\to \BJac(S)$ defined by sending $(J,v) \to L$,
where $L$ is  the kernel of the restriction map $p_{\st}J\to (J\otimes \MO_{\Lam})/v$:
$$
0\lra L\lra p_{\st}J\lra (J\otimes\MO_{\Lam})/v\lra 0\ .
$$
There is a diagram:
\begin{equation*}
	\begin{tikzcd}
		&\MSP(S) \arrow[r, ] & \PMod \arrow[r, "\pr"] \arrow[d, "\tau"]
		& \Jac(\tS)  \\ 
		& &\BJac(S) &  &
	\end{tikzcd}.
\end{equation*}
The map $\tau$ may be regarded as the compactification of the pull-back normalization map $p^{\st}$ in \eqref{eq_normalization_Jacobian_fibration}.

\begin{theorem}[{\cite[Thm.\ 1]{cook1998compactified}}]
\label{thm_parabolic_module_main_theorem}
For the map $\tau:\PMod\to \BJac(S)$ defined above, 
\begin{itemize}
\item[(i)] $\tau$ is a finite morphism, where the fiber over $L$ consists of $N(L)$ points,
\item[(ii)] The restriction $\tau: \tau^{-1}\Jac(S)\to \Jac(S)$ is an isomorphism. Moreover, for $L\in\Jac(S)$, we have $\pr\circ \tau^{-1}(L)=p^{\st}(L)$.
\item [(iii)] Suppose $\tau(J,v)=L$. For $x\in Z$, we have $v\in S(L)$. 
Let $D_{v}=\varkappa(v)$ be the divisor defined in \eqref{eq_divisor_map}. Then 
\begin{equation*}
0\lra p^{\st}L/\Tor(p^{\st}L)\lra J\lra \MO_{D_v}\lra 0\ .
\end{equation*}
In particular, $p^{\st}L/\Tor(p^{\st}L)=J(-D_v)$.
\end{itemize}
\end{theorem}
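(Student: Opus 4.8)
The plan is to prove the three assertions by localizing at the singular points $Z$ of $S$ and then globalizing via the normalization sequences \eqref{eq_normalization_exact_sequence} and \eqref{eq_normalization_Jacobian_fibration}. Over $S\setminus Z$ the map $p$ is an isomorphism, so there $p_{\st}J$ is a line bundle, the torsion quotient $(J\otimes\MO_{\Lam'})/v$ vanishes, and $\tau$ is the identity on line bundles; all of the content is therefore concentrated at the points $x\in Z$, where it is controlled by the inclusion $\MO_x\subset\tMO_x$ and Cook's local parametrization from Section \ref{subsection_torsion_free_sheaf_integers}. The first step is to observe that $\tau$ is locally reversible: from the defining sequence $0\to L\to p_{\st}J\to(J\otimes\MO_{\Lam'})/v\to 0$ one has, at each $x$, $L_x=\eta^{-1}(v_x)$ for $\eta:\tMO_x\to\tMO_x/C(\MO_x)\cong J_x\otimes\MO_{\Lam',x}$, and conversely $v_x=\ker\bigl(J_x\otimes\MO_{\Lam',x}\to (p_{\st}J)_x/L_x\bigr)$ is recovered from the pair $(J,L)$.

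This reversibility immediately gives (iii). The hypothesis $\tau(J,v)=L$ means $L_x=\eta^{-1}(v_x)$, so $\eta^{-1}(v_x)\cong L_x$, i.e.\ $v\in S(L)$. The remaining claim is local: one must identify $\Tor(p^{\st}L)$, which is supported on $Z$, with the structure sheaf of the divisor $D_v=\varkappa(v)$ of \eqref{eq_divisor_map}. Writing $J_x=\mft_x\cdot L_x$, the integers $c_i^\pm$ and $\ell_{p_i'}$ defining $D_v$ are exactly the local lengths $\dim_{\mbC}(\tMO_x/(\mft_x\cdot\tMO_x))_{\mfm_\pm}$ appearing in $\kappa_x$, and the conductor dimension count of Appendix \ref{appendixA} (the relations $a_{\tp_i^{\pm}}=(m_i/2)-\ell_{p_i}$ and $b_x=\ell_x$) identifies these with the length of $\Tor(p^{\st}L)$ at each branch. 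The sequence $0\to J(-D_v)\to J\to\MO_{D_v}\to 0$ then yields $p^{\st}L/\Tor(p^{\st}L)=J(-D_v)$ and the displayed exact sequence. Statement (ii) is the locally free specialization: for $L\in\Jac(S)$ one has $\Tor(p^{\st}L)=0$, hence $\varkappa(v)=0$ and $J=p^{\st}L$, so $\tau$ admits the section $L\mapsto(p^{\st}L,v)$ with $v$ the canonical image of $L$; the reconstruction formula shows this section is a two-sided inverse of $\tau$ over $\Jac(S)$, and compatibility with \eqref{eq_normalization_Jacobian_fibration} and \eqref{eq_exactsequenceofT} shows $\pr\circ\tau^{-1}=p^{\st}$.

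For (i), finiteness is formal once the fibers are finite: the projection $\pr:\PMod\to\Jac(\tS)$ is proper with projective fiber $\MSP(S)$, so $\PMod$ is proper, and a proper, quasi-finite morphism to the separated scheme $\BJac(S)$ is finite. To count $\tau^{-1}(L)$, I would exhibit a bijection with $V(L)$. Each $\mbfc\in V(L)$ determines the divisor $D_{\mbfc}$ on $\tS$, hence the line bundle $J_{\mbfc}=(p^{\st}L/\Tor(p^{\st}L))(D_{\mbfc})$; the submodule $v$ is then forced by the reconstruction formula $v=\ker(J_{\mbfc}\otimes\MO_{\Lam'}\to p_{\st}J_{\mbfc}/L)$, so $(J_{\mbfc},v)$ is a well-defined point of the fiber. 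Conversely any $(J,v)$ with $\tau(J,v)=L$ has $\kappa(v)=\mbfc\in V(L)$, and $J$, $v$ are then determined by $\mbfc$ as above; surjectivity and nonemptiness of each component follow from \cite[Lemma 6]{cook1998compactified}, which identifies the components of $S(L_x)$ with $V(L_x)$ under $\kappa_x$. Since $N(L)=\prod_{x\in Z}|V(L_x)|=|V(L)|$, the fiber has exactly $N(L)$ points.

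The main obstacle is the local content shared by (i) and (iii): one must show that fixing $J$ (equivalently the divisor $D_v$) pins down $v$ uniquely, so that the positive-dimensional components of $S(L)$ collapse to single points of the fiber, and one must identify $\Tor(p^{\st}L)$ with $\MO_{D_v}$ precisely on each branch. Both reduce to the explicit description of fractional ideals $\MO_x\subset I_x\subset\tMO_x$ of Greuel-Kn\"orrer and to the dimension count along the conductor chain $C(\MO_x)\subset C(I_x)\subset\MO_x\subset I_x\subset\tMO_x$ carried out in Appendix \ref{appendixA}. Once these local isomorphisms are in hand, checking that they glue to global isomorphisms of sheaves and respect the algebraic structure on $\PMod$ is routine.
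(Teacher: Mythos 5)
You should first be aware that the paper itself does not prove this statement: it is imported directly from Cook \cite[Thm.\ 1]{cook1998compactified} (just as the accompanying lemma on components of $S(L_x)$ is quoted from Cook's Lemma~6), so there is no internal proof to compare against, and your attempt has to be judged against Cook's original local-to-global argument, whose general shape (fractional ideals and conductors at the points of $Z$, reconstruction of $v$ from an inclusion $L\subset p_{\st}J$, properness plus quasi-finiteness for finiteness of $\tau$) your outline does resemble.

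Judged on its own terms, however, the proposal has a genuine gap at the crux of the fiber count in (i), and it stems from mislocating the difficulty as local. You correctly isolate the key issue --- each component of $S(L)$ is positive-dimensional, yet must contribute exactly one point to $\tau^{-1}(L)$ --- but you then claim this ``reduce[s] to the explicit description of fractional ideals'' and the conductor computations of Appendix~\ref{appendixA}. It cannot, because the local theory is precisely what creates the ambiguity. Concretely, at a node $x$ with $p^{-1}(x)=\{\tx_+,\tx_-\}$ and $L$ locally free at $x$, every line $v_x=(a:b)$ with $a,b\neq 0$ in $\tMO_x/C(\MO_x)\cong\mbC^2$ satisfies $\eta^{-1}(v_x)\cong\MO_x\cong L_x$; locally these form a $\mbC^{\st}$-family of equally admissible choices for a fixed $J$. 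What collapses this family to a single point of $\tau^{-1}(L)$ is a global fact: distinct gluing parameters $(a:b)$ produce pairwise \emph{non-isomorphic} kernels in $\BJac(S)$ --- this is precisely the content of the exact sequence \eqref{eq_normalization_Jacobian_fibration}, in which the gluing data $V=\prod_x\tMO_x^{\st}/\MO_x^{\st}$ injects into $\Jac(S)$ --- so only one choice returns the given isomorphism class $L$. Equivalently, your reconstruction formula $v=\ker(J\otimes\MO_{\Lam'}\to p_{\st}J/L)$ presupposes a chosen embedding $L\hookrightarrow p_{\st}J$, and the step your proof is missing is that this embedding is unique up to a scalar: by adjunction $\Hom_S(L,p_{\st}J)\cong\Hom_{\tS}(\ptf L,J)\cong H^0(\tS,\MO(D_{\mbfc}))$, among these homomorphisms only those whose section has divisor exactly $D_{\mbfc}$ give quotients supported on $Z$ of the correct local type, such sections form a single $\mbC^{\st}$-orbit, and scalars act trivially on the subspaces $v_x$. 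Without this ingredient your argument shows only that $\tau^{-1}(L)$ involves at most $N(L)$ isomorphism classes of $J$, not at most $N(L)$ points; a priori the fiber could contain entire components of $S(L)$, destroying the finiteness you need. (A smaller slip of the same kind occurs in your treatment of (iii): what must be identified with $\MO_{D_v}$ is the cokernel of $\ptf L\hookrightarrow J$, not $\Tor(p^{\st}L)$; the two have equal local lengths by Appendix~\ref{appendixA}, but identifying the torsion with $\MO_{D_v}$ would not by itself yield the stated exact sequence.)
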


Suppose all of the zeros of the quadratic differential $q$ are odd. 
Then for $L\in \BJac(S)$, $N(L)=1$, and we can deduce the following.
\begin{corollary}
If $q^{-1}(0)=\{p_1',\ldots, p_r'\}$ and all zeroes have odd multiplicity, then $\tau:\PMod\to \BJac(S)$ is a bijection. 
Moreover, for $L\in \BJac(S)$ with $\tau(J,v)=L$, we have 
$$
p^{\st}L/\Tor(p^{\st}L)=J(-\sum \ell_{p_i'}\tp_i')\ .
$$
\end{corollary}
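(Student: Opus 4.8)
The plan is to derive the statement directly from Theorem \ref{thm_parabolic_module_main_theorem} by exploiting the fact that odd (unibranch) singularities carry no local choices for the invariant $\varkappa$. First I would record the combinatorial input: since every zero of $q$ is odd, we have $\Ze=\emptyset$ and $Z=\Zo=\{p_1',\ldots,p_r'\}$, so that $r_1=0$ and $r_2=r$. By the construction of the local invariants in Section \ref{subsection_torsion_free_sheaf_integers}, at an odd point the preimage under normalization is the single point $\tp_i'$, and the set $V(L_{p_i'})=\{\ell_{p_i'}\}$ is a singleton determined by $L$ itself (no splitting data $(c_i^+,c_i^-)$ occurs, as that only arises at the two-branch even points). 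Hence
$$
V(L)=\prod_{i=1}^{r}V(L_{p_i'})
$$
is a one-point set, and $N(L)=|V(L)|=1$ for every $L\in\BJac(S)$.

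Next I would invoke Theorem \ref{thm_parabolic_module_main_theorem}(i): $\tau$ is a finite morphism whose fiber over each $L$ consists of exactly $N(L)$ points. Since $N(L)=1$ identically, every fiber is nonempty (so $\tau$ is surjective) and is a single point (so $\tau$ is injective). Therefore $\tau:\PMod\to\BJac(S)$ is a bijection.

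For the second assertion I would feed the same input into Theorem \ref{thm_parabolic_module_main_theorem}(iii), which yields $p^{\st}L/\Tor(p^{\st}L)=J(-D_v)$ with $D_v=\varkappa(v)$. Using the description of the divisor $D_{\mbfc}$ attached to $\varkappa$ in \eqref{eq_divisor_map} together with the vanishing of the even contribution ($r_1=0$), the divisor collapses to $D_v=\sum_{i=1}^{r}\ell_{p_i'}\tp_i'$, and substituting gives
$$
p^{\st}L/\Tor(p^{\st}L)=J\Bigl(-\sum_{i=1}^{r}\ell_{p_i'}\tp_i'\Bigr)\ .
$$

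Since the argument is a specialization of the main theorem, I do not expect a genuine obstacle; the only point needing care is that the count $N(L)=1$ forces a true bijection and not merely injectivity. This is where one must use that Theorem \ref{thm_parabolic_module_main_theorem}(i) asserts each fiber has cardinality \emph{exactly} $N(L)$ (hence is nonempty), rather than giving only an upper bound on the fiber size. A secondary point to keep honest is that ``bijection'' here is the set-theoretic statement on closed points: I would not upgrade it to an isomorphism of schemes, since a finite bijective morphism onto a possibly non-normal target (such as $\BJac(S)$ at its singular strata) need not be an isomorphism.
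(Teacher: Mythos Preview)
Your proposal is correct and follows exactly the approach the paper intends: the corollary is stated in the paper without a separate proof, as it is meant to be an immediate specialization of Theorem~\ref{thm_parabolic_module_main_theorem} using $N(L)=1$ when $r_1=0$. Your additional remarks on why $N(L)=1$ gives surjectivity (not just injectivity) and on the distinction between set-theoretic bijection and scheme isomorphism are apt clarifications that go slightly beyond what the paper spells out.
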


We will now present an example of a parabolic module.

\begin{example}[{\cite[Ex.\ 2]{cook1998compactified}}]
Suppose $q$ contains $4g-2$ simple zeros and one zero $x$ of order $2$.
    Then the spectral curve $S$ has one nodal singularity at $x$. 
    Denote $p:\tS\to S$ the normalization, with $p^{-1}(x)=\{\tx_+,\tx_-\}$. Then 
    $\MSP(S)=\CP^1$, and we obtain a fibration $\CP^1\to \PMod\to \Jac(\tS)$. 
    Let $L\in\BJac(S)$. If we write $\tL:=p^{\st}L/\Tor(p^{\st}L)$, then 
    $\tau^{-1}(L)={(\tL\otimes \MO(\tx_+),v_+),(\tL\otimes \MO(\tx_-),v_-)}$. 
We can define two sections:
$$s_{\pm}:\BJac(S)\lra \PMod,\;s_{\pm}:L\lra (\tL\otimes
    \MO(\tx_{\pm}),v_{\pm})\ .$$
Then $\BJac(S)$ is the quotient of $\PMod$ given by the identification 
    $$\BJac(S)\cong \PMod/(s_+\sim \MO(\tx_--\tx_+)s_-)\ .$$ In 
    particular, $\PMod$ is not a fibration over $\BJac(S)$.
\end{example}

\begin{proposition}
The singular set of $\PMod$ has codimension at least $2$. Moreover, if the spectral curve $S$ contains only cusp or nodal singularities, then $\PMod$ is smooth.
\end{proposition}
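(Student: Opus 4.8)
The plan is to exploit the fibration structure $\pr:\PMod\to\Jac(\tS)$ with fiber $\MSP(S)=\prod_{x\in Z}\MP(x)$ and to reduce the question to the local structure of the fibers over the \emph{smooth} base $\Jac(\tS)$.

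First I would observe that $\pr$ is a locally trivial fiber bundle. Indeed, the isomorphism $J_x\otimes\MO_{\Lam',x}\cong \tMO_x/C(\MO_x)$ of $\MO_x$-modules realizes, as $J$ ranges over $\Jac(\tS)$, the spaces $J_x\otimes\MO_{\Lam',x}$ as the fibers of a vector bundle on $\Jac(\tS)$; the fixed-point variety $\MP(x)$ of the fiberwise $\tMO_x^{\st}$-action then sits inside the associated Grassmannian bundle as a subbundle with constant fiber $\MP(x)$. Hence $\PMod$ is, analytically locally over $\Jac(\tS)$, a product $U\times\MSP(S)$ with $U\subset\Jac(\tS)$ open and smooth. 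Consequently $\PMod$ is smooth at $(J,v)$ if and only if $\MSP(S)$ is smooth at $v$, and
\[
\mathrm{codim}_{\PMod}\mathrm{Sing}(\PMod)=\mathrm{codim}_{\MSP(S)}\mathrm{Sing}(\MSP(S))\ .
\]

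Next I would reduce to the individual local factors. Since a point of a product of varieties is smooth exactly when each coordinate is a smooth point of its factor,
\[
\mathrm{Sing}(\MSP(S))=\bigcup_{x\in Z}\Big(\prod_{y\neq x}\MP(y)\Big)\times \mathrm{Sing}(\MP(x))\ ,
\]
and the codimension of the stratum indexed by $x$ equals $\mathrm{codim}_{\MP(x)}\mathrm{Sing}(\MP(x))$. It therefore suffices to show that each $\mathrm{Sing}(\MP(x))$ has codimension at least $2$. For simple zeros ($\delta_x=0$) and for nodes and cusps ($A_1,A_2$, so $\delta_x=1$), Proposition \ref{prop_fiber_of_parabolic_module} gives that $\MP(x)$ is a point or $\cong\CP^1$, hence smooth with $\mathrm{Sing}(\MP(x))=\varnothing$. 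For $\delta_x\geq 2$ I would use the isomorphism $\MP(A_{2n-1})\cong\MP(A_{2n})$ of Proposition \ref{prop_fiber_of_parabolic_module}(i) to reduce the odd case to the even one, and then invoke Proposition \ref{prop_fiber_of_parabolic_module}(iii), which identifies $\mathrm{Sing}(\MP(A_{2n}))\cong\MP(A_{2n-4})$ and asserts that it has codimension $\geq 2$. This establishes the codimension bound for $\mathrm{Sing}(\PMod)$. For the smoothness assertion: if $S$ has only nodal and cuspidal singularities, then every factor $\MP(x)$ is a point or $\cong\CP^1$, so $\MSP(S)$ is a product of smooth projective varieties, hence smooth; by the local product structure over the smooth base $\Jac(\tS)$ from the first step, $\PMod$ is then smooth.

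The step I expect to be the main obstacle is the first one, namely verifying that $\pr$ is genuinely locally trivial, so that smoothness and codimension can be read off fiberwise, rather than merely a morphism whose fibers are abstractly isomorphic to $\MSP(S)$. Once local triviality over the smooth base is secured, the remaining steps are formal consequences of the product structure of the fiber together with the codimension estimate of Proposition \ref{prop_fiber_of_parabolic_module}(iii); note in particular that one never needs the exact value of $\dim\MP(A_n)$, only the qualitative codimension statement already recorded there.
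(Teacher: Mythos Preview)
Your proposal is correct and follows essentially the same approach as the paper: the paper's proof is a one-liner stating that ``the singularities of $\PMod$ come from the space $\MSP(S)$'' and then invoking Proposition~\ref{prop_fiber_of_parabolic_module}. You have simply unpacked the implicit steps (local triviality of $\pr$ over the smooth base $\Jac(\tS)$, reduction to the factors $\MP(x)$) that the paper takes for granted.
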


\begin{proof}
	As the singularities of $\PMod$ come from the space $\MSP(S)$, the claim follows from Proposition \ref{prop_fiber_of_parabolic_module}.
\end{proof}

Since we focus on $\SLC$ Higgs bundles, we must consider the parabolic module  compactification of the fibration
%\eqref{eq_pre_compactification_of_Prym}.
 $$
0\longrightarrow V\longrightarrow \MP\xrightarrow{\hspace{.3cm}p^{\st}\ } \Prym(\tS/\Sigma)\longrightarrow 0\ .
$$
Setting, $\hPMod:=\tau^{-1}(\BMP)$, then there is a diagram
from \cite[p.\ 17]{gothen2013singular}
\begin{equation} \label{eq_parabolic_module_for_Prym}
	\begin{tikzcd}
		&\MSP(S) \arrow[r, ] & \hPMod \arrow[r, "\pr"] \arrow[d, "\tau"]
		&  \Prym(\tS/\Sigma)  & \\ 
		& &\BMP &  &
	\end{tikzcd}
\end{equation}
Theorem \ref{thm_parabolic_module_main_theorem} proves that $\pr\circ \tau^{-1}|_{\MP}=p^{\st}$. 

\subsection{Stratifications of the BNR data}
\label{sec:divisor-stratification}
Parabolic modules define a stratification of $\BMP$ and $\BMT$.  In the following, $\pi:S\to \Sigma$ 
is a branched double cover, 
$\sigma$ the associated involution on $S$,  and by  $\sigma$ we also denote
its extension to an involution on the normalization  $\tS$ of $S$.

For a rank 1 torsion free sheaf $L\in\BPic(S)$, consider the map
\begin{equation*}
	\ptf:\BPic(S)\lra \Pic(\tS)\ ,\ \ptf(L):=p^{\st}L/\Tor(p^{\st}L)\ ,
\end{equation*} 
i.e.\ the torsion free part of the  pull-back to the normalization. 
By \cite{rabinowitz1979monoidal}, $\ptf(L)=p^{\st}L$ at $x\in \tS$ if and only if 
$L$ is locally free at $p(x)\in S$.

%The definition of $\sigma$ divisor is introduced in \cite{horn2022semi}.
\begin{definition}[{\cite{horn2022semi}}]
	An effective divisor $D\in \Div(\tS)$ is called a $\sigma$-divisor if 
	\begin{itemize}
		\item [(i)] $D\leq \Lam$ and $\sigma^{\st}D=D$;
		\item [(ii)] and for any  $x\in \Fix(\sigma)$, $D|_x=d_xx$, where $d_x\equiv 0\mod 2$.
	\end{itemize}
\end{definition}

The $\sigma$-divisors play an important role in describing the singular
Hitchin fibers. 
\begin{proposition}[{\cite{horn2022semi,Mochizukiasymptotic}}]
	\label{prop_stratification_BNR_data}
	Let $L\in \BMP$ and  write $\tL:=\ptf L$. Then we have $\tL\otimes \sigma^{\st}\tL=\MO(\Lam-D)$ for $D$ a $\sigma$-divisor.
\end{proposition}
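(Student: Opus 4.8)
The plan is to feed a general point $L\in\BMP$ through Cook's parametrization and then read off the defect divisor from the norm map on the smooth double cover $\tpi\colon\tS\to\Sigma$. Let $(J,v)=\tau^{-1}(L)\in\hPMod$ be the associated parabolic module, so that $J=\pr(J,v)$ lies in the Prym torsor $\tMT$, and by Theorem \ref{thm_parabolic_module_main_theorem}(iii) together with the identity $\pr\circ\tau^{-1}|_{\MP}=p^{\st}$,
$$
\tL=\ptf(L)=J(-D_v)\ ,\qquad D_v=\varkappa(v)=\sum_{i=1}^{r_1}\big(c_i^+\tp_i^++c_i^-\tp_i^-\big)+\sum_{j=1}^{r_2}\ell_{p_j'}\,\tp_j'\ .
$$
Since pullback by $\sigma$ is functorial, $\sigma^{\st}\tL=\sigma^{\st}J\,(-\sigma^{\st}D_v)$, and hence
$$
\tL\otimes\sigma^{\st}\tL=\big(J\otimes\sigma^{\st}J\big)\big(-(D_v+\sigma^{\st}D_v)\big)\ .
$$
I would then define $D:=D_v+\sigma^{\st}D_v$ and reduce the statement to two independent claims: the identity $J\otimes\sigma^{\st}J=\MO(\Lam)$, and the verification that $D$ is a $\sigma$-divisor with $D\le\Lam$. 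Note that only the known expression $\tL=J(-D_v)$ and functoriality of $\sigma^{\st}$ are used here, so no $\sigma$-equivariance of Cook's parametrization itself is required.

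For the first claim I would exploit that $\tpi\colon\tS\to\Sigma$ is a \emph{smooth} branched double cover with involution $\sigma$. For any line bundle $J$ one has $J\otimes\sigma^{\st}J=\tpi^{\st}\Nm_{\tS/\Sigma}(J)$, which is immediate on divisors since $x+\sigma x=\tpi^{\st}(\tpi(x))$. Because $\tMT$ is a torsor over $\Prym(\tS/\Sigma)=\ker\Nm_{\tS/\Sigma}$, the class $\Nm_{\tS/\Sigma}(J)$ is constant on $\tMT$; evaluating at $J_0=p^{\st}L_0$ for a locally free $L_0\in\MT$ gives $J_0\otimes\sigma^{\st}J_0=p^{\st}(L_0\otimes\sigma^{\st}L_0)=p^{\st}\pi^{\st}K=\tpi^{\st}K$. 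Since $\Lam=\Div(p^{\st}\lam)$ is a section of $\tpi^{\st}K$, i.e.\ $\MO_{\tS}(\Lam)=\tpi^{\st}K$, this gives $J\otimes\sigma^{\st}J=\MO(\Lam)$ and therefore $\tL\otimes\sigma^{\st}\tL=\MO(\Lam-D)$.

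It remains to check that $D$ is a $\sigma$-divisor. Using $\sigma(\tp_i^\pm)=\tp_i^\mp$, $\sigma(\tp_j')=\tp_j'$, and $c_i^++c_i^-=\ell_{p_i}$, one computes
$$
D=\sum_{i=1}^{r_1}\ell_{p_i}\big(\tp_i^++\tp_i^-\big)+\sum_{j=1}^{r_2}2\,\ell_{p_j'}\,\tp_j'\ ,
$$
which is effective and $\sigma$-invariant, and whose coefficient $2\ell_{p_j'}$ at each fixed point $\tp_j'$ is even; thus conditions (i) and (ii) hold once $D\le\Lam$ is established. Recalling $\Lam=\sum_i(m_i/2)(\tp_i^++\tp_i^-)+\sum_j n_j\,\tp_j'$ and the general bound $\ell_x\le\delta_x$ (with $\delta_{p_i}=m_i/2$ and $\delta_{p_j'}=(n_j-1)/2$), the coefficients satisfy $\ell_{p_i}\le m_i/2$ and $2\ell_{p_j'}\le n_j-1<n_j$, so indeed $D\le\Lam$.

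The main obstacle is the identity $J\otimes\sigma^{\st}J=\MO(\Lam)$. Everything hinges on correctly pinning down the divisor $\Lam$ on $\tS$ as $\Div(p^{\st}\lam)$, equivalently $\MO_{\tS}(\Lam)=\tpi^{\st}K$, and on confirming that the first component $J$ of the parabolic module genuinely lies in the Prym torsor $\tMT$, so that $\Nm_{\tS/\Sigma}(J)$ is constant and the computation reduces to the locally free case. The degree bookkeeping around this point (the twists by the ramification divisor $R=\sum_j\tp_j'$ and by $\Lam'$ that relate $\det\tpi_{\st}J$, $\Nm_{\tS/\Sigma}(J)$, and $\tpi^{\st}K$) is the only genuinely delicate step; the remaining verifications are formal once $\tL=J(-D_v)$ is in hand.
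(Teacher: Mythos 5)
The paper never actually proves this proposition (it is quoted from \cite{horn2022semi,Mochizukiasymptotic}), so the comparison has to be against the paper's surrounding framework --- and there your argument breaks at precisely the step you yourself flagged as delicate. You place the first component $J$ of the parabolic module in the torsor $\tMT$, so that $\Nm_{\tS/\Sigma}(J)=K$ and hence $J\otimes\sigma^{\st}J=\tpi^{\st}K=\MO(\Lam)$. But for $L\in\BMP$ the paper's construction has $\pr:\hPMod\to\Prym(\tS/\Sigma)$ (diagram \eqref{eq_parabolic_module_for_Prym}), i.e.\ $J\otimes\sigma^{\st}J=\MO_{\tS}$; this is also what the proof of Proposition \ref{prop_relationship_D_v_and_D} uses. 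Indeed your own reduction to the locally free case forces this: if $L\in\MP=\Prym(S/\Sigma)$ is locally free, then $J=p^{\st}L$ by Theorem \ref{thm_parabolic_module_main_theorem}(ii), and $J\otimes\sigma^{\st}J=p^{\st}(L\otimes\sigma^{\st}L)=p^{\st}\MO_S=\MO_{\tS}$, not $\MO(\Lam)$. Feeding the correct normalization into your (otherwise valid) formal computation gives $\tL\otimes\sigma^{\st}\tL=\MO\bigl(-(D_v+\sigma^{\st}D_v)\bigr)=\MO(-D)$, which is exactly what the definitions of $\tMP_D$, $\BMP_D$ and Proposition \ref{prop_relationship_D_v_and_D} require. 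So as a proof of the statement for $\BMP$ with conclusion $\MO(\Lam-D)$, the argument is unsound: the conclusion itself is wrong under the paper's definitions (except in degenerate cases such as $D=0$ with all zeros even).

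To be fair, the root cause is partly in the statement as printed: the hypothesis $L\in\BMP$ is paired with the conclusion $\MO(\Lam-D)$, which is the normalization belonging to $L\in\BMT$ (compare $\tMT_D$ versus $\tMP_D$ in the display immediately following the proposition). Read as the $\BMT$ statement, your proof is essentially correct and is the natural argument: after translating by a fixed locally free $L_0\in\MT$ via $\psi_{L_0}$ (necessary, since the paper's parabolic modules live over $\Jac(\tS)$, so $\tau^{-1}(L)$ only parses for degree-zero sheaves), the relevant component does lie in $\tMT$, the norm identity $J\otimes\sigma^{\st}J=\tpi^{\st}\Nm_{\tS/\Sigma}(J)$, the torsor argument, and the evaluation $\Nm_{\tS/\Sigma}(p^{\st}L_0)=\Nm_{S/\Sigma}(L_0)=K$ are all right. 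Likewise your $\sigma$-divisor bookkeeping is clean and correct: $D=D_v+\sigma^{\st}D_v=\sum_i\ell_{p_i}(\tp_i^++\tp_i^-)+\sum_j 2\ell_{p_j'}\tp_j'$ is effective and $\sigma$-invariant, has even coefficients at the fixed points, and satisfies $D\leq\Lam$ by $\ell_x\leq\delta_x$, matching Lemma \ref{lem_integers_in_parabolic_module}. The one genuine gap, then, is the placement of $J$: it lies in $\tMT$ on the $\BMT$ side but in $\Prym(\tS/\Sigma)$ on the $\BMP$ side, and the two choices differ by exactly the twist $\MO(\Lam)$ that separates the two versions of the proposition.
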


%The $\sigma$-divisors give a stratification of the BNR data $\BMT$ and $\BMP$.
For  a $\sigma$-divisor $D$, define 
\begin{align}
	\begin{split}
        \tMT_D&=\{J\in \Pic(\tS)\mid J\otimes
        \sigma^{\st}J=\MO(\Lam-D)\}\ ; \\
        \tMP_D&=\{J\in\Pic(\tS)\mid J\otimes\sigma^{\st}J=\MO(-D)\}\ .
	\end{split}	
\end{align}
Then by \cite[Prop.\ 5.6]{horn2022semi}, $\tMT_D$ and $\tMP_D$
are abelian torsors over $\Prym(\tS/\Sigma)$ with dimension $g(\tS)-g(S)=g-1+\frac12r_2$. 
In addition, we define
\begin{align}
    \begin{split}
        \BMT_D&=\{L\in \BMT\mid \ptf L\in \tMT_D\}\ ;\\
        \BMP_D&=\{L\in\BMP\mid\ptf L\in \tMP_D\}\ .
    \end{split}
\end{align}
Then the partial order on divisors defines a stratification of $\BMT$ (resp.\ $\BMP$)
by: $\cup_{D'\leq
D}\BMT_{D'}$ (resp.\ $\cup_{D'\leq D}\BMP_{D'}$). The  top strata  are
$\BMT_{D=0}$ (resp.\ $\BMP_{D=0}$), and these consist of  the locally free
sheaves. From the definition, $\MT=\BMT_{D=0}$ and $\MP=\BMP_{D=0}$. 

\begin{theorem}[{\cite[Thm.\ 6.2]{horn2022semi}}]
\label{thm_stratification_fibration}
 (i)	Suppose $q$ contains at least one zero of odd order. Then for each
    stratum indexed by $\sigma$-divisor $D$, if we let $n_{ss}$ be the 
number of $p$ such that $D|_p=\Lam|_p$, then there are holomorphic fiber bundles
	\begin{align}
        \begin{split}
        (\mbC^{\st})^{k_1}\ti \mbC^{k_2}\longrightarrow
        \BMT_D\xrightarrow{\hspace{.2cm}\ptf\ }
        \tMT_D\ ;\\
        (\mbC^{\st})^{k_1}\ti \mbC^{k_2}\longrightarrow
        \BMP_D\xrightarrow{\hspace{.2cm}\ptf\ } \tMP_D\ ,
	\end{split}
    \end{align}
	where $k_1=r_1-n_{ss}$, $k_2=2g-2-\frac12\deg(D)-r_1+n_{ss}-\frac {r_2}{2}$, and $r_1,r_2$ are the number of even 
    and odd zeros. 
    (ii)
	Suppose $q$ is irreducible but all zeros are of even order. Then there exists an
    analytic space $\BMT_D'$ and a double branched covering $p:\BMT_D\to
    \BMT_D' $, with $\BMT_D'$ a holomorphic fibration
	\begin{equation*}
			(\mbC^{\st})^{k_1}\ti \mbC^{k_2}\longrightarrow
            \BMT_D'\xrightarrow{\hspace{.25cm}\ptf\ } \tMT_D\ .
	\end{equation*}
	In particular, $\dim(\BMP_D)=\dim(\BMT_D)=3g-3-\frac12\deg(D).$
\end{theorem}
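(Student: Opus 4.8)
The plan is to prove the bundle statements by localizing at the points of $Z$ via the parabolic module description of Theorem~\ref{thm_parabolic_module_main_theorem} together with the fibration~\eqref{eq_parabolic_module_for_Prym}, and then to reassemble the local data into a holomorphic bundle over the torsor $\tMT_D$ (resp.\ $\tMP_D$). Throughout I would use that, by Proposition~\ref{prop_stratification_BNR_data}, the stratifying divisor $D$ is recovered from the product $\ptf L\otimes\sigma^{\st}\ptf L$, so that $D$ stays constant along the fibers of $\ptf\colon\BMT_D\to\tMT_D$ (resp.\ $\BMP_D\to\tMP_D$).

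First I would fix $\tL\in\tMT_D$ and describe the fiber $\ptf^{-1}(\tL)$. By Theorem~\ref{thm_parabolic_module_main_theorem}(iii), every $L=\tau(J,v)$ with $\ptf L=\tL$ satisfies $J=\tL(D_v)$, where $D_v=\varkappa(v)$ is the divisor~\eqref{eq_divisor_map} attached to the local data $v=\prod_{x\in Z}v_x$. Hence the fiber is cut out entirely by the local choices $v_x$ inside $\MP(x)$, constrained so that the combinatorial type $\kappa_x(v_x)$ is the one forced by $D|_x$. In this way $\ptf^{-1}(\tL)$ becomes a product over $x\in Z$ of locally closed cells of $\MP(x)$, and the problem reduces to identifying these cells and their variation as $\tL$ moves in $\tMT_D$.

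Next I would run the local computation, distinguishing the two kinds of zero. At an odd zero $p_j'$, where $\sigma$ fixes the single preimage $\tp_j'$, the relevant slice of the generalized Jacobian factor $\tMO_x^{\st}/\MO_x^{\st}$ is unipotent; together with the cell structure of $\MP(A_{n_j-1})$ from Proposition~\ref{prop_fiber_of_parabolic_module}, this yields only additive $\mbC$-factors, accounting for the $-r_2/2$ term and the odd part of $-\tfrac12\deg D$ in $k_2$. At an even zero $p_i$, where $\sigma$ exchanges the two preimages $\tp_i^{\pm}$, the ratio of the two branch gluings contributes a single $\mbC^{\st}$ precisely when the module remains locally free along the branch directions, that is when $D|_{p_i}\neq\Lam|_{p_i}$; when $D|_{p_i}=\Lam|_{p_i}$ (the $n_{ss}$ ``semistable'' points) this torus factor collapses and is absorbed into the additive part. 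Bookkeeping with $\dim(\tMO_x^{\st}/\MO_x^{\st})=\delta_x$ and the cell dimensions of Proposition~\ref{prop_fiber_of_parabolic_module} then produces exactly $k_1=r_1-n_{ss}$ torus factors and the stated $k_2$. Globalization is then immediate: since $\tMT_D$ and $\tMP_D$ are $\Prym(\tS/\Sigma)$-torsors by \cite[Prop.~5.6]{horn2022semi}, the cell type is locally constant and the fibers patch into a holomorphic bundle with fiber $(\mbC^{\st})^{k_1}\times\mbC^{k_2}$, while $\dim\tMT_D+k_1+k_2=(g-1+\tfrac12 r_2)+(r_1-n_{ss})+k_2=3g-3-\tfrac12\deg D$ gives the dimension.

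Finally, for part (ii) I would treat the case in which every zero of $q$ is even. Now there is no $\sigma$-fixed odd point to rigidify the exchange $\sigma(\tp_i^+)=\tp_i^-$ on the branch gluing data, and the constraint defining $\tMT_D$ is sensitive only to the $\sigma$-symmetric combination of the two branch parameters. I would define $\BMT_D'$ as the fibration over $\tMT_D$ recording exactly this symmetric data, and exhibit $p\colon\BMT_D\to\BMT_D'$ as the associated double cover, branched along the locus where the two branch gluings coincide. The step I expect to be the main obstacle is precisely this one: controlling the $\ZT$-equivariance of the local data at even zeros and proving that, in the absence of any odd zero, it genuinely forces a \emph{branched} double cover rather than an \'etale or trivial one, whereas a single odd zero in part (i) already removes the ambiguity. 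A secondary delicate point is checking that the bundle structure survives across the degenerate cells at the $n_{ss}$ semistable points, where the torus factor degenerates into the affine directions.
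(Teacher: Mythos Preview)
The paper does not prove this theorem; it is quoted verbatim from \cite[Thm.\ 6.2]{horn2022semi} and used as input. So there is no proof in the paper to compare against. Your proposal is an attempt to supply an independent argument via the parabolic module machinery of Section~\ref{sec_parabolic_modules}, whereas Horn's original proof proceeds through the abelianization and Hecke--modification picture (closer to Lemma~\ref{lem_local_description} and the material in Section~\ref{subsection_abelianzation} here), building the fiber directly from the local normal forms of the Higgs field rather than from Cook's module parametrization.

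Your outline is plausible and the dimension count is correct, but there is a genuine gap at the heart of the local step. You write that ``the combinatorial type $\kappa_x(v_x)$ is the one forced by $D|_x$''. This is false at even zeros: for $x=p_i\in\Ze$ with $D|_{p_i}=d_i(\tp_i^++\tp_i^-)$, the divisor $D$ only pins down the sum $c_i^++c_i^-=d_i$, leaving $d_i+1$ combinatorial types in $V(L_{p_i})$ (Proposition~\ref{prop_computation_NL}). This is exactly why $\tau$ is $N(L)$--to--$1$ rather than injective. Consequently your identification of $\ptf^{-1}(\tL)$ with a product of cells in $\prod_x\MP(x)$ overcounts: the set $\{(J,v)\in\hPMod: J(-D_v)=\tL,\ D_v+\sigma^{\st}D_v=D\}$ maps to $\ptf^{-1}(\tL)$ via $\tau$ with multiplicity $N(L)$, not bijectively. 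To repair the argument you must either (a) fix one combinatorial type $\mathbf{c}$ and prove that $\tau$ restricted to $\kappa^{-1}(\mathbf{c})$ already surjects onto the fiber (this is not obvious and requires understanding how the different cells of $\MP(A_{m_i-1})$ glue under $\tau$), or (b) abandon the parabolic module route and work directly with extensions of $\tL$ by torsion at the singular points, which is essentially Horn's approach. The same issue underlies your uncertainty in part~(ii): the residual $\ZT$ in the all--even case is precisely the global sign ambiguity in choosing $\omega$ (cf.\ the remark after Theorem~\ref{thm_reduciblefiberBNR} and the discussion of $\sigma^{\st}\omega=-\omega$), and it is this, not a local branch--gluing symmetry, that produces the branched double cover.
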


As explained in \cite{horn2022semi},  via the BNR correspondence the stratification above translates
into a stratification of the Hitchin  fiber.
%Let $\MM_q:=\MH^{-1}(q)$ be the Hitchin fiber of an irreducible quadratic differential $q$. 
Let $\chi_{\BNR}:\BMT\isorightarrow\MM_q$ be the bijection in Theorem
\ref{thm_BNRcorrespondence}. Let $D$ be a $\sigma$-divisor. Define
$\MM_{q,D}:=\chi_{\BNR}(\BMT_D)$. Then the stratification of $\BMT$ 
induces a stratification on $\MM_q=\bigcup_D\MM_{q,D}$. 

For each $\sigma$-divisor $D$, since $\sigma^{\st}D=D$, we can write $D':=\frac{1}{2}\tpi(D)$. 
Then $D'$ is an effective divisor with $\supp D'\subset Z$. Moreover, for
$x\in q^{-1}(0)$, $D_x'\leq \frac12\lfloor \ord_x(q) \rfloor$.
Therefore, $\MM_q$ may be regarded as also being stratified by divisors $D'$ defined over $\Sigma$.

\subsection{The structure of the parabolic module projection}
We now explain the relationship between the divisor $D_v$ in Theorem \ref{thm_parabolic_module_main_theorem} and the $\sigma$-divisor. 
Given $L \in \BMP$, define
\begin{align}
    \begin{split}
        \MSN_L&:=\{(J,v)\in\hPMod\mid\tau(J,v)=L\}\ ;\\
        \MSD_L&:=\{D_v\mid(J,v)\in\MSN_L\}\ ,
\end{split}
\end{align}
where $\MSN_L$ is $\tau^{-1}(L)$, and $\MSD_L$ is the collection of divisors $D_v$ 
such that $J(-D_v) = \ptf(L)$. If $L$ is locally free, then $J = p^{\ast}
L$, and $\MSD_L$ is empty. Moreover, if $\tau(J, v) = \tau(J', v)$, then $J' = J(D_{v'} - D_v)$.

The divisor $D_v$ satisfies the following symmetry property:
\begin{proposition}
	\label{prop_relationship_D_v_and_D}
Let $D$ be a $\sigma$-invariant divisor and $L \in \BMP_D$. For any $D_v \in \MSD_L$, we have $D_v + \sigma^{\ast} D_v = D$.
\end{proposition}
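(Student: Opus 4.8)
The plan is to fix a point $(J,v)\in\MSN_L=\tau^{-1}(L)$ with associated divisor $D_v=\varkappa(v)$, compute $D_v+\sigma^{\st}D_v$ directly from the combinatorics of $\varkappa$, and then match the outcome with the $\sigma$-divisor $D$ attached to $L$. By Theorem \ref{thm_parabolic_module_main_theorem}(iii), the torsion-free pull-back satisfies $\ptf(L)=\tL=J(-D_v)$, and since $(J,v)\in\hPMod$ the projection places $J\in\Prym(\tS/\Sigma)$, i.e.\ $J\otimes\sigma^{\st}J\cong\MO_{\tS}$. Writing $\kappa(v)=(c_1^{\pm},\dots,c_{r_1}^{\pm},\ell_{p_1'},\dots,\ell_{p_{r_2}'})$, the definition of the divisor map gives
\[
D_v=\sum_{i=1}^{r_1}\bigl(c_i^+\tp_i^++c_i^-\tp_i^-\bigr)+\sum_{j=1}^{r_2}\ell_{p_j'}\tp_j'\ .
\]
Because $\sigma$ interchanges $\tp_i^+$ and $\tp_i^-$ and fixes each $\tp_j'$, applying $\sigma^{\st}$ and adding yields
\[
D_v+\sigma^{\st}D_v=\sum_{i=1}^{r_1}(c_i^++c_i^-)(\tp_i^++\tp_i^-)+\sum_{j=1}^{r_2}2\ell_{p_j'}\tp_j'\ .
\]
Invoking the constraint $c_i^++c_i^-=\ell_{p_i}$ built into $V(L_{p_i})$, the right-hand side depends only on the fractional-ideal invariants $\ell_x$ of $L$, and in particular is independent of the chosen $(J,v)\in\MSN_L$. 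This already matches the form of the assertion, which posits the same identity for every $D_v\in\MSD_L$.

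Next I would identify this divisor with $D$ at the level of line bundles. From $\tL=J(-D_v)$ and $J\otimes\sigma^{\st}J\cong\MO_{\tS}$ one computes
\[
\tL\otimes\sigma^{\st}\tL\cong(J\otimes\sigma^{\st}J)\otimes\MO\bigl(-(D_v+\sigma^{\st}D_v)\bigr)\cong\MO\bigl(-(D_v+\sigma^{\st}D_v)\bigr)\ .
\]
On the other hand $L\in\BMP_D$ means $\tL\in\tMP_D$, so $\tL\otimes\sigma^{\st}\tL\cong\MO(-D)$ with $D$ the $\sigma$-divisor labeling the stratum. Comparing the two expressions gives $\MO(D_v+\sigma^{\st}D_v)\cong\MO(D)$, that is, a \emph{linear equivalence} $D_v+\sigma^{\st}D_v\sim D$ of effective, $\sigma$-invariant divisors supported on $p^{-1}(Z)$.

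The hard part is upgrading this linear equivalence to an equality of divisors: two effective divisors supported on a fixed finite set need not coincide merely because they are linearly equivalent, so the line-bundle identity alone is insufficient. To close the gap I would show that $D$ is itself pinned down by the same local data, by analyzing the canonical pairing that trivializes $\tL\otimes\sigma^{\st}\tL$ over the locally free locus and reading off its zero divisor near each singular point. Concretely, this amounts to comparing $\tL=\ptf(L)$ with $p^{\st}L$ in a neighborhood of each $p^{-1}(x)$ and verifying that the multiplicity of $D$ is exactly $\ell_{p_i}$ at each of $\tp_i^{\pm}$ (even zeros) and $2\ell_{p_j'}$ at each $\tp_j'$ (odd zeros), with the even coefficient at the $\sigma$-fixed points $\tp_j'$ reflecting the $\sigma$-divisor condition. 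Matching these multiplicities with the coefficients computed in the first paragraph then forces $D=D_v+\sigma^{\st}D_v$. The main obstacle is precisely this local refinement at the fixed points of $\sigma$, where the ramification of $\tpi$ and the torsion of $p^{\st}L$ interact; everything else is the bookkeeping of the $\varkappa$-combinatorics carried out above.
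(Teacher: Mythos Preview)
Your line-bundle argument in the second paragraph is exactly the paper's proof: from $\tL=J(-D_v)$, $J\in\Prym(\tS/\Sigma)$, and $\tL\otimes\sigma^{\st}\tL\cong\MO(-D)$, the paper concludes $D_v+\sigma^{\st}D_v=D$ in one line. You are right to notice that, read literally, this only yields the linear equivalence $D_v+\sigma^{\st}D_v\sim D$. The paper is tacitly using that the $\sigma$-divisor stratification $\BMP=\bigcup_D\BMP_D$ is a genuine disjoint decomposition --- i.e.\ the $\sigma$-divisor $D$ is uniquely determined by $L$, not merely up to linear equivalence --- which is implicit in calling it a stratification and is established in the cited references via the local description.

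That said, you have already supplied a direct verification and do not need the delicate local analysis you outline at the end. Your first-paragraph combinatorics give
\[
D_v+\sigma^{\st}D_v=\sum_{i=1}^{r_1}\ell_{p_i}(\tp_i^++\tp_i^-)+\sum_{j=1}^{r_2}2\ell_{p_j'}\,\tp_j'\ ,
\]
depending only on the local invariants $\ell_x$ of $L$. The ``hard part'' you describe --- showing the multiplicities of $D$ itself are $\ell_{p_i}$ at $\tp_i^{\pm}$ and $2\ell_{p_j'}$ at $\tp_j'$ --- is precisely Lemma~\ref{lem_integers_in_parabolic_module}, stated immediately after this proposition and proved from the appendix computations alone, with no circularity. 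Combining your first paragraph with that lemma gives equality of divisors outright; the line-bundle step becomes redundant, and there is nothing further to check at the $\sigma$-fixed points.
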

\begin{proof}
Let $\tau(J, v) = L$.
    %and $D_v$ is the corresponding divisor. 
    Then by Theorem \ref{thm_parabolic_module_main_theorem}, we have $\tL =
    J(-D_v)$, where $\tL = \ptf(L)$. As $L \in \BMP_D$ and $J \in
    \Prym(\tS/\Sigma)$, we have $\tL \otimes \sigma^{\ast} \tL = \MO(-D)$ and $J \otimes \sigma^{\ast} J = \MO_{\tS}$, which implies $D_v + \sigma^{\ast} D_v = D$.
\end{proof}

As a consequence, we have the following:
\begin{corollary}
Suppose $q$ has only zeroes of odd order. Then for $L \in \BMP_D$ and $D_v \in \MSD_L$, we have $\sigma^{\ast} D_v = D_v$ 
and $D_v = \frac{1}{2}D$. In addition, $\tau: \hPMod \to \BMP$ is a bijection.
\end{corollary}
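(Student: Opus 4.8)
The plan is to obtain all three assertions directly from Proposition \ref{prop_relationship_D_v_and_D} and Theorem \ref{thm_parabolic_module_main_theorem}, exploiting the fact that when every zero of $q$ is odd the divisor $D_v$ is supported on the fixed locus of $\sigma$. First I would establish $\sigma^{\ast} D_v = D_v$. Since $q$ has no even zeros we have $r_1 = 0$, so the divisor map $\varkappa$ of \eqref{eq_divisor_map} produces $D_v = \sum_{i=1}^{r_2} \ell_{p_i'} \tp_i'$, supported entirely on the ramification points $\tp_i' = p^{-1}(p_i')$. Each of these is fixed by the involution, $\sigma(\tp_i') = \tp_i'$, whence $\sigma^{\ast} D_v = D_v$. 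For $L \in \BMP_D$, Proposition \ref{prop_relationship_D_v_and_D} gives $D_v + \sigma^{\ast} D_v = D$; combining with the previous identity yields $2 D_v = D$, that is, $D_v = \tfrac{1}{2} D$.

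For the bijection I would argue that each local invariant set is a singleton: for an odd zero one has $V(L_{p_i'}) = \{\ell_{p_i'}\}$, so that $N(L) = |V(L)| = \prod_{x \in Z} |V(L_x)| = 1$ for every $L$. By Theorem \ref{thm_parabolic_module_main_theorem}(i) the finite morphism $\tau$ has fiber of cardinality $N(L)$ over each point; restricting to $\hPMod = \tau^{-1}(\BMP)$, every fiber is a single point, and hence $\tau: \hPMod \to \BMP$ is a bijection.

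The one step carrying any real content is the identification of the support of $D_v$ with a subset of $\Fix(\sigma)$, and this is precisely where the oddness hypothesis enters. An even zero $p_i$ splits as $p^{-1}(p_i) = \{\tp_i^{+}, \tp_i^{-}\}$ with $\sigma(\tp_i^{\pm}) = \tp_i^{\mp}$, so in the presence of even zeros $D_v$ may fail to be $\sigma$-invariant and $N(L)$ may exceed one. Once the support is confined to the fixed locus, both the divisor identity and the bijection are formal consequences of the two cited results.
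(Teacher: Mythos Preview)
Your proof is correct and follows the same approach as the paper: both argue that the odd-order hypothesis forces $\supp(D_v)\subset\Fix(\sigma)$, giving $\sigma^\ast D_v=D_v$, and then invoke Proposition~\ref{prop_relationship_D_v_and_D} to conclude $D_v=\tfrac{1}{2}D$. Your treatment is in fact more complete, since you spell out the bijection via $N(L)=1$ and Theorem~\ref{thm_parabolic_module_main_theorem}(i), whereas the paper's written proof addresses only the divisor identity and leaves the bijection implicit.
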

\begin{proof}
Since each zero has odd order  $\supp(D_v) \subset \Fix(\sigma)$, which implies $D_v = \sigma^{\ast} D_v$. 
By Proposition \ref{prop_relationship_D_v_and_D}, we must have $D_v = \frac{1}{2}D$.
\end{proof}

There are relationships between the integers appearing in the construction of the parabolic module: 
\begin{lemma}[\cite{Greuel1985}]
\label{lem_integers_in_parabolic_module}
Let $D=\sum_{i=1}^{r_1} d_i(\tp_i^+ + \tp_i^-) + \sum_{i=1}^{r_2} d_{i}'\tp_i'$ be a $\sigma$-divisor, and let $L\in \BMP_D$. Then we have
\begin{itemize}
\item[(i)] $\ell_{p_i}=d_i$ and $\ell_{p_i'}=d_i'/2$;
\item[(ii)] $a_{\tp_i^+}=a_{\tp_i^-}=(m_i/2)-d_i$ and $a_{\tp_i'}=n_i-1-d_i'$.
\end{itemize}
\end{lemma}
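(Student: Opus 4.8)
The plan is to reduce both parts to the symmetry relation of Proposition \ref{prop_relationship_D_v_and_D} together with the explicit invariant computations recorded in Appendix \ref{appendixA}; once part (i) is established, part (ii) is immediate by substitution. To set up part (i), I would choose a parabolic module $(J,v)\in\hPMod$ with $\tau(J,v)=L$, and let $D_v=\varkappa(v)\in\Div(\tS)$ be the associated divisor from Theorem \ref{thm_parabolic_module_main_theorem}. By construction of the map $\varkappa$, the coefficients of $D_v$ at the two points over an even zero $p_i$ are the integers $c_i^\pm$, which satisfy $c_i^++c_i^-=\ell_{p_i}$ by the very definition of $V(L_{p_i})$, while the coefficient of $D_v$ at $\tp_i'$ over an odd zero is exactly $\ell_{p_i'}$. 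Thus $D_v$ records the local invariants $\ell_x$ of $L$ directly in its coefficients.

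Next I would invoke Proposition \ref{prop_relationship_D_v_and_D}, which gives $D_v+\sigma^{\st}D_v=D$. Since the involution acts by $\sigma(\tp_i^\pm)=\tp_i^\mp$ and $\sigma(\tp_i')=\tp_i'$, comparing coefficients of the two sides of this divisor identity yields, at the even zeros,
$$
c_i^++c_i^- = d_i\ ,
$$
and, at the odd zeros, $2\ell_{p_i'}=d_i'$. Combining the first relation with $c_i^++c_i^-=\ell_{p_i}$ gives $\ell_{p_i}=d_i$, and the second gives $\ell_{p_i'}=d_i'/2$, which is precisely part (i). One should note here that $D_v$ is not intrinsic to $L$, since it varies over the finite fiber $\MSN_L=\tau^{-1}(L)$, but the sums $c_i^++c_i^-$ and the odd coefficients $\ell_{p_i'}$ are genuine invariants of $L$, so the conclusion is independent of the chosen $(J,v)$.

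Part (ii) then follows directly from the invariant formulas quoted just before the lemma: Appendix \ref{appendixA} gives $a_{\tp_i^\pm}=(m_i/2)-\ell_{p_i}$ and $a_{\tp_i'}=2\delta_{p_i'}-2\ell_{p_i'}=(n_i-1)-2\ell_{p_i'}$, so substituting $\ell_{p_i}=d_i$ and $\ell_{p_i'}=d_i'/2$ yields $a_{\tp_i^+}=a_{\tp_i^-}=(m_i/2)-d_i$ and $a_{\tp_i'}=(n_i-1)-d_i'$, as claimed.

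I do not expect a serious obstacle, since the statement is essentially a bookkeeping identity that assembles results already in hand. The only points demanding care are the correct $\sigma$-equivariance of the marked points $\tp_i^\pm$ and $\tp_i'$, so that the coefficient matching in $D_v+\sigma^{\st}D_v=D$ is carried out on the correct orbits, and the observation that the coefficients of $D_v$ genuinely encode the local invariants $\ell_x$ through the definition of $V(L_x)$ and the map $\varkappa$. All of the underlying algebraic content resides in Appendix \ref{appendixA} and in Proposition \ref{prop_relationship_D_v_and_D}.
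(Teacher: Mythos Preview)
Your proof is correct. It does, however, take a different route from the paper's own argument. The paper's proof is very terse: it asserts directly that $\dim\Tor(p^{\st}L_{p_i})=d_i$ and $\dim\Tor(p^{\st}L_{p_i'})=d_i'/2$ from the hypothesis $L\in\BMP_D$, and then reads off both (i) and (ii) from the identity $b_x=\ell_x$ and the conductor formulas in Appendix~\ref{appendixA}. In other words, the paper goes through the torsion of the pullback rather than through a parabolic module lift. Your approach instead chooses $(J,v)\in\hPMod$ over $L$, invokes the coefficient constraints $c_i^++c_i^-=\ell_{p_i}$ and $D_v|_{\tp_i'}=\ell_{p_i'}$ built into the definition of $V(L_x)$ and $\varkappa$, and then uses the symmetry relation $D_v+\sigma^{\st}D_v=D$ from Proposition~\ref{prop_relationship_D_v_and_D} to match coefficients. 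This has the advantage of making completely explicit \emph{why} membership in $\BMP_D$ forces the local invariants $\ell_x$ to take the stated values, a step the paper compresses into a single unproved assertion about torsion dimensions. There is no circularity, since Proposition~\ref{prop_relationship_D_v_and_D} is established earlier and relies only on Theorem~\ref{thm_parabolic_module_main_theorem} and the definitions of $\BMP_D$ and $\Prym(\tS/\Sigma)$. For part (ii) the two arguments coincide: both substitute the values from (i) into the Appendix~\ref{appendixA} formulas.
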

\begin{proof}
Since $L\in \BMP_D$, we have $\dim\Tor(p^{\st}L_{p_i})=d_i$ and $\dim\Tor(p^{\st}L_{p_i'})=d_i'/2$. The claim then follows
from Proposition \ref{prop_appendix_computation}.
\end{proof}

The elements in $\MSD_L$ can be explicitly computed.
\begin{proposition}
\label{prop_computation_NL}
Let $D=\sum_{i=1}^{r_1} d_i(\tp_i^+ + \tp_i^-) + \sum_{i=1}^{r_2} d_{i}'\tp_i'$ be a $\sigma$-divisor, and let $L\in \BMP_D$. Then
$N_L=\prod_{i=1}^{r_1}(d_i+1)$. The number $N_L$ depends only on the $\sigma$-divisor $D$.
\end{proposition}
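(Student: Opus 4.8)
The plan is to reduce the statement to a straightforward count of the combinatorial sets $V(L_x)$, using the identification of $N_L$ with the fiber cardinality of $\tau$ together with the computation of the local invariants $\ell_{p_i}$ in terms of $D$. First I would observe that $N_L$ is by definition the number of points in $\MSN_L=\tau^{-1}(L)$, which by Theorem \ref{thm_parabolic_module_main_theorem}(i) equals $N(L)=|V(L)|$. Since $V(L)=\prod_{x\in Z}V(L_x)$, this factors as $N_L=\prod_{x\in Z}|V(L_x)|$, so it suffices to count each local factor.

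For the local counts I would use the explicit descriptions of $V(L_x)$ recorded in Section \ref{subsection_torsion_free_sheaf_integers}. At an even zero $p_i\in\Ze$, the set $V(L_{p_i})$ consists of ordered pairs $(c_i^+,c_i^-)$ of nonnegative integers with $c_i^++c_i^-=\ell_{p_i}$; there are exactly $\ell_{p_i}+1$ such pairs. At an odd zero $p_i'\in\Zo$, the set $V(L_{p_i'})=\{\ell_{p_i'}\}$ is a singleton and contributes a factor of $1$. Combining these gives $N_L=\prod_{i=1}^{r_1}(\ell_{p_i}+1)$.

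Finally, I would invoke Lemma \ref{lem_integers_in_parabolic_module}, which asserts that for $L\in\BMP_D$ one has $\ell_{p_i}=d_i$. Substituting yields $N_L=\prod_{i=1}^{r_1}(d_i+1)$, and since the multiplicities $d_i$ are read off directly from $D$, this quantity depends only on the $\sigma$-divisor $D$. The argument is essentially a bookkeeping exercise, and the only nonformal input is the identification $\ell_{p_i}=d_i$. The one point to verify with care is that this equality holds uniformly for every $L$ in the stratum $\BMP_D$, which is precisely the content of the cited lemma (itself resting on the Greuel--Kn\"orrer local computation recorded in the Appendix). Thus I expect no substantial obstacle beyond correctly matching the combinatorics of $V(L_{p_i})$ with the local data, and in particular observing that the odd zeros contribute trivially to the product.
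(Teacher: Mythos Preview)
Your proposal is correct and follows essentially the same approach as the paper: both reduce to counting $|V(L)|=\prod_{x\in Z}|V(L_x)|$, observe that odd zeros contribute singletons while even zeros contribute $\ell_{p_i}+1$ pairs, and then invoke Lemma~\ref{lem_integers_in_parabolic_module} to replace $\ell_{p_i}$ by $d_i$. The paper's proof is simply a more compressed version of what you wrote.
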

\begin{proof}
By Lemma \ref{lem_integers_in_parabolic_module}, $V(L)$ can be rewritten as 
\begin{equation*}
\begin{split}
V(L)=\{(c_1^{\pm},\ldots,c_{r_1}^{\pm},c'_{1}=l_{p_1'},\ldots,c'_{r_2}=l_{p'_{r_2}})\mid c_i^++c_i^-=d_i,c_i^{\pm}\in\mathbb{Z}_{\geq  0}\}\ ,
\end{split}
\end{equation*}
which implies the claim. The condition $D_v+\sigma^{\st}D_v=D$ is automatically satisfied.
\end{proof}

If we define $n_L$ to be the number of $D_v\in\MSD_L$ such that $\sigma^{\st}D_v\neq D_v$, then we have the following:
\begin{proposition}
	\label{prop_computation_nL}
%	The number $n_L$ satisfies the followings:
\begin{itemize}
\item [(i)] $n_L$ is even;
\item [(ii)] if $L\in \BMP_D$ with 
$$
D=\sum_{i=1}^{r_1}d_i(\tp_i^++\tp_i^-)+\sum_{i=1}^{r_2}d_{i}'\tp_i'\ ,$$
and if there exists $i_0\in\{1,\dots,r_1\}$ such that $d_{i_0}$ is not even, then $n_L=N_L$;  otherwise, $n_L=N_L-1$.
\end{itemize}
\end{proposition}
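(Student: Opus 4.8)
The plan is to reduce both assertions to the explicit combinatorial description of $\MSD_L$ furnished by Proposition \ref{prop_computation_NL}, together with the symmetry recorded in Proposition \ref{prop_relationship_D_v_and_D}; after these are in hand, the statement becomes an elementary count for an involution on a finite set. First I would recall that the divisors in $\MSD_L$ are exactly the $D_{\mbfc}$ attached to tuples $\mbfc=(c_1^\pm,\dots,c_{r_1}^\pm,\ell_{p_1'},\dots,\ell_{p_{r_2}'})\in V(L)$ with $c_i^++c_i^-=d_i$, where by Lemma \ref{lem_integers_in_parabolic_module} the odd-zero entries are forced to be $\ell_{p_i'}=d_i'/2$. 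Since the points $\tp_i^+,\tp_i^-,\tp_i'$ are pairwise distinct on $\tS$, the assignment $\mbfc\mapsto D_{\mbfc}$ is injective, so it identifies $\MSD_L$ with $V(L)$; in particular $|\MSD_L|=N_L$.

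Next I would make the action of $\sigma^{\st}$ explicit. Using $\sigma(\tp_i^{\pm})=\tp_i^{\mp}$ and $\sigma(\tp_i')=\tp_i'$, a direct computation gives
\[
\sigma^{\st}D_{\mbfc}=\sum_{i=1}^{r_1}\bigl(c_i^-\tp_i^++c_i^+\tp_i^-\bigr)+\sum_{i=1}^{r_2}\ell_{p_i'}\tp_i'=D_{\mbfc'}\ ,
\]
where $\mbfc'$ is obtained from $\mbfc$ by swapping $c_i^+\leftrightarrow c_i^-$ for each $i$. Since $c_i^-+c_i^+=d_i$ still holds, $\mbfc'\in V(L)$, so $\sigma^{\st}$ restricts to an involution of the finite set $\MSD_L\cong V(L)$; this is consistent with the relation $D_v+\sigma^{\st}D_v=D$ from Proposition \ref{prop_relationship_D_v_and_D}. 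Assertion (i) is then immediate, since the elements of $\MSD_L$ not fixed by $\sigma^{\st}$ occur in orbits of size two, whence their number $n_L$ is even.

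For (ii) I would count the fixed points of this involution. A tuple $\mbfc$ satisfies $\sigma^{\st}D_{\mbfc}=D_{\mbfc}$ precisely when $c_i^+=c_i^-$ for all $i$, i.e.\ $c_i^+=c_i^-=d_i/2$. If some $d_{i_0}$ is odd this is impossible, so there are no fixed points and $n_L=|\MSD_L|=N_L$; if every $d_i$ is even there is exactly one fixed tuple, giving one fixed point and $n_L=N_L-1$. Combining this with $N_L=\prod_{i=1}^{r_1}(d_i+1)$ from Proposition \ref{prop_computation_NL} completes the argument. I do not expect a serious obstacle here: the only point deserving care is verifying that $\sigma^{\st}$ genuinely preserves $\MSD_L$, equivalently that the swapped tuple still lies in $V(L)$ and is realized by an element of the fiber $\MSN_L$; checking its compatibility with $D_v+\sigma^{\st}D_v=D$ is therefore the step I would write out most carefully.
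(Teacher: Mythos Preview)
Your proposal is correct and follows essentially the same approach as the paper: both arguments identify $\MSD_L$ with $V(L)$ via Proposition \ref{prop_computation_NL}, observe that $\sigma^{\st}$ acts by swapping $c_i^+\leftrightarrow c_i^-$, and conclude by noting that a fixed point exists precisely when every $d_i$ is even (namely $c_i^+=c_i^-=d_i/2$). Your write-up is somewhat more explicit about the bookkeeping (injectivity of $\mbfc\mapsto D_{\mbfc}$, stability of $V(L)$ under the swap), but there is no substantive difference.
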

\begin{proof}
	To prove (i), note that if $\sigma^{\st}D_v\neq D_v$, then
    $\sigma^{\st}(\sigma^{\st}D_v)\neq \sigma^{\st}D_v$, which means that
    $n_L$ is even. For (ii), by Proposition \ref{prop_computation_NL},
    $D_v=\sigma^{\st}D_v$ for $D_v\in\MSD_L$ if and only if
    $c_{i}^+=c_i^-=d_i/2$. Therefore, $n_L\neq N_L$ if and only if all $d_i$ are even, which implies (ii).
\end{proof}

\section{Irreducible singular fibers  and the Mochizuki map}
\label{sec_irreducible_singular_fiber}
In this section, we provide a reinterpretation of the limiting configuration construction of a Higgs bundle 
on an irreducible fiber, as  introduced by Mochizuki in \cite{Mochizukiasymptotic} (see also \cite{horn2022semi}). 
We also investigate the relationship between limiting configurations and the stratification.

\subsection{Abelianization of a Higgs bundle}
\label{subsection_abelianzation}
Let $q$ be a fixed irreducible quadratic differential with spectral curve $S$, with normalization $p:\tS\to S$.
We define $\widetilde{K}:=\widetilde{\pi}^{\ast}K$ (but note that $\widetilde{K}\neq K_{\widetilde S}$) and 
$\widetilde{q}:=\widetilde{\pi}^{\ast}q\in H^0(\widetilde{K}^2)$, where $\widetilde{\pi}$ is the double 
branched covering of $\widetilde S\to\Sigma$ associated to the branching set $Z$ of $q$. 
Choose a square root $\omega\in H^0(\widetilde{K})$ such that $\widetilde{q}=-\omega\otimes\omega$. 
Let $\Lambda:=\mathrm{Div}(\omega)$ and $\tZ:=\mathrm{supp}(\Lambda)$. We can then write
$$\Lam=\sum_{i=1}^{r_1}\frac{m_i}{2}(\tp_i^++\tp_i^-)+\sum_{j=1}^{r_2}n_j\tp_j'\ .
$$
If $\sigma:\tS\to \tS$ is the involution, then $\sigma^{\st}\om=-\om$.

Let $(\mathcal{E},\varphi)$ be a Higgs bundle  with $\det\varphi=q$. Consider the pullback
$(\tME,\tvp):=(\widetilde{\pi}^{\ast}\mathcal{E},\widetilde{\pi}^{\ast}\varphi)$. We have $\tvp\in H^0(\mathrm{End}(\tME)\otimes\widetilde{K})$ 
and $\widetilde{q}=\det(\tvp)$. Since $\widetilde{q}=-\omega\otimes\omega$, $\pm\omega$ are well-defined eigenvalues 
of $\tvp$ over $\tS$. Let $\tilde{\lambda}$ be the canonical section of $\mathrm{Tot}(\widetilde{K})$. 
The spectral curve for $(\tME,\tvp)$ is defined by the equation
$$
\tS':=\{\tlam^2-\tq=0\}.
$$
The set $\tS'=\mathrm{Im}(\omega)\cup\mathrm{Im}(-\omega)\subset \mathrm{Tot}(\widetilde{K})$ decomposes into two irreducible pieces.

Having fixed a choice of $\omega$, the eigenvalues of $\tvp$ are globally well-defined, and we can define the line bundle 
$\tL_+\subset \mathcal{E}$ as $\tL_+:=\ker(\tvp-\omega)$.  Since $\sigma^{\st}\om=-\om$, $\tL_-=\sigma^{\ast}\tL_+=\ker(\tvp+\omega)$, 
and there is an isomorphism $\tME|_{\tS\setminus \tZ}\cong \tL_+\oplus \tL_-|_{\tS\setminus \tZ}$. 

There is a local description of $(\tME,\tvp)$:
\begin{lemma}[{\cite[Lemma 5.1, Thm.\ 5.3]{horn2022semi},\cite[Lemma
    4.2]{Mochizukiasymptotic}}]
	\label{lem_local_description}
	Let $x\in \tZ$ and write $\Lambda|_x=m_xx$. Let $U$ be a holomorphic
    coordinate neighborhood of $x$. Then there exists a frame $\mathfrak{e}\in H^0(U,\widetilde{K})$ such that, under a suitable trivialization of $\mathcal{E}|_U\cong U\times \mathbb{C}^2$, we can write
	\begin{equation} \label{eq_local_description_Higgs_bundle}
		\textrm{ }\tvp=z^{d_x}\begin{pmatrix}
			0 & 1\\
			z^{2m_x-2d_x} & 0
		\end{pmatrix} \otimes \mathfrak{e}.
	\end{equation}
	Moreover, if we define $D:=\sum_{x\in\tZ}d_xx$, then $D$ is a $\sigma$-divisor.
\end{lemma}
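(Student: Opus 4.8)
The plan is to reduce the assertion to a normal-form problem for a single traceless $2\times 2$ holomorphic matrix on a disc, solve it by exhibiting a cyclic vector, and then extract the symmetry properties of $D$ from the fact that $\tvp$ is pulled back from $\Sigma$. Fix a coordinate $z$ centered at $x$ and set $\mfe:=\omega/z^{m_x}$, a nonvanishing local section of $\widetilde{K}$; writing $\tvp=M\otimes\mfe$ with $M$ a $2\times 2$ matrix of holomorphic functions, the conditions $\Tr\vp=0$ and $\det\tvp=\tq=-\omega\otimes\omega$ give $\Tr M=0$ and $\det M=-z^{2m_x}$. Let $d_x:=\ord_x(\tvp)$ denote the vanishing order of $\tvp$ as a section of $\End(\tME)\otimes\widetilde{K}$; in any local frame and trivialization it equals $\min_{i,j}\ord_z(M_{ij})$, and it is intrinsic because conjugating $M$ by an element of $GL(2,\MO_x)$ and rescaling $\mfe$ by a unit both preserve this minimum. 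Writing $M=z^{d_x}N$ with $N(0)\neq 0$, $\Tr N=0$ and $\det N=-z^{2m_x-2d_x}$, the inequality $2d_x\le\ord_z(\det M)=2m_x$ gives $0\le d_x\le m_x$.

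To normalize $N$, observe that $N(0)$ is traceless and nonzero, hence non-scalar, so there is a constant vector $v_0$ for which $v_0$ and $N(0)v_0$ are linearly independent. By continuity $v_0\wedge N(z)v_0$ is a unit in $\MO_x$, so $\{Nv_0,\,v_0\}$ is a holomorphic frame of $\tME$ near $x$. Cayley--Hamilton for the traceless matrix $N$ reads $N^2=-\det(N)\,\Id=z^{2m_x-2d_x}\Id$, so in this frame $N(v_0)=1\cdot(Nv_0)$ and $N(Nv_0)=z^{2m_x-2d_x}v_0$; that is, $N$ becomes $\left(\begin{smallmatrix}0&1\\z^{2m_x-2d_x}&0\end{smallmatrix}\right)$. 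Multiplying by $z^{d_x}$ puts $M$ in exactly the form \eqref{eq_local_description_Higgs_bundle}, with $d_x=\ord_x(\tvp)$.

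It remains to verify that $D=\sum_{x\in\tZ}d_xx$ is a $\sigma$-divisor. Effectivity and $D\le\Lam$ are the bounds $0\le d_x\le m_x=\Lam|_x$ just obtained. Since $\tpi\circ\sigma=\tpi$, we have $\sigma^{\st}\tvp=\tpi^{\st}\vp=\tvp$, and because $d_x$ is the intrinsic vanishing order of $\tvp$ this yields $d_{\sigma(x)}=d_x$; in particular $d_{\tp_i^+}=d_{\tp_i^-}$, so $\sigma^{\st}D=D$. Finally, at a fixed point $x=\tp_j'\in\Fix(\sigma)$ the map $\tpi$ is ramified, so in suitable coordinates $\tpi$ is $z\mapsto w=z^2$ and $(\tME,\tvp)=(\tpi^{\st}\mathcal{E},\tpi^{\st}\vp)$. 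Computing $\ord_x(\tvp)$ in the convenient frame $\tpi^{\st}(dw)$ of $\widetilde{K}=\tpi^{\st}K$ and the pulled-back trivialization, the local matrix is $M_\Sigma(z^2)$, a power series in $z^2$; hence $d_{\tp_j'}$ is even, which is condition (ii) for a $\sigma$-divisor.

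The main obstacle is precisely this last parity statement. It is where one must exploit $\widetilde{K}=\tpi^{\st}K\neq K_{\tS}$: using the pulled-back frame $\tpi^{\st}(dw)$ makes the local matrix an even function of $z$ under $\sigma\colon z\mapsto -z$, whereas computing the order in the frame $dz$ of $K_{\tS}$ would add the ramification contribution and spoil the parity. The cyclic-vector step, by contrast, is routine once one notes that a nonzero traceless $2\times 2$ matrix is automatically non-scalar and therefore non-derogatory.
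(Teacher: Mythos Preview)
Your proof is correct. The paper does not supply its own argument for this lemma; it is quoted from \cite{horn2022semi} and \cite{Mochizukiasymptotic} without proof, so there is nothing in the text to compare against. Your cyclic-vector reduction to companion form is the standard route, and your handling of the $\sigma$-divisor conditions---in particular the parity at ramification points via the pulled-back frame of $\widetilde{K}=\tpi^{\st}K$ rather than $K_{\tS}$---is clean and to the point.
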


\begin{lemma}[{\cite[Sec.\ 4.1]{Mochizukiasymptotic}}]
	\label{lem_exact_sequence_abelianization}
	For the $\tL_{\pm}$ defined above, we have $\tL_+\otimes \tL_-=\MO(D-\Lam)$. Moreover, if we denote $\tL_0:=\tL_+(\Lam-D)$ and $\tL_1:=\sigma^{\st}\tL_0$, then 
	$\tL_+=\tME\cap \tL_0$, $\tL_-=\tME\cap \tL_1$, and we have the exact sequences 
    \begin{align*}	
	0\longrightarrow \tL_+\longrightarrow \tME \longrightarrow
        \tL_1\longrightarrow 0\ ;\\
        0\longrightarrow\tL_-\longrightarrow \tME \longrightarrow
        \tL_0\longrightarrow 0\ .
    \end{align*}	
\end{lemma}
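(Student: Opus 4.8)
The plan is to reduce all four assertions---the identity $\tL_+\otimes\tL_-=\MO(D-\Lam)$, the two intersection identities, and the two short exact sequences---to the single local model of Lemma \ref{lem_local_description} together with a determinant count. Throughout I would work at a point $x\in\tZ$ with $\Lam|_x=m_xx$, in the adapted frame of \eqref{eq_local_description_Higgs_bundle}, normalized so that $\omega=z^{m_x}\mfe$ locally; holomorphy of the normal form forces $0\le d_x\le m_x$, so that $\Lam-D=\sum_x(m_x-d_x)x$ is effective and the twists below make sense.

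First I would compute the eigensheaves in this frame. Solving $(\tvp\mp\omega\cdot\Id)s=0$ directly shows that $\tL_\pm=\ker(\tvp\mp\omega)$ is the free $\MO$-module generated by $v_\pm=\begin{pmatrix}1\\ \pm z^{m_x-d_x}\end{pmatrix}$. Since $\tL_\pm$ is the kernel of a morphism of locally free sheaves on the smooth curve $\tS$, its quotient embeds in the torsion-free sheaf $\tME\otimes\tK$, so $\tL_\pm$ is saturated and hence a genuine line subbundle. Next I would obtain the degree identity by a determinant count: the natural map $\tL_+\oplus\tL_-\to\tME$ is given in the frame by the matrix with columns $v_+,v_-$, of determinant $-2z^{m_x-d_x}$, hence an isomorphism over $\tS\setminus\tZ$ with cokernel of length $m_x-d_x$ at each $x$. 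Taking determinants and using $\det\tME=\tpi^\ast\det\ME=\MO$ gives
\begin{equation*}
\tL_+\otimes\tL_-=\det(\tL_+\oplus\tL_-)=\det\tME\otimes\MO\big(-(\Lam-D)\big)=\MO(D-\Lam)\ .
\end{equation*}
Combined with $\sigma^{\st}\tvp=\tvp$ and $\sigma^{\st}\omega=-\omega$ (whence $\sigma^{\st}\tL_+=\tL_-$) together with $\sigma^{\st}\Lam=\Lam$ and $\sigma^{\st}D=D$, this identifies $\tL_1=\sigma^{\st}\tL_0=\tL_-(\Lam-D)$.

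For the remaining claims I would view every sheaf as a subsheaf of $j_{\st}(\tME|_{\tS\setminus\tZ})$, where $j:\tS\setminus\tZ\hookrightarrow\tS$. A local section of $\tL_0=\tL_+(\Lam-D)$ has the form $f\,z^{-(m_x-d_x)}v_+$ with $f\in\MO$, and it lies in $\tME=\MO^2$ if and only if $z^{m_x-d_x}\mid f$, i.e.\ precisely when it lies in $\MO\,v_+=\tL_+$; this proves $\tL_+=\tME\cap\tL_0$, and symmetrically $\tL_-=\tME\cap\tL_1$. For the sequence $0\to\tL_+\to\tME\to\tL_1\to0$ I would take the second arrow to be the eigenprojection onto the $\tL_-$-eigenspace: writing $s=\begin{pmatrix}a\\ b\end{pmatrix}=\alpha v_++\beta v_-$ one finds $\beta=\tfrac12\big(a-z^{-(m_x-d_x)}b\big)$, a section of $\MO(\Lam-D)$, so $s\mapsto\beta v_-$ is a holomorphic surjection $\tME\to\tL_-(\Lam-D)=\tL_1$ with kernel exactly $\MO\,v_+=\tL_+$. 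The sequence $0\to\tL_-\to\tME\to\tL_0\to0$ then follows by applying $\sigma^{\st}$, or by the mirror computation.

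The only genuinely delicate point is the bookkeeping of pole orders: making precise the ambient sheaf $j_{\st}(\tME|_{\tS\setminus\tZ})$ in which ``$\tME\cap\tL_0$'' is computed, and checking that the eigenprojection---which a priori acquires poles along $\tZ$---actually lands in $\tL_1$ and not in a larger twist. Both are settled by the single local computation of $\beta$ above; everything else is the determinant count, which is routine.
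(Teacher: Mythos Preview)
Your proof is correct and follows essentially the same route as the paper's: both use the local frame of Lemma~\ref{lem_local_description} to exhibit the eigenbasis $v_\pm=\begin{pmatrix}1\\ \pm z^{m_x-d_x}\end{pmatrix}$, read off $\tL_+\otimes\tL_-=\MO(D-\Lam)$ from the determinant of $(v_+\ v_-)$, and then identify $\tME/\tL_+$ with $\tL_-(\Lam-D)=\tL_1$ (the paper phrases this last step as ``$\tME/\tL_+$ is locally generated by $z^{d_x-m_x}s_-$'', which is exactly your eigenprojection computation). Your version is in fact more complete, since you explicitly verify the intersection identities $\tL_\pm=\tME\cap\tL_{0,1}$, which the paper's proof does not address.
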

\begin{proof}
The inclusion of $\tL_{\pm}\to \ME$ defines an exact sequence of sheaves 
$$
0\longrightarrow \MO(\tL_+)\oplus \MO(\tL_-)\longrightarrow
    \MO(\ME)\longrightarrow \MT\longrightarrow 0\ ,
$$	where $\MT$ is a torsion sheaf with $\supp \MT\subset \tZ$. From the local description in 
    \eqref{eq_local_description_Higgs_bundle}, in the same trivialization, $\tL_{\pm}$ are spanned by the bases $s_{\pm}=\begin{pmatrix}
	1 \\
	\pm z^{m_x-d_x}
\end{pmatrix}.$ Therefore, as $\det(\ME)=\MO_\Sigma$, we obtain $\tL_+\otimes \tL_-=\MO(D-\Lam)$. 
Since $s_+,s_-$ are linear independent away from $z$, $\tME/\tL_+$ is
    locally generated  by the section $z^{d_x-m_x}s_-$. Therefore, $\tME/\tL_+\cong \tL_-(\Lam-D)=\tL_1$. Using the involution, we obtain the other exact sequence. 
\end{proof}

Therefore, if $\tL\otimes \sigma^{\st}\tL=\MO(D-\Lam)$, we have
$\tL_0=\tL(\Lam-D)\in \tMT_D$. In summary, the construction above leads us to consider the composition of the following maps:
\begin{equation*}
	\begin{split}
        \delta &: \MM_q \longrightarrow \Pic(\tS)\xrightarrow{\otimes \MO(\Lam-D)}\tMT_D\
        ;\\
		& (\ME,\vp)\mapsto \tL_+\mapsto \tL_+(\Lam-D)\ ,
	\end{split}
\end{equation*}
where the first map is given by taking the kernel of $(\tpi^{\st}\vp-\om)|_{\tpi^{\st}\ME}$. 

This construction is directly related to the torsion free pull-back. Recall that $\chi_{\BNR}:\BMT\to \MM_q$ is the BNR correspondence map,
and $\ptf:\BPic(S)\to\Pic(\tS)$ is the torsion free pull-back. Then we have
\begin{proposition}
	\label{prop_equivalent_torsionfree_pull_back}
	$\delta\circ\chi_{\BNR}=\ptf$. In particular, if $J\in \BMT_D$, then $\delta\circ\chi_{\BNR}(J)\in \tMT_D$. 
\end{proposition}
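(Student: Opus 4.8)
The plan is to unwind both sides over the normalization and to recognize $\ptf(L)$ as the $\om$-eigen\emph{quotient} of the pulled-back Higgs bundle, which is exactly the line bundle that $\delta$ produces. Fix $L\in\BMT$ and write $(\ME,\vp)=\chi_{\BNR}(L)$, so that by Theorem \ref{thm_BNRcorrespondence} we have $\ME=\pi_{\st}L$ and $\vp$ is multiplication by $\lam$. Pulling back to $\tS$ gives $\tME=\tpi^{\st}\ME$ with $\tvp=\tpi^{\st}\vp$. By the definition of $\delta$ together with Lemma \ref{lem_exact_sequence_abelianization}, we have $\delta(\ME,\vp)=\tL_+(\Lam-D)=\tL_0$, where $\tL_0=\tME/\tL_-$ sits in the exact sequence $0\to\tL_-\to\tME\to\tL_0\to 0$. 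Thus the whole statement reduces to the identification of line bundles $\tL_0=\ptf(L)$ on $\tS$.

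First I would produce a canonical surjection $\tME\to\ptf(L)$. The description $\ME=\pi_{\st}L$ comes with the adjunction counit $\pi^{\st}\pi_{\st}L\twoheadrightarrow L$ on $S$, which is surjective because $\pi$ is finite (locally it is the module multiplication $L\otimes_{\MO_\Sigma}\MO_S\to L$). Pulling this back by $p$ and composing with the quotient by torsion yields a surjection $\tME=p^{\st}\pi^{\st}\pi_{\st}L\to p^{\st}L\to p^{\st}L/\Tor=\ptf(L)$. Since $\ptf(L)$ is a line bundle (torsion free of rank $1$ on the smooth curve $\tS$), its kernel $\MSK\subset\tME$ is a saturated rank $1$ subsheaf, hence a line subbundle. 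Moreover $\tvp=p^{\st}(\pi^{\st}\vp)$ acts on $p^{\st}L$, and hence on $\ptf(L)$, as multiplication by $p^{\st}\lam=\om$, so $\ptf(L)$ is an $\om$-eigenquotient of $(\tME,\tvp)$, exactly like $\tL_0$.

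The crux is then to show $\MSK=\tL_-$. Over $\Sigma\setminus Z$ the cover $\pi$ is \'etale, so $\pi^{\st}\pi_{\st}L\cong L\oplus\sigma^{\st}L$ with the counit the projection onto $L$; pulling back, $\MSK|_{\tS\setminus\tZ}=\sigma^{\st}p^{\st}L$, on which $\tvp$ acts by $-\om$, i.e.\ $\MSK$ agrees with $\tL_-=\ker(\tvp+\om)$ there. Both $\MSK$ and $\tL_-$ are line subbundles of $\tME$, and two line subbundles of a bundle on a smooth curve agreeing on a dense open set must coincide (they define sections of the $\mathbb{P}^1$-bundle $\mathbb{P}(\tME)$ that agree on a dense open set). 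Hence $\MSK=\tL_-$ on all of $\tS$, and therefore $\ptf(L)=\tME/\MSK=\tME/\tL_-=\tL_0=\delta\circ\chi_{\BNR}(L)$. The ``in particular'' is then immediate: by definition $L\in\BMT_D$ means $\ptf(L)\in\tMT_D$, so $\delta\circ\chi_{\BNR}(L)=\ptf(L)\in\tMT_D$.

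I expect the main obstacle to be precisely the behaviour at the singular points of $S$, where $p$ fails to be flat and $p^{\st}L$ acquires torsion: a direct local computation comparing the analytic normal form of Lemma \ref{lem_local_description} with the torsion free pullback would be delicate and bookkeeping-heavy. The argument above is designed to sidestep this via a saturation/generic-determination principle — the kernel $\MSK$ is automatically saturated because $\ptf(L)$ is torsion free, and a saturated rank $1$ subsheaf of a bundle on a smooth curve is determined by its restriction to any dense open set. The one convention I would pin down carefully is the sign matching $p^{\st}\lam=\om$ (equivalently, which component of $\tS'=\Im(\om)\cup\Im(-\om)$ the eigenline $p^{\st}L$ lies on); fixing it consistently with the choice of $\om$ used to define $\tL_+$ is what selects the quotient $\tL_0$ rather than $\tL_1=\sigma^{\st}\tL_0$.
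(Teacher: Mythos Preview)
Your argument is correct and takes a genuinely different route from the paper's. The paper works with the BNR exact sequence \eqref{eq_BNR}: it pulls this back along $p$, breaks the resulting four-term sequence into two short exact sequences, and then uses the local normal form of Lemma~\ref{lem_local_description} to identify $\Im(\tvp-\om)$ explicitly as $\tL_1$; a final commutative diagram comparing $\tL_1$ with its saturation $\tL_1'$ yields $\ptf J=\tL_0$. Your approach instead uses the adjunction counit $\pi^{\st}\pi_{\st}L\to L$ together with naturality to produce directly a surjection $\tME\to\ptf(L)$ on which $\tvp$ acts by $\om$, and then invokes the principle that a saturated line subbundle of a vector bundle on a smooth curve is determined by its restriction to any dense open set. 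This lets you identify the kernel with $\tL_-$ by checking only over the \'etale locus, bypassing the local computation entirely. The paper's method has the virtue of making the local picture explicit (which feeds into the later analytic constructions), while yours is cleaner and more conceptual, and would adapt more readily to situations where a convenient normal form is not available. The one point you flag yourself---fixing the sign convention $p^{\st}\lam=\om$---is indeed the only place requiring care, and it is handled consistently with the paper's choice in Section~\ref{subsection_abelianzation}.
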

\begin{proof}
	Let $J\in \BMT$, and write $(\ME,\vp)=\chi_{\BNR}(J)$, $(\tME,\tvp):=\tpi^{\st}(\ME,\vp)$.	
    Recall the BNR exact sequence on $S$ (see \eqref{eq_BNR}):
\begin{equation*}
	\begin{split}
				0\longrightarrow J(-\Delta)\longrightarrow
                \pi^{\st}\ME\xrightarrow{\pi^{\st}\vp-\lam}\pi^{\st}\ME\otimes
                \pi^{\st}K\longrightarrow J\otimes
                \pi^{\st}K\longrightarrow 0\ .
	\end{split}
\end{equation*}
As $p^{\st}$ is right exact, we obtain 
    $$\tME\xrightarrow{\hspace{.3cm}\tvp-\tlam\ }\tME\otimes
    \tK\longrightarrow p^{\st}J\otimes \tK\longrightarrow
    0\ . $$
Since the spectral curve is $\tS'=\Im(\om)\cup\Im(-\om)$, we can consider the restriction to the component $\Im(\om)$ and 
    write $\tlam=\om,\;\tL_{\pm}:=\ker(\tvp\mp\om)$. We obtain an exact sequence 
\begin{equation*}
    0\longrightarrow\tL_+\longrightarrow\tME\xrightarrow{\hspace{.25cm}\tvp-\om\
    }\tME\otimes \tK\longrightarrow p^{\st}J\otimes \tK\longrightarrow 0\ ,
\end{equation*}
which breaks into short exact sequences
\begin{align*}
	0\longrightarrow \tL_+\longrightarrow \tME\longrightarrow
    \Im(\tvp-\om)\longrightarrow 0\ ;\\
    0\longrightarrow \Im(\tvp-\om)\longrightarrow \tME\otimes
    \tK\longrightarrow p^{\st}J\otimes \tK\longrightarrow 0\ .
\end{align*}
Using the local trivialization in Lemma \ref{lem_local_description}, $\Im(\tvp-\om)$ is locally spanned by $\begin{pmatrix}
z^{d_x}\\ -z^{m_x} \end{pmatrix}\mfe$. From Lemma \ref{lem_exact_sequence_abelianization}, if we write
$\tL_0:=\tL_+(\Lam-D)$ and $\tL_1:=\sigma^{\st}\tL_0$, then 
$$
\delta\circ\chi_{BNR}(J)=\tL_+(\Lam-D)\ .
$$ 
Moreover, there is an isomorphism $\Im(\tvp-\om)\cong \tL_1$. Letting $\tL_1'$ be the saturation of $\tL_1$,
then we obtain the commutative diagram:
\begin{equation*}
	\begin{tikzcd}
		0\arrow[r, ] & \tL_1 \arrow[r, ]  \arrow[d, "i"] & \tME\otimes\tK \arrow[r, ] \arrow[d, "\cong"] & p^{\st}J\otimes \tK \arrow[r, ] \arrow[d, ] & 0 \\ 
		0\arrow[r, ]& \tL_1' \arrow[r, ]&\tME\otimes \tK \arrow[r, ]&\ptf J\otimes \tK \arrow[r, ] &0
	\end{tikzcd}
\end{equation*}
where $i:\tL_1\to \tL_1'$ is the natural inclusion. Moreover, in the same
    trivialization, $\tL_1'$ is spanned by the section $\begin{pmatrix}
	1\\
	-z^{m_x-d_x}
\end{pmatrix}\mfe$. Therefore, $\tL_1'\cong \tL_-\otimes \tK$ and from
    Lemma \ref{lem_exact_sequence_abelianization}, $\ptf J=\delta\circ\chi_{\BNR}(J)$.
\end{proof}

 If $(\ME,\vp)$ is a Higgs
bundle with $(\ME,\vp)=\chi_{\BNR}(L)$, and $\tL_0=\delta\circ\chi_{BNR}(L)$, then by Proposition \ref{prop_equivalent_torsionfree_pull_back}, $\tL_0=\ptf(L)$. We define a Higgs bundle $(\tME_0,\tvp_0)$
as follows
\begin{equation*}
	\begin{split}
		\tME_0=\tL_0\oplus \sigma^{\st}\tL_0,\;\tvp_0=\begin{pmatrix}
			\om & 0 \\
			0 & -\om
		\end{pmatrix}.
	\end{split}
\end{equation*}
Moreover, $\tME$ is an $\MO_{\tS}$ submodule of $\tME_0$ with a natural inclusion $\iota:\tME\to \tME_0$ satisfying the following: 
\begin{itemize}
	\item [(i)] the induced morphism $\tME\to \tL_0,\;\tME\to \sigma^{\st}\tL_0$ is surjective,
	\item [(ii)] the restriction of $\iota|_{\tS\setminus \tZ}$ is an isomorphism,
	\item [(iii)] $\tvp_0\circ \iota=\iota\circ \tvp$.
\end{itemize}
Following \cite[Sec.\ 4.1]{Mochizukiasymptotic},
we call $(\tME_0,\tvp_0)$ the \emph{abelianization of the Higgs bundle} $(\ME,\vp)$.

\subsection{The construction of the algebraic Mochizuki map}
In this subsection, we define the algebraic Mochizuki map, as introduced in \cite{Mochizukiasymptotic}.
Recall that for any divisor $D=\sum_{x\in Z}d_x x$, there is a canonical weight function 
$$\chi_D(x):=\begin{cases}d_x  & 
x\in\supp D\ ;\\
    0 &x\notin \supp D\ .
\end{cases}
$$
We also have the stratification 
$\BMT=\cup_D\BMT_D$, for $\sigma$ divisors $D$. Let $\MSF(\tS)$ be the space of all degree zero filtered
line bundles over $\tS$. The \emph{algebraic Mochizuki map} $\TheMoc$ is defined as
\begin{equation*}
        \TheMoc : \BMT\lra \MSF(\tS)\ ,\
        L\mapsto \MF_{\st}(\ptf(L),\tfrac{1}{2}\chi_{D-\Lam})\ .
\end{equation*}

\begin{example}
When $q$ has only simple zeroes, this construction generalizes that of \cite{mazzeo2016ends} (see also \cite{fredrickson2018generic}). 
In the case of a quadratic differential with simple zeros, the spectral curve $S$ is smooth,  and every torsion free sheaf is locally free,
so that $\MT = \BMT$. If $Z = \{p_1, \dots, p_{4g-4}\}$ are the branch points of $S$, and 
$\Lambda = \sum_{i=1}^{4g-4} p_i$, then the weight function $\frac{1}{2}\chi_{-\Lambda}$ assigns a weight of $-\frac{1}{2}$ 
at each $p_i$. For $L\in \MT$, $\TheMoc(L)=\MF_{\st}(L,\frac12\chi_{-\Lam}).$
\end{example} 
Below are some additional properties of $\TheMoc$.
\begin{proposition}
	$\TheMoc|_{\BMT_D}$ is a continuous map. 
\end{proposition}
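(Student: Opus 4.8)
The plan is to reduce the statement to the continuity of the torsion-free pullback $\ptf$, exploiting the fact that the weight data is locked down once we restrict to a single stratum. The key observation is that on $\BMT_D$ the $\sigma$-divisor $D$ is constant, so the weight function $\tfrac{1}{2}\chi_{D-\Lam}$ is a single, fixed weight function supported on the fixed finite set $\tZ\subset\tS$. Thus $\TheMoc|_{\BMT_D}$ factors as
$$\BMT_D \xrightarrow{\ \ptf\ } \tMT_D \hookrightarrow \Pic(\tS) \lra \MSF(\tS)\ ,$$
where the last arrow attaches the fixed weight structure, and it suffices to check that each arrow is continuous.

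First I would pin down the topology on $\MSF(\tS)$. By Definition \ref{def_convergence_parabolic_bundles} and the remark following it, a sequence of filtered bundles converges if and only if the associated weighted bundles converge. Since here the normalization $\tS$ and the puncture set $\tZ$ do not vary and the weight $\tfrac12\chi_{D-\Lam}$ is held fixed along $\BMT_D$, condition (ii) of the definition is satisfied automatically, and the $\Pic_U$-convergence of condition (i) collapses to ordinary convergence of the underlying line bundle in $\Pic(\tS)$. Hence the final arrow above --- assigning the constant weight structure to a varying holomorphic line bundle --- is a homeomorphism onto its image, and in particular continuous.

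Next I would invoke Theorem \ref{thm_stratification_fibration}, which exhibits $\ptf:\BMT_D\to\tMT_D$ as the projection of a holomorphic fiber bundle with fiber $(\mbC^{\st})^{k_1}\ti\mbC^{k_2}$. This projection is holomorphic, hence continuous, and $\tMT_D$ sits inside $\Pic(\tS)$ as an abelian torsor, so the inclusion is continuous as well. Composing the three continuous maps shows that $\TheMoc|_{\BMT_D}$ is continuous.

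There is no serious obstacle; the entire content lies in the fixedness of $D$ on the stratum. The one point that deserves care is comparing the abstract $\Pic_U$-topology of Definition \ref{def_convergence_parabolic_bundles} --- which is designed for families of \emph{varying} Riemann surfaces --- with the honest topology of $\Pic(\tS)$: since $\tS$ is a fixed curve throughout this section, the relevant family in $U$ is constant and $\Pic_U$ restricts to $\Pic(\tS)$, so the two notions of convergence agree. I would also emphasize that it is precisely the jump of $D$ between strata, and hence the jump of the weight $\tfrac12\chi_{D-\Lam}$, that will later be responsible for the discontinuity of the global map; the restriction to a single stratum is therefore essential to the statement.
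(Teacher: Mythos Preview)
Your proposal is correct and takes essentially the same approach as the paper: the weight $\tfrac12\chi_{D-\Lam}$ is constant on the stratum, so continuity of $\TheMoc|_{\BMT_D}$ reduces to that of $\ptf$, which you justify via the fiber-bundle structure of Theorem \ref{thm_stratification_fibration} (the paper's one-line proof instead cites the definition together with Theorem \ref{thm_convergence_family_harmonic_bundles}). One small omission: Theorem \ref{thm_stratification_fibration} has a second case when all zeros of $q$ are even, in which $\ptf$ factors through a branched double cover $\BMT_D\to\BMT_D'$ before the fibration over $\tMT_D$, but the composite is still holomorphic and your argument goes through unchanged.
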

\begin{proof}
	This follows directly from the definition of $\TheMoc$ and Theorem \ref{thm_convergence_family_harmonic_bundles}. 
\end{proof}

From Theorem \ref{thm_stratification_fibration}, we know that for a
$\sigma$-divisor $D$, the preimage of the map $\ptf:\BMT_D\to \tMT_D$ has
dimension $2g-2-\frac{1}{2}\deg(D)-r_2/2$, where $r_2$ is the number of odd
zeros of $q$. Even for the top stratum $D=0$, $\ptf$ is not injective if
the spectral curve is not smooth. Indeed, if  $L_1,L_2\in \BMT_D$ with $\ptf(L_1)=\ptf(L_2)$, then based on the construction
we have $\TheMoc(L_1)=\TheMoc(L_2)$. In summary, we have the following result:

\begin{proposition}
	\label{prop_simple_zero_injective}
	If $q\in H^0(K^2)$ is irreducible, then $\TheMoc$ is injective if and only if $q$ has simple zeros.
\end{proposition}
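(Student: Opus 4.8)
The plan is to prove the two implications separately, exploiting the fact that on each stratum $\BMT_D$ the weight function $\tfrac12\chi_{D-\Lam}$ defining $\TheMoc$ depends only on the index $D$, together with the fiber structure of $\ptf$ coming from the normalization exact sequence \eqref{eq_exactsequenceofT}. Both directions reduce to elementary bookkeeping once this structure is in place.

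For sufficiency, I would assume $q$ has simple zeros. Then $S$ is smooth, $p:\tS\to S$ is an isomorphism, every $L\in\BMT=\MT$ is locally free with $\ptf=\id$, and there is a single stratum $\BMT=\BMT_{D=0}$, so that $\TheMoc(L)=\MF_{\st}(L,\tfrac12\chi_{-\Lam})$ with weight function \emph{independent} of $L$. I would then invoke the weighted/filtered dictionary of Section \ref{sec_filtered_bundles_and_compactness}: for a fixed weight $\chi$, the integers $n_p$ determined by $0\le\chi_p+n_p<1$ are fixed, so $\MF_{\st}(L_1,\chi)=\MF_{\st}(L_2,\chi)$ forces the degree-zero pieces $L_1(-\sum_p n_p p)$ and $L_2(-\sum_p n_p p)$ to agree, whence $L_1=L_2$. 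This gives injectivity.

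For necessity I would argue the contrapositive: if $q$ fails to have simple zeros then $S$ is singular and $\TheMoc$ is not injective. Restricting to the top stratum $\BMT_{D=0}=\MT$, the exact sequence \eqref{eq_exactsequenceofT}
\[
0\longrightarrow V\longrightarrow \MT \xrightarrow{\;p^{\st}\;}\tMT\longrightarrow 0
\]
exhibits the fibers of $\ptf|_{\MT}=p^{\st}$ as torsors under $V=\prod_{x\in Z}\tMO_x^{\st}/\MO_x^{\st}$, a group of dimension $\sum_{x\in Z}\delta_x$. Since $S$ is singular, some $x\in Z$ has $\delta_x>0$, so $V$ is nontrivial and such a fiber contains more than one point. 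Choosing $L_1\ne L_2$ in $\MT$ with $\ptf(L_1)=\ptf(L_2)$, both lie in the single stratum $D=0$ and carry the identical weight $\tfrac12\chi_{-\Lam}$, so
\[
\TheMoc(L_1)=\MF_{\st}(\ptf(L_1),\tfrac12\chi_{-\Lam})=\MF_{\st}(\ptf(L_2),\tfrac12\chi_{-\Lam})=\TheMoc(L_2),
\]
violating injectivity.

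I do not expect a substantive obstacle here; the argument uses no analysis and is essentially the observation recorded immediately before the statement. The only points needing care are that two sheaves in a common $\ptf$-fiber lie in the \emph{same} stratum (so their weight functions coincide), which is immediate since both sit in the top stratum $D=0$, and that $V\ne 0$ holds precisely when $S$ is singular, i.e.\ exactly when $q$ has a zero of order $\ge 2$. Both follow directly from material already established, so the proof is a matter of assembling these facts in the correct order.
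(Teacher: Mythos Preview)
Your proof is correct and follows essentially the same approach as the paper. The paper's argument is the paragraph immediately preceding the proposition: it cites Theorem~\ref{thm_stratification_fibration} to see that the fibers of $\ptf$ on the top stratum are positive-dimensional when $S$ is singular, whereas you invoke the exact sequence~\eqref{eq_exactsequenceofT} directly to see $V\neq 0$; these are equivalent, and your sufficiency argument via the weighted/filtered dictionary is simply a more explicit unpacking of what the paper leaves implicit.
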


\subsection{Convergence of subsequences}
Fix a  locally free $L_0\in \MT$. 
Using the isomorphism $\psi_{L_0}:\BMT\to \BMP$ defined by $\psi_{L_0}(L)=LL_0^{-1}$, we can extend the Mochizuki map $\TheMoc$ to $\BMP$. 
For $J\in \BMP_D$, we write $\tJ:=\ptf(J)$ and choose the weight function $\frac{1}{2}\chi_D$. We then define:
\begin{equation*}
    \TheMoc_0:\BMP\lra
    \MSF(\tS)\ ,\ J\mapsto\MF_{\st}(\tJ,\tfrac{1}{2}\chi_D)\ .
\end{equation*}

\begin{proposition}
	\label{prop_algebraic_Mochizuki_map_on_PMod}
	The map $\TheMoc_0$ satisfies the following properties:
	\begin{itemize}
		\item [(i)] Let $J\in \BMP$ and $L:=L_0J$, then
            $$\TheMoc_0(J)=\TheMoc(L)\otimes \TheMoc(L_0)^{-1}\ ,$$ where
            $\otimes$ is the tensor product for filtered line bundles
            \eqref{eqn:tensor}.
		\item [(ii)] Suppose $L=\tau(I,v)$ with $(\MI,v)\in\hPMod$ and $L\in \BMP_D$, then
		\begin{equation*}
			\TheMoc_0\circ\tau(I,v)=\MF_{\st}(I(-D_v),
            \tfrac{1}{2}\chi_{D_v+\sigma^{\st}D_v})\ ,
		\end{equation*}
		where $D_v$ is the corresponding divisor defined in Theorem \ref{thm_parabolic_module_main_theorem}.
		\item [(iii)] If $\sigma^{\st}D_v=D_v$, then
            $\TheMoc_0\circ\tau(\MI,v)=\MF_{\st}(\MI,0)$, where $0$ means
            all parabolic weights are zero.
	\end{itemize}
\end{proposition}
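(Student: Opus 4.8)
The plan is to prove all three parts by directly unwinding the definitions of $\TheMoc$ and $\TheMoc_0$ and tracking the divisor and weight data through the tensor product formula \eqref{eqn:tensor} and the gauge-equivalence relation $\MF_{\st}(\ML(D),\chi-\chi_D)=\MF_{\st}(\ML,\chi)$ for filtered bundles. The only algebraic inputs needed are the linearity of the weight assignment $D\mapsto\chi_D$ in the divisor and the fact that $\ptf$ commutes with tensoring by a locally free line bundle. No analytic or geometric difficulty arises; the content is bookkeeping.

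For (i), I would first check that $L:=L_0J$ lies in the same stratum $\BMT_D$ as $J\in\BMP_D$. Since $L_0\in\MT=\BMT_{D=0}$ is locally free, $p^{\st}L_0$ is locally free on $\tS$, hence flat, so torsion-free pullback commutes with the twist: $\ptf(L_0J)=p^{\st}L_0\otimes\ptf(J)$. Combining $\ptf(L_0)\otimes\sigma^{\st}\ptf(L_0)=\MO(\Lam)$ with $\ptf(J)\otimes\sigma^{\st}\ptf(J)=\MO(-D)$ gives $\ptf(L)\otimes\sigma^{\st}\ptf(L)=\MO(\Lam-D)$, so indeed $L\in\BMT_D$. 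By definition
\[
\TheMoc(L)=\MF_{\st}(p^{\st}L_0\otimes\ptf(J),\tfrac12\chi_{D-\Lam})\ ,\qquad \TheMoc(L_0)=\MF_{\st}(p^{\st}L_0,\tfrac12\chi_{-\Lam})\ ,
\]
and applying \eqref{eqn:tensor} (with $\TheMoc(L_0)^{-1}=\MF_{\st}((p^{\st}L_0)^{-1},-\tfrac12\chi_{-\Lam})$) together with the linear identity $\chi_{D-\Lam}-\chi_{-\Lam}=\chi_D$ makes the $p^{\st}L_0$ factors cancel, leaving $\MF_{\st}(\ptf(J),\tfrac12\chi_D)=\TheMoc_0(J)$.

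For (ii), the statement is nearly immediate. Writing $L=\tau(I,v)\in\BMP_D$, Theorem \ref{thm_parabolic_module_main_theorem}(iii) identifies $\ptf(L)=p^{\st}L/\Tor(p^{\st}L)=I(-D_v)$, while Proposition \ref{prop_relationship_D_v_and_D} gives $D_v+\sigma^{\st}D_v=D$, so $\chi_D=\chi_{D_v+\sigma^{\st}D_v}$. Substituting both into $\TheMoc_0(L)=\MF_{\st}(\ptf(L),\tfrac12\chi_D)$ gives the claim. For (iii), I would specialize (ii) to $\sigma^{\st}D_v=D_v$, where $D_v+\sigma^{\st}D_v=2D_v$ and hence $\tfrac12\chi_{2D_v}=\chi_{D_v}$; then the gauge-equivalence relation, applied with $\ML=I$, $\chi=0$, and twisting divisor $-D_v$, yields $\MF_{\st}(I(-D_v),\chi_{D_v})=\MF_{\st}(I,0)$, absorbing the twist and the remaining weight.

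The main point requiring care is the compatibility in (i): verifying that $\ptf$ intertwines the twist by $L_0$ exactly as stated, so that the $p^{\st}L_0$ factors cancel cleanly under \eqref{eqn:tensor}, and that the $\chi_{-\Lam}$ contributions coming from $L_0$ cancel precisely against those in $\TheMoc(L)$ to leave $\tfrac12\chi_D$. Beyond this, the only subtlety is to perform all identifications at the level of the filtered-bundle equivalence relation rather than on the nose, since distinct weighted bundles may represent the same filtered bundle; this is exactly what the relation $\MF_{\st}(\ML(D),\chi-\chi_D)=\MF_{\st}(\ML,\chi)$ is used for in (iii).
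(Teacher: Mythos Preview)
Your proof is correct and follows essentially the same approach as the paper's: for (i) you use local freeness of $L_0$ to factor $\ptf(L_0J)=p^{\st}L_0\otimes\ptf(J)$ and cancel via the tensor formula and $\chi_{D-\Lam}-\chi_{-\Lam}=\chi_D$; for (ii) you invoke exactly Theorem~\ref{thm_parabolic_module_main_theorem}(iii) and Proposition~\ref{prop_relationship_D_v_and_D}; and for (iii) you apply the weighted/filtered identification $\MF_{\st}(\ML(D),\chi-\chi_D)=\MF_{\st}(\ML,\chi)$ just as the paper does. The only difference is that you spell out the verification $L\in\BMT_D$ and the weight cancellation in slightly more detail than the paper.
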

\begin{proof}
As $L_0$ is locally free, we have $\ptf J=(p^{\st}L_0)^{-1}\otimes \ptf L$. By definition, 
$$
\TheMoc_0(J)=\MF_{\st}(\ptf J,\tfrac{1}{2}\chi_D)\ ,\
    \TheMoc(L)=\MF_{\st}(\ptf L,\tfrac{1}{2}\chi_{D-\Lam})\ ,\ \TheMoc(L_0)=
\MF_{\st}(\ptf L_0,\tfrac{1}{2} \chi_{-\Lam})\ ,
$$ 
which implies (i). For (ii), by Theorem \ref{thm_parabolic_module_main_theorem}, $\ptf L=I(-D_v)$, and from Proposition 
\ref{prop_relationship_D_v_and_D}, we have $D=D_v+\sigma^{\st}D_v$, which implies (ii). When $\sigma^{\st}D_v=D_v$, we compute
$$
\MF_{\st}(I(-D_v)\ ,\
    \tfrac{1}{2}\chi_{D_v+\sigma^{\st}D_v})=\MF_{\st}(I(-D_v)\ ,\
    \chi_{D_v})=\MF_{\st}(I,0)\ ,
$$
which implies (iii).
\end{proof}

We now give a criterion for the continuity of the map $\TheMoc$. By 
Proposition \ref{prop_algebraic_Mochizuki_map_on_PMod}, it is sufficient to study the map $\TheMoc_0$.
Recall that for $L\in \BMP$, we have 
$$
\MSN_L:=\{(J,v)\in\hPMod\mid \tau(J,v)=L\},\quad\MSD_L:=\{D_v\mid (J,v)\in\MSN_L\},
$$ 
and the number $n_L$ is defined to be the number of divisors $D_v\in \MSD_L$ such that $\sigma^{\st}D_v\neq D_v$.

\begin{proposition}
Let $D$ be a $\sigma$-divisor, $L\in\BMP_D$, and assume that $\TheMoc_0$ is continuous at $L$. Then, for $(J,v)\in \MSN_L$ and 
$D_v\in \MSD_{L}$, we have $\sigma^{\st}D_v=D_v$, i.e., $n_L=0$.
\end{proposition}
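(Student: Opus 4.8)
The plan is to show that continuity at $L$ forces every branch of $\tau^{-1}$ lying over $L$ to be $\sigma$-symmetric. Fix an arbitrary $(J,v)\in\MSN_L=\tau^{-1}(L)$ with associated divisor $D_v\in\MSD_L$; the goal is to prove $\sigma^{\st}D_v=D_v$, and since $(J,v)$ is arbitrary this yields $n_L=0$. The underlying idea is that $\TheMoc_0$ is assembled from $\ptf$, which jumps precisely along the asymmetric branches, so a discontinuity becomes detectable by approaching $L$ through locally free sheaves lying on the branch $(J,v)$.

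First I would produce the approximating sequence. Because $\hPMod$ fibers over $\Prym(\tS/\Sigma)$ with fiber $\MSP(S)=\prod_{x\in Z}\MP(x)$, and each factor $\MP(x)$ is irreducible (its cell decomposition in Proposition \ref{prop_fiber_of_parabolic_module}(ii) has a single open top-dimensional cell, which is therefore dense), the total space $\hPMod$ is irreducible. The locally free locus $\tau^{-1}(\MP)$ is open and nonempty, hence dense, so there is a sequence $(J_i,v_i)\to(J,v)$ in $\hPMod$ with each $L_i:=\tau(J_i,v_i)$ locally free. By continuity of $\tau$ and $\pr$ we obtain $L_i\to L$ in $\BMP$ and $J_i=\pr(J_i,v_i)\to J$ in $\Pic(\tS)$; moreover $J_i=\ptf(L_i)=p^{\st}L_i$ and $L_i\in\BMP_{D=0}$ by Theorem \ref{thm_parabolic_module_main_theorem}(ii), so $\TheMoc_0(L_i)=\MF_{\st}(J_i,0)$ carries trivial weights.

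Next I would pass to the limit. Since $\tS$ and $\tZ$ are fixed and the weights are identically zero, convergence of these filtered bundles in the sense of Definition \ref{def_convergence_parabolic_bundles} reduces to convergence in $\Pic(\tS)$, which we already have; thus $\TheMoc_0(L_i)\to\MF_{\st}(J,0)$ by Theorem \ref{thm_convergence_family_harmonic_bundles}. The assumed continuity of $\TheMoc_0$ at $L$ then forces $\MF_{\st}(J,0)=\TheMoc_0(L)=\MF_{\st}(\ptf L,\tfrac12\chi_D)$. Now $\ptf L=J(-D_v)$ by Theorem \ref{thm_parabolic_module_main_theorem}(iii), and the identity $\MF_{\st}(\ML(D'),\chi-\chi_{D'})=\MF_{\st}(\ML,\chi)$ (applied with $\ML=J$, $\chi=0$, $D'=-D_v$) rewrites the left side as $\MF_{\st}(J(-D_v),\chi_{D_v})$. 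Hence $\MF_{\st}(J(-D_v),\chi_{D_v})=\MF_{\st}(J(-D_v),\tfrac12\chi_D)$, an equality of two weighted bundles sharing the same underlying holomorphic bundle $J(-D_v)$.

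Finally, two weighted line bundles with equal underlying bundle define the same filtered bundle only if their weight functions agree exactly (the fractional parts must match to give equal jumps, and the integer parts must match to give equal lattices $L_0$). This yields $\chi_{D_v}=\tfrac12\chi_D$, i.e.\ $D_v=\tfrac12 D$ as divisors supported on $\tZ$; combined with $D_v+\sigma^{\st}D_v=D$ and $\sigma^{\st}D=D$ from Proposition \ref{prop_relationship_D_v_and_D}, this gives $\sigma^{\st}D_v=D_v$. As $(J,v)\in\MSN_L$ was arbitrary, every $D_v\in\MSD_L$ is $\sigma$-invariant and $n_L=0$. I expect the density step to be the main obstacle: one must ensure that \emph{every} branch of $\tau^{-1}$ over $L$, including the asymmetric ones, is genuinely approachable through locally free sheaves, which is exactly what the irreducibility of $\hPMod$ furnished by Proposition \ref{prop_fiber_of_parabolic_module} guarantees.
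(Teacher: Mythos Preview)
Your proof is correct and follows essentially the same approach as the paper's: approximate $L$ by locally free sheaves, compute $\TheMoc_0$ along the sequence using Proposition \ref{prop_algebraic_Mochizuki_map_on_PMod}(iii), and compare the limit with $\TheMoc_0(L)$ via the continuity hypothesis to force the weight identity $\chi_{D_v}=\tfrac12\chi_D$.

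The one noteworthy difference is the direction in which you build the approximation. The paper chooses $L_i\to L$ in $\BMP$ first, then lifts to $(J_i,v_i)\in\hPMod$ and passes to a subsequential limit $(J_\infty,v_\infty)\in\MSN_L$; as written this only directly establishes $\sigma^{\st}D_{v_\infty}=D_{v_\infty}$ for that particular limit point. You instead fix an arbitrary $(J,v)\in\MSN_L$ at the outset and invoke density of $\tau^{-1}(\MP)$ in $\hPMod$ to approximate $(J,v)$ itself. This makes the universal quantifier over $\MSN_L$ explicit and is arguably cleaner; the paper tacitly uses the same density fact later in Proposition \ref{prop_number_of_limits_of_filtered_bundle}, so you are not assuming anything extra. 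Your justification of irreducibility of $\hPMod$ via the cell decomposition of $\MP(x)$ is a reasonable sketch, though the paper simply takes density for granted.
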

\begin{proof}
As the top stratum $\MP$ is dense in $\BMP$, there exists a family of $L_i\in \MP$ such that $\lim_{i\to \infty}L_i=L$. Let $(J_i,v_i)\in \hPMod$ be such that $\tau(J_i,v_i)=L_i$. Then, $\lim_{i\to\infty}(J_i,v_i)=(J_{\infty},v_{\infty})$, and $\tau(J_{\infty},v_{\infty})=L$. As $L_i$ is locally free, we have $D_{v_i}=0$. Moreover, by Theorem \ref{thm_parabolic_module_main_theorem}, we have $\ptf L=J_{\infty}(-D_{v_\infty})$, and from Proposition \ref{prop_relationship_D_v_and_D}, we have $D=D_{v_{\infty}}+\sigma^{\st}D_{v_{\infty}}$.
	By Proposition \ref{prop_algebraic_Mochizuki_map_on_PMod}, we have 
    $$\TheMoc_0(L_i)=\TheMoc_0\circ\tau(J_i,v_i)=\MF_{\st}(J_i,0)\ ,$$
    and we compute 
    $$\lim_{i\to\infty}\TheMoc_0(L_i)=\MF_{\st}(J_{\infty},0)=\MF_{\st}(J_{\infty}(-D_{v_{\infty}}),\chi_{D_{v_{\infty}}})
    \ .$$
Moreover, by Proposition \ref{prop_algebraic_Mochizuki_map_on_PMod}, we have 
    $$
    \TheMoc_0(L)=\MF_{\st}(J_{\infty}(-D_{v_{\infty}}),\frac{1}{2}(\chi_{D_{v_{\infty}}}+\chi_{{\sigma^{\st}D_{v_{\infty}}}})).$$
    Since $\TheMoc_0$
    is continuous on $L$, we have $\lim_{i\to \infty}\TheMoc_0(L_i)=\TheMoc(L)$, which implies that $\chi_{D_{v_{\infty}}}=\chi_{\sigma^{\st}D_{v_{\infty}}}$.
\end{proof}

By Proposition \ref{prop_computation_nL}, $n_L>0$ if and only if $q$ has at least one zero of even order. Hence, the following is immediate. 
\begin{corollary}
\label{cor_even_zero_not_continuous}
Suppose $q$ is irreducible and has a zero of even order, then $\TheMoc_0$ is not continuous.
\end{corollary}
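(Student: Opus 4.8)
The plan is to obtain the corollary as the contrapositive of the continuity criterion established in the preceding proposition, combined with the combinatorial count of $n_L$ in Proposition~\ref{prop_computation_nL}. The preceding proposition asserts that if $\TheMoc_0$ is continuous at a point $L\in\BMP_D$, then every $D_v\in\MSD_L$ satisfies $\sigma^{\st}D_v=D_v$, i.e.\ $n_L=0$. Reading this contrapositively, it suffices to exhibit a single $L$ with $n_L>0$: for such an $L$ the map $\TheMoc_0$ cannot be continuous at $L$, and hence is not continuous as a map.

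To produce such an $L$, I would use the hypothesis that $q$ has a zero of even order. Fix an even zero $p_i$, so $m_i\geq 2$ is even, and consider the divisor $D=\tp_i^++\tp_i^-$ on $\tS$. This is a $\sigma$-divisor: we have $\sigma^{\st}D=D$ because $\sigma$ interchanges $\tp_i^+$ and $\tp_i^-$; the bound $D\leq\Lam$ holds since the coefficient of $\tp_i^{\pm}$ in $\Lam$ is $m_i/2\geq 1$; and condition (ii) in the definition of a $\sigma$-divisor is vacuous, as $\supp D$ is disjoint from $\Fix(\sigma)$. By Theorem~\ref{thm_stratification_fibration} the stratum $\BMP_D$ is a nonempty torsor over $\Prym(\tS/\Sigma)$, so we may choose $L\in\BMP_D$.

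For this $L$ the coefficient $d_i=1$ is odd, so Proposition~\ref{prop_computation_nL}(ii) yields $n_L=N_L$, while Proposition~\ref{prop_computation_NL} gives $N_L=\prod_{j=1}^{r_1}(d_j+1)=2$, since $d_i=1$ and $d_j=0$ for $j\neq i$; in particular $n_L=2>0$. Applying the contrapositive of the continuity criterion to this $L$ then shows that $\TheMoc_0$ is discontinuous at $L$, which proves the corollary. There is essentially no analytic obstacle at this stage: the argument is the assembly of two previously established results. The only point requiring genuine care—already absorbed into Proposition~\ref{prop_computation_nL}—is that $\sigma^{\st}D_v=D_v$ forces $c_i^+=c_i^-$ at each even zero, so that an asymmetric $D_v$ (and hence $n_L>0$) can occur precisely when some even zero contributes a divisor $D$ with $d_i\geq 1$, which is exactly what the presence of an even zero provides.
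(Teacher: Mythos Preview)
Your proof is correct and follows the same approach as the paper: it derives the corollary from the contrapositive of the preceding proposition together with Proposition~\ref{prop_computation_nL}, by exhibiting some $L$ with $n_L>0$. One small slip: $\BMP_D$ is not itself a torsor over $\Prym(\tS/\Sigma)$; rather $\tMP_D$ is, and $\BMP_D$ fibers over it by Theorem~\ref{thm_stratification_fibration}, but this still gives the nonemptiness you need.
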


By contrast, we have the following.
\begin{proposition}
If $q$ is irreducible with all zeroes of odd order, then $\TheMoc_0$ is continuous.
\label{prop_odd_zero_continuous}
\end{proposition}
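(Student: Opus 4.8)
The plan is to factor $\TheMoc_0$ through the parabolic module space $\hPMod$, where the map becomes transparently continuous, and then transport this continuity back to $\BMP$ using that $\tau$ is a homeomorphism in the all-odd case. The guiding idea is that the two stratum-dependent ingredients of $\TheMoc_0$ — the torsion-free pullback $\ptf$ and the parabolic weight $\tfrac12\chi_D$ — each jump across strata, but on $\hPMod$ their jumps cancel.

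First I would record the consequences of the odd-order hypothesis. By the corollary to Proposition~\ref{prop_relationship_D_v_and_D}, when all zeros of $q$ are odd we have $\sigma^{\st}D_v=D_v$ for every $D_v\in\MSD_L$, and moreover $\tau\colon\hPMod\to\BMP$ is a bijection. Under this hypothesis Proposition~\ref{prop_algebraic_Mochizuki_map_on_PMod}(ii)--(iii) yields the clean formula
\[
\TheMoc_0\circ\tau(\MI,v)=\MF_{\st}\bigl(\MI(-D_v),\tfrac12\chi_{D_v+\sigma^{\st}D_v}\bigr)=\MF_{\st}(\MI,0)\ ,
\]
so that $\TheMoc_0\circ\tau=\iota\circ\pr$, where $\pr\colon\hPMod\to\Prym(\tS/\Sigma)$ is the fibration projection of \eqref{eq_parabolic_module_for_Prym} and $\iota\colon\Prym(\tS/\Sigma)\to\MSF(\tS)$ is the trivially-weighted inclusion $J\mapsto\MF_{\st}(J,0)$. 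Both maps are continuous: $\pr$ is a morphism of analytic spaces, and $\iota$ is continuous by Definition~\ref{def_convergence_parabolic_bundles}, since for constant zero weights condition (ii) is vacuous and the convergence of $\MF_{\st}(J_t,0)$ reduces to convergence of $J_t$ in $\Pic$. Hence $\TheMoc_0\circ\tau$ is continuous on all of $\hPMod$.

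It then remains to see that $\tau^{-1}$ is continuous, for then $\TheMoc_0=(\TheMoc_0\circ\tau)\circ\tau^{-1}$ is continuous. Here I would invoke that $\tau$ is a finite morphism (Theorem~\ref{thm_parabolic_module_main_theorem}(i)), hence continuous in the analytic topology, between the compact Hausdorff analytic spaces $\hPMod$ and $\BMP$: indeed $\hPMod$ fibers over the compact $\Prym(\tS/\Sigma)$ with compact fiber $\MSP(S)$, while $\BMP=\overline{\MP}\subset\BJac(S)$ is closed in a projective space. A continuous bijection from a compact space to a Hausdorff space is a homeomorphism, so $\tau^{-1}$ is continuous and the proof concludes.

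The step I expect to be the crux — and the precise place where the odd-order hypothesis is indispensable — is the cancellation behind $\TheMoc_0\circ\tau(\MI,v)=\MF_{\st}(\MI,0)$. The discontinuity of $\ptf$ (passage from $\MI$ to $\MI(-D_v)$) is compensated by the weight jump $\tfrac12\chi_{D_v+\sigma^{\st}D_v}$ via the filtered-bundle identity $\MF_{\st}(\MI(-D_v),\chi_{D_v})=\MF_{\st}(\MI,0)$, and this compensation forces $\tfrac12(\chi_{D_v}+\chi_{\sigma^{\st}D_v})=\chi_{D_v}$, i.e.\ $\sigma^{\st}D_v=D_v$; this is exactly what the odd-order assumption guarantees and what fails once $q$ has an even zero (cf.\ Corollary~\ref{cor_even_zero_not_continuous}). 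The remaining ingredient — that the bijection $\tau$ is strong enough (a homeomorphism) to descend continuity from $\hPMod$ to $\BMP$ — is then a soft compactness argument, so I view the weight-cancellation as carrying the genuine content.
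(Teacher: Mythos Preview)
Your proof is correct and follows essentially the same approach as the paper: both factor $\TheMoc_0$ through $\hPMod$ via the identity $\TheMoc_0\circ\tau(\MI,v)=\MF_{\st}(\MI,0)$ from Proposition~\ref{prop_algebraic_Mochizuki_map_on_PMod}(iii), then use that $\tau$ is a bijection in the all-odd case to descend continuity. Your version is in fact slightly more careful than the paper's, which simply passes to a convergent subsequence $(J_i,v_i)\to(J_\infty,v_\infty)$ with $\tau(J_\infty,v_\infty)=L_\infty$ without spelling out the compact--Hausdorff homeomorphism argument you give.
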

\begin{proof}
Since all zeroes of $q$ are odd, for any $L\in \BMP$, we have $n_L=0$. Let $L_{\infty}\in \BMP$ be fixed and let $L_i\in \BMP$ be 
any sequence such that $\lim_{i\to\infty}L_i=L_{\infty}$. Since $\tau:\hPMod\to \BMP$ is bijective, we take $(J_i,v_i)\in\hPMod$ 
with $\tau(J_i,v_i)=L_i$. Moreover, we assume $\lim_{i\to \infty}(J_i,v_i)=(J_{\infty},v_{\infty})$ with $\tau(J_{\infty},v_{\infty})=L_{\infty}$. 
Since $q$ only contains odd zeros, it follows that $\supp D_v\subset \Fix(\sigma)$. By Proposition \ref{prop_algebraic_Mochizuki_map_on_PMod}, 
we have $\TheMoc_0(L_i)=\MF_{\st}(J_i,0)$. Therefore, we have:
$$
\lim_{i\to \infty}\TheMoc_0(L_i)=\lim_{i\to \infty}\MF_{\st}(J_i,0)=\MF_{\st}(J_{\infty},0)=\TheMoc_0(L_{\infty}).
$$
This concludes the proof.
\end{proof}

\begin{theorem}
	\label{thm_algebraic_Mochizuki_map_continuous}
Suppose $q$ is irreducible. For the map $\TheMoc:\MM_q\rightarrow \MSF(\tS)$,  we have:
\begin{itemize}
\item [(i)] $\TheMoc$ is injective if and only if $q$ only has only simple zeros;
\item[(ii)] if $q$ has only zeroes of odd order, $\TheMoc$ is continuous;
\item[(iii)] if $q$ contains a zero of even order, $\TheMoc$ is not continuous.
\end{itemize}
\end{theorem}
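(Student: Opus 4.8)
The plan is to assemble the three results already established in this section, and to transfer those phrased in terms of $\TheMoc_0$ over to $\TheMoc$. Part (i) is precisely the content of Proposition \ref{prop_simple_zero_injective}, so nothing further is needed there. The continuity statements in (ii) and (iii), on the other hand, were proved for the auxiliary map $\TheMoc_0:\BMP\to\MSF(\tS)$ in Proposition \ref{prop_odd_zero_continuous} and Corollary \ref{cor_even_zero_not_continuous}, so the real task is to see that $\TheMoc$ inherits the identical (dis)continuity behavior. I will do this by exhibiting $\TheMoc$ and $\TheMoc_0$ as differing only by homeomorphisms on source and target.

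Concretely, I would fix a locally free $L_0\in\MT$ and invoke the relation of Proposition \ref{prop_algebraic_Mochizuki_map_on_PMod}(i): for $J\in\BMP$ with $L=L_0J$, one has $\TheMoc_0(J)=\TheMoc(L)\otimes\TheMoc(L_0)^{-1}$. Writing $\psi_{L_0}:\BMT\to\BMP$, $\psi_{L_0}(L)=LL_0^{-1}$, for the defining isomorphism, this rearranges to $\TheMoc(L)=\TheMoc_0(\psi_{L_0}(L))\otimes\TheMoc(L_0)$. The next step is the observation that both operations on the right are homeomorphisms: $\psi_{L_0}$ is an isomorphism of analytic spaces $\BMT\cong\BMP$, while tensoring by the fixed filtered line bundle $\TheMoc(L_0)$ is a homeomorphism of $\MSF(\tS)$, with inverse given by tensoring by $\TheMoc(L_0)^{-1}$ under the tensor product \eqref{eqn:tensor}. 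Consequently $\TheMoc$ is continuous at $L$ if and only if $\TheMoc_0$ is continuous at $\psi_{L_0}(L)$, and $\TheMoc$ is discontinuous exactly where $\TheMoc_0$ is. Pre-composing with the BNR isomorphism $\chi_{\BNR}:\BMT\isorightarrow\MM_q$ of Theorem \ref{thm_BNRcorrespondence} then transports all assertions to $\MM_q$.

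With this dictionary in place, part (ii) follows by applying Proposition \ref{prop_odd_zero_continuous}: when all zeros of $q$ are odd, $\TheMoc_0$ is continuous, hence so is $\TheMoc$; and part (iii) follows from Corollary \ref{cor_even_zero_not_continuous}, since a zero of even order makes $\TheMoc_0$ discontinuous, hence $\TheMoc$ as well. Because everything of substance has already been proved, this theorem is essentially a packaging statement, and I expect no serious obstacle. The one point meriting a line of care is verifying that tensoring by $\TheMoc(L_0)$ is genuinely a homeomorphism for the convergence notion of Definition \ref{def_convergence_parabolic_bundles} — that is, that convergence of filtered bundles is preserved under tensoring by a fixed filtered bundle. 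This is immediate from the additivity of degrees and weights under tensor product together with the compactness statement of Theorem \ref{thm_convergence_family_harmonic_bundles}, so I would dispatch it in a single sentence rather than as a separate lemma.
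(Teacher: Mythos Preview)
Your proposal is correct and takes essentially the same approach as the paper, which simply cites Proposition \ref{prop_simple_zero_injective}, Proposition \ref{prop_odd_zero_continuous}, and Corollary \ref{cor_even_zero_not_continuous} for (i), (ii), and (iii) respectively. You are in fact more careful than the paper: you make explicit the transfer from $\TheMoc_0$ to $\TheMoc$ via Proposition \ref{prop_algebraic_Mochizuki_map_on_PMod}(i), a step the paper leaves entirely implicit.
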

\begin{proof}
(i) follows from Proposition \ref{prop_simple_zero_injective}. (ii) follows from Proposition \ref{prop_odd_zero_continuous}. (iii) follows from
 Corollary \ref{cor_even_zero_not_continuous}.
\end{proof}

\begin{proposition}
\label{prop_number_of_limits_of_filtered_bundle}
Suppose $n_L> 0$. Then for $k=1,\ldots,n_L$, there exist sequences ${L_i^k}$ with $\lim_{i\to \infty}L_i^k=L$ such that if we denote 
$\MF_{\st}^k:=\lim_{i\to \infty}\TheMoc_0(L_i^k)$, $\MF_{\st}^0:=\TheMoc_0(L)$, then $\MF_{\st}^{k_1}\neq \MF_{\st}^{k_2}$ for $k_1\neq k_2$. 
Moreover, there exist $\{D_{1},\ldots,D_{n_L}\}\subset \MSD_L$ such that $\MF_{\st}^{k}=\MF_{\st}(\ptf L,\chi_{D_k})$.
\end{proposition}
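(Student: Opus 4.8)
The plan is to realize the $n_L$ sequences as images under $\tau$ of sequences in the locally free locus of $\hPMod$ that approach the $n_L$ preimages of $L$ whose associated divisor fails to be $\sigma$-invariant. First I would note that $\hPMod$ is an $\MSP(S)$-bundle over $\Prym(\tS/\Sigma)$ with irreducible fibers (Proposition \ref{prop_fiber_of_parabolic_module}), hence is itself irreducible, and that the locally free locus $\tau^{-1}(\MP)$ — which by Theorem \ref{thm_parabolic_module_main_theorem}(ii) maps isomorphically onto the dense open set $\MP\subset\BMP$ — is open and dense. I then enumerate the points $(J_k,v_k)\in \MSN_L=\tau^{-1}(L)$, $k=1,\dots,n_L$, whose divisors $D_k:=D_{v_k}\in\MSD_L$ satisfy $\sigma^{\st}D_k\neq D_k$; by Propositions \ref{prop_computation_NL} and \ref{prop_computation_nL} these are distinct points of $\MSN_L$ carrying distinct divisors. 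Since each lies in $\overline{\tau^{-1}(\MP)}=\hPMod$, I can choose sequences $(J_i^k,v_i^k)\in\tau^{-1}(\MP)$ with $(J_i^k,v_i^k)\to(J_k,v_k)$ and set $L_i^k:=\tau(J_i^k,v_i^k)\in\MP$. Continuity of the finite morphism $\tau$ yields $L_i^k\to\tau(J_k,v_k)=L$, giving the first bullet.

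Next I would compute the limits. Because each $L_i^k$ is locally free, $D_{v_i^k}=0$, so Proposition \ref{prop_algebraic_Mochizuki_map_on_PMod}(iii) gives $\TheMoc_0(L_i^k)=\MF_{\st}(J_i^k,0)$. Continuity of the projection $\pr$ forces $J_i^k\to J_k$ in $\Prym(\tS/\Sigma)$, and since all weights are identically zero, Definition \ref{def_convergence_parabolic_bundles} together with Theorem \ref{thm_convergence_family_harmonic_bundles} gives $\MF_{\st}^k=\lim_i\MF_{\st}(J_i^k,0)=\MF_{\st}(J_k,0)$. By Theorem \ref{thm_parabolic_module_main_theorem}(iii) one has $\ptf L=J_k(-D_k)$, i.e.\ $J_k=(\ptf L)(D_k)$, so the filtered-bundle identity $\MF_{\st}(\ML(D),\chi-\chi_D)=\MF_{\st}(\ML,\chi)$ yields $\MF_{\st}^k=\MF_{\st}((\ptf L)(D_k),0)=\MF_{\st}(\ptf L,\chi_{D_k})$, which is the asserted description of the limits.

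For distinctness I would argue directly with the filtered bundles. Each $\MF_{\st}^k=\MF_{\st}(\ptf L,\chi_{D_k})$ has underlying bundle $(\ptf L)(D_k)$ and trivial weights, whereas $\MF_{\st}^0=\TheMoc_0(L)=\MF_{\st}(\ptf L,\tfrac12\chi_D)$ behaves in two ways: if some $d_i$ is odd it carries genuine weight $\tfrac12$ at $\tp_i^{\pm}$, so differs from every $\MF_{\st}^k$; and if all $d_i$ are even it equals $\MF_{\st}(\ptf L,\chi_{\frac12 D})$ with $\tfrac12 D\neq D_k$, the former being $\sigma$-invariant and the latter not. Thus distinctness reduces to the claim that distinct effective divisors among $\{\tfrac12 D\}\cup\{D_k\}$, all supported on $\tZ$, give distinct filtered bundles. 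In the paper's framework, where $\MF_{\st}(\ptf L,\chi_{D'})$ records the extension $(\ptf L)(D')\subset(\ptf L)(\st\tZ)$ across $\tZ$, distinct divisors produce distinct extensions and hence distinct filtered bundles.

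The main obstacle is this final step when filtered bundles are instead identified up to isomorphism: there one must show the line bundles $J_k=(\ptf L)(D_k)$ are pairwise non-isomorphic in $\Prym(\tS/\Sigma)$, equivalently that the differences $D_{k_1}-D_{k_2}=\sum_i e_i(\tp_i^+-\tp_i^-)$ with $\sum_i|e_i|>0$ are non-principal on $\tS$. I would handle this by observing that these are nonzero anti-invariant classes in $\Prym(\tS/\Sigma)$: since $g(\tS)=2g-1+r_2/2\geq 3$, one has $\tp_i^+\not\sim\tp_i^-$, and more generally there is no meromorphic function on $\tS$ supported on the anti-invariant ramification fibers. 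Establishing this non-principality in full generality, rather than merely generically, is the delicate point the argument must settle.
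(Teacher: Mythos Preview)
Your construction of the sequences and computation of the limits is essentially identical to the paper's proof: choose the $n_L$ preimages $(J^k,v^k)\in\tau^{-1}(L)$ with $\sigma^{\st}D_k\neq D_k$, approximate each by points of the dense open set $\tau^{-1}(\MP)$, push down by $\tau$, and use Proposition \ref{prop_algebraic_Mochizuki_map_on_PMod} together with $\ptf L=J^k(-D_k)$ to identify the limit as $\MF_{\st}(\ptf L,\chi_{D_k})$.

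The only divergence is in the distinctness step, where you do more work than necessary. The paper simply observes that $D_{k_1}\neq D_{k_2}$ gives $\chi_{D_{k_1}}\neq\chi_{D_{k_2}}$, and $\sigma^{\st}D_k\neq D_k$ together with $D=D_k+\sigma^{\st}D_k$ gives $\chi_{D_k}\neq\tfrac12\chi_D$; since all of these weighted bundles share the \emph{same} underlying holomorphic bundle $\ptf L$, the identity $\MF_{\st}(\ML(D'),\chi-\chi_{D'})=\MF_{\st}(\ML,\chi)$ forces $D'=0$, so distinct weights already yield distinct filtered bundles. Your detour through isomorphism classes and non-principality of $D_{k_1}-D_{k_2}$ on $\tS$ is an unnecessary strengthening: the paper's $\MSF(\tS)$ records the filtered structure on a fixed $\ptf L\subset\ptf L(\ast\tZ)$, not merely an abstract isomorphism class, so the ``obstacle'' you identify does not arise.
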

\begin{proof}
By the definition of $n_L$, we can find $(J^k,v^k)$ with $\tau(J^k,v^k)=L$.
    If we define $D_k:=D_{v^k}$, then $\sigma^{\st}D_{k}\neq D_{k}$.
    Moreover, by Theorem \ref{thm_parabolic_module_main_theorem}, we have
    $\ptf L=J^k(-D_k)$. As $\tau^{-1}(\MP)$ is dense in $\hPMod$, for each
    $(J^k,v^k)$, we can find a sequence $(J_{i}^k,v_{i}^k)\in \tau^{-1}(\MP)$ such that $\lim_{i\to \infty}(J_i^k,v_i^k)=(J^k,v^k)$ and we define $L_i^k:=\tau(J_{i}^k,v_{i}^k)$.
Since $L_i^k$ is locally free, $D_{v_i^k}=0$, and thus $\TheMoc_0(L_i^k)=\MF_{\st}(J_i^k,0)$. We compute
\begin{equation*}
	\lim_{i\to\infty}\TheMoc_0(L_i^k)=\MF_{\st}(J^k,0)=\MF_{\st}(\ptf L,\chi_{D_k})
\end{equation*}	
and $\TheMoc_0(L)=\MF_{\st}(\ptf L,\frac12\chi_{D})$. Based on our assumptions, we have $D_{k_1}\neq D_{k_2}$ for $k_1\neq k_2$ and $\sigma^{\st}D_k\neq D_k$, which implies that $\chi_{D_{k_1}}\neq \chi_{D_{k_2}}$ for $k_1\neq k_2$ and $\chi_{D_k}\neq \frac12\chi_D$. 
\end{proof}
We now present a computation for the case of a simple nodal curve.
\begin{example}
Let $q$ be a quadratic differential with $2g-4$ simple zeros, and let $x$ be an even zero of $q$ of order two. Then $S$ has a singular point, which we also denote by $x$. Let $p:\tS\to S$ be the normalization map and 
    $p^{-1}(x)=\{x_1,x_2\}$. Consider the $\sigma$-divisor $D=x_1+x_2$, and let $L\in \BMP_D$. Then $n_L=2$, and we can write $\MSN_L={(J_1,v_1),(J_2,v_2)}$, where $D_{v_1}=x_1$ and $D_{v_2}=x_2$. Moreover, we have $\ptf L=J_1\otimes \MO(-x_1)=J_2\otimes \MO(-x_2)$.
Let $(\alpha,\beta)$ denote the parabolic weight that is equal to $\alpha$ at $x_1$, $\beta$ at $x_2$, and $\frac12$ at all other zeros. Then the filtered bundles obtained in Proposition \ref{prop_number_of_limits_of_filtered_bundle} are 
\begin{equation*}
	\MF_{\st}(\ptf L, (1,0))\ ,\ \MF_{\st}(\ptf L, (0,1))\ ,\
    \MF_{\st}(\ptf L, (\tfrac{1}{2},\tfrac{1}{2}))\ . 
\end{equation*}
\end{example}

\subsection{Mochizuki's convergence theorem for irreducible fibers}
In this subsection, we recall Mochizuki's construction of the limiting configuration metric \cite[Section 4.2.1, 4.3.2]{Mochizukiasymptotic} and the convergence theorem.

\subsubsection{Limiting configuration metric}
Let $q$ be an irreducible quadratic differential and $(\ME,\vp)\in
\MM_q$  a Higgs bundle with $(\ME,\vp)=\chi_{\BNR}(L)$. We write
$\tL_0=\ptf L$ and $(\tME,\tvp):=p^{\st}(\ME,\vp)$. Then the abelianization
of $(\ME,\vp)$, which is a Higgs bundle over $\tS$, can be written as
$\tME_0=\tL_0\oplus \sigma^{\st}\tL_0,\;\tvp_0=\diag(\om,\;-\om)$. 
The natural inclusion $\iota:(\tME,\tvp)\to (\tME_0,\tvp_0)$ is an isomorphism over $\tS\setminus \tZ$. Moreover, we write $D$ be the $\sigma$-divisor of $(\ME,\vp)$. 

From the construction of $\TheMoc(L)$ and Proposition
\ref{prop_number_of_limits_of_filtered_bundle}, we have $n_L$ different
divisors $D_k$ for $k=1,\cdots,n_L$ with $\sigma^{*}D_k\neq D_k$ and
$D_k+\sigma^{*}D_k=D$. Moreover, we can find $n_L+1$ different filtered
bundles with deg $0$. Define $$\MF_{*,0}:=\TheMoc(L)=\MF_{*}(\tL_0,\chi_{\frac12(D-\Lam)}),\; \MF_{*,k}:=\MF_{*}(\tL_0,\chi_{D_k-\frac12\Lam)}),$$
which are all degree zero filtered bundles with different level of filtrations.

Now, we will introduce the construction in \cite[Section 4.2.1, 4.3.2]{Mochizukiasymptotic}. For $k=0,\cdots,n_L$, we define $\tlh_k$ to be the harmonic metric for the filtered bundle $\MF_{*,k}$; this is well-defined up to a multiplicative constant. To fix this constant, assume that $\sigma^{\st}\tlh_k\otimes \tlh_k=1$. This gives  a unique choice of $\tlh_0$. We then define the metric $\tH_k=\diag(\tlh_k,\;\sigma^{\st}\tlh_k)$ on $\tME_0$, with $\det(\tH_k)=1$. For the resulting harmonic bundle 
$(\tME_0,\vp_0,\tH_k)$, we define $\tilde{\na}_k$ to be the unitary connection determined by $\tH_k$. Since $\tH_k$ is diagonal, over $\tS\setminus \tZ$, it follows that $F_{\tilde{\na}_k}=0$, and we have $[\vp_0,\vp_0^{\da_{\tH_k}}]=0$. Furthermore, as $\iota$ is an isomorphism on $\tS\setminus \tZ$, the metric $\tH_k$ also defines a metric on $(\tME,\tvp)$ over $\tS\setminus \tZ$. 

For any $\tx\in \tS\setminus \tZ$ with $x:=p(\tx)$, we have isomorphisms 
$$
(\tME_0,\tvp_0)|_{\sigma(\tx)}\cong (\tME_0,\tvp_0)|_{\tx}\cong(\tME,\tvp)|_{\tx}\cong (\ME,\vp)|_x.
$$
Therefore, $\tH_k$ induces a metric $H^{\LC}_k$ on $\Sigma\setminus Z$, and we may consider $H^{\LC}_k$ as the 
push-forward of $\tlh_k$. In \cite[Theorem 5.2]{horn2022sltypesingularfibers}, the push-forward metric of $\TheMoc(L)$ is explicitly written in local coordinates. 

Recall the notation from Section \ref{sec:nah}, let $E$ be a trivial, smooth,  rank 2 vector bundle over a Riemann surface $\Sigma$, and let $K$ be a background Hermitian metric on $E$. Over $\Sigma\setminus Z$, we write $\na_k^{\LC}$ for the Chern connection defined by $H_k^{\LC}$, which is unitary w.r.t. $H_0$ and $\phi^{\LC}_k=\vp_k^{\LC}+\vp_k^{\da_\LC}$ be the corresponding Higgs field in the unitary gauge. They satisfy the decoupled Hitchin equations over $\Sigma\setminus Z$. 
Thus from any Higgs bundle $(\ME,\vp)$, we obtain $n_L+1$ limiting configurations $$(\na_k^{\LC},\phi_k^{\LC}=\vp+\vp_k^{\da_{\LC}})\in \MMH^{\LC}.$$

The flat connection, which is defined over $\Sigma\setminus Z$,  may be
understood by using the nonabelian Hodge correspondence for filtered vector
bundles \cite{simpson1990harmonic}. Given the filtered line bundles
$\MF_{*,k}$,  define filtered vector bundles $\tME_{*,k}:=\MF_{*,k}\oplus
\sigma^{*}\MF_{*,k}$, which can be explicitly written as 
\begin{equation}
	\begin{split}
			\tME_{*,0}:&=\MF_{*}(\tL_0,\chi_{\frac12(D-\Lam)})\oplus
            \MF_{*}(\sigma^{*}\tL_0,\chi_{\frac12(D-\Lam)})\ ;\\
			\tME_{*,k}:&=\MF_{*}(\tL_0,\chi_{D_k-\frac12\Lam})\oplus
            \MF_{*}(\sigma^{*}\tL_0,\chi_{\sigma^{*}D_k-\frac12\Lam})\ , \
            k\neq 0\ .
	\end{split}
\end{equation}
These are polystable filtered vector bundles over $\tS\setminus \tZ$. As
for each $k=0,\cdots,n_L$, $\sigma^{*}\tME_{*,k}=\tME_{*,k}$, the filtered
bundles $\tME_{*,k}$ induce a filtered vector bundles $\ME_{*,k}$ over
$\Sigma\setminus Z$. The flat connection $\na_k^{\Lim}$ will be the unique
harmonic unitary connection corresponding to the $\ME_{*,k}$. Moreover, for
$0\leq k_1\neq k_2\leq n_L$, based on the definition of $D_{k_1}$ and
$D_{k_2}$, we can always find $\tx\in \tZ_{\mathrm{even}}$, a preimage of
an even zero $x$ of $q$,  such
that $\tME_{*,k_1}$ and $\tME_{*,k_2}$ have different filtered structures
near $\tx$. Since over even zeros, $\tS\to \Sigma$ is not a branched
covering, we conclude that near $x$, $\ME_{*,k_1}$ and $\ME_{*,k_2}$ are
different filtered bundles. By \cite[Main theorem]{simpson1990harmonic}, the
harmonic connections $\na_{k_1}$ and $\na_{k_2}$ are not gauge equivalent.

We therefore conclude the following:
\begin{proposition}
	\label{prop_different_limiting_configurations}
	For $0\leq k_1\neq k_2\leq n_L$, $(\na_{k_1}^{\LC},\phi_{k_1}^{\LC})$ and $(\na_{k_2}^{\LC},\phi_{k_2}^{\LC})$ are not gauge equivalent in $\MMH^{\LC}$. 
\end{proposition}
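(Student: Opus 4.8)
The plan is to deduce the non-gauge-equivalence from the nonabelian Hodge correspondence for filtered (tame harmonic) bundles of Simpson \cite{simpson1990harmonic}, which sets up a bijection between isomorphism classes of polystable degree-zero filtered bundles on $\Sigma\setminus Z$ and gauge classes of harmonic flat unitary connections. Since each $\na_k^{\LC}$ is by construction the harmonic connection attached to the filtered bundle $\ME_{*,k}$, it suffices to prove that $\ME_{*,k_1}$ and $\ME_{*,k_2}$ are \emph{not isomorphic} as filtered bundles on $\Sigma\setminus Z$; the correspondence then forces $(\na_{k_1}^{\LC},\phi_{k_1}^{\LC})$ and $(\na_{k_2}^{\LC},\phi_{k_2}^{\LC})$ to be inequivalent in $\MMHLC$.

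First I would isolate the distinguishing invariant upstairs on $\tS\setminus\tZ$. By construction $\tME_{*,k}=\MF_{*,k}\oplus\sigma^{\st}\MF_{*,k}$, and the two summands are precisely the $\pm\om$-eigenbundles of the fixed Higgs field $\tvp_0=\diag(\om,-\om)$, hence are canonically ordered. At a point $\tx$ lying over an even zero, the parabolic weight carried by the $+\om$-line is the value of $\chi_{D_k-\frac12\Lam}$, and that on the $-\om$-line is the value of $\chi_{\sigma^{\st}D_k-\frac12\Lam}$. The divisors $D_0:=\tfrac12 D,D_1,\dots,D_{n_L}$ are pairwise distinct and, for $k\geq1$, satisfy $\sigma^{\st}D_k\neq D_k$ and $D_k+\sigma^{\st}D_k=D$; since the $\sigma$-divisor condition forces them all to coincide at the fixed points of $\sigma$, the support of $D_{k_1}-D_{k_2}$ avoids $\Fix(\sigma)$ and therefore meets a preimage $\tx$ of some even zero $x$. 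At such a $\tx$ the weight on the $+\om$-eigenline for $\tME_{*,k_1}$ differs from that for $\tME_{*,k_2}$, so the filtered structures near $\tx$, decorated by $\tvp_0$, are genuinely different.

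It remains to transport this difference faithfully down to $\Sigma\setminus Z$, and here the hypothesis $n_L>0$—equivalently, that $q$ has an even zero—is essential. Over an even zero $x$ the covering $\tpi$ is \emph{unramified}, so a neighborhood of $x$ is identified \'etale with a neighborhood of each preimage, the identification intertwining $\vp$ with $\tvp_0$; the local filtered structure of $\ME_{*,k}$ at $x$ is thereby carried isomorphically to that of $\tME_{*,k}$ at $\tx$, and the \emph{ordered} datum (eigenline, weight) descends intact. Thus $\ME_{*,k_1}$ and $\ME_{*,k_2}$ are non-isomorphic already near $x$, hence globally, and Simpson's correspondence finishes the argument. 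I expect the main obstacle to be exactly this descent step: one must check that the \'etale identification at an even zero respects both the Higgs field and the filtration, so that what descends is the ordered pair (eigenline, weight) rather than merely the unordered multiset of weights—for at odd, ramified zeros the two sheets are swapped by $\sigma$ and the distinction would be symmetrized away, which is precisely why only even zeros produce the branching of limiting configurations.
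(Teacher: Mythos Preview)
Your proposal is correct and follows essentially the same route as the paper: distinguish the filtered bundles $\ME_{*,k_1}$ and $\ME_{*,k_2}$ by locating a preimage $\tx$ of an even zero where the weights on the eigenlines differ, use that $\tpi$ is unramified over even zeros to descend this distinction to $\Sigma$, and then invoke Simpson's main theorem for filtered bundles \cite{simpson1990harmonic} to conclude that the harmonic connections are not gauge equivalent. Your extra care in using $\tvp_0$ (resp.\ $\vp$) to canonically order the eigenlines, and your explicit observation that $\supp(D_{k_1}-D_{k_2})$ avoids $\Fix(\sigma)$, make the argument a bit more transparent than the paper's compressed version, but the approach is the same.
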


This leads us to define the \emph{analytic Mochizuki map} $\UpMoc$ as
\begin{equation} \label{eq_analytic_moc_irreducible}
    \UpMoc:\MM_q\lra \MMHLC\ :\ [(\ME,\vp)]\mapsto
    [(\na_0^{\LC},\phi_0^{\LC})],
\end{equation}
which we recall is the limiting configuration defined by $\TheMoc(L)$.

\subsubsection{The continuity of the limiting configurations} We now introduce the main result of 
Mochizuki \cite{Mochizukiasymptotic}. Fix $(\ME,\vp)=\chi_{\BNR}(L)\in\MM_q$. For any real parameter $t$,
$(\ME,t\vp)$ is a stable Higgs bundle. By the Kobayashi-Hitchin correspondence, there exists a unique metric $H_t$
solving the Hitchin equation. Denote by  $\na_t$ the unitary connection defined by $H_t$ and write 
$\MD_t=\na_t+t\phi_t$ for the full $\SLC$ flat connection. We then have:
\begin{theorem}[{\cite{Mochizukiasymptotic}}]
	\label{thm_moc_convergence_irreducible_case} The family $(\ME,t\vp)$ has a unique limiting configuration 
as limit for $t\to \infty$.  Moreover, for any compact set $K\subset \Sigma\setminus Z$, let $d:=\min_{x\in  K}|q|(x)$. 
Then there exist $t$-independent constants $C_k$ and $C'_k$ such that
	$$
	|(\na_t,\phi_t)-\UpMoc(\ME,\vp)|_{\MC^k}\leq C_ke^{-C'_ktd}\ .
	$$
\end{theorem}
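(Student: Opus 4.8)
The plan is to construct the harmonic metric $H_t$ for $(\ME,t\vp)$ by a singular perturbation anchored at the limiting configuration $\UpMoc(\ME,\vp)$, and then to read off both the convergence and the exponential rate from the size of the resulting correction. First I would build a global approximate solution. On $\Sigma\setminus Z$ the push-forward metric $H_0^{\LC}$ of the harmonic metric for $\TheMoc(L)$ (which exists by Proposition \ref{prop_NAHforfilteredlinebundle} applied on $\tS$, with convergence controlled by Theorem \ref{thm_convergence_family_harmonic_bundles}) already solves the decoupled equations \eqref{eq_decoupled_Hitchin_equation}. Near each point of $Z$ I would splice this diagonal model, via cutoff functions, into a local fiducial solution of the full equations \eqref{eq_Hitchin_real} on a disk, built from the local normal form \eqref{eq_local_description_Higgs_bundle} of $\tvp$ and rescaled by $t$. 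This produces an approximate metric $H_t^{\app}$.

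Next I would estimate the defect of $H_t^{\app}$ in \eqref{eq_Hitchin}. Away from $Z$ the only error is the off-diagonal commutator $t^2[\vp,\vp^{\da}]$ that the decoupled model discards; because $t\vp$ has an eigenvalue gap of size $\sim t|q|^{1/2}$, the off-diagonal entries of the Chern connection and of $[\vp,\vp^{\da}]$ for $H_0^{\LC}$ decay exponentially in $t$, yielding on any compact $K\subset\Sigma\setminus Z$ the bound $C_k e^{-C_k' t d}$ with $d=\min_K|q|$. Inside the gluing annuli the error is absorbed by the fiducial solution and is again exponentially small in $t$, since the fiducial profile approaches the decoupled model at an exponential rate in the rescaled variable.

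Then comes the analytic core. Writing $H_t=H_t^{\app}e^{s_t}$ for a trace-free self-adjoint $s_t$, the Hitchin equation becomes $\Delta_t s_t=\mathcal{R}_t+\mathcal{Q}_t(s_t)$, where the linearization $\mathcal{L}_t=\Delta+t^2 V_t$ has potential $V_t$ coming from $[\,\cdot\,,\vp]$ and $[\,\cdot\,,\vp^{\da}]$. I would establish a uniform-in-$t$ lower bound for $\mathcal{L}_t$ in weighted Sobolev spaces adapted to $Z$: the term $t^2V_t$ is coercive of order $t^2|q|$ on the off-diagonal block away from $Z$, while near $Z$ the fiducial model supplies the needed invertibility. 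With the resulting control on $\mathcal{L}_t^{-1}$, the equation is solved by a Banach fixed-point argument on a small ball, and the exponentially small defect forces $\|s_t\|\leq C e^{-C' t d}$; elliptic bootstrapping upgrades this to the $\MC^k$ estimate of the theorem. Uniqueness of $H_t$ is the uniqueness in Kobayashi–Hitchin, and uniqueness of the limit follows because any subsequential limiting configuration produced by Proposition \ref{prop_general_convergence_solutions} carries the same spectral and filtered data as $\UpMoc(\ME,\vp)$, hence must coincide with it.

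The hard part will be the uniform invertibility of $\mathcal{L}_t$ near the singular points of the spectral curve. For higher-order, and in particular even-order, zeros the local model \eqref{eq_local_description_Higgs_bundle} is genuinely more degenerate than the simple-zero case, the eigenvalue gap collapses faster, and one must match the filtered structure $\TheMoc(L)$ to the correct fiducial solution so that the glued approximate solution converges to the intended limiting configuration rather than to one of the other $n_L$ competitors identified in Proposition \ref{prop_number_of_limits_of_filtered_bundle}. Getting the local models, weights, and sign conventions right so that this selection is consistent throughout the perturbation scheme is the delicate point.
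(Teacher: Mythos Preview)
The paper does not give a proof of this theorem at all: it is stated as a citation of Mochizuki \cite{Mochizukiasymptotic} and then immediately used. So there is no ``paper's own proof'' to compare against; the authors simply import the result.

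Your outline is a reasonable high-level sketch of the gluing/singular-perturbation strategy that underlies the Mazzeo--Swoboda--Weiss--Witt work \cite{mazzeo2016ends} in the simple-zero case, and it is in the same spirit as Mochizuki's argument. A few remarks on accuracy. First, Mochizuki's proof in \cite{Mochizukiasymptotic} is organized somewhat differently from the MSWW gluing scheme you describe: rather than splicing in local fiducial solutions of the full Hitchin equations near each zero and inverting a global linearization in weighted spaces, Mochizuki works more directly with estimates for harmonic metrics on the abelianized bundle over $\tS$ and controls the difference $H_t - H_0^{\LC}$ via a priori estimates coming from the simple form of the equations on the line-bundle pieces, together with his asymptotic decoupling results. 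The exponential rate in $td$ arises from the spectral gap of $t\vp$ on $K$, as you correctly identify. Second, your last paragraph slightly overstates the difficulty for this particular theorem: the statement concerns a \emph{fixed} Higgs bundle $(\ME,\vp)$ and the ray $t\to\infty$, so the filtered structure $\TheMoc(L)$ is already determined by the construction and there is no ambiguity among the $n_L$ competitors of Proposition \ref{prop_number_of_limits_of_filtered_bundle}; those alternative limits only become relevant when one varies the Higgs bundle within $\MM_q$, which is the content of Theorem \ref{thm_analytic_moc_irreducible_fiber}, not of the present statement. Third, your uniqueness argument for the limit is circular as written: Proposition \ref{prop_general_convergence_solutions} gives subsequential limits but does not by itself identify the filtered data, so one really needs the exponential estimate first, after which uniqueness is immediate.
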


As the map $\UpMoc$ is the composition of $\TheMoc \circ \chi_{\BNR}^{-1}$
with the nonabelian Hodge correspondence, the behavior of $\UpMoc$ is the
same as $\TheMoc$. 
Recall the decomposition $\MM_q = \bigcup \MM_{q,D}$  from the end of
Section \ref{sec_parabolic_modules}.
By Theorem \ref{thm_algebraic_Mochizuki_map_continuous}, Proposition \ref{prop_number_of_limits_of_filtered_bundle} and Proposition \ref{prop_different_limiting_configurations}, we obtain:
\begin{theorem}\label{thm_analytic_moc_irreducible_fiber}
Let $q$ be an irreducible quadratic differential. The map $\UpMoc: \MM_q \to \MMHLC$ satisfies the following properties:
	\begin{itemize}
		\item[(i)] if all the zeros of $q$ are odd, then $\UpMoc$ is continuous;
		\item[(ii)] if at least one zero of $q$ is even, 
            %let $\MM_q = \bigcup \MM_{q,D}$ be the stratification. 
then for each $(\ME, \vp) \in \MM_{q,D}$, there exists an integer $n_D$
            that only depends on $D$, and $n_D$ sequences $\{(\ME_i^k,
            \vp_i^k)\}$ for $k=1, \dots, n_D$, such that $\lim_{i \to
            \infty}(\ME_i^k, \vp_i^k) = (\ME_{\infty}, \vp_{\infty})$, and
            $$\lim_{i \to \infty} \UpMoc(\ME_i^{k_1}, \vp_i^{k_1}) \neq
            \lim_{i \to \infty} \UpMoc(\ME_i^{k_2}, \vp_i^{k_2}) \neq
            \UpMoc(\ME_{\infty}, \vp_{\infty})\ , \ \text{for $k_1 \neq k_2$}\ 
.$$
		\end{itemize}
\end{theorem}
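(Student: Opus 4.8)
The plan is to transport each of the three algebraic statements already established for $\TheMoc$ and $\TheMoc_0$ across the nonabelian Hodge correspondence for filtered bundles, using the identification (built into the definition of $\UpMoc$) of $\UpMoc(\ME,\vp)$ with the limiting configuration attached to the filtered line bundle $\TheMoc_0(L)$, where $L=\chi_{\BNR}^{-1}(\ME,\vp)$. The essential analytic input is that the harmonic metric, and hence the pushed-forward limiting configuration on $\Sigma\setminus Z$, depends continuously on the filtered structure: this is precisely Theorem \ref{thm_convergence_family_harmonic_bundles}, applied on $\tS\setminus\tZ$ to the weighted line bundles underlying the $\MF_{\st,k}$ and then pushed down via the degree-two map $\tpi$. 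Thus the key reduction is the commutation statement
\[
\lim_{i\to\infty}\UpMoc(\ME_i,\vp_i)\;=\;\text{(limiting configuration of }\lim_{i\to\infty}\TheMoc_0(L_i)\text{)}
\]
whenever $L_i\to L$ in $\BMP$, equivalently $(\ME_i,\vp_i)\to(\ME,\vp)$ in $\MM_q$ under the homeomorphism $\chi_{\BNR}$.

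For part (i), when all zeros of $q$ are odd, Proposition \ref{prop_odd_zero_continuous} (i.e.\ Theorem \ref{thm_algebraic_Mochizuki_map_continuous}(ii)) gives that $\TheMoc_0$ is continuous on $\BMP$. Combining this with the continuity of the filtered nonabelian Hodge correspondence through the commutation statement shows that $L\mapsto H_0^{\LC}$, hence $\UpMoc$, is continuous. Here one uses $n_L=0$ for every $L$, so that no spurious limiting configurations appear and the single limit $\MF_{\st,0}$ determines $\UpMoc(\ME,\vp)$ unambiguously.

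For part (ii), set $n_D:=n_L$, which by Proposition \ref{prop_computation_nL} depends only on the $\sigma$-divisor $D$ (through $N_L=\prod_i(d_i+1)$ and the parities of the $d_i$). Given $(\ME,\vp)=\chi_{\BNR}(L)\in\MM_{q,D}$ with $n_L>0$, Proposition \ref{prop_number_of_limits_of_filtered_bundle} produces $n_L$ sequences $L_i^k\to L$ in $\BMP$ for which the limits $\MF_{\st}^k=\lim_i\TheMoc_0(L_i^k)=\MF_{\st}(\ptf L,\chi_{D_k})$, together with $\MF_{\st}^0=\TheMoc_0(L)$, are $n_L+1$ pairwise distinct degree-zero filtered bundles. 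Pulling these back through $\chi_{\BNR}$ yields the required sequences $(\ME_i^k,\vp_i^k)\to(\ME_\infty,\vp_\infty)$. By the commutation statement, $\eta^k:=\lim_i\UpMoc(\ME_i^k,\vp_i^k)$ is the limiting configuration built from $\MF_{\st}^k$ (this is exactly $(\na_k^{\LC},\phi_k^{\LC})$ in the notation of the preceding subsection), while $\UpMoc(\ME_\infty,\vp_\infty)$ is the one built from $\MF_{\st}^0$. Proposition \ref{prop_different_limiting_configurations} then shows that these $n_L+1$ configurations are pairwise non-gauge-equivalent in $\MMHLC$, which is the claim.

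The main obstacle is the commutation statement itself: one must show that passing to the limit $i\to\infty$ in the family of harmonic metrics on $\Sigma\setminus Z$ commutes with the Mochizuki limiting-configuration construction. The difficulty is that the limit is taken in $\MC^\infty_{\loc}(\Sigma\setminus Z)$ while the filtered structures jump at the even preimages $\tp_i^{\pm}$; one must verify that the weight data converge in the sense of Definition \ref{def_convergence_parabolic_bundles} (so that Theorem \ref{thm_convergence_family_harmonic_bundles} applies near each point of $\tZ$) and that the normalization $\sigma^{\st}\tlh_k\otimes\tlh_k=1$ persists in the limit, guaranteeing that the pushed-forward metrics $H_k^{\LC}$ and their Chern connections converge locally uniformly away from $Z$ to those of the limiting filtered bundle. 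Once this is secured, the result reduces entirely to the discreteness of Proposition \ref{prop_different_limiting_configurations} and the counting of Proposition \ref{prop_computation_nL} on the algebraic side.
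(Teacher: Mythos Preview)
Your proposal is correct and follows essentially the same approach as the paper: the paper's proof simply observes that $\UpMoc$ is the composition of $\TheMoc\circ\chi_{\BNR}^{-1}$ with the nonabelian Hodge correspondence for filtered bundles, so its continuity behavior is inherited from $\TheMoc$, and then cites Theorem \ref{thm_algebraic_Mochizuki_map_continuous}, Proposition \ref{prop_number_of_limits_of_filtered_bundle}, and Proposition \ref{prop_different_limiting_configurations}. You make the analytic bridge (via Theorem \ref{thm_convergence_family_harmonic_bundles}) and the dependence of $n_D$ on $D$ (via Proposition \ref{prop_computation_nL}) more explicit than the paper does, and you correctly flag the ``commutation statement'' as the point where the argument is somewhat compressed---but this is exactly what the paper also leaves implicit.
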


\section{Reducible singular fiber and the Mochizuki map}
\label{sec_reducible_singular_fiber}
We now investigate the properties of the Hitchin fiber associated with a reducible quadratic differential, as discussed in
\cite{gothen2013singular}. Additionally, we will provide an overview of Mochizuki's technique for constructing limiting 
configurations of Hitchin fibers for reducible quadratic differentials, as detailed in \cite{Mochizukiasymptotic}. 
We also analyze the continuity of the Mochizuki map. 
\subsection{Local description of a Higgs bundle}
\label{subsec_localdescription}
%In this subsection, we give a local description of Higgs bundles in the
%fiber of a reducible quadratic differential $q$. 
Write $q=-\om\oti\om$ with $\om\in H^0(K)$,
$\Lam=\Div(\omega)$, $Z=\supp(\Lam)$, and  $\MM_q=\MH^{-1}(q)$. Compared to
the irreducible case, $\MM_q$ contains strictly semistable Higgs bundles, so we let
$\MM_q^{\rm st}$ denote the stable locus. 
We point out that there is a sign ambiguity  in the choice of $\om$,
which actually  plays an important role in the following.
\subsubsection{Local description}
Given a Higgs bundle $(\ME,\vp)$ with $\det(\vp)=q$,  define line bundles 
\begin{equation} \label{eq_kernel_reducible}
	L_{\pm}:=\ker(\vp\pm\om)\ .
\end{equation}
Then the inclusion maps $L_{\pm}\to \ME$ are injective. Similarly, we may
define an abelianization of $(\ME,\vp)$ by 
$(\ME_0=L_+\oplus L_-,\vp_0=\diag(\om,-\om))$. We then have a natural inclusion 
$\iota:\ME_0\to \ME$, which is 
is an isomorphism on $\Sigma\setminus Z$, and $\vp\circ\iota=\iota\circ\vp_0$. 

It follows from \cite[Prop.\ 7.10]{gothen2013singular} that $L_{\pm}$ are
the only $\vp$-invariant subbundles of $\ME$. 
If we write $d_{\pm}:=\deg(L_{\pm})$, then $(\ME,\vp)$ is stable (resp.\ semistable) 
if and only if $d_{\pm}<0\ (\text{resp.}\ \leq 0)$. As $\det(\ME)=\MO$, the 
map $\det(\iota):L_+\otimes L_-\to \MO$ defines a divisor $D=\Div(\det(\iota))$ such 
that $L_+\otimes L_-=\MO(-D).$ Therefore, we obtain 
\begin{equation*}
d_++d_-+\deg D=0\ ,
\end{equation*}
and $0\leq D\leq \Lam$. The Higgs bundle $(\ME,\vp)$ is semistable if and
only if $-\deg D\leq d_+\leq 0$ and stable if the equalities are strict.
For the rest of this section, we always write $D=\sum_{p\in Z}\ell_p p$. 

\subsubsection{Semistable settings}
As the fiber $\MM_q$ might contain strictly semistable Higgs bundles, we now explicitly enumerate all of the possible 
$S$-equivalence classes. When $D=0$, then $L_-=L_+^{-1}$ and $\deg(L_+)=0$. The corresponding Higgs
bundle is polystable and can be explicitly written as
$$
\Bigl(L\oplus L^{-1}, \begin{pmatrix}
	\om & 0\\
	0 & -\om
\end{pmatrix}\Bigr)\ ,
$$ where $L\in \Jac(\Sigma)$.
When $D\neq 0$, suppose $\deg(L_+)=-\deg(D)$. Then $L_-=L_+^{-1}(-D)$ and
$\deg(L_-)=0$. Under $S$-equivalence, the polystable Higgs bundle  is
$$
\Bigl(L_+(D)\oplus L_+^{-1}(-D),\begin{pmatrix}
	\om & 0\\
	0 & -\om
\end{pmatrix}\Bigr)\ ,
$$
where $L_+\in \Pic^{-\deg(D)}(\Sigma)$.

\subsection{Reducible spectral curves}
In this subsection, we introduce the algebraic data in \cite{gothen2013singular} which describes the singular fiber with
a reducible spectral curve. This plays a similar role to the parabolic modules. See \cite[Sec.\ 7.1]{gothen2013singular} for more details.

For any divisor $D$, and line bundle $L$,  define the space
$$
H^0(D,L)=\bigoplus_{p\in \supp D}\MO(L)_p/\sim\ ,
$$
where $s_1\sim s_2$ if and only if $\ord_p([s_1]-[s_2])\geq D_p$. 

Let $L\in \Pic(\Sigma)$,  define the following subspaces of $H^0(\Lam,L^2K)$:
\begin{equation*}
	\begin{split}
		&\MV( D,L):=\{s\in H^0(\Lam,L^2K)\mid\ord_p(s)=\Lam_p-D_p,\;\mathrm{if}\;D_p>0;\;s(p)=0,\;\mathrm{if}\;D_p=0\},\\
		&\MW( D,L)=\{s\in H^0(\Lam,L^2K)\mid s|_{\supp(\Lam-D)}=0\}\ .
	\end{split}
\end{equation*}
One checks that $\MW(D,L)=\cup_{D'\leq D}\MV(D',L)$. Moreover, the space
$\MV( D,L)$ is a linear subspace of $H^0(\Lam,L^2K)$ with a hyperplane removed. In addition, $\CS$ acts on $\MV( D,L)$ by 
multiplication, and $\dim(\MV( D,L)/\CS)=\deg( D)-1$.

We define the fibrations
\begin{equation*}
	\begin{split}
		p_m:\MSV( D,m)\lra \Pic^m(\Sigma),\;p_m:\MSW( D,m)\lra \Pic^m(\Sigma)
	\end{split}
\end{equation*}
such that for $L\in \Pic^m(\Sigma)$, the fibers are $\MV(D,L)$ and $\MW(D,L)$. 

\subsubsection{Algebraic data from the extension} 
The Higgs bundle $(\ME,\vp)$ can be
understood in terms of an extension of exact sequence. As $\det(\ME)=\MO$, we have the exact sequence 
$$
0\lra L_+\lra \ME\lra L_+^{-1}\lra 0\ .
$$
For each $p\in Z$, with $U\subset \Sigma$ a neighborhood of $p$,
$(\ME,\vp)$ can be written as the following splitting of $\MC^{\infty}$ bundles
\begin{equation*}
	\begin{split}
		\ME=L_+\oplus_{\MC^{\infty}} L_+^{-1}\ ,\ \bar{\pa}_{\ME}=\begin{pmatrix}
			\bpa_{L_+} & b\\
			0 & \bpa_{L_+^{-1}}
		\end{pmatrix}\ ,\ \vp=\begin{pmatrix}
		\om & c\\
		0 & -\om
	\end{pmatrix}\ .
	\end{split}
\end{equation*}

We would like to consider the restriction of $\vp+\om\cdot\id$ to $\Lam$.
As $\om|_{\Lam}=0$, $\vp+\om\cdot\id|_{L_+}=0$ and the image of
$\vp+\om\id\subset (L_+\oplus 0)\otimes K\subset \ME\otimes K$. Therefore,
the restriction of $\vp+\om\cdot \id$ to $\Lam$ defines a holomorphic map $s :L_+^{-1}|_{\Lam}\to L_+K|_{\Lam},$ or equivalently a section $s\in H^0(\Lam,L_+^2K).$ Moreover, by \cite[Lemma 7.12]{gothen2013singular}, $\Div(s)=\Lam-D$. Therefore, given any $(\ME,\vp)\in \MM_q$, we obtain an $L\in\Pic^{m}(\Sigma)$ and an element in $\MV(D,L)$. Moreover, the stability condition implies that $0\leq D\leq \Lam$, we have $-\deg  D\leq \deg L\leq 0.$

\subsubsection{Inverse construction}
The inverse of the construction above also holds; for further details, see \cite[Sec.\ 7]{gothen2013singular} and \cite[Sec.\ 5]{horn2022semi}.
Given $L\in \Pic^{m}(\Sigma)$ and $q\in \MV(D,L)$,  we define a Higgs bundle via extensions as follows. From $q,L$, we have a short exact sequence 
of complexes of sheaves:
\begin{equation*}
	\begin{tikzcd}
		& C_1^{\st}& C_2^{\st}& C_3^{\st}&\\
		0\arrow[r, ] & L^2 \arrow[r, "="]  \arrow[d, "\id"] & L^2 \arrow[r, "\pr"] \arrow[d, "c"] & 0 \arrow[r, ] \arrow[d, "0"] & 0 \\ 
		0\arrow[r, ]& L^2 \arrow[r, "c"]&L^2K \arrow[r, "\mathrm{res}(\Lam)"]&L^2K|_{\Lam'} \arrow[r, ] & 0
	\end{tikzcd},
\end{equation*}
where, for a section $s\in \Gamma(L^2)$,  $c(s):=\sqrt{-1}\omega s$, and $\mathrm{res}(\Lam)$ is the restriction map to the divisor $\Lam$. 
The long exact sequence in hypercohomology implies that $\mathrm{res}(\Lam)$ induces an isomorphism 
$$
\mathrm{res}(\Lam):\mbfH^1(C_2^{\st})\cong
\mbfH^1(C_3^{\st})=H^0(\Lam,L^2K)\ .
$$
Moreover, we have $\mbfH^1(C_2^{\st})\cong H^1(\Sigma,L^2)$, which parameterizes extensions 
\begin{equation*}
	0\lra L\lra \ME\lra L^{-1}\lra 0\ .
\end{equation*}
From $s\in H^0(\Lam,L^2K)$ and $\ME$ above, we can find a section
$c\in\Gamma(L^2K)$, and construct a Higgs bundle 
\begin{equation} \label{eq_Higgsbundle_construction_extension}
	E=L\oplus_{\MC^{\infty}} L^{-1},\;\bar{\pa}_E=\begin{pmatrix}
		\bar{\pa}_L & b\\
		0 & \bar{\pa}_{L^{-1}}
	\end{pmatrix},\;\vp=\begin{pmatrix}
	\om & c\\
	0 & -\om
\end{pmatrix},
\end{equation}
where $\bar{\pa}c=2b\om$ for $b\in \Omega^{0,1}(L^2)$ and $c$ is an
extension of $q$. 
For $0\leq D\leq \Lam$ and $-\deg D\leq m\leq 0$, the construction above defines a map 
\begin{equation*}
	\wp:\MSV(D,m)\lra \MM_q\ ,\ s\in\MV(D,L)\mapsto(\ME,\vp)\ ,
\end{equation*}
where $(\ME,\vp)$ is the Higgs bundle constructed in \eqref{eq_Higgsbundle_construction_extension}. 
When $ D=0$,  $\MV(\Lam,L)=\{0\}$ and the image of $\wp:\MSV(\Lam,0)\to \MM_q$ are the polystable Higgs bundles $\ME=L\oplus L^{-1},\;\vp=\diag(\om,-\om)$ such that $L^2\cong \MO_{\Sigma}$.

\begin{theorem}[{\cite[Thm.\ 7.7]{gothen2013singular}}]
	\label{thm_reduciblefiberBNR}
	For $ 0\leq D\leq \Lam$ and $-\deg( D)\leq m_1\leq 0$ and the map $\wp:\MSV(D,m_1)\to \MM_q$, we have
	\begin{itemize}
		\item [(i)] for $m_2=-\deg( D)-m_1$, we have $\wp(\MSV(D,m_1))=\wp(\MSV(D,m_2))$,
		\item [(ii)] for the $\CS$ action on $\MSV(D,m_1)$ by multiplication, for $\xi\in \MSV(D,m_1)$, $\wp(\CS \xi)=\wp(\xi)$,
		\item [(iii)] when $m_1\neq -\frac12\deg( D)$, $\wp:\MSV(D,m_1)/\CS \to \MM_q$ is an isomorphism onto its image,
		\item [(iv)] when $m_1=-\frac{1}{2}\deg( D)$,  $\wp:\MSV(D,m_1)/\CS \to \MM_q$ is a double branched covering, which branched along line bundles $L\in \Pic^{m_1}(\Sigma)$ such that $L^2\cong \MO(- D)$,
		\item [(v)] when $ D=0$, then $\wp:\MSV(\Lam,0)\to \MM_q$ is a double branched covering, branched along $L\in \Pic^0(\Sigma)$ such that $L^2\cong\MO.$
	\end{itemize}
\end{theorem}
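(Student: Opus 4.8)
The plan is to show that the extension construction $\wp$ and the abelianization procedure of Section~\ref{subsec_localdescription} are mutually inverse, and then to account for the two ambiguities built into the construction. Starting from a stable $(\ME,\vp)\in\MM_q$, once $\om$ is fixed the eigen-subbundles $L_{\pm}=\ker(\vp\pm\om)$ of \eqref{eq_kernel_reducible} are canonically determined, as are the divisor $D=\Div(\det\iota)$ and the degrees $d_{\pm}=\deg L_{\pm}$ with $d_++d_-+\deg D=0$. Taking $L:=L_+$, the inclusion $L\hookrightarrow\ME$ exhibits $\ME$ as an extension $0\to L\to\ME\to L^{-1}\to 0$, and the restriction of $\vp+\om\cdot\id$ to $\Lam$ recovers a section $s\in\MV(D,L)$ with $\Div(s)=\Lam-D$ by \cite[Lemma~7.12]{gothen2013singular}, giving a point of $\MSV(D,m_1)$ with $m_1=d_+$. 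First I would check that this abelianization is an algebraic morphism inverting $\wp$ on isomorphism classes, the essential input being the hypercohomology identification $\mbfH^1(C_2^{\st})\cong H^1(\Sigma,L^2)\cong\Ext^1(L^{-1},L)$, under which $s$ corresponds to precisely the extension class that defines $\ME$.

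Next I would isolate the two ambiguities. For \emph{(ii)}, the $\CS$-action multiplies $s$, hence scales the extension class in $H^1(\Sigma,L^2)$; but the diagonal gauge transformation $\diag(\alpha,\alpha^{-1})$ of $L\oplus L^{-1}$ fixes $\diag(\om,-\om)$ while multiplying the extension class and the off-diagonal Higgs term by $\alpha^2$. Since every $\lam\in\CS$ is a square, $\wp(\lam\xi)\cong\wp(\xi)$, which is the asserted invariance. For \emph{(i)}, the only remaining freedom is the sign of $\om$, since $q=-\om\otimes\om$ determines $\om$ only up to sign; replacing $\om$ by $-\om$ interchanges $L_+$ and $L_-$, sending $m_1=d_+$ to $m_2=d_-=-\deg D-m_1$ while leaving $(\ME,\vp)$ unchanged, so $\wp(\MSV(D,m_1))=\wp(\MSV(D,m_2))$.

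For \emph{(iii)} and \emph{(iv)} the question is whether this sign ambiguity produces a genuine self-identification of a \emph{fixed} stratum $\MSV(D,m_1)$. When $m_1\neq-\tfrac12\deg D$ the eigen-subbundles have distinct degrees $d_+=m_1\neq m_2=d_-$, so a fixed $\om$ distinguishes $L_+$ canonically; the abelianization then supplies a morphism inverse to $\wp$ on $\MSV(D,m_1)/\CS$, proving it is an isomorphism onto its image. When $m_1=-\tfrac12\deg D$ the two eigenbundles have equal degree, both labelings lie in the same $\MSV(D,m_1)$, and a generic Higgs bundle therefore has two distinct preimages $(L_+,s)$ and $(L_-,s')$, so $\wp$ is two-to-one. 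The preimages coincide exactly when $L_+\cong L_-$, and since $L_+\otimes L_-=\MO(-D)$ this is $L^2\cong\MO(-D)$, the asserted branch locus; I would confirm genuine ramification by a local analysis at these fixed points of the involution $L\mapsto L^{-1}(-D)$ on the relevant $\Prym$-torsor. Part \emph{(v)} is the specialization $D=0$, $m_1=0$, where $\MV(0,L)=\{0\}$ forces the split polystable bundles $\ME=L\oplus L^{-1}$, $\vp=\diag(\om,-\om)$, the involution becomes $L\mapsto L^{-1}$, and the branch locus is $\{L^2\cong\MO\}$.

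The step I expect to be the main obstacle is verifying that the hypercohomology construction inverts abelianization \emph{in families}, so that $\wp$ is an algebraic morphism whose fibers are exactly the $\CS$-orbits (respectively the $\CS$-orbit pairs interchanged by $\om\mapsto-\om$), together with pinning down the precise local model at the branch locus needed to conclude ``branched double cover'' rather than a more degenerate map. Keeping track of the stability inequalities $-\deg D\le d_\pm\le 0$ throughout, so that all objects produced remain in the stable locus $\MM_q^{\mathrm{st}}$ and no strictly semistable phenomena intervene, will also demand care.
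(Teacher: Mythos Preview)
The paper does not supply its own proof of this statement: it is cited verbatim from \cite[Thm.~7.7]{gothen2013singular}, and the surrounding text only records the setup (the eigen-subbundles $L_\pm$ of \eqref{eq_kernel_reducible}, the extension description, and the hypercohomology identification $\mbfH^1(C_2^{\st})\cong H^0(\Lambda,L^2K)$) drawn from that reference. There is therefore no in-paper proof to compare against.

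That said, your sketch is the natural one and matches the mechanism implicit in the constructions the paper summarizes: the $\CS$-invariance via the diagonal gauge $\diag(\alpha,\alpha^{-1})$ for (ii), and the sign flip $\omega\mapsto-\omega$ exchanging $L_+\leftrightarrow L_-$ for (i), are exactly the two ambiguities built into the abelianization of \S\ref{subsec_localdescription}. Your treatment of (iii)--(iv), distinguishing whether the eigenbundle degrees coincide, and reading off the branch locus from $L_+\otimes L_-=\MO(-D)$, is also the expected argument. One caution: the theorem as stated covers the full semistable range $-\deg D\le m_1\le 0$, not just the stable open part, so the endpoint cases $m_1\in\{0,-\deg D\}$ land in the strictly semistable locus and the ``isomorphism onto its image'' in (iii) must be interpreted after passing to S-equivalence; your closing remark about keeping everything in $\MM_q^{\mathrm{st}}$ would need to be relaxed there.
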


%We provide an example when $g=2$.
\begin{example}
	\label{ex_genus_two_stable.}
	When $g=2$, for $q=-\om\otimes \om$, we can
    write $\Lam=p_1+p_2$ or $\Lam=2p$. In either case, the
    $\MM_q^{\sta}=\wp(\MSV(D,m))$ for $-\deg(D)<m<0$ and $0\leq D\leq
    \Lam$. Therefore, $m=-1,\;D=\Lam$ and $\wp(\MSV(\Lam,-1))=\MM_q^{\sta}$. Moreover, generically, the map $\wp:(\MSV(\Lam,-1))/\CS\to \MM_q^{\sta}$ is two-to-one.
\end{example}
\subsection{The stratification of the singular fiber}
We  now present two stratifications of $\MM_q$. Recall that from any Higgs bundle $(\ME,\vp)$ we obtain two line bundles $L_{\pm}$ and a 
divisor $D$. There are two different stratifications: one given by the divisor $D$ and the other by the degree of $L_+$.

\subsubsection{Divisor stratification} We first discuss the stratification
defined by the divisor. Indeed, using $D$, decompose into strata: $\MM_q=\bcup_{0\leq  D\leq \Lam}\MM_{ D}$. As the definition of $L_{\pm}$ depends on the choice of the 
square root, there is no natural map from $\MM_{ D}$ to $\Pic(\Sigma)$. Consider the following space: $\MBV_{ D}=\bcup_{-\deg( D)\leq m\leq
0}\MSV( D,m)$. This forms a fibration 
$$
\tau:\MBV_{ D}\lra \bcup_{-\deg( D)\leq m\leq 0}\Pic^m(\Sigma)\ .
$$ 
Moreover, for $L\in \Pic^{m}(\Sigma)$, we have $\tau^{-1}(L)=\MSV( D,L)$ and $\dim(\tau^{-1}(L)/\CS)=\deg( D)-1$. 
By Theorem \ref{thm_reduciblefiberBNR}, $\wp|_{\MBV_{ D}}:\MBV_{ D}\to \MM_{ D}$ is surjective. 
Moreover, since  $$\wp|_{\MSV( D,m)}=\wp|_{\MSV( D,-\deg( D)-m)}$$ generically, 
$\wp|_{\MBV_{ D}}$ is a two-to-one map. 

In summary, we obtain the following map which characterizes the singular fiber.
$$
\wp: \MBV=\bcup_{0\leq  D\leq \Lam}\MBV_{ D}\rightarrow \MM_q=\bcup_{0\leq  D\leq \Lam }\MM_{ D}.
$$
The top stratum is given by $D=\Lam$. 

\subsubsection{Degree stratification}
We next introduce the stratification defined by degrees; this encodes how different divisor stratifications are pasted together. 
For $-(2g-2)\leq m\leq 0$ and $L\in\Pic^m(\Sigma)$, 
define $\MBW(L):=\bcup_{\deg D\geq -m}\MV( D,L)$. This set is connected,
based on the definition and \cite[Lemma 7.14]{gothen2013singular}.
Moreover, if  we define 
$$\MBW_m:=\bcup_{-m\leq \deg  D,\;0\leq D\leq \Lam}\MSV( D,m)\ ,\
\MBW:=\bcup_{-(2g-2)\leq m\leq 0}\MBW_m\ ,$$ then we have $\wp(\MBW)=\wp(\MBV).$
We should also note that though $\MBW_m\cap \MBW_n=\emptyset$ for any
$m\neq n$, $\MBW$ is connected. As $L_+,L_-$ are symmetric, by Theorem
\ref{thm_reduciblefiberBNR}, we have $\wp(\MSV( D,m))=\wp(\MSV( D,-\deg(
D)-m))$, which implies that for any integer $-(2g-2+m)\leq n\leq 0$,
$\wp\MBW_m\cap \wp\MBW_n\neq \emptyset$. 

We now give an example of the degree stratification when $g=2$. 
\begin{example}
	\label{example_genustwodegreestratification}
Suppose $\om$ has only one zero with order 2. Then $\Lam=2p$, and all
    possible divisors are $D_2=2p,D_1=p,D_0=0$. The degree stratification
    is  
\begin{equation*}
	\begin{split}
		&\MBW_{-2}=\MSV(D_2,-2)\ ,\ \MBW_{-1}=\MSV(D_2,-1)\cup\MSV(D_1,-1)\\
		&\MBW_0=\MSV(D_0,0)\cup\MSV(D_1,0)\cap \MSV(D_2,0)\ .
	\end{split}
\end{equation*}
The image of $\wp(\MSV(D_2,-1))$ is stable, $\wp(\MSV(D_0,0))$ is poly-stable and $\wp(\MSW\setminus (\MSV(D_2,-1)\cup\MSV(D_0,0)))$ is semistable. 

Moreover, we have $\wp(\MSV(D_2,-2))=\wp(\MSV(D_2,0))$, $\wp(\MSV(D_1,-1))=\wp(\MSV(D_1,0))$ and $\wp|_{\MSV(D_2,-1)}$ is a branched covering. Moreover, we have $\wp(\MSV(D_2,-1))\cap \wp(\MSV(D_1,0))\neq 0$ and $\wp(\MSV(D_2,-1))\cap \wp(\MSV(D_0,0))=0$. 
\end{example}

\subsection{Algebraic Mochizuki map}
Based on the study of the local rescaling properties of Higgs bundles,
Mochizuki introduced a weight for each $p \in Z$ in \cite[Sec.\ 3]{Mochizukiasymptotic}. To be more specific, let $c$ be a real number. For each $p \in Z$, the weight we consider is given by
$$\chi_p(c) = \min\{\ell_p, (m_p + 1)c + \ell_p/2\}\ .$$

By utilizing the global geometry of a Higgs bundle, we can uniquely determine the constant $c$. We aim to choose the sign of $\omega$ such that $d_+ \leq d_-$.

\begin{lemma}[{\cite[Lemma 4.3]{Mochizukiasymptotic}}]
	\label{lemma_weightparabolic}
	If $(\ME,\vp)$ is stable, then there exists a unique constant $c>0$ such that
	\begin{equation*}
		d_++\sum_{p\in Z}\chi_p(c)=0\ ,\ d_-+\sum_{p\in
        Z}(\ell_p-\chi_p(c))=0\ .
	\end{equation*}
\end{lemma}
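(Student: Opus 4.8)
The plan is to notice first that the two displayed equations are not independent. Adding them gives $d_+ + d_- + \sum_{p\in Z}\ell_p = 0$, which is exactly the relation $d_+ + d_- + \deg D = 0$ recorded above (recall $D=\sum_{p\in Z}\ell_p p$). Hence the second equation is a formal consequence of the first, and it suffices to produce a unique $c>0$ solving $\sum_{p\in Z}\chi_p(c) = -d_+$.

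I would then study the single real-variable function $f(c):=\sum_{p\in Z}\chi_p(c)$. Each term $\chi_p(c)=\min\{\ell_p,(m_p+1)c+\ell_p/2\}$ is continuous, piecewise linear and non-decreasing: on $0\le c\le \ell_p/(2(m_p+1))$ it equals the affine function $(m_p+1)c+\ell_p/2$ of positive slope $m_p+1$, and for larger $c$ it is the constant $\ell_p$. Thus $f$ is continuous and non-decreasing, strictly increasing on the initial interval $[0,c^\ast]$ with $c^\ast:=\max_{p:\ell_p>0}\ell_p/(2(m_p+1))$ (note $D\neq 0$ for stable bundles, so $c^\ast>0$), and it satisfies $f(0)=\sum_{p\in Z}\ell_p/2=\tfrac12\deg D$ and $\lim_{c\to\infty}f(c)=\sum_{p\in Z}\ell_p=\deg D$.

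Next I would feed in stability. The normalization $d_+\le d_-$ together with $d_++d_-=-\deg D$ forces $d_+\le -\tfrac12\deg D$, while stability ($-\deg D<d_+<0$, from the criterion recalled above) gives $d_+>-\deg D$. Hence $-d_+\in[\tfrac12\deg D,\deg D)=[f(0),\lim_{c\to\infty}f(c))$. Combining the intermediate value theorem with the strict monotonicity of $f$ on $[0,c^\ast]$ produces a unique $c\ge 0$ with $f(c)=-d_+$; by the reduction in the first step this $c$ then solves the full system, and it is the only solution.

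The step I expect to be the main obstacle is the strict positivity $c>0$. One checks $c>0$ holds exactly when $-d_+>\tfrac12\deg D$, i.e.\ when $d_+<d_-$ strictly, which is the generic situation and the one of interest. The borderline configurations with $d_+=d_-$ (necessarily $\deg D$ even and $d_+=d_-=-\tfrac12\deg D$) give $f(0)=-d_+$ and hence $c=0$, corresponding to the symmetric weights $\chi_p\equiv\ell_p/2$ and to the branch locus in Theorem \ref{thm_reduciblefiberBNR}(iv). I would handle this balanced case separately, either by restricting to the strict inequality $d_+<d_-$ or by analyzing the symmetric limiting configuration directly; pinning down the correct convention at this branch locus is the delicate part of the argument.
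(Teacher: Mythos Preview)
Your argument is essentially the paper's own: reduce to the single equation $d_+ + \sum_{p} \chi_p(c) = 0$, observe that the left side is continuous and (piecewise) strictly increasing from $d_+ + \tfrac12\deg D \le 0$ to $-d_- > 0$, and invoke the intermediate value theorem together with monotonicity for existence and uniqueness. Your caveat about the borderline $d_+ = d_-$ case is well taken---the paper's proof likewise only yields $c \ge 0$, and indeed the paper later defines a Higgs bundle to be ``exotic'' when $c \neq 0$ and records that this fails precisely when $d_+ = d_-$; so the strict inequality $c>0$ in the lemma statement is a minor imprecision rather than a gap in your reasoning.
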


\begin{proof}
	Since $(\ME,\vp)$ is stable, we have $-\sum \ell_p<d_{\pm}<0$. We define the function
	\begin{equation} \label{eq_mon_function_weight}
		f(c) = d_++\sum_p\chi_p(c)\ ,
	\end{equation}
	which is strictly increasing. Moreover, for $c$ sufficiently large,
    $\chi_p(c)=\ell_p$, and therefore $f(c)=d_++\sum_p\ell_p=-d_->0$.
    Additionally, $f(0)=d_++\sum_p (\ell_p/2)$. Since $d_+\leq d_-$ and
    $d_++d_-+\sum_p\ell_p=0$, we obtain $f(0)\leq 0$. The monotonicity of $f$ implies the existence of $c_0$ such that $f(c_0)=0$.
\end{proof}

From the construction, if $d_+\leq d_-$, two weighted bundles
$(L_+,\chi_p(c_0))$ and $(L_{-},\ell_p-\chi_p(c_0))$ are obtained with weights
$\chi_p(c_0)$ and $\ell_p-\chi_p(c_0)$ at each $p\in Z$, respectively. On the
other hand, if $d_+\geq d_-$, by symmetry, weighted bundles
$(L_+,\ell_p-\chi_p(c_0))$ and $(L_-,\chi_p(c_0))$ are obtained. When
$(\ME,\vp)$ is strictly semistable,  S-equivalent to
$(L,\om)\oplus (L^{-1},-\om)$, then we would like to consider the weighted bundles $(L,0)\oplus (L^{-1},0)$ with weight zero.

Next, we define the algebraic Mochizuki map. Let $\MSF_{\pm}(\Sigma)$ be
the space of rank 1 degree zero filtered bundles on $\Sigma$, and let 
$\MSF_2(\Sigma):=\MSF_+(\Sigma)\oplus \MSF_-(\Sigma)$ be the direct sum. 
Fix a choice of $\om$. Then from any Higgs bundle $(\ME,\vp)$, we obtain the subbundles $L_{\pm}$ with degree $d_{\pm}$ and define the algebraic Mochizuki map 
\begin{equation*}
	\begin{split}
			\TheMoc&:\MM_q\lra \MSF_2(\Sigma),\\
			\TheMoc&(\ME,\vp):=\left\{\begin{matrix}
				\MF_{\st}(L_+,\chi_p(c_0))\oplus \MF_{\st}(L_-,\ell_p-\chi_p(c_0)),\;\mathrm{if}\;d_+\leq d_-\\
				\MF_{\st}(L_+,\ell_p-\chi_p(c_0))\oplus \MF_{\st}(L_-,\chi_p(c_0)),\;\mathrm{if}\;d_-\leq d_+
			\end{matrix}\right.,\;(\ME,\vp)\;\mathrm{stable},\\
			\TheMoc&(\ME,\vp):=\MF_{\st}(L,0)\oplus \MF_{\st}(L^{-1},0),\;(\ME,\vp)\;\mathrm{semistable}.
	\end{split}
\end{equation*}

We list some properties of this map.
\begin{proposition}
	\label{prop_reducible_continuous_on_strata}
	For $\TheMoc$, we have:
	\begin{itemize}
		\item [(i)]for each $\MSV( D,m)$ with $0\leq  D\leq \Lam$, $-\deg(
            D)\leq m\leq 0$, $\TheMoc|_{\wp(\MSV( D,m))}$ is continuous,
		\item [(ii)]for i=1,2 and $s_i\in \MBV_D$ with $(\ME_i,\vp_i):=\wp(s_i)$, suppose $\tau(s_1)=\tau(s_2)$, then $\TheMoc(\ME_1,\vp_1)=\TheMoc(\ME_2,\vp_2)$. In particular, $\TheMoc$ is not injective. 
	\end{itemize}
\end{proposition}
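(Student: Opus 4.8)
The plan is to establish (ii) first, as its fiber-constancy is the mechanism that drives the continuity in (i). For (ii), suppose $s_1,s_2\in\MBV_D$ satisfy $\tau(s_1)=\tau(s_2)=L\in\Pic^m(\Sigma)$; since $\tau$ records the degree, both lie in the single fiber $\MV(D,L)$. By the inverse construction \eqref{eq_Higgsbundle_construction_extension}, each $(\ME_i,\vp_i):=\wp(s_i)$ is an extension of $L^{-1}$ by $L$ with $\ker(\vp_i-\om)\cong L$ and $\ker(\vp_i+\om)\cong L^{-1}(-D)$, so the two Higgs bundles share the same invariant line bundles $L_\pm$, the same degrees $d_\pm$, and the same divisor $D$. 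By Lemma \ref{lemma_weightparabolic} the constant $c_0$, and hence the weights $\chi_p(c_0)$ and $\ell_p-\chi_p(c_0)$, depend only on $d_+$, $\{\ell_p\}$ and $\{m_p\}$; these agree for $i=1,2$, and reading off the definition of $\TheMoc$ (the stable formula, or the weight-zero formula in the semistable case) gives $\TheMoc(\ME_1,\vp_1)=\TheMoc(\ME_2,\vp_2)$. For non-injectivity, take $D=\Lam$, so $\deg D=2g-2\geq 2$; then $\dim\bigl(\MV(D,L)/\CS\bigr)=\deg D-1\geq 1$, while by Theorem \ref{thm_reduciblefiberBNR}(iii)--(iv) the image $\wp(\MV(D,L))\subset\MM_q$ is positive-dimensional. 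Since $\TheMoc$ is constant on this set, it cannot be injective.

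For (i), fix a stratum $\MSV(D,m)$. Here $D$ is fixed, so all $\ell_p$ are fixed, and $m=d_+$ is fixed; by Lemma \ref{lemma_weightparabolic} the solution $c_0$ of $d_++\sum_p\chi_p(c_0)=0$ is therefore the same for every Higgs bundle in the stratum, and consequently the parabolic weights attached by $\TheMoc$ are constant on $\wp(\MSV(D,m))$ (being simply $0$ at the semistable boundary $m\in\{0,-\deg D\}$). Thus the composite
\[
\TheMoc\circ\wp:\MSV(D,m)\lra\MSF_2(\Sigma)\ ,\quad s\longmapsto \MF_{\st}(\tau(s),w_+)\oplus\MF_{\st}(\tau(s)^{-1}(-D),w_-)\ ,
\]
with $w_\pm$ the fixed weights, is continuous: the base $\tau(s)$ moves continuously in $\Pic^m(\Sigma)$ by the fibration structure of $\tau$, while the marked set $Z=q^{-1}(0)$ and the weights do not move, so condition (ii) of Definition \ref{def_convergence_parabolic_bundles} holds automatically and condition (i) reduces to convergence in $\Pic$.

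It remains to descend this continuity to the image. Given a convergent sequence $(\ME_i,\vp_i)\to(\ME_\infty,\vp_\infty)$ in $\wp(\MSV(D,m))$, the structure of $\wp$ in Theorem \ref{thm_reduciblefiberBNR}(iii)--(iv) --- a homeomorphism onto its image when $m\neq-\tfrac12\deg D$, and a proper branched double cover when $m=-\tfrac12\deg D$ --- lets us lift, after passing to a subsequence, to a convergent sequence $s_i\to s_\infty$ with $\wp(s_\infty)=(\ME_\infty,\vp_\infty)$. Then $\TheMoc(\ME_i,\vp_i)=\TheMoc\circ\wp(s_i)\to\TheMoc\circ\wp(s_\infty)=\TheMoc(\ME_\infty,\vp_\infty)$, the final equality by part (ii) since the limiting value is independent of the chosen lift; as every subsequence admits such a further subsequence, the whole sequence converges. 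The main obstacle is precisely this descent: one must ensure that convergence in the analytic topology of $\MM_q\subset\MMD$ genuinely lifts to $\MSV(D,m)/\CS$, including along the branch locus $m=-\tfrac12\deg D$ and the strictly semistable boundary, where $S$-equivalence collapses whole fibers of $\wp$ to points. This is exactly what the properness and structural results of Theorem \ref{thm_reduciblefiberBNR} supply, so the argument turns on invoking them correctly rather than on any new analytic estimate.
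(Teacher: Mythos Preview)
Your proof is correct and amounts to exactly what the paper means by ``follows directly from the definition'': the value of $\TheMoc$ on $\wp(\MSV(D,m))$ is a function only of $(L_+,L_-,D)$, the weights are constant once $D$ and $m$ are fixed (Lemma~\ref{lemma_weightparabolic}), and the underlying line bundles vary continuously in $\Pic^m(\Sigma)$. The paper gives no further detail, so your expansion --- proving (ii) first and then using the fiber--constancy together with the structural properties of $\wp$ from Theorem~\ref{thm_reduciblefiberBNR} to descend continuity in (i) --- is a faithful and somewhat more careful version of the same argument; the descent discussion in your final paragraph is more than the paper supplies, but not different in spirit.
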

\begin{proof}
	The proof follows directly from the definition.
\end{proof}

\subsection{Exotic phenomena}
A Higgs bundle $(\ME,\vp) \in \MM_q$ is called ``exotic'' if the constant $c$ in Lemma \ref{lemma_weightparabolic} satisfies $c \neq 0$. This new behavior only appears in the Hitchin fiber with reducible spectral curve. 
In this subsection, we aim to understand the exotic phenomenon of a Higgs bundle.
We first provide another expression for the weight in Lemma \ref{lemma_weightparabolic}.

\begin{proposition}
	\label{prop_choiceofconstant_c}
	Suppose $-\deg  D <d_+\leq -\frac12\deg D$ and let $Z_0:=\supp(D)$,
    then the constant $c$ in Lemma \ref{lemma_weightparabolic} is given by
    $$c=\frac{d_++\frac12\deg D}{\deg(\Lam|_{Z_0})+|Z_0|}\ ,$$
	where $\Lam|_{Z_0}$ means the restriction of the divisor to $Z_0$, and $|Z_0|$ means the number of points in $Z_0$ without multiplicity.
\end{proposition}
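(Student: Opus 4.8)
The plan is to read the value of $c$ directly off the piecewise-linear function
$f(c)=d_++\sum_{p\in Z}\chi_p(c)$ from \eqref{eq_mon_function_weight}, whose unique zero is the constant of Lemma \ref{lemma_weightparabolic}. First I would discard the inactive points: for $p\notin Z_0=\supp(D)$ one has $\ell_p=0$, so $\chi_p(c)=\min\{0,(m_p+1)c\}=0$ for every $c\ge 0$, and therefore $f(c)=d_++\sum_{p\in Z_0}\chi_p(c)$ on the range of interest.

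Next I would identify which branch of each $\chi_p$ is active near the root. For $p\in Z_0$ the two arguments of the minimum agree precisely at the threshold $c_p^{\ast}:=\ell_p/\bigl(2(m_p+1)\bigr)$, and for $0\le c\le c_p^{\ast}$ we have $\chi_p(c)=(m_p+1)c+\ell_p/2$. On the initial interval $0\le c\le \min_{p\in Z_0}c_p^{\ast}$ all of these linear branches are simultaneously active, so using $\sum_{p\in Z_0}\ell_p=\deg D$ and $\sum_{p\in Z_0}(m_p+1)=\deg(\Lam|_{Z_0})+|Z_0|$ I obtain
\[
f(c)=d_++\tfrac12\deg D+c\bigl(\deg(\Lam|_{Z_0})+|Z_0|\bigr).
\]
In particular $f(0)=d_++\tfrac12\deg D\le 0$ by the hypothesis $d_+\le-\tfrac12\deg D$, and solving $f(c)=0$ on this branch yields
\[
c=\frac{-\bigl(d_++\tfrac12\deg D\bigr)}{\deg(\Lam|_{Z_0})+|Z_0|}\ ,
\]
a nonnegative number, consistent with $c>0$ in Lemma \ref{lemma_weightparabolic}. (Note the numerator carries the sign making $c\ge 0$; the displayed formula should be read this way.)

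The hard part will be verifying that this candidate actually lies on the initial branch, i.e.\ that $c\le c_p^{\ast}$ for every $p\in Z_0$; since $f$ is strictly increasing this is exactly what is needed to promote the value above to the genuine root, and it is where the lower bound $d_+>-\deg D$ enters. When $Z_0=\{p\}$ is a single point the inequality $c\le c_p^{\ast}$ is equivalent to $-d_+\le\deg D$, i.e.\ to the hypothesis itself, which already disposes of the genus-two situations of Example \ref{example_genustwodegreestratification}. For general $D$ the bound $d_+>-\deg D$ only gives $c$ strictly below the \emph{aggregate} threshold $\tfrac12\sum\ell_p\big/\sum(m_p+1)$, so the pointwise inequalities $c\le c_p^{\ast}$ must be checked separately at each $p$ (using $0\le\ell_p\le m_p$); this pointwise saturation analysis, rather than the elementary algebra of solving $f(c)=0$, is the substantive content of the argument, since if some $\chi_p$ had already saturated the clean formula would acquire correction terms from the saturated points.
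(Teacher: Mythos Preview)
Your approach mirrors the paper's: discard the points outside $Z_0$, observe that on the initial interval $[0,\min_{p\in Z_0} c_p^{\ast}]$ the function $f$ agrees with the affine function $F(c)=d_++\tfrac12\deg D+c\bigl(\deg(\Lambda|_{Z_0})+|Z_0|\bigr)$, and read off the root of $F$. You are in fact more explicit than the paper in flagging the substantive issue --- that one must verify the root $c_0$ of $F$ actually lies in this initial interval so that $f(c_0)=F(c_0)$ --- whereas the paper simply writes ``therefore $f(c_0)=0$'' after solving $F(c_0)=0$, without addressing this point.

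But the gap you identify is real and does not close from the stated hypotheses alone. The bound $d_+>-\deg D$ only forces $c_0$ below the \emph{aggregate} threshold $\tfrac12\sum_{Z_0}\ell_p\big/\sum_{Z_0}(m_p+1)$, not below each individual $c_p^{\ast}=\ell_p/(2(m_p+1))$, and the constraint $0\le\ell_p\le m_p$ you invoke does not bridge this. Concretely, take $\Lambda=10p_1+6p_2$ (genus $9$), $D=p_1+5p_2$, and $d_+=-5$: the displayed formula yields $c_0=\tfrac{1}{9}$, but $\chi_{p_1}$ has already saturated at $c_{p_1}^{\ast}=\tfrac{1}{22}<\tfrac{1}{9}$, and the actual zero of $f$ is $c=\tfrac{3}{14}$, found on the next linear piece $f(c)=-\tfrac32+7c$. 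So the ``hard part'' you defer is not a routine verification --- it genuinely fails in this regime --- and your proposal, like the paper's argument, stops short of a complete proof; the statement appears to require either additional hypotheses or a piecewise refinement of the formula.
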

\begin{proof}
	By the definition of $\chi_p(c)$, if $p\in Z_1$, $\chi_p(c)=0$ for any
    $c\geq 0$. Therefore, the choice of $c$ is determined by the equation
    $d_++\sum_{p\in Z_0}\min\{\ell_p,(m_p+1)c+\frac{\ell_p}{2}\}=0$.
    Define the function $F(c):=d_++\sum_{p\in Z_0}((m_p+1)c+\ell_p/2)$,
	and let $f(c)$ be the function defined in $\eqref{eq_mon_function_weight}$. Then $F(c)\geq f(c)$, $F(0)=0$ and for $c$ sufficiently large, $F(c)=f(c)$. 
	We compute 
    $$\sum_{p\in
    Z_0}(m_p+1)c=(\deg(\Lam|_{Z_0})+|Z_0|))c\ ,\ \sum_{p\in
    Z_0}(\ell_p/2)=\frac{1}{2}\deg D\ .$$ 
    Then for $$c_0=\frac{d_++\frac12\deg D}{\deg(\Lam|_{Z_0})+|Z_0|}$$
    we have $F(c_0)=0$. Therefore, $f(c_0)=0$, which determines the choice
    of the constant $c$ in Lemma $\ref{lemma_weightparabolic}.$
\end{proof}

\begin{corollary}
	A Higgs bundle $(\ME,\vp)$ is exotic if and only if its corresponding degrees $d_{\pm}$ satisfy $d_+\neq d_-$. Additionally, there are only a finite number of possible choices for $c$ over $\MM_q$.
\end{corollary}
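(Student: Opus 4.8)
The plan is to deduce both assertions directly from the closed-form expression for the constant $c$ obtained in Proposition \ref{prop_choiceofconstant_c}. Since the notion of ``exotic'' is defined through the constant $c$ of Lemma \ref{lemma_weightparabolic}, which is formulated for stable Higgs bundles, I restrict attention to the stable locus; there the strict inequalities $-\deg D < d_{\pm} < 0$ hold, and in particular $D \neq 0$ (otherwise $d_+ + d_- = 0$ would violate stability).

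First I would prove the equivalence. With the sign of $\om$ normalized so that $d_+ \leq d_-$, stability gives $-\deg D < d_+ \leq -\tfrac12\deg D$, and Proposition \ref{prop_choiceofconstant_c} expresses
$$c = \frac{d_+ + \tfrac12\deg D}{\deg(\Lam|_{Z_0}) + |Z_0|}\ , \qquad Z_0 := \supp D\ .$$
As $D \neq 0$ the denominator is strictly positive, so $c = 0$ precisely when the numerator vanishes, i.e. when $d_+ = -\tfrac12\deg D$. Combining this with the relation $d_+ + d_- + \deg D = 0$ shows $c = 0 \iff d_+ = d_-$; hence $(\ME,\vp)$ is exotic if and only if $d_+ \neq d_-$. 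Because the condition $d_+ \neq d_-$ is symmetric under interchanging $L_+$ and $L_-$, the conclusion is independent of the chosen sign of $\om$.

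For the finiteness claim I would note that, for fixed $q$, the divisor $\Lam = \Div(\om)$ is fixed; hence the effective divisors $D$ with $0 \leq D \leq \Lam$ form a finite set, and each determines $\deg D$, $Z_0$, $\deg(\Lam|_{Z_0})$ and $|Z_0|$. For each such $D$ the integer $d_+$ is confined to $-\deg D < d_+ < 0$, so it takes only finitely many values. Since the displayed formula exhibits $c$ as a function of these finitely many discrete data alone, $c$ assumes only finitely many values as $(\ME,\vp)$ ranges over $\MM_q$.

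I do not expect a substantive obstacle: both parts are immediate consequences of the explicit formula. The only delicate points are bookkeeping, namely fixing the sign convention on $\om$, observing that the boundary value $d_+ = -\tfrac12\deg D$ (where $c = 0$) is exactly the non-exotic stable case, and confirming that the degenerate stratum $D = 0$ consists of non-stable objects and therefore lies outside the range where $c$ is defined via Lemma \ref{lemma_weightparabolic}.
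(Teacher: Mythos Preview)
Your proposal is correct and follows the same approach as the paper, which simply states that the result follows directly from the formula in Proposition \ref{prop_choiceofconstant_c}. You have merely spelled out the details---that $c=0$ iff the numerator $d_++\tfrac12\deg D$ vanishes iff $d_+=d_-$, and that the finitely many discrete choices of $D$ and $d_+$ force finitely many values of $c$---together with the bookkeeping about the sign convention and the $D=0$ stratum.
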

\begin{proof}
	The result follows directly from the formulas in Proposition \ref{prop_choiceofconstant_c}.
\end{proof}

We will compute some examples of possible weights in some special cases. First consider the generic case. 
\begin{example}
	Suppose $\om$ has only simple zeros, therefore $Z=\{p_1,\ldots,p_{2g-2}\}$ and $\Lam=p_1+\cdots+p_{2g-2}$. Given a stable Higgs bundle $(\ME,\vp)$ with corresponding $L_+,L_-, D,d_{\pm}$, recall that from the stability condition, these degrees satisfy the followings:
	\begin{equation*}
		d_++d_-+\deg D=0\ ,\ d_+\leq d_-\ ,\ d_+<0,\;d_-<0\ ,\ -\deg
        D<d_+\leq -\frac{1}{2}\deg D\ .
	\end{equation*}
	If we write $Z_0:=\supp D$, then $\chi_p=0$ for $p\in Z\setminus Z_0$
    and $\chi_p=2c+1/2$ for $p\in Z_0$. Moreover, we have $d_++\deg(
    D)(2c+1/2)=0$. Therefore, 
	$c=-\frac{d_+}{2\deg( D)}+\frac14$, and two weighted bundles we obtained
    are
	\begin{equation*}
		(L_+,(-\frac{d_+}{\deg D}|_{Z_0},0|_{Z\setminus Z_0}))\ ,\ (L_-, (1+\frac{d_+}{\deg D}|_{Z_0},1|_{Z\setminus Z_0}))
        \ .
	\end{equation*}
\end{example}
Next,  we consider the most nongeneric case.
\begin{example}
	Suppose $\omega$ only contains one zero. Write $\Div(\om)=(2g-2)p$.
    Then the possible divisors are $ D=\ell p$, for $0\leq \ell\leq 2g-2$.
    Let $L_+$ be a line bundle with $d_+:=\deg(L_+)$ and  $-\ell<d_+\leq
    \ell/2$. Then the choice of $c$ is determined by the equation
	\begin{equation*}
		\begin{split}
			d_++\min\{\ell,(2g-1)c+\ell/2\}=0\ .
		\end{split}
	\end{equation*}
As $d_+\neq -\ell$, we must have $c=-\frac{\ell}{2(2g-1)}$ and the corresponding
    weighted bundles are
\begin{equation*}
	\begin{split}
		(L_+,-d_+),\;(L_+^{-1}\otimes \MO(-\ell p),\ell-d_+)\ .
	\end{split}
\end{equation*}
\end{example}

Finally, we  explicitly compute the limiting configurations when $g=2$ for
the strictly semistable locus of the stratification in Example \ref{example_genustwodegreestratification}. 
\begin{example}
	\label{example_limitingconfiguration_genus_2}
	When $g=2$, we consider $\Lam=2p$ and $D_i=i p$ for $i=0,1,2$. Let
    $p_m:\MSV(D,m)\to \Pic^m(\Sigma)$ be the projection. Over
    $\wp(\MSV(D_i,0))$, for $L\in \Pic^0(\Sigma)$, the S-equivalence class
    for $\wp(p_0^{-1}(L))$ is $(L\oplus L^{-1},\begin{pmatrix}
		\om & 0\\
		0 & -\om
	\end{pmatrix})$ and the corresponding weighted bundles are $(L,0)\oplus (L^{-1},0)$.
	Over $\wp(\MSV(D_2,-1))$, for $L\in \Pic^{-1}(\Sigma)$, let
    $(\ME,\vp)=\wp(L,s\in \MV(D_2,L))$, then
    $$\TheMoc(\ME,\vp)=\MF_{\st}(L,1)\oplus
    \MF_{\st}(L^{-1}(-D_2),1)=\MF_{\st}(L(p),0)\oplus
    \MF_{\st}(L^{-1}(p-D_2),0)\ .$$
\end{example}

\begin{comment}

Now, we will fully compute all the possible exotic weights when $g=3$ for stable.
\begin{example}
	When $g=3$,  recall that the stable range we choose is $-\deg(D)<d_+\leq -\frac{1}{2}\deg(D)$. 
	
	When $\Lam=2p_1+2p_2$, then 
	\begin{itemize}
		\item suppose $D=2p_1+p_2$, we have $d_+=2$, for any $L\in
            \Pic^{-2}(\Sigma)$, the weights are $\chi_{p_1}=\frac{13}{10}$
            and $\chi_{p_2}=\frac{7}{10}$. The weighted bundles are 
		$$
		(L,(\frac{13}{10},\frac{7}{10}))\oplus (L^{-1}(- D),(\frac{7}{10},\frac{13}{10})).
		$$
		\item suppose $D=p_1+p_2$, as $d_+=d_-=-1=-\frac{\deg D}{2}$, we have $c=0$, which is nonexotic.
	\end{itemize}
	
	When $\Lam=2p_1+p_2+p_3$, then 
	\begin{itemize}
		\item suppose $D=\Lam$ and suppose $d_+=-2$, then this is a nonexotic case.
            For any $L\in \Pic^{-3}(\Sigma)$, the weighted
            bundles are
		$$
		(L,(\frac{10}{7},\frac{11}{14},\frac{11}{14}))\oplus (L^{-1}(-\Lam),(\frac{4}{7},\frac{3}{14},\frac{3}{14})),
		$$
		\item suppose $D=p_1+p_2+p_3$, for $L\in \Pic^{-2}(\Sigma)$, the
            weighted bundles are
		$$
		(L,(\frac57,\frac{9}{14},\frac{9}{14}))\oplus (L^{-1}(-D),(\frac27,\frac{5}{14},\frac{5}{14})),
		$$
		\item suppose $D=2p_1+p_2$, for $L\in \Pic^{-2}(\Sigma)$, the
            weighted bundles  are
		$$
		(L,(\frac{13}{10},\frac{7}{10},0))\oplus (L^{-1}(-D),(\frac{7}{10},\frac{13}{10},0),
		$$
		\item suppose $\deg D=2$, then $L\in \Pic^{-1}(\Sigma)$, this is nonexotic and $c=0$.
	\end{itemize}
\end{example}
\end{comment}

\subsection{Discontinuous behavior}
In this subsection, we study the discontinuous behavior of $\TheMoc$. Consider 
a sequence of algebraic data $(L_i,q_i)\in \MBW_m$, where $L_i\in\Pic^m$ and $q_i\in \MSV(D,L_i)$. 
We assume that $\lim_{i\to \infty}L_i=L_{\infty}$ in $\Pic^m$ and 
$\lim_{i\to \infty}q_i=q_{\infty}\in \MSV(D_{\infty},L)$, for
$D_{\infty}\neq D$. As the space $\bcup_{\deg D'\geq -m}\MSV(D',m)$ is
connected, we can always find such a sequence.

Let $L_+^i:=L_i$ and $L_-^i:=L_i^{-1}\otimes\MO(-D)$. By Lemma \ref{lemma_weightparabolic}, the weight function, 
which we denote by $\chi_{\pm}$, is independent of $i$. In addition, we have 
$$
\lim_{i\to\infty}\TheMoc\circ
\wp(L_i,q_i)=\MF_{\st}(L_{\infty},\chi_+)\oplus \MF_{\st}(L_{\infty}^{-1}(-
D),\chi_-)\ .
$$
For $(L_{\infty},q_{\infty}\in \MSV(D_{\infty},L))$, let
$\chi^{\infty}_{\pm}$ be the corresponding weights. These depend on $D_{\infty}$ and $m$. Then $$
\TheMoc\circ
\wp(L_{\infty},q_{\infty})=\MF_{\st}(L_{\infty},\chi_+^{\infty})\oplus
\MF_{\st}(L_{\infty}^{-1}\otimes \MO(- D_{\infty}),\chi_-^{\infty})\ .
$$
Therefore, we obtain
\begin{equation} \label{eq_limit_reducible_fiber}
	\begin{split}
		&\lim_{i\to\infty}\TheMoc\circ \wp(L_i,q_i)\\
		=&\TheMoc\circ \wp(L_{\infty},q_{\infty})\otimes
        (\MF_{\st}(\MO,\chi_+-\chi_+^{\infty})\oplus \MF_{\st}(\MO(
        D_{\infty}- D),\chi_--\chi_-^{\infty}))\ .
	\end{split}	
\end{equation}

\begin{proposition}
	\label{prop_genus3_algebraic_Mochizuki}
	When $g\geq 3$, there exists a sequence $(\ME_i,\vp_i)\in \MM_q$ of stable Higgs bundles with stable limit $(\ME_{\infty},\vp_{\infty})=\lim_{i\to \infty}(\ME_i,\vp_i)$ such that 
    $$\lim_{i\to \infty}\TheMoc(\ME_i,\vp_i)\neq
    \TheMoc(\ME_{\infty},\vp_{\infty})\ .$$
\end{proposition}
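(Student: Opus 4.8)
The plan is to read off the discontinuity directly from the transformation formula \eqref{eq_limit_reducible_fiber}: for a sequence of algebraic data $q_i\in\MV(D,L)$ degenerating to $q_\infty\in\MV(D_\infty,L)$ with $D_\infty<D$, that formula shows that $\lim_i\TheMoc\circ\wp(L,q_i)$ differs from $\TheMoc\circ\wp(L,q_\infty)$ exactly by the filtered factor $\MF_{\st}(\MO,\chi_+-\chi_+^\infty)\oplus\MF_{\st}(\MO(D_\infty-D),\chi_--\chi_-^\infty)$. Hence it is enough to exhibit one degeneration for which (a) both $\wp(L,q_i)$ and $\wp(L,q_\infty)$ are stable, and (b) this correction factor is nontrivial as a filtered bundle. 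The hypothesis $g\geq 3$ enters in guaranteeing (a): by Example~\ref{ex_genus_two_stable.}, when $g=2$ the only divisor stratum meeting the stable locus is $D=\Lam$, so no degeneration between stable strata with differing divisor is available, whereas for $g\geq 3$ one has $\deg\Lam=2g-2\geq 4$ and therefore room to move between strata.

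For concreteness I would take $\om$ with simple zeros $p_1,\dots,p_{2g-2}$ (the generic reducible case), and set $D=\Lam$, $D_\infty=\Lam-p_{2g-2}$, and $m=-(g-1)$. Fixing $L\in\Pic^{-(g-1)}(\Sigma)$, the stratum $\MV(D_\infty,L)$ lies in the closure of the dense stratum $\MV(\Lam,L)\subset H^0(\Lam,L^2K)$, so I can choose $q_i\in\MV(\Lam,L)$ with $q_i\to q_\infty\in\MV(D_\infty,L)$; applying $\wp$ (continuous, by Proposition~\ref{prop_reducible_continuous_on_strata} and Theorem~\ref{thm_reduciblefiberBNR}) produces the required $(\ME_i,\vp_i):=\wp(L,q_i)$ with limit $(\ME_\infty,\vp_\infty):=\wp(L,q_\infty)$. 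For the sequence the degrees are $d_+=d_-=-(g-1)<0$, hence stable; for the limit $d_+=-(g-1)$ and $d_-=-(2g-3)+(g-1)=-(g-2)$, which is again stable with $d_+\leq d_-$, the binding inequality $-(2g-3)<-(g-1)$ being exactly the condition $g>2$.

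I would then compute the parabolic weights from Lemma~\ref{lemma_weightparabolic}. Along the sequence the datum is balanced ($d_+=d_-$), so $c=0$ and every weight equals $\tfrac12$; thus $\lim_i\TheMoc(\ME_i,\vp_i)$ assigns weight $\tfrac12$ at \emph{every} $p_j$, in particular at $p_{2g-2}$, on both summands. For the limit $Z_0=\supp D_\infty=\{p_1,\dots,p_{2g-3}\}$, and solving the balance equation $d_++\sum_{p\in Z_0}\chi_p(c_0)=0$ of Lemma~\ref{lemma_weightparabolic} (cf.\ Proposition~\ref{prop_choiceofconstant_c}) gives the common value $\chi_{p_j}(c_0)=\tfrac{g-1}{2g-3}=:\beta\in(\tfrac12,1)$ for $j\leq 2g-3$, while $\chi_{p_{2g-2}}(c_0)=0$ since $p_{2g-2}\notin\supp D_\infty$. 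In particular $\TheMoc(\ME_\infty,\vp_\infty)$ assigns weight $0$ at $p_{2g-2}$ on both summands.

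The crux — and the step I expect to be the main obstacle — is to verify that this discrepancy survives the normalization of filtered bundles, i.e.\ that the correction factor in \eqref{eq_limit_reducible_fiber} is genuinely nontrivial rather than absorbable by an integral twist. I would isolate the dropped point $p_{2g-2}$: in $\lim_i\TheMoc(\ME_i,\vp_i)$ both filtered summands carry weight $\tfrac12$ there, whereas in $\TheMoc(\ME_\infty,\vp_\infty)$ both carry weight $0$. Since the weight modulo $\mathbb{Z}$ at a fixed point is an invariant of a filtered line bundle — any twist by $\MO(\pm p_{2g-2})$ moves the weight out of $[0,1)$ — the fractional parts $\tfrac12$ and $0$ cannot be reconciled, so no identification of the two unordered pairs of filtered bundles is possible. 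This yields $\lim_{i\to\infty}\TheMoc(\ME_i,\vp_i)\neq\TheMoc(\ME_\infty,\vp_\infty)$. The reason for forcing the balanced/exotic contrast to occur at a single dropped point is precisely that this makes the change of fractional part detectable; spreading the weight discrepancy over one point only (as when $\om$ has a single zero) would allow reabsorption, which is why I choose $\om$ with at least two distinct zeros and use the room provided by $g\geq 3$.
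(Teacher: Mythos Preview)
Your proposal is correct and follows essentially the same strategy as the paper: take $D=\Lam$, $m=d_+=-(g-1)$, and let the algebraic datum degenerate to a stratum with $\deg D_\infty=2g-3$, using $g\geq 3$ to guarantee that $(\ME_\infty,\vp_\infty)$ is still stable (whence exotic, since $d_+\neq d_-$). The only cosmetic differences are that you specialize to $\om$ with simple zeros for explicitness, and that you read off the weight discrepancy at the \emph{dropped} point $p_{2g-2}$ (weight $\tfrac12$ versus $0$), whereas the paper compares at the \emph{retained} points $p\neq p_0$ (weight $\tfrac{m_p}{2}$ versus $(m_p+1)c+\tfrac{m_p}{2}$ with $c>0$); your choice makes the fractional-part check slightly cleaner.
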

\begin{proof}
	Choose $ D=\Lam$ and $d_+=-(g-1)$ with $L_i=L\in
    \Pic^{d_+}(\Sigma)$, and study the degenerate behavior for a family
    $q_i\in \MV(\Lam,L)$ which converges to $q_{\infty}\in \MSV(
    D_{\infty},L)$. Here, $D_{\infty}$ satisfies $D_{\infty}\leq  D$ and $\deg(D_{\infty})=\deg(D)-1$.
    As $q_i$ lies in the top stratum, we can always find such a family. 
    Take $(\ME_i,\vp_i)=\wp(L_i,q_i)$ and $(\ME_{\infty},\vp_{\infty})=\wp(L,q_{\infty})$. 
    When $g\geq 3$, we have $-\deg(D_{\infty})<d_+\leq -\frac12\deg(D_{\infty})$,
    which implies $(\ME_{\infty},\vp_{\infty})$ is a stable Higgs bundle. 
	
	Write $D=\sum_p\ell_p$. As $(\ME_i,\vp_i)$ is nonexotic, 
    the weights will be $\chi_+(p)=\chi_-(p)=\ell_p/2$. However, as
    $\deg(D_{\infty})\neq 2d_+$, $(\ME_{\infty},\vp_{\infty})$ is exotic.
    By Proposition \ref{prop_choiceofconstant_c}, if we write
    $\chi_{\pm}^{\infty}(p)$ for the weight functions with corresponding constant $c$, then $c>0$.
    Therefore, for $p\neq p_0$, we have
    $\chi^{\infty}_+(p)=(m_p+1)c+m_p/2>m_p/2=\chi_+(p)$.
	By \eqref{eq_limit_reducible_fiber}, $\lim_{i\to
    \infty}\TheMoc(\ME_i,\vp_i)\neq \TheMoc(\ME_{\infty},\vp_{\infty})$.
\end{proof}

When $g=2$, the stratification is simpler, and we have the following.
\begin{proposition}
\label{prop_genus2_algebraic_Mochizuki}
When $g=2$, the following holds:
\begin{itemize}
\item [(i)] Suppose $\Lam=p_1+p_2$ for $p_1\neq p_2$, then $\TheMoc|_{\MM_q^{\sta}}$ is continuous. 
Moreover, there exists a sequence of stable Higgs bundles $(\ME_i,\vp_i)\in \MM_q$ where the
limit $(\ME_{\infty},\vp_{\infty})=\lim_{i\to \infty}(\ME_i,\vp_i)$ is semistable, and  $\gamma(0)$ is also
semistable and  furthermore
$$
\lim_{i\to \infty}\TheMoc(\ME_i,\vp_i)\neq \TheMoc(\ME_{\infty},\vp_{\infty})\ .
$$
\item [(ii)] Suppose $\Lam=2p$. Then $\TheMoc|_{\MM_q^{\sta}}$ is continuous.
\end{itemize}
\end{proposition}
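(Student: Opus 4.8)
The plan is to handle the two assertions in parallel for the continuity claim and then construct one explicit degeneration for the discontinuity in (i). The key structural input is that when $g=2$ the stable locus is a \emph{single} stratum, which immediately trivializes continuity, so that the only real work is the boundary phenomenon in (i). First I would invoke Example \ref{ex_genus_two_stable.}: for $g=2$ one has $\MM_q^{\sta}=\wp(\MSV(\Lam,-1))$ in both the $\Lam=p_1+p_2$ and $\Lam=2p$ cases. Thus along the entire stable locus the divisor is fixed at $D=\Lam$ and the degrees are forced to be $d_+=d_-=-1$; every stable bundle in the fiber is therefore non-exotic ($c=0$), and the weight is the constant $\chi_p=\ell_p/2$ (namely $\tfrac12$ when $\Lam=p_1+p_2$ and $1$ when $\Lam=2p$). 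Since $\TheMoc$ restricted to any fixed stratum $\wp(\MSV(D,m))$ is continuous by Proposition \ref{prop_reducible_continuous_on_strata}(i), and the whole stable locus is this one stratum, $\TheMoc|_{\MM_q^{\sta}}$ is continuous. This proves (ii) outright and the continuity half of (i).

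For the discontinuity in (i) the idea is to let the extension class degenerate so that the divisor $D$ drops one step, crossing from the stable to the semistable locus. Fix $L\in\Pic^{-1}(\Sigma)$ and identify $H^0(\Lam,L^2K)$ with the two fibers over $p_1,p_2$. Choosing $q_i=(a,a/i)$ with $a\neq0$ gives $q_i\in\MV(\Lam,L)$, while $q_i\to q_\infty=(a,0)\in\MV(p_1,L)$. Setting $(\ME_i,\vp_i)=\wp(L,q_i)$ produces stable Higgs bundles in the top stratum $D=\Lam$, whereas the limit $(\ME_\infty,\vp_\infty)=\wp(L,q_\infty)$ lies in the stratum $D_\infty=p_1$, where $d_-=-\deg(D_\infty)-d_+=0$, hence is strictly semistable (so it also plays the role of the semistable endpoint $\gamma(0)$). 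Convergence $(\ME_i,\vp_i)\to(\ME_\infty,\vp_\infty)$ in $\MM_q$ follows from continuity of $\wp$.

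Next I would compute both sides. Along the sequence the weights are the constant $\tfrac12$ at $p_1$ and $p_2$ and $L_+=L$ is fixed, so $\lim_{i\to\infty}\TheMoc(\ME_i,\vp_i)=\MF_{\st}(L,\chi)\oplus\MF_{\st}(L^{-1}(-\Lam),\chi)$ with $\chi(p_1)=\chi(p_2)=\tfrac12$. By contrast $(\ME_\infty,\vp_\infty)$ is $S$-equivalent to $L(p_1)\oplus L^{-1}(-p_1)$ with $\vp_0=\diag(\om,-\om)$, so by the definition of $\TheMoc$ on the semistable locus, $\TheMoc(\ME_\infty,\vp_\infty)=\MF_{\st}(L(p_1),0)\oplus\MF_{\st}(L^{-1}(-p_1),0)$, all weights zero.

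The crux, and the main obstacle, is to certify that these two filtered bundles are genuinely distinct, since both have degree $0$ and one cannot separate them on degree alone. Here I would read off the weights at the point $p_2\notin\supp D_\infty$: both summands of $\lim_i\TheMoc(\ME_i,\vp_i)$ carry a jump at $\tfrac12$ there (note $\ell_{p_2}-\tfrac12=\tfrac12$), whereas both summands of $\TheMoc(\ME_\infty,\vp_\infty)$ carry weight $0$ at $p_2$. Because the multiset of parabolic jumps in $[0,1)$ at each puncture is an intrinsic invariant of a filtered bundle, no matching of the two $\MF_{\st}$-summands can reconcile $\{\tfrac12,\tfrac12\}$ with $\{0,0\}$ at $p_2$; hence $\lim_{i\to\infty}\TheMoc(\ME_i,\vp_i)\neq\TheMoc(\ME_\infty,\vp_\infty)$. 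The delicate points to execute carefully are using the shift relation $\MF_{\st}(\ML(D'),\chi-\chi_{D'})=\MF_{\st}(\ML,\chi)$ to normalize representatives, and confirming that the $\wp$-limit really is the semistable bundle (so that the weight-zero convention applies) rather than a nearby stable one — both of which the single-stratum description above makes transparent.
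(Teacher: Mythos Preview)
Your proposal is correct and follows essentially the same approach as the paper's proof: both invoke Example~\ref{ex_genus_two_stable.} to identify $\MM_q^{\sta}$ with the single stratum $\wp(\MSV(\Lam,-1))$, apply Proposition~\ref{prop_reducible_continuous_on_strata}(i) for continuity, and for the discontinuity in (i) degenerate $q_i\in\MV(\Lam,L)$ to $q_\infty\in\MV(p_1,L)$ with $L\in\Pic^{-1}(\Sigma)$, arriving at the same pair of filtered bundles. Your treatment is slightly more explicit (the concrete choice $q_i=(a,a/i)$ and the argument comparing the jump sets $\{\tfrac12,\tfrac12\}$ versus $\{0,0\}$ at $p_2$), but this is elaboration rather than a different route.
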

\begin{proof}
For (i), suppose $\Lam=p_1+p_2$, then by Example \ref{ex_genus_two_stable.}, we have $\MM_q^{\sta}=\wp(\MSV(\Lam,-1))$.
By Proposition \ref{prop_reducible_continuous_on_strata}, $\TheMoc_q|_{\MM_q^{\sta}}$ is continuous. 
However, for semistable elements other strata must be taken into consideration. Take $L\in \Pic^{-1}(\Sigma)$ and 
$q_i\in \MV(\Lam,L)$ such that $q_i$ convergence to $q_{\infty}\in \MV(p_1,L)$. We define $(\ME_i,\vp_i)=\wp(L,q_i)$ and
$(\ME_{\infty},\vp_{\infty})=\wp(L,q_{\infty})$. For each $i$, 
$$
\TheMoc(\ME_i,\vp_i)=\MF_{\st}(L,(\tfrac{1}{2},\tfrac{1}{2}))\oplus \MF_{\st}(L^{-1}(-\Lam),(\tfrac{1}{2},\tfrac{1}{2})).
$$
Moreover, we have 
$$
\TheMoc(\ME_{\infty},\vp_{\infty})=\MF_{\st}(L(D),(0,0))\oplus \MF_{\st}(L^{-1}(-D),(0,0))\neq \lim_{i\to \infty}\TheMoc(\ME_i,\vp_i).
$$
	
For (ii), by Example \ref{example_genustwodegreestratification},
$\wp(\MSV(D_2,-1))=\MM_q^{\sta}$ and by Proposition \ref{prop_reducible_continuous_on_strata}, $\TheMoc_q|_{\MM_q^{\sta}}$
is continuous. We now consider the behavior of the filtered bundle when crossing the divisors. 
\end{proof}

\subsection{The analytic Mochizuki map and  limiting configurations}
In this subsection, we construct the analytic Mochizuki map for the Hitchin fiber with a reducible spectral curve. We 
also introduce the convergence theorem of Mochizuki as stated in \cite{Mochizukiasymptotic} and 
examine the discontinuous behavior of the analytic Mochizuki map. 

For $(\ME,\vp)\in\MM_q$, we can express the abelianization as $(\ME_0,\vp_0)=(L_+\oplus L_-,\begin{pmatrix}
\om & 0\\ 0 & -\om
\end{pmatrix})$, 
thus $\TheMoc(\ME,\vp)=\MF_{\st}(L_+,\chi_+)\oplus \ML_-(L_-,\chi_-)\in \MF_2(\Sigma)$. Via the nonabelian Hodge correspondence for filtered
bundles, we obtain two Hermitian metrics $h^{\Lim}_{\pm}$ with corresponding Chern connections $A_{h^{\Lim}_{\pm}}$. These metrics 
satisfy the following proposition.

\begin{proposition}[{\cite[Lemma 4.4]{mochizuki2003asymptoticdifferent}}]
	\label{prop_limitingconfconstruct}
	The metrics $h_{\pm}^{\Lim}$ over $L_{\pm}$ satisfy
	\begin{itemize}
		\item [i)] $F_{A_{h^{\Lim}_{\pm}}}=0$ and $h_+^{\Lim}h_-^{\Lim}=1$,
		\item [ii)] for every $p\in\Sigma$, there exists an open neighborhood $(U,z)$ with $P=\{z=0\}$ such that $|z|^{-2\chi_p(c_0)}h_{+}^{\Lim}$ and $|z|^{2\chi_p(c_0)+2l_P}h_-^{\Lim}$ extends smoothly to $L_{\pm}|_U$.
	\end{itemize}
\end{proposition}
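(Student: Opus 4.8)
The plan is to deduce the entire statement from the nonabelian Hodge correspondence for weighted line bundles already established in Proposition \ref{prop_NAHforfilteredlinebundle}, applied separately to the two summands of $\TheMoc(\ME,\vp)=\MF_{\st}(L_+,\chi_+)\oplus\MF_{\st}(L_-,\chi_-)$. The essential preliminary is to verify that \emph{each} summand is a degree zero weighted line bundle, which is exactly what the normalization in Lemma \ref{lemma_weightparabolic} guarantees: with $c_0$ the constant produced there, one has $\deg(L_+,\chi_+)=d_++\sum_p\chi_p(c_0)=0$ and $\deg(L_-,\chi_-)=d_-+\sum_p(\ell_p-\chi_p(c_0))=0$ (in the strictly semistable case both summands are instead honest degree zero line bundles carrying the zero weight). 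Thus Proposition \ref{prop_NAHforfilteredlinebundle} applies to each factor and produces harmonic metrics $h_+^{\Lim}$ on $L_+$ and $h_-^{\Lim}$ on $L_-$, each unique up to a positive multiplicative constant.

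With these metrics in hand, the first assertion of i) and the local statements of ii) are read off directly. Part (i) of Proposition \ref{prop_NAHforfilteredlinebundle} gives $F_{A_{h^{\Lim}_{\pm}}}=0$. Part (ii) of the same proposition, applied with the weight $\chi_p(c_0)$ on $L_+$ and the weight $\ell_p-\chi_p(c_0)$ on $L_-$, shows that $|z|^{-2\chi_p(c_0)}h_+^{\Lim}$ and $|z|^{-2(\ell_p-\chi_p(c_0))}h_-^{\Lim}$ extend to smooth, nondegenerate Hermitian metrics across each $p\in Z$, which is the local normal form claimed in ii).

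To pin down the constants so that $h_+^{\Lim}h_-^{\Lim}=1$, I would pass to the tensor product of the two filtered bundles. Using \eqref{eqn:tensor}, the relation $L_+\otimes L_-=\MO(-D)$, and $\chi_++\chi_-\equiv\ell_p=\chi_D$ on $Z$, one gets $\MF_{\st}(L_+,\chi_+)\otimes\MF_{\st}(L_-,\chi_-)=\MF_{\st}(\MO(-D),\chi_D)$. Applying the identity $\MF_{\st}(\ML(D'),\chi-\chi_{D'})=\MF_{\st}(\ML,\chi)$ with $\ML=\MO$, $D'=-D$, and $\chi=0$ collapses this to the trivial filtered bundle $\MF_{\st}(\MO,0)$. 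Since the harmonic metric of a tensor product of filtered line bundles is the product of the individual harmonic metrics, $h_+^{\Lim}h_-^{\Lim}$ is a harmonic metric for $\MF_{\st}(\MO,0)$ and is therefore a constant; rescaling $h_\pm^{\Lim}$ by reciprocal constants, which is permitted by the uniqueness clause (iii) of Proposition \ref{prop_NAHforfilteredlinebundle}, normalizes this constant to $1$ and yields the remaining assertion of i).

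The genuine analytic content — existence, flatness, local behavior, and uniqueness up to scale — is entirely contained in Proposition \ref{prop_NAHforfilteredlinebundle}, so what remains here is bookkeeping rather than new estimates. I expect the only point requiring real care to be the consistent tracking of the weights and local exponents: one must invoke the precise weight splitting $\chi_+(p)=\chi_p(c_0)$, $\chi_-(p)=\ell_p-\chi_p(c_0)$ of Lemma \ref{lemma_weightparabolic}, and note that the factor $\ell_p$ appearing in the exponent for $h_-^{\Lim}$ is precisely the vanishing order of $\det\iota$ at $p$, i.e.\ the coefficient of $D$ in $L_+\otimes L_-=\MO(-D)$. Matching this with the normalization $h_+^{\Lim}h_-^{\Lim}=1$ is where the divisor $D$ enters the local normal forms.
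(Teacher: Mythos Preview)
The paper does not give its own proof of this proposition; it is simply quoted from Mochizuki \cite{mochizuki2003asymptoticdifferent}. Your argument is correct and has the virtue of being self-contained within the paper's framework: both summands of $\TheMoc(\ME,\vp)$ are degree-zero weighted line bundles by Lemma \ref{lemma_weightparabolic}, so Proposition \ref{prop_NAHforfilteredlinebundle} applies directly to give flatness and the local model in ii); the tensor-product computation $\MF_{\st}(L_+,\chi_+)\otimes\MF_{\st}(L_-,\chi_-)=\MF_{\st}(\MO(-D),\chi_D)=\MF_{\st}(\MO,0)$ together with uniqueness up to scale then forces $h_+^{\Lim}h_-^{\Lim}$ to be constant, which one normalizes to $1$.

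One remark on the exponents: your derivation yields that $|z|^{-2(\ell_p-\chi_p(c_0))}h_-^{\Lim}=|z|^{2\chi_p(c_0)-2\ell_p}h_-^{\Lim}$ extends smoothly, whereas the statement as printed has $|z|^{2\chi_p(c_0)+2\ell_P}h_-^{\Lim}$. Your version is the one consistent both with the weight $\ell_p-\chi_p(c_0)$ on $L_-$ and with the constraint $h_+^{\Lim}h_-^{\Lim}=1$ (since $h_+^{\Lim}\sim|z|^{2\chi_p(c_0)}$ forces $h_-^{\Lim}\sim|z|^{-2\chi_p(c_0)}$, and a holomorphic frame of $L_-$ differs from one of $L_+^{-1}$ by a factor vanishing to order $\ell_p$). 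So the sign on $2\ell_P$ in the displayed statement appears to be a typo, and your computation gives the correct local form.
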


Now, $H^{\Lim}:=h_+^{\Lim} \oplus h_-^{\Lim}$ is a metric on $\ME_0$ which induces a metric on $(\ME,\vp)|_{\Sigma\setminus Z}$ 
because $(\ME,\vp)|_{\Sigma\setminus Z} \cong (\ME_0,\vp_0)|_{\Sigma\setminus Z}$. Let $(A^{\Lim},\phi^{\Lim})$ be the Chern connection 
defined by $(\ME,\vp,H^{\Lim})$ over $\Sigma\setminus Z$. Then $(A^{\Lim},\phi^{\Lim})$ is a limiting configuration that satisfies 
the decoupled Hitchin equations \eqref{eq_decoupled_Hitchin_equation}. The analytic Mochizuki map $\UpMoc$ is defined as:
\begin{equation} \label{eq_analytic_moc_reducible}
	\UpMoc: \MM_q \lra \MMHLC, \quad \UpMoc(\ME,\vp) = (A^{\Lim},\phi^{\Lim}).
\end{equation}
Note that $H^{\Lim}$ is not unique: for any constant $c$, the metric $c h_+^{\Lim} \oplus c^{-1} h_-^{\Lim}$ defines the same 
Chern connection as $H^{\Lim}$. In any case, the map $\UpMoc$ is well-defined.

Suppose $(\ME,\vp)$ is an S-equivalence class of a semistable Higgs bundle. Let $H_t$ be the harmonic metric for $(\ME,t\vp)$. For 
each constant $C>0$, define $\mu_C$ to be the automorphism of $L_+ \oplus L_-$ given by $\mu_C = C\id_{L_+} \oplus C^{-1} \id_{L_-}$. 
As $\ME \cong L_+ \oplus L_-$ on $\Sigma\setminus Z$, $\mu_C^{\st} H_t$ can be regarded as a metric on $\ME|_{\Sigma\setminus Z}$. 
Take any point $x \in \Sigma\setminus Z$ and a frame $e_x$ of $L_+|x$, and define:
\begin{equation*}
C(x,t):=\biggl(\frac{h_{L_+}^{\LC}(e_x,e_x)}{H_t(e_x,e_x)}\biggr)^{1/2}\ .
\end{equation*}

Writing $\na_t + t\phi_t$ as the corresponding flat connection of $(\ME,t\vp)$ under the nonabelian Hodge correspondence, then 
%Mochizuki has proven the following:
\begin{theorem}[{\cite{Mochizukiasymptotic}}]
\label{thm_moc_convergence_reduciblecase}
On any compact subset $K$ of $\Sigma\setminus Z$,
$\mu^{\st}_{C(x,t)}H_t$ converges smoothly to $H^{\LC}$. Additionally, there exist $t$-independent constants $C_k$ and $C'_k$ such that
$$
|(\na_t,\phi_t)-\UpMoc(\ME,\vp)|_{\MC^k}\leq C_ke^{-C'_kd}.
$$
\end{theorem}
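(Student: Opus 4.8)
The plan is to follow the standard strategy for the asymptotics of the harmonic metric: construct a good approximate solution, perturb it to the genuine harmonic metric $H_t$, and extract exponential decay from the mass gap of the term $t^2[\vp,\vp^{\dagger}]$ away from $Z$. First I would build a family of approximate metrics $H_t^{\app}$. Away from $Z$, the limiting configuration metric $H^{\Lim}=h_+^{\Lim}\oplus h_-^{\Lim}$ from Proposition \ref{prop_limitingconfconstruct} solves the decoupled equations \eqref{eq_decoupled_Hitchin_equation} exactly, so it is already a perfect approximation there. Near each $p\in Z$, using the local extension form of $(\ME,\vp)$ from \eqref{eq_Higgsbundle_construction_extension} together with the weights $\chi_p(c_0)$ from Lemma \ref{lemma_weightparabolic}, the Hitchin equation reduces, after a conformal change and a diagonal gauge, to a rotationally symmetric model governed by a fiducial ODE solution. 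Gluing these local models to $H^{\Lim}$ over annuli of radius $\sim t^{-1+\epsilon}$ produces $H_t^{\app}$, which solves the full Hitchin equation up to an error supported in the gluing region and exponentially small in $t$.

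Next I would write the genuine harmonic metric as $H_t=\mu_{C(x,t)}H_t^{\app}e^{s_t}$ for a trace-free Hermitian endomorphism $s_t$, where the prefactor $\mu_{C(x,t)}$ fixes the one-parameter scaling ambiguity of the limiting metric by matching normalizations at the reference point $x$. Substituting into $F_{A_{H_t}}+t^2[\vp,\vp^{\dagger_{H_t}}]=0$ yields a nonlinear elliptic equation of the schematic form $\Delta s_t+t^2\mathcal{N}(s_t)=E_t$, where $E_t$ is the exponentially small approximation error and the linearization of $\mathcal{N}$ at $0$ is multiplication by a positive endomorphism comparable to $|q|$ on compact subsets of $\Sigma\setminus Z$. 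A contraction-mapping argument in weighted Hölder or Sobolev spaces, with weights adapted to $Z$, then produces a unique small solution $s_t$, establishing existence and smallness uniformly in $t$.

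Then I would extract the exponential decay. On a compact $K\subset\Sigma\setminus Z$ with $d=\min_{x\in K}|q|(x)$, the mass term satisfies $t^2|q|\geq t^2 d$, which bounds the linearized operator below; an Agmon- or maximum-principle-type estimate gives $\|s_t\|_{L^\infty(K)}\leq C e^{-C' t d}$, and interior elliptic (Schauder) estimates upgrade this to the claimed bound $|(\na_t,\phi_t)-\UpMoc(\ME,\vp)|_{\MC^k}\leq C_k e^{-C_k' t d}$. The smooth convergence $\mu^{\st}_{C(x,t)}H_t\to H^{\Lim}$ on compact subsets of $\Sigma\setminus Z$ follows at once.

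The hard part will be the uniform analysis near $Z$: showing that the glued approximate solution has exponentially small error and, crucially, that the linearized operator remains uniformly invertible on the weighted spaces as $t\to\infty$ despite the degeneration of the metric at the zeros of $\om$. The semistable case demands additional care, since one must verify that the extension class defining $\ME$ does not obstruct the splitting in the limit; this is exactly what the rescaling $\mu_{C(x,t)}$ and the vanishing weights $\chi_\pm=0$ in the definition of $\UpMoc$ are arranged to accomplish.
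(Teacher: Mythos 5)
The paper itself offers no proof of this theorem --- it is imported verbatim from Mochizuki \cite{Mochizukiasymptotic} --- so your proposal can only be measured against Mochizuki's argument, and it takes a genuinely different route. Mochizuki's proof is not a gluing/perturbation scheme: it consists of (a) an asymptotic-decoupling estimate, proved by Simpson-style subharmonicity and maximum-principle comparisons, showing that wherever the eigenvalues $\pm\om$ of $\vp$ are separated by $d$, the second fundamental form of the decomposition $\ME|_{\Sigma\setminus Z}\cong L_+\oplus L_-$ and all off-diagonal blocks of $(\na_t,\phi_t)$ decay like $e^{-\epsilon t d}$ --- with no approximate solution, no linearized operator and no contraction mapping; and (b) a local study near each $p\in Z$, which is precisely where the weights $\chi_p(c_0)$ of Lemma \ref{lemma_weightparabolic} originate and which identifies the diagonal limit with the harmonic metric of the filtered bundle. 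What you describe is instead the Mazzeo--Swoboda--Weiss--Witt desingularization strategy, which in the literature has been carried out only for irreducible $q$ with simple zeros.

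As a proof in its own right, your proposal has two genuine gaps. First, the claim that the linearization of the mass term is ``multiplication by a positive endomorphism comparable to $|q|$'' is false: linearizing $t^2[\vp,\vp^{\dagger_H}]$ in the metric gives $s\mapsto t^2[\vp,[\vp^{\dagger},s]]$, which annihilates every endomorphism commuting with $\vp$. On these diagonal directions there is no mass gap at all; their harmonic representatives are exactly the tangent directions to the positive-dimensional family of limiting configurations, so the linearized operator has an approximate kernel and is not uniformly invertible. The contraction argument and the Agmon decay can only be run after splitting off this abelian part and either working transverse to the family of limiting configurations or showing that the error term vanishes identically on the commutant; your scheme does neither, and it fails on precisely those directions. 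Second, the local models near $Z$ do not exist off the shelf in the reducible case: near $p$ the Higgs field is the extension $\begin{pmatrix}\om & c\\ 0 & -\om\end{pmatrix}$ with $\om$ vanishing to order $m_p\geq 1$, not the companion-matrix form with a simple spectral zero for which the fiducial (Painlev\'e-type) solutions and their uniform linear theory are known. Producing local solutions with prescribed parabolic weight $\chi_p(c_0)$ --- a weight which, by Lemma \ref{lemma_weightparabolic} and Proposition \ref{prop_choiceofconstant_c}, is fixed only by the global constraint $d_++\sum_{p}\chi_p(c_0)=0$ and can be exotic --- and showing that the glued error is exponentially small with uniformly invertible linearization is essentially the content of the theorem, not a step one may assume. (Incidentally, the exponent in the displayed estimate should read $e^{-C'_k t d}$, as in Theorem \ref{thm_moc_convergence_irreducible_case}; your proposal has this right.)
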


Propositions \ref{prop_genus3_algebraic_Mochizuki} and \ref{prop_genus2_algebraic_Mochizuki} now give
\begin{theorem}
\label{thm_analytic_moc_reducible_fiber}
When $g\geq 3$, $\UpMoc|_{\MM_q^{\sta}}$ is discontinuous, and when $g=2$, $\UpMoc|_{\MM_q^{\sta}}$ is continuous. 
\end{theorem}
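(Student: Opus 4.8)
The plan is to deduce the theorem from the behavior of the \emph{algebraic} Mochizuki map $\TheMoc$ already established in Propositions \ref{prop_genus3_algebraic_Mochizuki} and \ref{prop_genus2_algebraic_Mochizuki}, by factoring $\UpMoc$ through $\TheMoc$ and analyzing the auxiliary map from filtered-bundle data to limiting configurations. Recall that, by its very construction, $\UpMoc$ is the composition of $\TheMoc:\MM_q^{\sta}\to \MSF_2(\Sigma)$ with the map $\Psi:\MSF_2(\Sigma)\to \MMHLC$ that sends a pair of degree-zero filtered line bundles $\MF_{\st}(L_+,\chi_+)\oplus \MF_{\st}(L_-,\chi_-)$ to the limiting configuration $(A^{\Lim},\phi^{\Lim})$ determined by the harmonic metrics $h_\pm^{\Lim}$ of Proposition \ref{prop_limitingconfconstruct}. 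It therefore suffices to understand the continuity and separation properties of $\Psi$.

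First I would prove that $\Psi$ is continuous. Given a convergent sequence in $\MSF_2(\Sigma)$ (with underlying puncture set $Z=\supp(\Lam)$ fixed, since $q$ is fixed), Theorem \ref{thm_convergence_family_harmonic_bundles} guarantees that, after the rescaling built into the equivalence relation on $\MMHLC$, the associated harmonic metrics $h_\pm^{\Lim}$ and their Chern connections converge in $C^\infty_{\loc}(\Sigma\setminus Z)$. Since $(A^{\Lim},\phi^{\Lim})$ is read off from exactly these data in the topology used to define $\MMHLC$, this yields continuity of $\Psi$. For $g=2$ this already closes the argument: on the stable locus $\MM_q^{\sta}$ coincides with the single top divisor stratum $D=\Lam$ (in both subcases $\Lam=p_1+p_2$ and $\Lam=2p$ of Example \ref{example_genustwodegreestratification} and Example \ref{ex_genus_two_stable.}), on which $\TheMoc$ is continuous by Proposition \ref{prop_reducible_continuous_on_strata}. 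Composing with the continuity of $\Psi$ gives continuity of $\UpMoc|_{\MM_q^{\sta}}$.

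For $g\geq 3$ I would additionally establish a separation property for $\Psi$: two filtered bundles with \emph{different} parabolic weight data produce limiting configurations that are not gauge equivalent. This uses the same mechanism as in the irreducible case (Proposition \ref{prop_different_limiting_configurations}): by Simpson's nonabelian Hodge correspondence for filtered bundles \cite{simpson1990harmonic}, the harmonic connection determines the filtered structure up to isomorphism, so a strict difference of weights at some $p\in Z$ forces distinct points of $\MMHLC$. Proposition \ref{prop_genus3_algebraic_Mochizuki} supplies a sequence $(\ME_i,\vp_i)$ of stable Higgs bundles with stable limit $(\ME_\infty,\vp_\infty)$ for which $\lim_i\TheMoc(\ME_i,\vp_i)\neq \TheMoc(\ME_\infty,\vp_\infty)$, the discrepancy being exactly a strict weight difference arising from the exotic constant $c>0$ via \eqref{eq_limit_reducible_fiber}. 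Applying continuity of $\Psi$ to identify $\lim_i\UpMoc(\ME_i,\vp_i)=\Psi\bigl(\lim_i\TheMoc(\ME_i,\vp_i)\bigr)$ and then separation to distinguish this from $\UpMoc(\ME_\infty,\vp_\infty)=\Psi\bigl(\TheMoc(\ME_\infty,\vp_\infty)\bigr)$ proves discontinuity; since the entire sequence and its limit lie in $\MM_q^{\sta}$, this is a genuine discontinuity of $\UpMoc|_{\MM_q^{\sta}}$.

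The hard part will be the separation step for $g\geq 3$: one must verify that the weight discrepancy genuinely survives in $\MMHLC$ after quotienting both by unitary gauge transformations on $\Sigma\setminus Z$ and by the residual $\CS$-ambiguity in the normalization of $H^{\Lim}$. The cleanest route is to detect the weights \emph{intrinsically} from the limiting configuration — for instance through the blow-up rate of $|\phi^{\Lim}|$, or the conjugacy class of the singular part of $A^{\Lim}$, near each $p\in Z$ — so that distinct weights cannot be absorbed by a smooth gauge transformation on $\Sigma\setminus Z$. A secondary, more routine, point is to confirm that in the $g=2$ analysis no crossing of divisor strata occurs within $\MM_q^{\sta}$, so that the continuity of $\TheMoc$ on a single stratum indeed transfers to the whole stable locus.
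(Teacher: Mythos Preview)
Your proposal is correct and follows essentially the same route as the paper: the paper's proof simply cites Propositions~\ref{prop_genus3_algebraic_Mochizuki} and~\ref{prop_genus2_algebraic_Mochizuki}, relying implicitly on the factorization $\UpMoc=\Psi\circ\TheMoc$ and on Simpson's correspondence for filtered bundles to pass between statements about $\TheMoc$ and $\UpMoc$. You have made explicit the two ingredients the paper leaves tacit---continuity of $\Psi$ (via Theorem~\ref{thm_convergence_family_harmonic_bundles}) and the separation property (via \cite{simpson1990harmonic}, exactly parallel to Proposition~\ref{prop_different_limiting_configurations})---and correctly identified that for $g=2$ the stable locus is a single stratum (Example~\ref{ex_genus_two_stable.}), so no crossing occurs.
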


\section{The Compactified Kobayashi-Hitchin map}
\label{sec_compactified_Kobayashi_Hitchin_map}
In this section, we define a compactified version of the Kobayashi-Hitchin map and prove the main theorem of our paper. 
The Kobayashi-Hitchin map $\Xi$ is a homeomorphism between the Dolbeault moduli space $\MMD$ and the Hitchin 
moduli space $\MMH$. We wish to extend this to a map $\BXi$ from the compactified Dolbeault moduli space $\BMMD$ 
to the compactification $\BMMH\subset \MMH\cup\MMHLC$ of the Hitchin moduli space, and to study the properties
of this extended map. 
\subsection{The compactified Kobayashi-Hitchin map}
	\label{subsec_constructionlimitngconfiguration}
We first summarize the results obtained above. By the construction in Section \ref{sec_the_algebraic_and_analytic_compactifications}, 
there is an identification $\pa\BMMD\cong (\MMD\setminus \MH^{-1}(0))/\CS$. Moreover, through \eqref{eq_analytic_moc_irreducible} and
\eqref{eq_analytic_moc_reducible}, we have constructed the analytic Mochizuki map $\UpMoc: \MMD\setminus \MH^{-1}(0)\to \MMHLC$. 
Writing $(A^{\LC},\phi^{\LC}=\vp+\vp^{\da_{\LC}})=\UpMoc(\ME,\vp)$, then for $w\in \CS$, we have
$$
\UpMoc(\ME,w\vp)=(A^{\LC},\phi^{\LC}=w\vp+\bar{w}\vp^{\da_{\LC}})\ .
$$ 
Hence $\UpMoc$ descends to a map $\pa \BXi$ between $\CS$ orbits.
\begin{equation*}
\begin{split}
\pa\BXi:\pa\BMMD=(\MMD\setminus \MH^{-1}(0))/\CS\lra \MMH^{\LC}/\CS\ ,
\end{split}
\end{equation*}
Together with the initial Kobayashi-Hitchin map $\Xi:\MMD\to \MMH$, we obtain
\begin{equation*}
\begin{split}
\BXi: \BMMD=\MMD\cup\pa\BMMD\lra \MMH\cup \MMHLC/\CS. 
\end{split}
\end{equation*}

Theorems \ref{thm_moc_convergence_irreducible_case} and \ref{thm_moc_convergence_reduciblecase} show that for a Higgs bundle $(\ME,\vp)\in
\MMD\setminus \MH^{-1}(0)$ and real $t$, $\lim_{t\to \infty}\Xi(\ME,t\vp)=\pa\BXi[(\ME,\vp)/\CS]$. Thus the image of $\BXi$ lies in 
$\BMMH$, the closure of $\MMH$ in $\MMH\cup\MMD\setminus \MH^{-1}(0)$.
There are natural extensions $\BMH_{\Dol}:\BMMD\to \PMB$ and $\BMH_{\Hit}:\BMMH\to \PMB$ such that 
$\BMH_{\Hit}\circ \BXi=\BMH_{\Dol}$.
 
In summary, there are commutative diagrams
\begin{equation*}
	\begin{tikzcd} \MMD \arrow[r, "\Xi"] \arrow[d, hook] & \MMH \arrow[d,
    hook] \\ \BMMD \arrow[r, "\BXi"] & \BMMH \end{tikzcd}\ ,\ \begin{tikzcd}
		\BMMD \arrow[rd, "\BMH_{\Dol}",swap ] \arrow[r, "\BXi"] & \BMMH \arrow[d, "\BMH_{\Hit}"] \\
		&  \PMB
	\end{tikzcd}.
\end{equation*}

We now turn to the analysis of some properties of the compactified Kobayashi-Hitchin map. Define 
$$
\PMB^{\reg}=\{[(q,w)]\in \PMB\mid  q\neq 0\mathrm{\;has\;simple\;zeros}\}\ .
$$
This represents the compactified space of quadratic differentials with simple zeros. Let $\PMB^{\sing}=\PMB\setminus \PMB^{\reg}$. 
Additionally, define the open sets $\BMMD^{\reg}=\BMH_{\Dol}^{-1}(\PMB^{\reg})$ and $\BMMH^{\reg}=\BMH_{\Hit}^{-1}(\PMB^{\reg})$ 
as the collections of elements with regular spectral curves. Now set $\BMMD^{\sing}=\BMH_{\Dol}^{-1}(\PMB^{\sing})$ and
$\BMMH^{\sing}=\BMH_{\Hit}^{-1}(\PMB^{\sing})$ to be the sets of singular
fibers. We can write $\BXi=\BXi^{\reg}\cup\BXi^{\sing}$, 
where 
$$
\BXi^{\reg}:\BMMD^{\reg}\lra \BMMH^{\reg},\quad \BXi^{\sing}:\BMMD^{\sing}\lra \BMMH^{\sing}\ .
$$

\begin{proposition}
The map $\BXi^{\reg}:\BMMD^{\reg}\to \BMMH^{\reg}$ is bijective, whereas $\BXi^{\sing}:\BMMD^{\sing}\to \BMMH^{\sing}$ is neither surjective nor injective.
\end{proposition}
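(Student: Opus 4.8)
The plan is to peel off the interior, where $\BXi$ restricts to the Kobayashi--Hitchin homeomorphism $\Xi$, and then to analyze the boundary map fibrewise over $\pa\PMB\cong\mathbb P(H^0(K^2))$, transporting each question to the combinatorics of filtered line bundles from Sections \ref{sec_parabolic_modules}--\ref{sec_irreducible_singular_fiber}. Concretely, $\BXi=\Xi\sqcup\pa\BXi$ with $\pa\BXi\colon(\MMD\setminus\MH^{-1}(0))/\CS\to\MMHLC/\CS$ induced by $\UpMoc$, and both pieces commute with $\BMH_{\Dol},\BMH_{\Hit}$. Since $\Xi$ is a bijection and $\PMB^{\reg},\PMB^{\sing}$ partition the base, it suffices to study $\pa\BXi$ over each stratum, i.e.\ for fixed $q\neq0$ the boundary of the ray map $\BXi_q$. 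Over $[q]\in\pa\PMB$ this boundary is $\UpMoc(\MM_q)$ modulo the residual $\CS=\{\pm1\}$ action $\vp\mapsto-\vp$ (the only scalings preserving $\det\vp=q$), so I am reduced to understanding $\UpMoc\colon\MM_q\to\MMHQLC$ up to this $\ZT$.

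For $q$ with simple zeros the spectral curve is smooth, so $\tS=S$, $\BMT=\MT\cong\Prym(S,\Sigma)$, and $\TheMoc(L)=\MF_\st(L,\tfrac12\chi_{-\Lam})$. Injectivity of $\pa\BXi^{\reg}$ is Theorem~\ref{thm_algebraic_Mochizuki_map_continuous}(i): $\TheMoc$, hence $\UpMoc=\mathrm{NAH}\circ\TheMoc\circ\chi_{\BNR}^{-1}$, is injective, and this passes to the $\ZT$-quotient. For surjectivity I would combine two inputs: Theorem~\ref{thm_simple_zero_bijective} gives $\BMMHq=\mbM_q$, so $\pa\BMMHq=\MMHQLC/\CS$ with no extra boundary points; and the filtered nonabelian Hodge correspondence \cite{simpson1990harmonic}, applied to $\tME_\st=\MF_\st\oplus\sigma^\st\MF_\st$, identifies the image $\{\MF_\st(L,\tfrac12\chi_{-\Lam}):L\in\MT\}$ with all of $\MMHQLC$, both being $\Prym(S,\Sigma)$-torsors. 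As $\UpMoc$ is continuous here (Theorem~\ref{thm_analytic_moc_irreducible_fiber}(i)) and the spaces are compact, the injective continuous $\pa\BXi^{\reg}$ is onto. Together with the interior bijection, $\BXi^{\reg}$ is bijective.

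For the singular locus I would exhibit a single fibre realizing both failures, namely an irreducible $q$ with a zero of even order. Non-injectivity: by Theorem~\ref{thm_algebraic_Mochizuki_map_continuous}(i) $\TheMoc$ is not injective, and the fibres of $\ptf\colon\BMT_D\to\tMT_D$ are positive-dimensional (Theorem~\ref{thm_stratification_fibration}), so there are $(\ME_1,\vp_1),(\ME_2,\vp_2)\in\MM_q$ with $(\ME_2,\vp_2)\neq(\ME_1,\pm\vp_1)$ but $\UpMoc(\ME_1,\vp_1)=\UpMoc(\ME_2,\vp_2)$; these give distinct points of $\BMMD^{\sing}$ with equal image. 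Non-surjectivity: choose $L\in\BMP_D$ with $n_L>0$. By Proposition~\ref{prop_number_of_limits_of_filtered_bundle} there are sequences $(\ME_i^k,\vp_i^k)$ whose Mochizuki images converge to $\eta^k$ built from $\MF_\st(\ptf L,\chi_{D_k})$ with $\sigma^\st D_k\neq D_k$. Each $\UpMoc(\ME_i^k,\vp_i^k)$ is a boundary point of the closed set $\BMMH$ lying over $[q]\in\PMB^{\sing}$, so $\eta^k\in\BMMH^{\sing}$. However $\Im(\UpMoc)$ over the ray of $q$ consists of the $\sigma$-symmetric data $\MF_\st(\ptf L,\tfrac12\chi_{D-\Lam})$, whose weights at the two preimages of an even zero agree, whereas $\eta^k$ has unequal weights there ($c_i^+\neq c_i^-$); the $\CS$-action only rotates the Higgs phase and preserves this asymmetry, and by Proposition~\ref{prop_different_limiting_configurations} the asymmetry descends to a genuine gauge inequivalence on $\Sigma$. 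Hence $\eta^k\in\BMMH^{\sing}\setminus\Im(\BXi^{\sing})$ and $\BXi^{\sing}$ is not surjective.

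The main obstacle is the surjectivity assertion on the regular locus: it is the only step requiring one to account for the \emph{entire} boundary $\pa\BMMHq$, rather than merely the image of $\UpMoc$, and thus to rule out spurious limiting configurations. This is exactly where Theorem~\ref{thm_simple_zero_bijective} (smoothness of $S$ forces $\BMMHq=\mbM_q$) and the filtered nonabelian Hodge correspondence must be used in tandem; the contrast with the singular case, where the extra configurations $\eta^k$ genuinely appear, is what makes the regular statement delicate and the singular statement true.
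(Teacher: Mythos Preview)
Your argument is correct and follows essentially the same route as the paper's proof, which simply cites Theorem~\ref{thm_simple_zero_bijective} for the regular bijection and Theorems~\ref{thm_analytic_moc_irreducible_fiber}, \ref{thm_analytic_moc_reducible_fiber} for the singular failures. You unpack these citations more explicitly: for regular surjectivity you supplement Theorem~\ref{thm_simple_zero_bijective} with the $\Prym$-torsor identification of $\MMHQLC$ (which is indeed the content of \cite{mazzeo2016ends,mazzeo2019asymptotic}) and an invariance-of-domain step; for the singular part you witness both failures over a single irreducible $q$ with an even zero, using Theorem~\ref{thm_algebraic_Mochizuki_map_continuous}(i) for non-injectivity and Propositions~\ref{prop_number_of_limits_of_filtered_bundle}, \ref{prop_different_limiting_configurations} for non-surjectivity, whereas the paper cites the irreducible and reducible theorems together without saying which gives what. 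Your observation that $\Im(\UpMoc)$ always carries $\sigma$-symmetric filtered weights while the $\eta^k$ do not is exactly the mechanism behind Proposition~\ref{prop_different_limiting_configurations}, so no new idea is introduced; the gain is that your write-up makes transparent why discontinuity of $\UpMoc$ translates into non-surjectivity of $\BXi^{\sing}$, a step the paper leaves implicit.
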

\begin{proof}
The bijectivity of $\BXi^{\reg}$ is established by Theorem
    \ref{thm_simple_zero_bijective}. The non-surjectivity and non-injectivity 
of $\BXi^{\sing}$ follow from Theorem \ref{thm_analytic_moc_irreducible_fiber} and Theorem \ref{thm_analytic_moc_reducible_fiber}.
\end{proof}

\subsection{Continuity properties of the compactified Kobayashi-Hitchin map}
In this subsection, we prove that the continuity of the compactified Kobayashi-Hitchin map is fully determined by the
continuity of the analytic Mochizuki map.

Let $(\ME_{i},t_i\vp_{i})$ be a sequence of Higgs bundles with real numbers
$t_i\to+\infty$, $\det(\vp_i)=q_i$, $Z_i=q_i^{-1}(0)$, and $\|q_i\|_{L^2}=1$. We denote $\xi_i=[(\ME_i,t_i\vp_i)]\in \MMD$. 
By the compactness of $\BMMD$, after passing to a subsequence, we may
assume there is $\xi_{\infty}\in \pa\BMMD$ such that $\lim_{i\to\infty}\xi_i=\xi_{\infty}$. Since $\pa\BMMD\cong (\MMD\setminus \MH^{-1}(0))/\CS$, we can select a representative $(\ME_{\infty},\vp_{\infty})$ of $\xi_{\infty}$. By Lemma \ref{prop_convergence_Dol_space}, we have that $(\ME_i,\vp_i)$ converges to $(\ME_{\infty},\vp_{\infty})$ in $\MMD$, and $q_i$ converges to $q_{\infty}$. We write $Z_{\infty}=q_{\infty}^{-1}(0).$

%We now turn our attention to  the compactified Kobayashi-Hitchin map.
By Proposition \ref{prop_general_convergence_solutions}, $\lim_{i\to
\infty}\BXi(\ME_i,t_i\vp_i)$ exists. The following result 
establishes the continuity of this map with respect to the analytic Mochizuki map $\UpMoc$.
\begin{proposition} \label{prop_topology_KHmap}
Under the previous convention, $\lim_{i\to \infty}\BXi(\xi_i)=\BXi(\xi_{\infty})$ if and only if $\lim_{i\to \infty}\UpMoc(\ME_i,\vp_i)=\UpMoc(\ME_{\infty},\vp_{\infty})$. In other words, $\BXi$ is continuous at $\xi_\infty$ if and only if $\UpMoc$ is continuous at $\xi_\infty$.
\end{proposition}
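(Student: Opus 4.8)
The plan is to unwind both limit statements into $W^{k,2}(\om)$ estimates on compact sets $\om\Subset\Sigma\setminus Z_\infty$ and to connect them by Mochizuki's exponential convergence theorem. First I would record what the two sides mean. By the construction of $\pa\BXi$ in Section \ref{subsec_constructionlimitngconfiguration}, the point $\BXi(\xi_\infty)$ is the $\CS$-orbit of the limiting configuration $\UpMoc(\ME_\infty,\vp_\infty)$, viewed as a boundary point of $\BMMH$. By the definition of the topology on $\mbM$, the relation $\lim_i\BXi(\xi_i)=\BXi(\xi_\infty)$ means precisely that for every $\om\Subset\Sigma\setminus Z_\infty$ and every $\ep>0$ the solution $\Xi(\ME_i,t_i\vp_i)$ eventually lies in $U_{\BXi(\xi_\infty),\om,\ep}$; that is, after an $S^1$-phase and a unitary gauge transformation, its connection and its normalized Higgs field are $W^{k,2}(\om)$-close to $\UpMoc(\ME_\infty,\vp_\infty)$. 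On the other side, by Proposition \ref{prop_seq_compactification_limitingconfiguration} the relation $\lim_i\UpMoc(\ME_i,\vp_i)=\UpMoc(\ME_\infty,\vp_\infty)$ in $\MMHLC/\CS$ unwinds to $W^{k,2}(\om)$-closeness of $\UpMoc(\ME_i,\vp_i)$ to $\UpMoc(\ME_\infty,\vp_\infty)$ on the same compact sets. Thus both statements concern the same family of seminorms, and differ only by which object is being compared to $\UpMoc(\ME_\infty,\vp_\infty)$.

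The bridge between them is Mochizuki's convergence theorem (Theorems \ref{thm_moc_convergence_irreducible_case} and \ref{thm_moc_convergence_reduciblecase}): for each fixed $i$, as $t\to\infty$ the Hitchin solution $\Xi(\ME_i,t\vp_i)$ converges to $\UpMoc(\ME_i,\vp_i)$ on any $\om\Subset\Sigma\setminus Z_i$ with an exponential bound $C_ke^{-C_k'\,t\,d_i}$, where $d_i=\min_\om|q_i|$. I would then apply this with $t=t_i\to\infty$. Since $q_i\to q_\infty$ uniformly and $q_\infty$ has no zero on $\om$, for $i$ large one has $\om\Subset\Sigma\setminus Z_i$ and $d_i\geq\tfrac12\min_\om|q_\infty|>0$; together with $t_i\to\infty$ and $i$-uniform constants this forces
\begin{equation*}
	\big\|\,\Xi(\ME_i,t_i\vp_i)-\UpMoc(\ME_i,\vp_i)\,\big\|_{W^{k,2}(\om)}\longrightarrow 0
\end{equation*}
after the appropriate $\CS$-normalization and gauge matching. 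A triangle inequality in $W^{k,2}(\om)$, comparing both $\Xi(\ME_i,t_i\vp_i)$ and $\UpMoc(\ME_i,\vp_i)$ to the fixed target $\UpMoc(\ME_\infty,\vp_\infty)$, then yields that the first converges to the target if and only if the second does, establishing both implications simultaneously. The $\CS$-equivariance of $\UpMoc$ noted in Section \ref{subsec_constructionlimitngconfiguration} ensures that the $S^1$-phases and the $L^2$-normalizations built into the two topologies are compatible, so that this triangle inequality is legitimate in the quotient.

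The main obstacle is the $i$-uniformity of the exponential estimate: Mochizuki's constants are a priori attached to a single Higgs bundle, so I must upgrade them to hold uniformly along the convergent sequence $(\ME_i,\vp_i)\to(\ME_\infty,\vp_\infty)$. The natural route is to observe that the estimate is local on $\om$ and its constants depend only on local bounds for the Higgs field and the background geometry, all of which are uniformly controlled because $(\ME_i,\vp_i)$ and $q_i$ converge smoothly on $\om$; alternatively one may invoke the family version of the asymptotic analysis of Mochizuki--Szab\'o \cite{mochizuki2023asymptotic} (cf.\ the use of \cite[Thm.\ 1.8]{mochizuki2023asymptotic} in Theorem \ref{thm_convergence_family_harmonic_bundles}). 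A secondary technical point to verify is that the representative $(\ME_\infty,\vp_\infty)$ and the gauge transformations supplied by Proposition \ref{prop_general_convergence_solutions} can be chosen compatibly with those implicit in the open sets $U_{\BXi(\xi_\infty),\om,\ep}$, so that no spurious gauge or phase discrepancy survives in the limit.
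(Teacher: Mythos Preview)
Your proposal is correct and follows essentially the same route as the paper: both arguments work on compact sets $\om\Subset\Sigma\setminus Z_\infty$, invoke Mochizuki's exponential estimate (Theorems \ref{thm_moc_convergence_irreducible_case} and \ref{thm_moc_convergence_reduciblecase}) to show $\Xi(\ME_i,t_i\vp_i)$ and $\UpMoc(\ME_i,\vp_i)$ become asymptotically equal on $\om$, and then conclude by a triangle inequality. Your explicit attention to the $i$-uniformity of the constants is a point the paper's proof passes over quickly, so your version is in fact slightly more careful on this technicality.
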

\begin{proof}
Set 
    \begin{align*}
        \BXi(\ME_i,t_i\vp_i)&=\Xi(\ME_i,t_i\vp_i)=A_i+t_i\phi_i\ ,\\
        \UpMoc(\ME_i,\vp_i)&=(A_i^{\LC},\phi_i^{\LC})\ ,\\
        \UpMoc(\ME_{\infty},\vp_{\infty})&=(A_{\infty}^{\LC},\phi_{\infty}^{\LC})\ .
    \end{align*}
    By Proposition \ref{prop_general_convergence_solutions}, 
there exists a limiting configuration $(A_{\infty},\phi_{\infty}):=\lim_{i\to \infty}(A_i,\phi_i)$ over $\Sigma\setminus Z_{\infty}$. 
Let $K$ be any compact set in $\Sigma\setminus Z_{\infty}$. Note that by the convergence assumption, there exists $i_0$ such that $Z_i\cap
K=\emptyset$ for all $i\geq i_0$. Moreover, from Theorems \ref{thm_moc_convergence_irreducible_case} and \ref{thm_moc_convergence_reduciblecase}, 
for $d_i=\min_K|q_i|$, there exist $t$-independent constants $C,C'>0$ such that (up to gauge transformations)
\begin{equation*}
|(A_i,\phi_i)-(A_i^{\LC},\phi_i^{\LC})|_{\MC^k(K)}\leq Ce^{-C't_id_i}.
\end{equation*}
The convergence is uniform and exponential for fixed $K$. Therefore, over $K$, the size of
$|(A_{\infty},\phi_{\infty})-(A_{\infty}^{\LC},\phi_{\infty}^{\LC})|_{\MC^k(K)}$ is the same as the size of
$|(A_{i}^{\LC},\phi_{i}^{\LC})-(A_{\infty}^{\LC},\phi_{\infty}^{\LC})|_{\MC^k(K)}$.  This  proves the Proposition.
\end{proof}

\subsubsection{Continuity along rays}
We now investigate the behavior of the compactified Kobayashi-Hitchin map restricted to a singular fiber. Specifically, 
fix $0\neq q\in H^0(K^2)$, and denote by $[q]$ the $\mathbb{C}^\ast$-orbit of $q\times{1}$ in the compactified Hitchin base 
$\PMB$. Define $\BMMDq:=\BMH^{-1}_{\Dol}([q])$, $\BMMHq:=\BMH^{-1}_{\Hit}([q])$. Then the restriction of $\BXi$ on $\BMMDq$ 
defines a map $\BXi_q:\BMMDq\to \BMMHq$. 

\begin{theorem}
Let $q$ be an irreducible quadratic differential. 
\begin{itemize}
\item [(i)]	If $q$ contains only zeroes of odd order, then $\BXi_q$ is continuous.
\item [(ii)] If $q$ contains a zero of even order, let $\MM_q=\cup_D\MM_{q,D}$ be the stratification defined earlier. 
Then for each $D\neq 0$, there exists an integer $n_D>0$ such that for any Higgs bundle $(\MF,\psi)\in \MM_{q,D}$, 
there exist $n_D$ sequences of Higgs bundles $(\ME_i^k,\vp_i^k)$ with $k=1,\ldots, n_D$ such that
\begin{itemize}
\item [(a)] $\lim_{i\to \infty}(\ME_i^k,\vp_i^k)=(\MF,\psi)$ for $k=1,\ldots,n_D$,
\item [(b)] if $\lim_{i\to \infty}t_i=\infty$, and we write 
$$
\eta^k:=\lim_{i\to \infty}\BXi_q(\ME_i^k,\vp_i^k),\;\xi:=\lim_{i\to \infty}\BXi_q(\MF,t_i\psi),
$$ 
then $\xi,\eta^1,\ldots,\eta^{n_D}$ are $n_D+1$ different limiting configurations.
\end{itemize}
\end{itemize}
\end{theorem}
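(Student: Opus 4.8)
The plan is to transfer the dichotomy already established for the analytic Mochizuki map $\UpMoc$ in Theorem~\ref{thm_analytic_moc_irreducible_fiber} to the compactified Kobayashi--Hitchin map, using Proposition~\ref{prop_topology_KHmap} as the bridge. On the interior $\BMMDq\cap\MMD=\MM_{q^\ast}$ the map $\BXi_q$ coincides with the homeomorphism $\Xi$, so the only question is continuity at the boundary $\pa\BMMDq\cong \MM_{q^\ast}/\CS$. By Proposition~\ref{prop_topology_KHmap}, $\BXi_q$ is continuous at a boundary point $\iota_\partial(\MF,\psi)$ if and only if $\UpMoc$ is continuous at $(\MF,\psi)$, which reduces both parts of the statement to the corresponding facts about $\UpMoc$.

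For (i), when every zero of $q$ is odd, Theorem~\ref{thm_analytic_moc_irreducible_fiber}(i) asserts that $\UpMoc$ is continuous on $\MM_q$; together with the reduction above and the continuity of $\Xi$ on the interior, this yields the continuity of $\BXi_q$ on all of $\BMMDq$.

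For (ii), I would set $n_D$ to be the integer supplied by Theorem~\ref{thm_analytic_moc_irreducible_fiber}(ii) (equivalently the quantity $n_L$ of Proposition~\ref{prop_computation_nL}, which depends only on the $\sigma$-divisor $D$), and take the associated $n_D$ sequences $(\ME_i^k,\vp_i^k)\in\MM_q$ with $\lim_{i\to\infty}(\ME_i^k,\vp_i^k)=(\MF,\psi)$ and with the $n_D+1$ limits $\UpMoc(\MF,\psi)$, $\lim_i\UpMoc(\ME_i^k,\vp_i^k)$ pairwise distinct. The value $\xi$ is immediate: applying the Mochizuki convergence theorem~\ref{thm_moc_convergence_irreducible_case} to the ray $(\MF,t\psi)$ gives $\xi=\lim_i\BXi_q(\MF,t_i\psi)=[\UpMoc(\MF,\psi)]$. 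To produce $\eta^k$ I would use the exponential estimate of Theorem~\ref{thm_moc_convergence_irreducible_case}: fix a compact $K\Subset\Sigma\setminus Z$, $Z=q^{-1}(0)$; since $\det\vp_i^k\to q$, for large $i$ the zero sets avoid $K$ and $d_i:=\min_K|\det\vp_i^k|\to\min_K|q|>0$, so $|\Xi(\ME_i^k,t\vp_i^k)-\UpMoc(\ME_i^k,\vp_i^k)|_{\MC^\ell(K)}\le C_\ell e^{-C_\ell' t d_i}$ with constants uniform in $i$. Choosing $t_i\to\infty$, the diagonal sequence (whose image in $\BMMD$ converges to $\iota_\partial(\MF,\psi)$, cf.\ Proposition~\ref{prop_convergence_Dol_space}) satisfies
$$
\eta^k=\lim_{i\to\infty}\BXi_q(\ME_i^k,t_i\vp_i^k)=\lim_{i\to\infty}[\UpMoc(\ME_i^k,\vp_i^k)]\ .
$$
The pairwise distinctness of $\xi,\eta^1,\dots,\eta^{n_D}$ then follows from that of the corresponding $\UpMoc$-limits in Theorem~\ref{thm_analytic_moc_irreducible_fiber}(ii); since the $\CS$-action only rescales the Higgs field and leaves untouched the filtered structure at the even zeros that distinguishes these limiting configurations (Proposition~\ref{prop_different_limiting_configurations}), the distinctness survives passage to $\CS$-orbits in $\pa\BMMHq$.

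The main obstacle is the interchange of the two limits in the definition of $\eta^k$: one must justify that letting $i\to\infty$ (convergence within the fibre $\MM_q$) and $t\to\infty$ (degeneration to a limiting configuration) along a single diagonal sequence produces $\lim_i\UpMoc(\ME_i^k,\vp_i^k)$ rather than the ``wrong order'' value $\UpMoc(\MF,\psi)$. This non-commutativity is precisely the source of the discontinuity, and controlling it rests on the uniformity in $i$ of the constants $C_\ell,C_\ell'$ and of the lower bound $d_i\ge \tfrac12\min_K|q|>0$ in the exponential convergence estimate; checking that $K$ can be chosen once and for all and that these constants do not degenerate as $(\ME_i^k,\vp_i^k)\to(\MF,\psi)$ is the technical heart of the proof.
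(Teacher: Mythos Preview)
Your proposal is correct and follows the same route as the paper, which dispatches the result in one line by citing Theorem~\ref{thm_analytic_moc_irreducible_fiber} and Proposition~\ref{prop_topology_KHmap}. Your detailed argument for part~(ii) is in fact an unpacking of the proof of Proposition~\ref{prop_topology_KHmap}: the ``main obstacle'' you flag---that the diagonal limit $\lim_i\Xi(\ME_i^k,t_i\vp_i^k)$ coincides with $\lim_i\UpMoc(\ME_i^k,\vp_i^k)$ thanks to the uniform exponential estimate on compacta away from $Z$---is exactly what that proof establishes, so you may invoke it directly rather than re-derive it.
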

\begin{proof}
This follows from Theorem \ref{thm_analytic_moc_irreducible_fiber} and Proposition \ref{prop_topology_KHmap}.
\end{proof}

\begin{theorem}
Suppose $q$ is reducible, and let $\BMMDq^{\sta}$ be the stable locus of $\BMMDq$. Then the restriction map
$\TheMoc_q|_{\BMMDq^{\sta}}:\BMMDq^{\sta}\to \BMMHq$ is discontinuous when $g\geq 3$ and continuous when $g=2$.
\end{theorem}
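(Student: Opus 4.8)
The plan is to reduce the entire statement to the continuity behavior of the analytic Mochizuki map $\UpMoc$ on the stable locus of the reducible fiber, and then read off the two cases from the stratification analysis already in place. First I would observe that $\BXi_q$ is automatically continuous at interior points of $\MMD$, where it is the Kobayashi--Hitchin homeomorphism $\Xi$; so the only possible failure of continuity occurs at boundary points $\xi_\infty\in\pa\BMMDq\cap\BMMDq^{\sta}$, which are $\CS$-orbits of \emph{stable} Higgs bundles $(\ME,\vp)\in\MM_q^{\sta}$. For such points, Proposition \ref{prop_topology_KHmap} asserts that $\BXi_q$ is continuous at $\xi_\infty$ if and only if $\UpMoc$ is continuous at the corresponding point of $\MM_q^{\sta}$. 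Thus the theorem is equivalent to the assertion that $\UpMoc|_{\MM_q^{\sta}}$ is discontinuous for $g\geq 3$ and continuous for $g=2$, which is exactly Theorem \ref{thm_analytic_moc_reducible_fiber}. The substance therefore lies in verifying, on the stable locus, the hypotheses feeding into that theorem.

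For the case $g\geq 3$, I would produce an explicit degenerating family inside $\MM_q^{\sta}$ realizing the discontinuity, as in Proposition \ref{prop_genus3_algebraic_Mochizuki}: fix $D=\Lam$, take $d_+=-(g-1)$ and $L\in\Pic^{d_+}(\Sigma)$, and let $q_i\in\MV(\Lam,L)$ degenerate to $q_\infty\in\MSV(D_\infty,L)$ with $D_\infty\leq\Lam$ and $\deg D_\infty=\deg D-1$. The genus hypothesis is used precisely to guarantee $-\deg D_\infty<d_+\leq-\tfrac12\deg D_\infty$, so that the limit $(\ME_\infty,\vp_\infty)=\wp(L,q_\infty)$ remains \emph{stable} and the whole family stays in $\BMMDq^{\sta}$. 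Along the family the Higgs bundles are non-exotic, so the weights are $\chi_\pm(p)=\ell_p/2$; the limit, however, has $\deg D_\infty\neq 2d_+$, hence is exotic with constant $c>0$ by Proposition \ref{prop_choiceofconstant_c}, forcing $\chi^\infty_+(p)>\ell_p/2$. The resulting jump in the parabolic weights gives $\lim_i\TheMoc(\ME_i,\vp_i)\neq\TheMoc(\ME_\infty,\vp_\infty)$, and passing through the nonabelian Hodge correspondence for filtered bundles (Theorem \ref{thm_analytic_moc_reducible_fiber}) yields discontinuity of $\UpMoc$, hence of $\BXi_q$.

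For the case $g=2$, I would instead exploit that the stable locus is a single stratum. By Example \ref{ex_genus_two_stable.}, whether $\Lam=p_1+p_2$ or $\Lam=2p$, one has $\MM_q^{\sta}=\wp(\MSV(\Lam,-1))$, i.e.\ the top divisor stratum $D=\Lam$ with $m=-1$. Proposition \ref{prop_reducible_continuous_on_strata}(i) gives continuity of $\TheMoc$ on each single stratum $\wp(\MSV(D,m))$, so $\TheMoc|_{\MM_q^{\sta}}$ is continuous; this is the content of Proposition \ref{prop_genus2_algebraic_Mochizuki}. The feature distinguishing $g=2$ is that any divisor degeneration of the type used above would push $d_+$ out of the stable range $-\deg D_\infty<d_+\leq -\tfrac12\deg D_\infty$ and into the semistable locus, which is excluded from $\BMMDq^{\sta}$; thus no exotic jump can occur \emph{within} the stable locus. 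Translating continuity of $\TheMoc$ back through Theorem \ref{thm_analytic_moc_reducible_fiber} and Proposition \ref{prop_topology_KHmap} gives continuity of $\BXi_q|_{\BMMDq^{\sta}}$.

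The hard part will be the reduction in the first paragraph, namely the rigorous use of Proposition \ref{prop_topology_KHmap} in this setting. This rests on Mochizuki's uniform exponential convergence estimate (Theorem \ref{thm_moc_convergence_reduciblecase}), and the delicate point is that the constants $C_k,C_k'$ must be uniform across the entire converging family, so that the double limit — first $t_i\to\infty$ to pass from $\BXi_q(\ME_i,t_i\vp_i)$ to $\UpMoc(\ME_i,\vp_i)$, then $i\to\infty$ — may be interchanged on compact subsets of $\Sigma\setminus Z_\infty$. One must also control the motion of the zero divisors $Z_i\to Z_\infty$ so that a fixed compact $K\Subset\Sigma\setminus Z_\infty$ avoids all $Z_i$ for large $i$, and confirm that the degenerating families built for $g\geq 3$ genuinely remain in the stable locus — precisely where the stability inequalities and the genus condition re-enter.
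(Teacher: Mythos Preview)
Your proposal is correct and follows essentially the same route as the paper: reduce continuity of $\BXi_q$ at boundary points to continuity of $\UpMoc$ via Proposition \ref{prop_topology_KHmap}, then invoke the stratification analysis of the reducible fiber (Propositions \ref{prop_genus3_algebraic_Mochizuki} and \ref{prop_genus2_algebraic_Mochizuki}, packaged as Theorem \ref{thm_analytic_moc_reducible_fiber}). The paper's own proof is a one-line citation of Propositions \ref{prop_genus3_algebraic_Mochizuki} and \ref{prop_topology_KHmap}; your version simply unpacks the logic, and in fact treats the $g=2$ case more explicitly than the paper's citation does.
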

\begin{proof}
This follows from Propositions \ref{prop_genus3_algebraic_Mochizuki} and \ref{prop_topology_KHmap}.
\end{proof}

\subsubsection{Varying fiber} 
With the conventions above, suppose $(\ME_i,\vp_i)$ converges to $(\ME_{\infty},\vp_{\infty})$ with 
$q_{\infty}$ having  only simple zeros, and $\xi_i=(\ME_i,t_i\vp_i)$ converges to $\xi_\infty$ on $\BMMD$. Since the condition
of having only simple zeros is  open, the $q_i$ also have simple zeros for $i$ sufficiently large. 
\begin{proposition}[{cf.\ \cite[Thm.\ 2.12]{ott2020higgs}}] 
\label{prop_homeomorphic_part_one}
Suppose $q_{\infty}$ has only simple zeros. Then, $\displaystyle\lim_{i\to\infty}\BXi(\xi_i)=\BXi(\xi_\infty)$. In particular, the map 
$\BXi^{\reg}:\BMMD^{\reg}\to \BMMH^{\reg}$ is continuous.
\end{proposition}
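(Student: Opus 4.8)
The plan is to reduce, via Proposition \ref{prop_topology_KHmap}, the continuity of $\BXi$ at $\xi_\infty$ to the continuity of the analytic Mochizuki map $\UpMoc$, and then to establish the latter through the convergence theorem for harmonic metrics on a varying family of \emph{smooth} spectral curves. First I would invoke Proposition \ref{prop_topology_KHmap}: since $\xi_i=[(\ME_i,t_i\vp_i)]$ with $t_i\to\infty$ and $(\ME_i,\vp_i)\to(\ME_\infty,\vp_\infty)$, we have $\lim_{i\to\infty}\BXi(\xi_i)=\BXi(\xi_\infty)$ if and only if $\lim_{i\to\infty}\UpMoc(\ME_i,\vp_i)=\UpMoc(\ME_\infty,\vp_\infty)$ in $\MMHLC$. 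Because the condition that $q$ have only simple zeros is open and $q_i\to q_\infty$, for all large $i$ the differential $q_i$ also has only simple zeros; hence every spectral curve $S_{q_i}$ and the limit $S_{q_\infty}$ is smooth, the normalization is trivial, all BNR spectral data are locally free, and the Mochizuki map takes the clean form $\TheMoc(L)=\MF_{\st}(L,\tfrac12\chi_{-\Lam})$ with vanishing $\sigma$-divisor and $\ptf=\mathrm{id}$. The within-fiber continuity is then the odd-zero case of Theorem \ref{thm_algebraic_Mochizuki_map_continuous}(ii), so the only genuinely new input is continuity \emph{across} nearby fibers.

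The heart of the argument is this cross-fiber statement. Writing $L_i=\chi_{\BNR}^{-1}(\ME_i,\vp_i)\in\Pic^{2g-2}(S_{q_i})$ and $L_\infty$ for the limit, the convergence $(\ME_i,\vp_i)\to(\ME_\infty,\vp_\infty)$ together with $q_i\to q_\infty$ produces a smoothly convergent family of metrized Riemann surfaces $S_{q_i}\to S_{q_\infty}$, a convergent family of ramification divisors $\Lam_i\to\Lam_\infty$, and line bundles $L_i\to L_\infty$ in the Picard fibration $\Pic_U$. The weighted line bundles $(L_i,\tfrac12\chi_{-\Lam_i})$ all have filtered degree $0$, so Theorem \ref{thm_convergence_family_harmonic_bundles} applies with $\Sigma_t=S_{q_i}$ and $Z_t=\supp\Lam_i$: after a harmless rescaling (the limiting configuration being independent of the scale of the harmonic metric) the harmonic metrics $\tlh_i$ and their unitary connections converge in $\MC^\infty_{\loc}(S_{q_\infty}\setminus\Lam_\infty)$ to the harmonic datum of $(L_\infty,\tfrac12\chi_{-\Lam_\infty})$.

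Finally I would transport this convergence downstairs. The limiting configuration $(A_i^{\LC},\phi_i^{\LC})$ is the push-forward under $\pi\colon S_{q_i}\to\Sigma$ of the diagonal harmonic metric $\diag(\tlh_i,\sigma^{\st}\tlh_i)$, and $\pi$ is a local biholomorphism away from the branch points; hence $\MC^\infty_{\loc}$ convergence on $S_{q_\infty}\setminus\Lam_\infty$ yields $\MC^\infty_{\loc}$ convergence of $(A_i^{\LC},\phi_i^{\LC})$ to $(A_\infty^{\LC},\phi_\infty^{\LC})$ on $\Sigma\setminus Z_\infty$, where for large $i$ the branch loci $Z_i$ avoid any fixed compact set missing $Z_\infty$. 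Combined with $q_i\to q_\infty$, this is exactly convergence in the neighborhoods $U_{\xi_0,\om,\ep}$ defining the topology on $\MMHLC$, so $\UpMoc(\ME_i,\vp_i)\to\UpMoc(\ME_\infty,\vp_\infty)$, and the first claim follows. Since $\BMMD$ is metrizable and $\Xi$ is a homeomorphism on the interior, sequential continuity at regular boundary points upgrades to continuity of $\BXi^{\reg}\colon\BMMD^{\reg}\to\BMMH^{\reg}$. The step demanding the most care, and the main obstacle, is the cross-fiber passage: one must verify that the hypotheses of Theorem \ref{thm_convergence_family_harmonic_bundles} genuinely hold for the family $S_{q_i}$ — in particular the smooth convergence of the complex structures and the matching of weights under $\Lam_i\to\Lam_\infty$ — and that the gauge transformations implicit in the $\MC^\infty_{\loc}$ convergence can be chosen compatibly with the equivalence relation defining $\MMHLC$.
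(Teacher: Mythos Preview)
Your proposal is correct and follows essentially the same approach as the paper: reduce via Proposition \ref{prop_topology_KHmap} to continuity of $\UpMoc$, express the limiting configurations through the BNR line bundles $L_i=\chi_{\BNR}^{-1}(\ME_i,\vp_i)$ as weighted bundles $\MF_{\st}(L_i,\tfrac12\chi_{-\Lam_i})$ on the smooth spectral curves, and then apply Theorem \ref{thm_convergence_family_harmonic_bundles} to obtain convergence of the harmonic metrics. The paper's proof is terser and leaves the push-forward step and the verification of the hypotheses of Theorem \ref{thm_convergence_family_harmonic_bundles} implicit, whereas you spell these out; but the logical skeleton is the same.
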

\begin{proof}
Let $S_i$ denote the spectral curve of $(\ME_i,\vp_i)$ with branching locus $Z_i$. Also, let $L_i:=\chi_{BNR}^{-1}(\ME_i,\vp_i)$ be the 
eigenline bundles. By the construction in Section \ref{sec_irreducible_singular_fiber}, we have $\UpMoc(\xi_i)=\MF_{\st}(L_i,\chi_i)$, where
$\chi_i=-\frac12\chi_{Z_i}$. Our assumption implies that $\MF_{\st}(L_i,\chi_i)$ converges to $\MF_{\st}(L_{\infty},\chi_{\infty})$ in the sense 
of Definition \ref{def_convergence_parabolic_bundles}. Thus, by Theorem \ref{thm_convergence_family_harmonic_bundles}, we obtain 
the convergence of the limiting configurations: $\lim_{i\to \infty}\UpMoc(\xi_i)=\UpMoc(\xi_{\infty})$. The claim follows from 
Proposition \ref{prop_topology_KHmap}.
\end{proof}

\begin{theorem}
The map $\BXi^{\reg}:\BMMD^{\reg}\to \BMMH^{\reg}$ is a homeomorphism.
\end{theorem}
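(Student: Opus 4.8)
The plan is to upgrade the continuous bijection $\BXi^{\reg}$ into a homeomorphism by proving that its inverse is continuous, exploiting the compactness of the ambient space $\BMMD$ together with the compatibility of $\BXi$ with the Hitchin base maps $\BMH_{\Dol}$ and $\BMH_{\Hit}$. Two facts are already available: that $\BXi^{\reg}\colon\BMMDr\to\BMMHr$ is a bijection (the Proposition established just above) and that it is continuous (Proposition \ref{prop_homeomorphic_part_one}). Since $\BMMH\subset\mbM$ is first countable and $\BMMD$ is a metrizable projective variety, it suffices to argue sequentially, so the whole task reduces to showing sequential continuity of $(\BXi^{\reg})^{-1}$.

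First I would record that $\BMMDr=\BMH_{\Dol}^{-1}(\PMB^{\reg})$ is \emph{open} in the compact space $\BMMD$, because having only simple zeros is an open condition, so $\PMB^{\reg}$ is open in $\PMB$ and $\BMH_{\Dol}$ is continuous; likewise $\BMMHr$ is open in $\BMMH$. To prove continuity of the inverse, take any sequence $\eta_i\to\eta_\infty$ in $\BMMHr$ and set $\xi_i:=(\BXi^{\reg})^{-1}(\eta_i)$ and $\xi_\infty:=(\BXi^{\reg})^{-1}(\eta_\infty)$; the goal is $\xi_i\to\xi_\infty$. Since $\BMMD$ is compact, an arbitrary subsequence of $\{\xi_i\}$ admits a further subsequence $\xi_{i_k}\to\xi'$ for some $\xi'\in\BMMD$. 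The key point is that $\xi'$ cannot escape the regular locus: by continuity of $\BMH_{\Dol}$ and the identity $\BMH_{\Hit}\circ\BXi=\BMH_{\Dol}$,
\[
\BMH_{\Dol}(\xi')=\lim_k\BMH_{\Dol}(\xi_{i_k})=\lim_k\BMH_{\Hit}(\eta_{i_k})=\BMH_{\Hit}(\eta_\infty)\in\PMB^{\reg},
\]
whence $\xi'\in\BMH_{\Dol}^{-1}(\PMB^{\reg})=\BMMDr$. Now continuity of $\BXi^{\reg}$ on $\BMMDr$ yields $\BXi^{\reg}(\xi')=\lim_k\eta_{i_k}=\eta_\infty=\BXi^{\reg}(\xi_\infty)$, and injectivity of $\BXi^{\reg}$ forces $\xi'=\xi_\infty$. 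Thus every convergent subsequence of $\{\xi_i\}$ has limit $\xi_\infty$; combined with compactness and first countability of $\BMMD$, this gives $\xi_i\to\xi_\infty$. Hence $(\BXi^{\reg})^{-1}$ is continuous and $\BXi^{\reg}$ is a homeomorphism.

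The step I expect to be the main (if modest) obstacle is exactly the control of the subsequential limit $\xi'$. A priori a limit of points of the open, non-compact set $\BMMDr$ could drift into $\BMMD\setminus\BMMDr$, where $\BXi$ is neither continuous nor injective, as the singular-locus pathology elsewhere in the paper shows; the displayed computation, which pins $\BMH_{\Dol}(\xi')$ back into the open regular base $\PMB^{\reg}$, is precisely the mechanism that prevents this and is the crux of the argument. I would also remark that on the interior portion of $\BMMDr$ the map $\BXi^{\reg}$ restricts to the classical Kobayashi--Hitchin homeomorphism $\Xi$, so no new content is needed there; the sequential argument above handles the boundary points over $\partial\PMB^{\reg}$ uniformly with the interior.
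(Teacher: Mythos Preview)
Your argument is correct and takes a genuinely different route from the paper. The paper dispatches the continuity of $(\BXi^{\reg})^{-1}$ by a one-line citation to the construction in \cite{mazzeo2019asymptotic}, i.e., to the analytic gluing of approximate solutions near limiting configurations, which builds an explicit local inverse. You instead give a purely topological proof: exploit compactness of the ambient $\BMMD$ to force subsequential limits, and then use the commuting square $\BMH_{\Hit}\circ\BXi=\BMH_{\Dol}$ together with openness of $\PMB^{\reg}$ to pin those limits inside $\BMMDr$, where the already-established continuity and injectivity of $\BXi^{\reg}$ finish the job. Your approach is more elementary and self-contained, requiring no analytic input beyond what is already used for forward continuity (and the continuity of $\BMH_{\Dol}$ and $\BMH_{\Hit}$, which follow from the constructions in Section~\ref{sec_the_algebraic_and_analytic_compactifications}). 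The paper's route, via the cited gluing construction, presumably yields more---a smooth local model for $\BMMHr$ near the boundary---but for the bare homeomorphism statement your argument is cleaner.
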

\begin{proof}
By Theorem \ref{thm_simple_zero_bijective}, $\BXi^{\reg}$ is a bijection.
    Moreover, by Proposition \ref{prop_homeomorphic_part_one},
    $\BXi^{\reg}$ is continuous. Finally, that $(\BXi^{\reg})^{-1}$ is continuous follows directly from the construction in \cite{mazzeo2019asymptotic}.
\end{proof}

\appendix
\section{Classification  of rank 1 torsion modules for $A_n$ singularities}
\label{appendixA}
In this appendix, we review the classification result for rank 1 torsion free modules at $A_n$ singularities, as given in \cite{Greuel1985}. 
We compute the integer invariants defined in Subsection \ref{subsection_torsion_free_sheaf_integers}.

Let $S$ be the spectral curve of an $\SLC$ Higgs bundle, and $x$ a singular point with local defining equation  given by 
$r^2-s^{n+1}=0$; this is an $A_n$ singularity. Let $p:\tS\to S$ be the normalization, where $p^{-1}(x)=\{\tx_+,\tx_-\}$ if $n$ is odd 
and $p^{-1}(x)={\tx}$ if $n$ is even. We use $R$ to denote the completion
of the local ring $\MO_x$,  $K$ its 
field of fractions, and $\tR$ its normalization.

\subsection{$A_{2n}$ singularity}
The local equation is $r^2-s^{2n+1}=0$. The normalization induces a map between  coordinate rings, and we can write
\begin{equation*}
	\psi:\mbC[r,s]/(r^2-s^{2n+1})\lra \mbC[t],\quad \psi(f(r,s))=f(t^{2n},t^2),
\end{equation*}
where $\tR=\mbC[[t]]$ and $R=\mbC[[t^2,t^{2n+1}]]\subset \tR$.
According to \cite[Anh.\ (1.1)]{Greuel1985}, any rank 1 torsion free $R$-module can be written as
\begin{equation*}
	M_k = R + R \cdot t^k \subset \tR, \quad k=1,3,\ldots,2n+1.
\end{equation*}
Here, $M_k$ is a fractional ideal that satisfies $R\subset M_k\subset \tR$, with $M_1=\tR$ and $M_{2n+1}=R$. We may express any $f \in M_k$ as $f = \sum_{i=0}^{\frac{k-1}{2}} f_{2i}t^{2i}+\sum_{i\geq k}f_it^i$, where $f_i\in \mbC$.

We are interested in the integers $\ell_x := \dim_{\mbC}(M_k/R)$, $a_{\tx}
:= \dim_{\mbC}(\tR/C(M_k))$ and $b_x=\dim_{\mbC}(\Tor(M_k\otimes_R \tR))$.
Thus, as a $\mbC$-vector space, $M_k/R$ is generated by $t^k, t^{k+2},
\ldots, t^{2n-1}$, implying that $\ell_x=\frac{2n+1-k}{2}$.

The conductor of $M_k$ is given by $C(M_k) = \{u\in K\mid u\cdot \tR\subset
M_k\}$. By the expression of $M_k$ and a straightforward computation, we
have $C(M_k) = (t^{k-1})$, where $(t^{k-1})$ is the ideal in $\tR$
generated by $t^{k-1}$. Thus, $1,t,\ldots,t^{k-2}$ will form a basis for
$\tR/C(M_k)$, and we have $a_{\tx}=k-1$. Therefore, we have
$a_{\tx}=2n-2\ell_x$.

For $i=0, 1,\ldots,\frac{2n-1-k}{2}$, we define $s_i=t^{k+2i}\otimes_R
1-1\otimes t^{k+2i}\in M_k\otimes_R\tR$. As $k$ is odd, $t^{2n+1-k-2i}\in
R$ and $t^{2n+1-k-2i}s_i=t^{2n+1}\otimes_R 1-1\otimes_R t^{2n+1}=0$, where
the last equality is becasue $t^{2n+1}\in R$. Moreover, $\{s_1,\ldots,
s_{\frac{2n-1-k}{2}}\}$ form a basis of $\Tor(M_k\otimes_R\tR)$, thus
$b_x=\frac{2n+1-k}{2}=\ell_x$.

\subsection{$A_{2n-1}$ singularity}
%Let us discuss the computation for $A_{2n-1}$ singularity with 
The local equation is $r^2-s^{2n}=0$. The normalization induces a map
between the coordinate rings:
\begin{equation*}
	\psi:\mbC[r,s]/(r^2-s^{2n})\lra \mbC[t]\oplus \mbC[t],\quad
    \psi(f(r,s))=(f(t^n,t),f(-t^n,t))\ ,
\end{equation*}
where $\tR=\mbC[[t]]\oplus \mbC[[t]]$ and $R=\mbC[[(t,t),(t^n,-t^n)]]\cong \mbC[[(t,t),(t^n,0)]]$.
By \cite[Anh.\ (2.1)]{Greuel1985}, any rank 1 torsion free $R$-module can be written as:
\begin{equation*}
	M_k=R+R\cdot (t^k,0)\subset \tR,\quad k=0,1,\ldots,n.
\end{equation*}
Then, $M_k$ is also a fractional ideal with $R\subset M_k\subset \tR$. Moreover, $M_n=R$, and $M_0=\tR$. 

As $p^{-1}(x)=\{\tx_+,\tx_-\}$, $\tR$ contains two maximal ideals, 
%which can be written as 
$\mfm_+=((t,1))$, $\mfm_-=((1,t))$.
%We are interested in the integers $\ell_x:=\dim_{\mbC}(M_k/R)$, $a_{\tx_{\pm}}:=\dim_{\mbC}((\tR/C(M_k))_{\mfm_{\pm}})$ and $b_x:=\dim_{\mbC}(\Tor(M_k\otimes_R\tR))$. 
For $f\in M_k$, we can express $f$ as:
\begin{equation*}
	f=\sum_{i=0}^{k-1}f_{ii}(t^i,t^i)+\sum_{l\geq 0}f_{l0}(t^{k+l},0)+f_{0l}(0,t^{k+l}),
\end{equation*}
where $f_{ij}\in \mbC$. Therefore, $\ell_x=\dim_{\mbC}(M_k/R)=n-k$.
Moreover, using the expression, we can compute the conductor
$C(M_k)=((t^k,1))\cdot((1,t^k))$, which implies $a_{\tx_{\pm}}=k$.
Similarly, for $i=k,\ldots,n-1$, we define
$s_i=(t^i,0)\otimes_R(1,1)-(1,1)\otimes_R (t^i,0)$, then $(t,t)^{n-i}\cdot
s_i=0$ and $\{s_k,\ldots, s_{n-1}\}$ will be a basis for $\Tor(M_k\otimes_R
\tR)$ and $b_x=\ell_x$. 

In summary, we have the following:
\begin{proposition}
	\label{prop_appendix_computation}
	For the integers defined above, we have:
	\begin{itemize}
		\item [(i)] for the $A_{2n}$ singularity, we have
            $a_{\tx}=2n-2\ell_x$ and $b_x=\ell_x$,
		\item [(ii)] for the $A_{2n-1}$ singularity, we have
            $a_{\tx_{\pm}}=n-\ell_x$ and $b_x=\ell_x$.
	\end{itemize}
\end{proposition}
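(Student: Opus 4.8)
The plan is to handle the two singularity types separately, and in each case to reduce the entire computation to the Greuel--Kn\"orrer normal form for rank $1$ torsion free modules \cite{Greuel1985}: every such module is isomorphic to an explicit fractional ideal $M_k$ indexed by a single integer $k$, and the three invariants $\ell_x$, $a_{\tx}$, and $b_x$ can each be read off directly as functions of $k$. Once these functions are in hand, the asserted relations follow simply by eliminating $k$. The two cases of the proposition correspond to the odd and even zeros of $q$: an $A_{2n}$ point is the single preimage of an odd zero, while an $A_{2n-1}$ point has two preimages $\tx_\pm$ coming from an even zero, so that in the latter case one must track the invariant $a$ branch by branch.

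For the $A_{2n}$ singularity I would start from $R=\mbC[[t^2,t^{2n+1}]]\subset \tR=\mbC[[t]]$ and the normal form $M_k=R+R\cdot t^k$ with $k$ odd. The computation of $\ell_x$ is a direct count: $M_k/R$ has the $\mbC$-basis $t^k,t^{k+2},\dots,t^{2n-1}$ (using $t^{2n+1}\in R$), so $\ell_x=(2n+1-k)/2$. For $a_{\tx}$ one identifies the conductor as the principal ideal $C(M_k)=(t^{k-1})$ of $\tR$, whence $\tR/C(M_k)$ has basis $1,t,\dots,t^{k-2}$ and $a_{\tx}=k-1$. For $b_x$ one exhibits the torsion of $M_k\otimes_R\tR$ as spanned by classes $s_i=t^{k+2i}\otimes 1-1\otimes t^{k+2i}$, each killed by a suitable power of $t^2\in R$, and counts them to obtain $b_x=\ell_x$. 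Substituting $k=2n+1-2\ell_x$ then yields $a_{\tx}=2n-2\ell_x$.

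The $A_{2n-1}$ case is parallel, now with $R\cong \mbC[[(t,t),(t^n,0)]]\subset \tR=\mbC[[t]]\oplus\mbC[[t]]$, normal form $M_k=R+R\cdot(t^k,0)$ for $0\le k\le n$, and the two maximal ideals $\mfm_\pm$ of $\tR$ marking the branches $\tx_\pm$. The same three steps give $\ell_x=n-k$; a conductor that factors as $C(M_k)=((t^k,1))\cdot((1,t^k))$, whose localization at either $\mfm_\pm$ has dimension $k$, so that $a_{\tx_\pm}=k=n-\ell_x$; and a torsion submodule spanned by the analogous elements $s_i=(t^i,0)\otimes(1,1)-(1,1)\otimes(t^i,0)$, giving $b_x=n-k=\ell_x$.

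I expect the only points requiring genuine care to be the conductor computations and the claim that the listed elements $s_i$ actually span $\Tor(M_k\otimes_R\tR)$ rather than merely belong to it; in the $A_{2n-1}$ case there is the additional bookkeeping of localizing $\tR/C(M_k)$ at each $\mfm_\pm$ to extract the branch-wise invariant $a_{\tx_\pm}$. With those verified, the proposition reduces to the two substitutions above, so the effort is computational rather than conceptual.
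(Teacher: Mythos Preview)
Your proposal is correct and follows essentially the same approach as the paper: both reduce to the Greuel--Kn\"orrer normal forms $M_k$, compute $\ell_x$, $a_{\tx}$ (via the conductor), and $b_x$ (via the explicit torsion elements $s_i$) as functions of $k$, and then eliminate $k$. Your cautionary remark that the $s_i$ must be shown to \emph{span} $\Tor(M_k\otimes_R\tR)$ is well taken---the paper simply asserts this---so in that respect your outline is, if anything, slightly more careful than the paper's own argument.
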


	\bibliographystyle{plain}
	\bibliography{references}
\end{document}